\newtheorem{teo}{Theorem}[section]
\newtheorem{prop}[teo]{Proposition}
\newtheorem{lem}[teo]{Lemma}
\newtheorem{cor}[teo]{Corollary}
\newtheorem{conj}[teo]{Conjecture}
\newtheorem{defini}[teo]{Definition}
\newtheorem{ques}[teo]{Question}
\newtheorem{rem}[teo]{Remark}
\newtheorem{contr}[teo]{Construction}
\newtheorem{conv}[teo]{Convention}
\newtheorem{notation}[teo]{Notation}
\newcommand{\neutralize}[1]{\expandafter\let\csname c@#1\endcsname\count@}
\newenvironment{teobis}[1]
  {%
   \neutralize{teo}\phantomsection
   \begin{teo}}
  {\end{teo}}
 \newenvironment{teobisbis}[1]
  {%
   \neutralize{teo}\phantomsection
   \begin{teo}}
  {\end{teo}}
\numberwithin{equation}{section}
\newcommand{\Hom}{\mbox{Hom}}
\newcommand{\IQbar}{\overline{\mathbb{Q}}}
\newcommand{\pullbackcorner}[1][dr]{\save*!/#1-1.7pc/#1:(-1.5,1.5)@^{|-}\restore}
\newcommand{\End}{{\rm End}}
\newcommand{\GSp}{{\rm GSp}}
\newcommand{\SL}{{\rm SL}}
\newcommand{\Lie}{\operatorname{Lie} }
\newcommand{\Gal}{{\rm Gal}}
\newcommand{\der}{{\rm der}}
\newcommand{\CC}{{\mathbb C}}
\newcommand{\RR}{{\mathbb R}}
\newcommand{\ZZ}{{\mathbb Z}}
\newcommand{\QQ}{{\mathbb Q}}
\newcommand{\PP}{{\mathbb P}}
\newcommand{\GG}{{\mathbb G}}
\newcommand{\SSS}{{\mathbb S}}
\newcommand{\AAA}{{\mathbb A}}
\newcommand{\lto}{\longrightarrow}
\newcommand{\Sp}{{\rm Sp}}
\newcommand{\bx}{{\bf x}}
\newcommand{\bG}{{\bf G}}
\newcommand{\bT}{\mathbf{T}}
\newcommand{\bH}{\mathbf{H}}
\def\Fk{\mathfrak{k}}
\def\Fp{\mathfrak{p}}
\def\Fg{\mathfrak{g}}
\def\Fh{\mathfrak{h}}
\def\Fm{\mathfrak{m}}
\def\Fz{\mathfrak{z}}
\newcommand{\cH}{{\mathcal H}}
\newcommand{\cB}{{\mathcal B}}
\newcommand{\cA}{{\mathcal A}}
\newcommand{\cD}{{\mathcal D}}
\newcommand{\cW}{{\mathcal W}}
\newcommand{\cO}{{\mathcal O}}
\newcommand{\ol}{\overline}
\newcommand{\oQ}{\overline{\QQ}}
\newcommand{\diag}{{\rm diag}}
\renewcommand{\cong}{\simeq}
\newcommand{\Gm}{\mathbb{G}_{\mathrm{m}}}
\newcommand{\GmF}[1]{\mathbb{G}_{\mathrm{m,#1}}}
\title[Bi-$\IQbar$ Shimura]{Bi-$\IQbar$-structures on Hermitian symmetric spaces and quadratic relations between CM periods}
\author{Ziyang Gao, Emmanuel Ullmo, Andrei Yafaev}
\begin{document}

\address{Department of Mathematics, UCLA, Los Angeles, CA 90095, USA}
\email{ziyang.gao@math.ucla.edu}

\address{IHES, Universit\'{e} Paris-Saclay, Laboratoire Alexandre Grothendieck; 35 route de Chartres, 91440 Bures sur Yvette, France.}
\email{ullmo@ihes.fr}

\address{Department of Mathematics, University College London; Gower street, WC1E 6BT London, UK.}
\email{yafaev@math.ucl.ac.uk}

\maketitle

\begin{abstract}
In this paper, we introduce the notion of a bi-$\IQbar$-structure on the tangent space at a CM point on a locally Hermitian symmetric domain. We prove that this bi-$\IQbar$-structure decomposes into the direct sum of $1$-dimensional bi-$\IQbar$-subspaces, and make this decomposition explicit for the moduli space of abelian varieties $\mathbb{A}_g$. 

We propose an Analytic Subspace Conjecture, which is the analogue of the W\"{u}stholz's Analytic Subgroup Theorem in this context. We  show that this conjecture, applied to $\mathbb{A}_g$, implies that all quadratic $\IQbar$-relations among the holomorphic periods of CM abelian varieties arise from elementary ones.
\end{abstract}

\tableofcontents

\section{Introduction}
\subsection{Prologue and motivation} 
In all this paper, $\oQ$ will denote the algebraic closure of $\QQ$ in $\CC$. 
The uniformizing map $u \colon \CC \to \CC^*$, $z \mapsto e^{2\pi i z}$, 
 is a map between algebraic varieties defined over $\IQbar$. %If $\alpha \in \QQ$, then $\alpha \in \IQbar$ and  $u(\alpha)\in \IQbar^*$. 
This map is transcendental, and the Gelfond--Schneider theorem asserts the following: if $\alpha \in \IQbar$ and $u(\alpha)\in \IQbar^*$, then $\alpha \in \QQ$. % Notice that $u(\alpha)$ is then a root of unity. 
On the other hand, if we consider the usual exponential $\exp \colon \CC \to \CC^*$,  $z \mapsto e^z$, the situation is different. 
In this case, if $\alpha \in \IQbar$ and $e^{\alpha} \in \IQbar^*$, then $\alpha = 0$ by a theorem of Lindemann. %; see \cite[]{UllmoZP} for a proof using W\"{u}stholz's Analytic Subgroup Theorem.

We can look at this  from a different viewpoint, which we will further develop and adapt in this paper. Consider the natural $\IQbar$-structure on  $\CC^*= \Gm(\CC) $  given by $ \GmF{\CC} = \GmF{\IQbar} \otimes \CC$, and the usual exponential map $\exp \colon \CC \to \Gm(\CC) = \CC^*$. Endow $\CC$ with \textit{two} $\IQbar$-structures: one given by $W_1 := 2\pi i \IQbar \subseteq \CC$, the other given by $W_2: = \IQbar \subseteq \CC$. An $\alpha \in \CC$ is called \textit{bi-$\IQbar$} for $W_j$ if $\alpha \in W_j$ and $e^{\alpha} \in \IQbar^*$. From the previous paragraph, $\alpha \in \CC$ is bi-$\IQbar$ for $W_1$ (resp. $W_2$) if and only if $\alpha \in \QQ$ (resp. $\alpha = 0$). These two $\IQbar$-structures on $\CC$ are related by $2\pi i$ which is a transcendental number and is a period of $\GG_m$. We will call $(W_1,W_2)$ a \textit{bi-$\IQbar$-structure on $\CC$ with period $2\pi i$}.

\vskip 0.3em

%The situation in the case of %the uniformisation map of 
%abelian varieties is in general  more complicated, but in the CM case. 
The situation of bi-$\IQbar$-structure also occurs in the case of complex CM abelian varieties $A$. Let $g = \dim A$. By definition if 
$$
A\simeq A_1^{n_1}\times \dots\times A_r^{n_r}
$$
 is the isotypic decomposition of $A$, then each $A_i$ is a simple CM abelian variety. If $g_i$ denotes the dimension of $A_i$, then $E_i:=\End (A_i)\otimes \QQ$ is a CM field of dimension $2g_i$. In this situation $\End(A)\otimes \QQ=M_{n_1}(E_1)\times \dots\times M_{n_r}(E_r)$ is a CM-algebra.

 In $\mathsection$\ref{SubsectionExampleCMAV}, 
we will see that $T_0 A$ is naturally endowed with a bi-$\oQ$-structure. Moreover, we show in Proposition~\ref{PropBiQbarCMAVSplit} that this bi-$\IQbar$-structure is \textit{split}, which means  
\begin{equation}\label{TanCM}
T_0 A=\bigoplus_{j=1}^g \CC(\theta_j)
\end{equation}
where each $\CC(\theta_j)$ is a $1$-dimensional complex vector subspace endowed with the induced bi-$\oQ$-structure. The complex number $\theta_j$ measures the difference between the two $\oQ$-structures and is defined up to multiplication by some element in $\oQ^*$, and they are precisely the holomorphic periods of $A$ defined as follows. For each $1$-form $\omega \in H^1_{\mathrm{dR}}(A)$, $\int_{\gamma}\omega$ is independent of the choice of $\gamma \in H_1(A,\ZZ)$ up to $\IQbar$ and is non-zero for some $\gamma \in H_1(A,\ZZ)$ by Shimura \cite[Rmk.3.4]{ShimuraPeriod}. Moreover if $\int_{\gamma}\omega\neq 0$ and $\int_{\gamma'}\omega\neq 0$, then there exists $\theta\in \oQ^*$ such that $\int_{\gamma}\omega=\theta \int_{\gamma'}\omega$. Note that Shimura \cite[Rmk.3.4]{ShimuraPeriod} has the hypothesis that $\End(A)\otimes \QQ$ has a totally real subfield of dimension $\dim(A)$, but we can apply his result to every isotypical factors of the above decomposition of $A$.

\begin{conv}\label{SubsectionConvention}
Let $\{\omega_1,\ldots,\omega_g\}$ be an $\mathrm{End}(A)\otimes \QQ$-eigenbasis of $\Omega_A$ (holomorphic $1$-forms). We can choose $\gamma_j\in \Gamma$ such that $\theta_j := \int_{\gamma_j} \omega_j \not=0$ for each $j \in \{1,\ldots,g\}$. We call $\theta_1,\ldots,\theta_g$ the \textit{holomorphic periods} of $A$; they are well-defined up to $\IQbar^*$.
%Let $\eta_j$ be the complex conjugate of $\omega_j$. Then the Shimura relation \cite[]{} asserts that $\int_{\gamma} \eta_j \cong 2\pi i/\theta_j$ for each $j \in \{1,\ldots,g\}$, and we call them the \textit{anti-holomorphic periods} of $A$.
\end{conv}

A very general conjecture in transcendental number theory is Grothendieck's Period Conjecture and its generalizations by Andr\'{e} and Konsevich--Zagier, which predicts the polynomials relations between the periods. W\" ustholz's  famous Analytic subgroup theorem (WAST) \cite{WAST} gives a complete answer of the linear relations of this conjecture for linear algebraic groups and more generally for $1$-motives; it includes most known results in transcendental number theory. We refer to \cite{AndreMotifs},  \cite{Tretkoff} and \cite{HuberWustholz} for relevant discussions. For a CM abelian variety $A$ and its holomorphic periods defined above, WAST implies: the $\theta_j$'s are transcendental numbers, and that they are $\IQbar$-linearly independent if $A$ has no square factors.\footnote{By the Poincar\'{e} Irreducibility Theorem, $A$ is isogenous to a product of simple CM abelian varieties. We say that \textit{$A$ has no square factors} if these simple CM abelian varieties are all distinct. }

However, almost nothing is known beyond the linear relations (except if the question can be reduced to linear case, for example the transcendence of $\theta_j\theta_{j'}/\pi$). For example, little is known for quadratic relations among the $\theta_j$'s.

In a subsequent paper by the first- and second-named authors \cite{GUHodgeCycle}, we will present an example of non-trivial quadratic relation which takes the following simple form: 
\begin{equation}\label{EqElementaryNonTrivial}
\theta_j\theta_{j'}  - c_{jj'kk'}  \theta_k\theta_{k'} = 0 \quad \text{ with } \{j,j'\}\not=\{k,k'\} \text{ and  }c_{jj'kk'} \in \IQbar^*.
\end{equation}
In this example, $A$ is simple and has dimension $2^5$. 
Based on this observation, we make the following definition.

\begin{defini}\label{DefnElementaryNonTrivial}
A quadratic relation among $\theta_1,\ldots,\theta_g$ is called \textbf{elementary} if it is of the form \eqref{EqElementaryNonTrivial}.
\end{defini}
Now, a natural question is:
\begin{ques}\label{QuestionNoQuadraticRelationIntro}
Is it true that any non-trivial $\IQbar$-quadratic relation among the $\theta_j$'s, or equivalently any $\IQbar$-linear relation among the $\theta_j\theta_{j'}$'s, is a $\IQbar$-linear combination of elementary ones?

Moreover, if the Mumford--Tate group of $A$ is a maximal torus, is it true that 
\begin{equation}
\dim_{\IQbar} \sum_{1\le j \le j'\le g} \IQbar \theta_j\theta_{j'} = \frac{g(g+1)}{2}?
\end{equation}
\end{ques}

The aim of this article is to propose a possible framework to study this question. % which is more tractable than  Grothendieck's Period Conjecture. 
We will introduce a natural bi-$\IQbar$-structure on the tangent space at a CM point on a Shimura variety, prove that it decomposes into the direct sum of $1$-dimensional bi-$\IQbar$-subspaces $\CC(\alpha_j)$ (and hence yields complex numbers $\alpha_j$ which measure the differences between the two $\IQbar$-structures on each $\CC(\alpha_j)$), and compute the $\alpha_j$'s in the case of the moduli space of abelian varieties $\mathbb{A}_g$. We then propose an Analytic Subspace Conjecture, which is the analogue of WAST in this context, and show that this conjecture when applied to $\mathbb{A}_g$ gives an affirmative answer to Question~\ref{QuestionNoQuadraticRelationIntro}.

\subsection{Bi-$\IQbar$-structure associated with Shimura varieties} 
 
For Shimura varieties, we use Deligne's notations \cite{DeligneTravaux-de-Shim, De}. A quick summary which suffices for our use can be found in $\mathsection$\ref{SubsectionRecallShimura}.

Let $(\bG,X)$ be a connected Shimura datum. Here $\bG$ is a reductive group defined over $\QQ$ and $X$ is a Hermitian symmetric domain. % and let
Let $\Gamma$ be an arithmetic subgroup of $\bG(\QQ)_+$ which acts on $X$. The Baily--Borel theorem \cite{BailyCompactificatio} asserts that $S := \Gamma\backslash X$ has a unique structure of quasi-projective complex algebraic variety. Write 
\[
u \colon X \lto S
\]
for the uniformizing map. Moreover, the general theory of Shimura varieties asserts that 
%The variety $S$ is quasi-projective and defined over $\ol{\QQ}$. %For all facts, notations and some proofs
 $S$ admits a unique model over $\ol{\QQ}$ and we write $S_{\IQbar}$ for this model. % and we refer to points of $S$ 

Let $[o] \in S(\IQbar)$ and $o \in X$ such that $u(o) = [o]$. 
%Let $z$ be a point in $S(\oQ)$ and $\tau \in X$ such that $u(\tau)=z$.
 The holomorphic tangent space $T_o X$ is a finite dimensional complex vector space. Assume furthermore that $o$ is special point (or CM point).  
 We endow $T_o X$ with two $\IQbar$-structures, the \textit{arithmetic $\IQbar$-structure} and the \textit{geometric $\IQbar$-structure} as follows.%$T_{\tau} (X)$ of two $\oQ$-structures, the {\bf Arithmetic} and the {\bf Geometric} structure. 

\subsubsection{The arithmetic $\IQbar$-structure on $T_o X$ }\label{SubsectionIntroArithmeticIQbar}
 The algebraic tangent space $T_{[o]} S_{\IQbar}$ is a finite dimensional $\oQ$-vector space and we have a  $\CC$-isomorphism
$$
\mathrm{d}u \colon  T_o X\longrightarrow T_{[o]} S_{\IQbar}\otimes \CC.
$$
Then $(\mathrm{d}u)^{-1}(T_{[o]} S_{\IQbar})$ defines a $\oQ$-structure on $T_o X$. We call it the \textit{arithmetic $\IQbar$-structure}; see $\mathsection$\ref{SubsectionArithmeticIQbarShimura} for more details.

\subsubsection{The geometric $\IQbar$-structure on $T_o X$ }\label{SubsectionIntroGeomIQbar}
The complex dual $X^{\vee}$ of $X$ is a projective algebraic variety. If a faithful rational representation $\rho$ of $\bG$ on a finite dimensional $\QQ$-vector space $V$, then $X^{\vee}$ is naturally a subvariety of a flag variety defined over $\oQ$. Write $X^\vee_{\IQbar}$ for this $\IQbar$-structure on $X^\vee$. The natural inclusion $\iota \colon X\rightarrow X^{\vee}$ is a holomorphic map and $X$ is an open subset of $X^{\vee}$ in the usual topology. Since $o$ is a special point, $\iota(o)$ is a $\oQ$-point of $X^{\vee}$. So $T_{\iota(o)} X^{\vee}$ is a $\oQ$-vector space of dimension $\dim X$. We have an isomorphism 
$$
\mathrm{d}\iota \colon T_o X\longrightarrow T_{\iota(o)} X^{\vee}_{\IQbar}\otimes \CC
$$
and 
$(\mathrm{d}\iota)^{-1}(T_{\iota(o)} X^{\vee}_{\IQbar})$ defines a $\oQ$-structure on $T_o X$. We call it the \textit{geometric $\IQbar$-structure}; see 
$\mathsection$\ref{SubsectionGeometricIQbarShimura} for more details.

\vskip 0.3em
The first result of this work is the following statement.

\begin{teo}[{Theorem~\ref{ThmBiIQbarShimuraSplit}}]
The bi-$\oQ$-structure on $T_o X$ defined above is split, \textit{i.e.} we have a decomposition
$$
T_o X=\bigoplus_{i=1}^{\dim X} \CC(\alpha_j),
$$
where each $\CC(\alpha_j)$ is a $1$-dimensional complex vector space endowed with the restriction of the bi-$\oQ$-structures on $T_o X$ and $\alpha_j\in \CC/\oQ^*$ compares the two $\oQ$-structures on $\CC(\alpha_j)$.
\end{teo}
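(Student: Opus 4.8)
The plan is to produce a single maximal $\QQ$-torus $\wt T\subseteq\bG$ whose weight decomposition of $T_o X$ refines \emph{both} $\oQ$-structures simultaneously; once this is done the splitting is immediate, because the relevant weight spaces turn out to be one-dimensional with pairwise distinct weights, so the decomposition is forced and automatically compatible with any $\wt T$-stable $\oQ$-structure.

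First I would choose the torus and treat the geometric side. Since $o$ is special, its Mumford--Tate group is a $\QQ$-torus $T\subseteq\bG$, the Hodge cocharacter $\mu_o\colon\Gm\to T_\CC$ is defined over the reflex field of $o$, hence over $\oQ$, and $o$ lifts to a point of $X$ factoring through $T_\RR$. The centralizer $Z_\bG(T)$ is a connected reductive $\QQ$-group of the same rank as $\bG$ and contains $T$ in its centre, so any maximal $\QQ$-torus $\wt T$ of $Z_\bG(T)$ contains $T$ and is a maximal torus of $\bG$; moreover $\mu_o$ factors through $\wt T_\CC$, and $\wt T_\oQ$ is split. Let $P_o\subseteq\bG_\CC$ be the parabolic attached to $\mu_o$, so $\iota(o)$ corresponds to $P_o$ and $T_o X\cong\Fg_\CC/\Fp_o$; since $\mu_o$ is defined over $\oQ$ so is $P_o$, and since $\wt T_\CC$ is abelian and contains $\mu_o(\Gm)$ it normalizes $P_o$. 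Hence $\wt T$ acts on $\Fg_\CC/\Fp_o$, and over $\oQ$ this space is the sum of the root spaces $\Fg_\alpha$ with $\langle\alpha,\mu_o\rangle<0$, which are one-dimensional with pairwise distinct $\wt T_\oQ$-weights. The geometric $\oQ$-structure on $T_oX$ is by construction $\Fg_\oQ/\Fp_{o,\oQ}$ with $\Fg_\oQ=\Fg_\QQ\otimes\oQ$; since $\wt T$ is defined over $\QQ$ and normalizes $P_o$ over $\oQ$, it preserves this lattice, so the geometric $\oQ$-structure splits along the $\wt T_\oQ$-weights and induces an $\oQ$-form on each line $\Fg_\alpha$.

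The main work is to show the arithmetic $\oQ$-structure is also $\wt T$-stable. Here the holomorphic tangent bundle $TS$ is the automorphic vector bundle attached to the $P$-representation $\Fg_\CC/\Fp$, and by the theory of canonical models of automorphic vector bundles it carries a model over $\oQ$ compatible with $S_\oQ$ inducing the arithmetic $\oQ$-structure on $T_{[o]}S_\oQ$. Comparing the two $\oQ$-structures on $T_oX$ then amounts to comparing, at the CM point $[o]$, the de Rham realization and the Betti realization of the CM object ``$\Fg$ at $o$''. I expect that, after fixing an $\oQ$-rational $T$-equivariant identification of the two realizations, the comparison isomorphism is the action of an element $p_o\in T(\CC)$: the motivic (Hodge) Galois group of a CM object is a torus, so its period torsor, once rigidified, is the torus itself --- the structural content of Shimura's and Blasius's work on CM periods. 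Since $p_o\in T(\CC)\subseteq\wt T(\CC)$ and $\wt T$ is abelian, $p_o$ commutes with the $\wt T$-action, so the arithmetic $\oQ$-structure $p_o\cdot(\Fg_\oQ/\Fp_{o,\oQ})$ is again $\wt T_\oQ$-stable and splits along the same weights.

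Finally, both $\oQ$-structures on $T_oX$ are stable under the split torus $\wt T_\oQ$, whose weight spaces are the pairwise distinct one-dimensional lines $\Fg_\alpha$ with $\langle\alpha,\mu_o\rangle<0$; by uniqueness of the weight decomposition each $\Fg_\alpha$ inherits an $\oQ$-form from each structure, and these two $\oQ$-forms of the complex line $\Fg_\alpha$ differ by a scalar $\alpha_j\in\CC^*/\oQ^*$. Reindexing by $j=1,\dots,\dim X$ and setting $\CC(\alpha_j):=\Fg_\alpha$ gives the splitting. The crux, and the step I expect to be hardest, is pinning down the arithmetic $\oQ$-structure intrinsically and showing that the period relating it to the geometric one lies in the Mumford--Tate torus rather than in some larger group; the case of $\mathbb{A}_g$, where $T_{[o]}\mathbb{A}_g=\mathrm{Sym}^2\mathrm{Lie}(A_o)$ by Grothendieck--Messing and the CM torus of $A_o$ acts $\oQ$-rationally on $\mathrm{Lie}(A_o)$, is a transparent model of this mechanism.
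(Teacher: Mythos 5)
Your overall strategy coincides with the paper's: pick a maximal $\QQ$-torus $\wt T \supseteq \mathrm{MT}(o)$, and show that both $\oQ$-structures on $T_oX \cong \Fg_{\CC}/\Fp_o \cong \Fm^+$ are $\wt T$-stable, so that each decomposes along the same one-dimensional root spaces $\Fg_{\CC}^{\chi}$, $\chi \in \Phi_M^+$, with pairwise distinct weights. Your treatment of the geometric side is essentially identical to the paper's (Proposition~\ref{PropIQbarHC} and Proposition~\ref{PropDecompositionGeomIQbar}). Where you genuinely diverge is the arithmetic side. You propose to realize $T_{[o]}S_{\oQ}$ via canonical models of automorphic vector bundles and to show that the de Rham--Betti comparison at the CM point is the action of a period point $p_o \in T(\CC)$, so that the arithmetic structure is $p_o\cdot(\text{geometric structure})$ and hence $\wt T$-stable. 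That claim is true and, if established, is strictly stronger than what the theorem needs (it would also compute the periods $\alpha_j$ as character values of $p_o$, which the paper only obtains later for $\AAA_g$ via Kodaira--Spencer); but as you yourself flag, it is the hard step, and it imports the full machinery of $\oQ$-models of automorphic vector bundles and CM period torsors (Harris, Milne, Blasius). The paper avoids all of this with a much lighter argument (Proposition~\ref{PropActionTArithmeticAdjoint}): for $t \in \wt T(\QQ)$ the Hecke correspondence $[t\cdot]$ is an algebraic map defined over $\oQ$ fixing $[o]$, its differential acts on $T_oX$ by $\mathrm{Ad}(t)$, hence the arithmetic $\oQ$-structure is stable under $\mathrm{Ad}(\wt T(\QQ))$ and therefore decomposes into weight spaces over $\oQ$ by the general theory of representations of $\QQ$-tori. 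So your proof is morally correct, but to make it complete you would either have to supply the automorphic-vector-bundle/CM-period input you only sketch, or replace that step by the Hecke-correspondence argument, which delivers exactly the needed $\wt T$-stability at essentially no cost.
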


We can be much more precise when $S=\AAA_g=\Sp(2g,\ZZ)\backslash \mathfrak{H}_g$ is the moduli space of principally polarized abelian varieties of dimension $g$.

\begin{teo}[{Theorem~\ref{ThmPeriodsSiegel}}]\label{ThmPeriodsSiegelIntro}
Let $[o]$ be a CM point of $\AAA_g(\IQbar)$ corresponding to a CM abelian variety $A$ with holomorphic periods $\theta_1,\ldots,\theta_g$. 
%$$
%T_o(A)=\bigoplus_{i=1}^g \CC(\theta_i)
%$$
%according to equation \ref{TanCM}.
 Let $o\in \mathfrak{H}_g$ mapping to $[o]$. Then 
\begin{equation}\label{TanSiegel}
T_o \mathfrak{H}_g= \bigoplus_{1\leq j\leq j'\leq g}\CC\left(\frac{\theta_j\theta_{j'}}{\pi}\right).
\end{equation}
\end{teo}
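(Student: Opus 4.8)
The plan is to realize both $\IQbar$-structures on $T_o\mathfrak{H}_g$ in terms of the Hodge-theoretic data of the CM abelian variety $A$, and then to identify the comparison numbers $\alpha_j$ with the $\theta_j\theta_{j'}/\pi$. First I would recall the standard identification of the tangent space: writing $H = H_1(A,\QQ)$ with its Hodge decomposition $H_\CC = H^{-1,0}\oplus H^{0,-1}$, one has $T_o\mathfrak{H}_g \simeq \mathrm{Sym}^2(H^{0,-1})^\vee$, or dually $T_o\mathfrak{H}_g \simeq \mathrm{Hom}_{\mathrm{sym}}(H^{0,-1}, H^{-1,0})$ via the polarization — the usual description of deformations of a polarized Hodge structure of weight $1$. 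Under the $\End(A)\otimes\QQ$-eigenbasis $\{\omega_1,\ldots,\omega_g\}$ of $\Omega_A = (H^{0,-1})^\vee$ fixed in Convention~\ref{SubsectionConvention}, the CM torus acts on $H^{0,-1}$ through characters, and one gets a canonical eigenline decomposition $T_o\mathfrak{H}_g = \bigoplus_{1\le j\le j'\le g} L_{jj'}$, where $L_{jj'}$ is spanned by the symmetric tensor $e_j\otimes e_{j'}$ (the $e_j$ being the dual basis to $\omega_j$ in $H^{0,-1}$). This $L_{jj'}$ must be the line $\CC(\alpha_{jj'})$ produced by Theorem~\ref{ThmBiIQbarShimuraSplit}, because that theorem's decomposition is the eigenspace decomposition under the CM torus, which is exactly what diagonalizes here.

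Next I would compute the comparison number on each line. The geometric $\IQbar$-structure on $T_o\mathfrak{H}_g = T_{\iota(o)}\mathfrak{H}_g^\vee$ is the one coming from the flag variety over $\IQbar$, i.e. it is the $\IQbar$-structure on $\mathrm{Sym}^2(H^{0,-1})^\vee$ induced by the $\IQbar$-structure $H_{\IQbar}$ on $H$ together with the Hodge filtration, which is defined over $\IQbar$ since $o$ is a special point (the Hodge flag of a CM abelian variety is $\IQbar$-rational). The arithmetic $\IQbar$-structure, on the other hand, is the one pulled back from $T_{[o]}\mathbb{A}_{g,\IQbar}$ via $\mathrm{d}u$; by Kodaira–Spencer it is identified with $\mathrm{Sym}^2(\Omega_A^\vee)$ using the de Rham realization, and the relevant $\IQbar$-structure is the one coming from $H^1_{\mathrm{dR}}(A/\IQbar)$, i.e. from algebraic de Rham cohomology over $\IQbar$. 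The difference between these two $\IQbar$-structures on a given eigenline $L_{jj'}$ is therefore measured by the period pairing comparing the Betti lattice with the de Rham lattice on the relevant pieces: on the eigenline indexed by $j$ the comparison is $\theta_j = \int_\gamma\omega_j$, and since $T_o\mathfrak{H}_g$ sits in $\mathrm{Sym}^2$ of the (dual of the) one-dimensional pieces, $L_{jj'}$ picks up $\theta_j\theta_{j'}$. The factor $\pi$ (more precisely $2\pi i$) enters because the polarization form — which is what lets one pass between $\mathrm{Sym}^2(H^{0,-1})^\vee$ and $\mathrm{Hom}_{\mathrm{sym}}(H^{0,-1},H^{-1,0})$, i.e. between a weight-$2$ and a weight-$0$ object — is itself a Hodge class whose period is $2\pi i$ (it is $(2\pi i)$ times an $\IQbar$-rational pairing, being a polarization on $H^1$ of weight $1$). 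Concretely: the arithmetic structure lives most naturally in the "weight $2$" incarnation $\mathrm{Sym}^2(H^{0,-1})^\vee$ via Kodaira–Spencer, while the theorem normalizes $T_oX$ as a subquotient of the adjoint representation of $\bG$ (weight $0$), and twisting by the polarization to move between them costs exactly one period of $\GG_m$, namely $2\pi i$, which is why we see $\theta_j\theta_{j'}/\pi$ rather than $\theta_j\theta_{j'}$.

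So the key steps, in order, are: (1) write $T_o\mathfrak{H}_g$ as $\mathrm{Sym}^2(H^{0,-1})^\vee$ and diagonalize by the CM torus into the eigenlines $L_{jj'}$, matching these with the $\CC(\alpha_j)$ of Theorem~\ref{ThmBiIQbarShimuraSplit}; (2) identify the geometric $\IQbar$-structure with the Hodge-flag $\IQbar$-structure $\mathrm{Sym}^2$ of $H^{0,-1}_{\IQbar}$ — here one uses that the Hodge filtration is $\IQbar$-rational at a CM point; (3) identify the arithmetic $\IQbar$-structure, via the Kodaira–Spencer isomorphism and the comparison between Betti and algebraic de Rham cohomology, with the structure whose discrepancy from step (2) is governed by the periods $\theta_j$; (4) bookkeep the Tate twist / polarization normalization to get the precise factor $\pi$. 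The main obstacle I anticipate is step (4): getting the power of $2\pi i$ exactly right, since $T_oX$ can be realized as a subquotient of several naturally isomorphic but differently-$\IQbar$-rational representations ($\Lie\bG$ versus $\mathrm{Sym}^2$ of the standard representation versus its dual), and each Tate twist introduced by passing between them contributes a factor of $2\pi i$; one must pin down the normalization forced by the definitions in $\mathsection$\ref{SubsectionGeometricIQbarShimura} and $\mathsection$\ref{SubsectionArithmeticIQbarShimura} and check it is consistent with the rank-$1$ case, where $\AAA_1 = \mathfrak{H}$ and the statement reduces to the classical fact that the comparison number is $\theta_1^2/\pi$ — equivalently, that $2\pi i$ is a period of $\GG_m$ as in the Prologue. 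A secondary technical point is checking that the $\End(A)\otimes\QQ$-eigenbasis of $\Omega_A$ is compatible with (indeed diagonalizes) the CM-torus action giving the decomposition of Theorem~\ref{ThmBiIQbarShimuraSplit}, so that the indexing by pairs $\{j,j'\}$ really is the right one; this should be routine given that $\mathrm{MT}(A)$ acts on $H^{0,-1}$ through the CM characters.
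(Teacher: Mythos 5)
Your proposal is correct and follows essentially the same route as the paper: the paper also identifies the arithmetic structure with $\mathrm{S}^2\Lie A_o$ via the Kodaira--Spencer isomorphism, the geometric structure with the tangent space of the Lagrangian Grassmannian (Betti side), decomposes into CM eigenlines, and reads off the comparison from the de Rham--Betti matrix $\overline{\beta}_o^{-1}\otimes\beta_o^\vee$. The factor of $\pi$ that you flag as the delicate point is handled in the paper exactly by the fact you invoke, in the form of Bertrand's reciprocity law $\int_\gamma \eta_j \cong 2\pi i/\theta_j$ for the anti-holomorphic eigenforms (equivalently, the polarization has period $2\pi i$), which makes $\overline{\beta}_o$ diagonal with entries $\pi\theta_j^{-1}$ and yields $\theta_j\theta_{j'}/\pi$.
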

Moreover, we prove that the decompositions \eqref{TanSiegel} and \eqref{TanCM} are compatible with the Kodaira--Spencer map; see Theorem~\ref{ThmKSSplitIQbarStru}. Better, we show that each $\CC\left(\frac{\theta_j\theta_{j'}}{\pi}\right)$ arises naturally as a \textit{root} space; see Theorem~\ref{ThmDecompositionIntoRootSpacesSiegel} and Corollary~\ref{CorDecompositionIntoRootSpacesSiegel} for the precise statements.

\vskip 0.3em
The following conjecture of Lang was proved by Cohen--Wolfart \cite{CW}. 
As an application of this framework of bi-$\IQbar$-structures and of Theorem~\ref{ThmPeriodsSiegelIntro}, we give a new and simpler proof.
\begin{prop}[{Proposition~\ref{PropLangQuestion}}]\label{PropLangQuestionIntro}
Let $C$ be a connected Shimura variety of dimension $1$ of genus $>1$. Suppose that the universal holomorphic covering map 
\[
\varphi \colon E_{\rho} := \{z \in \CC : |z| < \rho\} \rightarrow C^{\mathrm{an}}
\]
is normalized in such a way that $\varphi(0) \in C(\IQbar)$ is a CM point and that $\varphi'(0) \in \IQbar$. Then $\rho$ is a transcendental number.
\end{prop}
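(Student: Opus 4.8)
The plan is to reduce the transcendence of $\rho$ to the bi-$\IQbar$-structure machinery of Theorem~\ref{ThmPeriodsSiegelIntro}, applied to the one-dimensional situation. Since $C$ is a connected Shimura curve of dimension $1$, it is of the form $\Gamma\backslash \HH$ for an arithmetic subgroup $\Gamma$, and it carries a canonical model $C_{\IQbar}$ over $\IQbar$. The universal covering $\varphi \colon E_{\rho} \to C^{\mathrm{an}}$, normalized so that $o := \varphi(0)$ is a CM point lying over a $\IQbar$-point $[o] \in C(\IQbar)$ and so that $\varphi'(0) \in \IQbar$, is (up to reparametrizing $\HH$ by a Cayley transform sending $o$ to $0$) the uniformizing map of the Shimura datum. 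First I would compare two trivializations of the tangent line $T_o E_\rho \cong \CC$: on the one hand, the arithmetic $\IQbar$-structure on $T_o X$ pulled back via $\mathrm{d}\varphi$ from $T_{[o]} C_{\IQbar}$; on the other hand, the standard coordinate on $E_\rho \subset \CC$, which gives the geometric $\IQbar$-structure because the Borel embedding realizes $E_\rho$ as an open subset of $\PP^1_{\IQbar}$ with $0$ a $\IQbar$-point. The number $\alpha$ comparing these two $\IQbar$-structures on the $1$-dimensional space $\CC(\alpha)$ is, by the genus-$1$ case of Theorem~\ref{ThmPeriodsSiegelIntro} (via the modular/Torelli embedding of $C$ into some $\AAA_g$, or directly), of the form $\theta^2/\pi$ for a holomorphic period $\theta$ of the CM abelian variety attached to $[o]$; in particular $\alpha$ is transcendental, since $\theta$ is transcendental by WAST and $\theta^2/\pi$ cannot be algebraic (otherwise $\theta$ would be algebraic over $\QQ(\sqrt{\pi})$, contradicting the algebraic independence facts, or more simply: $\theta^2/\pi \in \IQbar$ forces a quadratic relation excluded here).

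Next I would translate "$\rho$ is transcendental" into a statement about $\alpha$. The covering map $\varphi$ sends the coordinate $z$ on $E_\rho$ to a local coordinate on $C^{\mathrm{an}}$; the hypothesis $\varphi'(0) \in \IQbar$ says precisely that $\mathrm{d}\varphi$ at $0$ carries the geometric $\IQbar$-structure (standard coordinate on $E_\rho$) to a $\IQbar$-multiple of a chosen algebraic coordinate at $[o]$, i.e. that the \emph{arithmetic} and \emph{geometric} $\IQbar$-structures on $T_o X$ agree up to $\IQbar^*$ \emph{after} rescaling by $\varphi'(0)$. But they differ intrinsically by the transcendental number $\alpha = \theta^2/\pi$. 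The only remaining parameter that can absorb this discrepancy is the radius $\rho$: indeed, the covering $E_\rho \to C^{\mathrm{an}}$ is determined up to the automorphisms $z \mapsto \lambda z$ of the disc, and the biholomorphism type of the pair (disc of radius $\rho$, germ of $\varphi$) records $\rho$ through the hyperbolic metric. Concretely, one compares $\varphi$ with the \emph{canonical} uniformization coming from the Borel embedding, whose derivative at $0$ (computed in the arithmetic coordinate) is exactly $\alpha$ up to $\IQbar^*$; since $\varphi$ and this canonical map differ by a scaling that also rescales $\rho$, one gets a relation of the shape $\rho^{\pm 1} \equiv \alpha \pmod{\IQbar^*}$, forcing $\rho$ to be transcendental.

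I expect the main obstacle to be making the previous paragraph precise: namely, pinning down the exact relationship between the radius $\rho$ and the comparison number $\alpha$, keeping careful track of which normalization of the covering map is used and how $\varphi'(0)$ enters. One must verify that the two $\IQbar$-structures on $T_o X$ invoked in Theorem~\ref{ThmPeriodsSiegelIntro} are literally the ones read off from (a) the algebraic structure $C_{\IQbar}$ near $[o]$ and (b) the standard disc coordinate together with the Borel embedding $C^{\vee} \hookrightarrow \PP^1_{\IQbar}$; and that the hypothesis $\varphi'(0) \in \IQbar$ is exactly the statement that these coincide after scaling by the real (transcendental) parameter controlled by $\rho$. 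Once this dictionary is set up, the transcendence of $\rho$ is immediate from the transcendence of $\alpha = \theta^2/\pi$, which in turn follows from WAST as recalled in the introduction. A secondary technical point is to handle the genus hypothesis: "$C$ has genus $>1$" ensures $C$ is uniformized by a disc (not $\CC$ or $\PP^1$) so that $\rho < \infty$ is a genuine invariant, and also guarantees the relevant CM abelian variety is nontrivial so that $\theta$ exists and is transcendental.
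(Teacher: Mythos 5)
Your overall strategy matches the paper's: identify $\rho$ up to $\IQbar^*$ with the number comparing the two $\IQbar$-structures on the tangent line at the CM point, recognize that number as a period of the restricted bi-$\IQbar$-structure, and conclude from the transcendence of $\theta_j\theta_{j'}/\pi$. But the step you yourself flag as ``the main obstacle'' --- the precise relation between $\rho$ and the comparison number $\alpha$ --- is exactly the step you have not carried out, and it is where the hypothesis $\varphi'(0)\in\IQbar$ does its work. The paper's resolution is a one-line computation: after passing to a finite cover $C'\subseteq\mathbb{A}_g$ defined over $\IQbar$ (dimension-one Shimura varieties are of abelian type; a Torelli-type embedding of $C$ itself need not exist, so your ``modular/Torelli embedding'' should be replaced by this finite covering), the map $\varphi$ factors as $u_C\circ(\rho^{-1}\cdot)$, where $u_C\colon\cD\to C$ is the uniformization of the unit disc $\cD=\bH(\RR)^+o$ sitting inside the Harish--Chandra realization $\cD_g$. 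Hence $\varphi'(0)=\rho^{-1}u_C'(0)$, and $\varphi'(0)\in\IQbar$ gives $\rho\cong u_C'(0)$, which by Theorem~\ref{ThmPeriodsSiegelBis} and Proposition~\ref{PropBiQbarEasyProp}.(iii) equals $\theta_j\theta_{j'}/\pi$ for some $j,j'$ (not necessarily $j=j'$, so your $\theta^2/\pi$ is too restrictive, though this does not affect the transcendence conclusion). No heuristic about $\rho$ ``absorbing the discrepancy'' is needed once the factorization is written down.

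The second genuine gap is your inline justification that $\theta^2/\pi$ is transcendental: transcendence of $\theta$ does not rule out $\theta^2/\pi\in\IQbar$ (a priori $\theta$ could be an algebraic multiple of $\sqrt{\pi}$), and ``forces a quadratic relation excluded here'' is circular --- the entire premise of the paper is that quadratic relations among CM periods are \emph{not} known to be excluded in general. The correct input is Corollary~\ref{PropProductPeriodTranscendental}, which is itself nontrivial: it is deduced from WAST applied to $\Gm\times A^\natural$ (the universal vector extension), via the $\IQbar$-linear independence of $2\pi i$, the $\theta_j$ and the $2\pi i/\theta_j$ (Proposition~\ref{PropQbarDimPeriodHoloAntiholo}). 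Since you do cite WAST at the end, the fix is simply to route the claim through that corollary rather than the faulty shortcut.
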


\subsection{The Hyperbolic Analytic Subspace Conjecture}

Another main objective of this article is to formulate a \textit{hyperbolic analytic subspace conjecture}, which is the analogue of WAST in this context, and to derive some of its immediate consequences. We start by recalling W\"{u}stholz's famous analytic subgroup theorem.
\begin{teo}[WAST, {\cite{WAST}}]\label{ThmWAST}
Let $G$ be a commutative algebraic group over $\overline{\QQ}$.  Let $\mathfrak{g}=\Lie G= T_0 G$ viewed as a $\IQbar$-vector space, and consider the exponential map 
\[
\exp_G \colon \mathfrak{g}_{\CC} \to G(\CC).
\]
% be the exponential map.
Let $z\in \mathfrak{g}_{\CC}$ be such that $\exp_G(z)\in G(\overline{\QQ})$. %Let $V_z$ be the smallest $\overline{\QQ}$-vector subspace of $\mathfrak{g}$ such that $z\in V_z\otimes \CC$. 
Let $V$ be a sub-vector space of the $\overline{\QQ}$-vector subspace of $\mathfrak{g}$ with $z\in V\otimes \CC$. 
Then there exists a connected algebraic subgroup $H$ of $G$ defined over $\IQbar$ which contains $\exp_G(z)$ such that $V \supseteq T_0 H$.
\end{teo}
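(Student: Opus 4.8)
The plan is to run the Baker--W\"ustholz transcendence method---the general form of Baker's theory of linear forms in logarithms---organized around an auxiliary polynomial, an analytic extrapolation, and a multiplicity (zero) estimate on commutative group varieties. Suppose the statement fails for some triple $(G, z, V)$; since $V = \Lie G$ would allow $H = G$, we have $V \subsetneq \Lie G$. Pick a counterexample with $\dim G$ minimal and then $\dim V$ minimal, and set $\gamma := \exp_G(z) \in G(\IQbar)$. Standard manipulations---passing to quotients of $G$, and replacing $G$ by the identity component of the $\IQbar$-Zariski closure of $\langle \gamma \rangle$ suitably intersected with the $\IQbar$-subgroup attached to $V$---reduce us to the case where $\gamma$ generates a Zariski-dense subgroup of $G$ and no proper nonzero connected $\IQbar$-subgroup $H \subsetneq G$ satisfies $z \in \Lie H \otimes_{\IQbar} \CC$; in other words the analytic one-parameter subgroup $\phi \colon \CC \to G(\CC)$, $t \mapsto \exp_G(tz)$, is \emph{non-degenerate}.

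Next I would build the auxiliary polynomial. Fix an equivariant projective completion $\ol G \subseteq \PP^N$ of $G$ together with a very ample line bundle, everything defined over $\IQbar$, so that the powers of $\gamma$ are $\IQbar$-points; write $\mathrm D_1, \dots, \mathrm D_m$ for a $\IQbar$-rational basis of the invariant derivations on $G$ ($m = \dim G$) and $\mathrm D^{\mathbf j} = \mathrm D_1^{j_1} \cdots \mathrm D_m^{j_m}$. Choose large integer parameters $D$ (degree), $T$ (order of vanishing) and $S$ (number of points) with $D^m \gg S\,T^m$. By Siegel's Lemma over $\IQbar$ there is a nonzero polynomial $P$ of degree $\le D$, with algebraic coefficients of controlled logarithmic height, such that $\mathrm D^{\mathbf j}P$ vanishes at $\gamma^s$ for all $|\mathbf j| < T$ and all $0 \le s \le S$. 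This is a \emph{linear} system over $\IQbar$---the point being that the invariant derivations are $\IQbar$-rational and the $\gamma^s$ are $\IQbar$-points, which is exactly what sidesteps the transcendence of the coordinates of $z$---and it forces the entire function $F(t) := P(\phi(t))$, which has finite order $\rho$ depending only on $G$, to vanish to order $\ge T$ at each of $t = 0, 1, \dots, S$.

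Then comes the heart of the matter: extrapolation against a zero estimate. From $\max_{|t| \le R} |F(t)| \le \exp(c\, D R^{\rho})$ together with the $T(S+1)$ zeros of $F$, a Schwarz/Hermite interpolation lemma makes $F$ extraordinarily small on a disk of radius much larger than $S$; in particular the \emph{algebraic} numbers $P(\gamma^{s'}) = F(s')$, whose heights are controlled in terms of $D$, $s'$ and $\gamma$, are smaller than any Liouville lower bound allows and hence vanish for every integer $s'$ up to some $S' \gg S$. Iterating this alternation between analytic smallness and arithmetic vanishing---Gelfond's device of trading newly-found zeros for higher vanishing order at the original points---yields a nonzero $P$ of degree $\le D$ whose invariant derivatives of order $< T'$ all vanish at $\gamma^0, \dots, \gamma^{S'}$ with $S'\,T'^m \gg D^m$. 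But the Masser--W\"ustholz multiplicity (zero) estimate for commutative group varieties forbids precisely this, unless the orbit of $\phi$ is contained in a proper connected $\IQbar$-subgroup $H \subsetneq G$ with $z \in \Lie H \otimes_{\IQbar} \CC$ (and then, by Zariski-density of $\langle \gamma \rangle$, with $\gamma \in H(\IQbar)$). That contradicts the non-degeneracy arranged at the start, so the theorem holds.

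The decisive difficulty is the quantitative Masser--W\"ustholz multiplicity estimate on commutative group varieties, together with the parameter bookkeeping that makes the construction and the extrapolation actually feed it: one must tune $D$, $T$, $S$ and the growth of $T'$, $S'$ under iteration so that the analytic upper bounds and the arithmetic (Liouville) lower bounds close up and the hypotheses of the zero estimate are met. A lesser but genuine subtlety is the reduction at the outset to a Zariski-dense, non-degenerate configuration, which takes some care with non-connected subgroups and with the interaction between $V$ and $\Lie G$.
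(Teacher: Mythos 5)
The paper does not prove this statement: it is W\"ustholz's Analytic Subgroup Theorem, quoted from \cite{WAST} as a known input, so there is no internal proof to compare yours against. Your sketch does identify the correct architecture of the known proof --- a Siegel-lemma auxiliary polynomial whose vanishing conditions are $\IQbar$-linear because the invariant derivations and the points $\gamma^s$ are $\IQbar$-rational, a Schwarz/Liouville extrapolation loop, and the Masser--W\"ustholz multiplicity estimate --- but as written it is a roadmap rather than a proof, and it has one genuine logical gap beyond the black boxes you acknowledge.

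The gap is that the subspace $V$ disappears from your argument. After the opening reduction you impose vanishing of $\mathrm{D}^{\mathbf{j}}P$ for derivations spanning all of $\Lie G$ (you take $m=\dim G$), and your terminal contradiction (``the zero estimate forbids this unless $\phi$ lies in a proper $H$ with $z\in\Lie H\otimes\CC$, contradicting non-degeneracy'') makes no reference to $V$ at all. If that chain of reasoning were valid, it would show that \emph{every} non-degenerate logarithm of an algebraic point leads to a contradiction, which is false already for $G=\Gm$, $\gamma=2$, $z=\log 2$. In the actual proof the hypothesis $z\in V\otimes\CC$ must enter the construction: one differentiates only along a $\IQbar$-basis of $V$, so the condition count is of order $S\,T^{\dim V}$ against $D^{\dim G}$ unknowns, and it is the codimension of $V$ in $\Lie G$ that lets the analytic upper bounds, the Liouville lower bounds, and the hypotheses of the zero estimate close up; the obstruction subgroup $H$ the multiplicity estimate produces must then be shown to satisfy $\Lie H\subseteq V$ and $\gamma\in H(\IQbar)$, which is a separate combinatorial analysis. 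Relatedly, your reduction via ``the $\IQbar$-subgroup attached to $V$'' is vacuous: in general $V$ is not the Lie algebra of any algebraic subgroup --- that is precisely the content of the theorem --- so the minimal-counterexample argument has to be run on the pair $(G,V)$ (quotienting by connected $H$ with $\Lie H\subseteq V$ and $\gamma\in H$, and shrinking $V$ to $V\cap\Lie H$ when $z\in\Lie H\otimes\CC$), not on $G$ alone.
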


In the context of Shimura varieties, the tangent space $T_0 G$ corresponds to the arithmetic $\IQbar$-structure on $T_o X$ defined in $\mathsection$\ref{SubsectionIntroArithmeticIQbar} and the natural $\IQbar$-structure on the uniformizing space gives rise to the geometric $\IQbar$-structure on $T_o X$ defined in $\mathsection$\ref{SubsectionIntroGeomIQbar}. Hence the analogue of WAST in this context, which we propose, is the following conjecture.
\begin{conj}[HASC; {Conjecture~\ref{ConjAnalyticSubspace}}]\label{HWASP}
Let $S=\Gamma\backslash X$ be a connected Shimura variety. 
Let $o$ be a special point of $X$.  Let $\cD \subseteq T_o X$ be  the Harish--Chandra realization of $X$  as a bounded symmetric domain centred at $o$. Let $u\colon \cD\longrightarrow S$ be the uniformizing map. % and 
Let $z \in \cD$ be such that $[z]:=u(z) \in S(\IQbar)$. %Set $V_z$ to be the smallest
Let $V$ be an arithmertic $\IQbar$-subspace of $T_o X$. %sub-vector space of $T_{[o]} S_{\IQbar}$ with $\psi(z) \in V \otimes \CC$. 
Then $V \supseteq V'$ for some bi-$\IQbar$-subspace $V' \subseteq T_o X$ which contains $z$.

If moreover $\mathrm{MT}(o)$ is a maximal torus, we can take $V' = T_{[o]} S'_{\IQbar}$ for some Shimura subvariety $S'$ of $S$ which contains $[z]$ and $[o]$.
\end{conj}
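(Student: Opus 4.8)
The plan is to mimic, at the level of Shimura varieties, not the \emph{statement} of W\"{u}stholz's Analytic Subgroup Theorem (Theorem~\ref{ThmWAST}) but its \emph{proof} --- an auxiliary-section construction together with a zero estimate --- now for the uniformizing map $u\colon\cD\to S$ in place of $\exp_G$. \textbf{Step 1: reduce the abelian-type case to $\AAA_g$.} If $(\bG,X)$ is of abelian type, then after replacing $S$ by a connected component and a finite covering there is a Shimura morphism to some $\AAA_g$, realized on universal covers by a (totally geodesic) holomorphic embedding $X\hookrightarrow\mathfrak{H}_g$ compatible with both the arithmetic and the geometric $\IQbar$-structures. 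By Theorem~\ref{ThmPeriodsSiegelIntro} and the Kodaira--Spencer compatibility (Theorem~\ref{ThmKSSplitIQbarStru}), the bi-$\IQbar$-structure on $T_oX$ is the pull-back of the one on $T_o\mathfrak{H}_g=\bigoplus_{1\le j\le j'\le g}\CC(\theta_j\theta_{j'}/\pi)$ attached to the CM abelian variety $A_o$ at $o$; hence an arithmetic subspace $V\subseteq T_oX$ containing $z$ in its complexification is the restriction of such a subspace of $T_o\mathfrak{H}_g$, and a bi-$\IQbar$-subspace of $T_o\mathfrak{H}_g$ inside the latter restricts to one of the required kind on $T_oX$. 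This reduces Conjecture~\ref{HWASP} for abelian-type $(\bG,X)$ to the case $S=\AAA_g$. (Shimura data of exceptional type are not reached this way and must be attacked directly, with much less geometry available.)

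\textbf{Step 2: a transcendence argument for $\AAA_g$.} Fix a toroidal compactification $\overline{\AAA_g}$ and an ample line bundle $L$ on it defined over $\IQbar$. Given $z\in\mathfrak{H}_g$ with $u(z)\in\AAA_g(\IQbar)$ and an arithmetic subspace $V\subseteq T_o\mathfrak{H}_g$ containing $z$ in its complexification, one would like, by Siegel's lemma, a nonzero global section $F$ of a high tensor power $L^{\otimes D}$ over $\IQbar$ whose pull-back $F\circ u$, together with all derivatives \emph{along $V$}, vanishes to large order at $o$; one then controls the growth of $F\circ u$ on a disk about $o$ (metric bounds for $L$, bounds on $u$ near the boundary) and invokes a \emph{zero estimate} on $\AAA_g$ to force $F$ to descend through a proper ``sub-structure'' of $\AAA_g$, whose tangent space at $[o]$ is a bi-$\IQbar$-subspace $V'\subseteq V$ containing $z$. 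The arithmetic and analytic estimates are routine; the zero estimate --- the analogue of the Masser--W\"{u}stholz multiplicity estimate, now in the foliated period-map setting --- is not available off the shelf, and it is precisely here that the functional transcendence on $\AAA_g$ (the Ax--Schanuel theorem of Mok--Pila--Tsimerman) would have to be upgraded to an effective arithmetic input. Note that, unlike in Theorem~\ref{ThmWAST}, the periods $\theta_j\theta_{j'}/\pi$ involved are \emph{quadratic}, so there can be no reduction to WAST or to $1$-motives: Step~2 is intrinsically a new transcendence statement.

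\textbf{Step 3: the maximal-torus refinement.} Once a bi-$\IQbar$-subspace $V'\ni z$ inside $V$ is in hand and $\MT(o)$ is a maximal torus, $A_o$ is CM with Mumford--Tate group its CM torus, so the data defining $V'$ must be stable under the CM action; a Mumford--Tate/Andr\'{e}--Oort-type argument --- using that at CM points the Hodge classes are $\IQbar$-spanned by ``elementary'' ones (cf. the discussion around Question~\ref{QuestionNoQuadraticRelationIntro}) --- should then identify $V'$ with $T_{[o]}S'_{\IQbar}$ for a Shimura subvariety $S'\ni[o],[z]$, which pulls back through Step~1 to the Shimura subvariety of $S$ required by the statement.

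\textbf{Main obstacle.} The crux is Step~2: there is at present no transcendence machinery --- auxiliary sections together with a usable multiplicity estimate --- for the uniformizer of a Shimura variety capable of settling even a single-point assertion such as Conjecture~\ref{HWASP}; only the functional analogue is known, and passing from ``fat'' functional transcendence to a statement about one algebraic point, while at the same time going beyond the linear/$1$-motive range covered by WAST, is exactly what is missing. A secondary difficulty is that the exceptional-type Shimura data bypass Step~1 altogether, so the direct method of Step~2 must be made to work for them essentially without an auxiliary abelian variety; and in Step~3 one has to keep careful track of the $\Gm$-factor responsible for the $\pi$'s when converting $V'$ into a special subvariety.
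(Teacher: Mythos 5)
This statement is not proved in the paper at all: it is Conjecture~\ref{ConjAnalyticSubspace}, proposed as the hyperbolic analogue of W\"{u}stholz's theorem, and the authors explicitly remark that proving it ``could be quite a challenge'' comparable to WAST itself. So there is no proof of the paper to compare yours with, and what you have written is not a proof either --- you say so yourself in Step~2 and in the ``Main obstacle'': the entire transcendence core (an auxiliary section on $\overline{\AAA_g}$ together with a multiplicity/zero estimate adapted to the uniformizer $u$, strong enough to go beyond the linear/$1$-motive range of WAST) is asserted to be missing. A strategy whose decisive ingredient is acknowledged to be unavailable is a research plan, not an argument, so the gap is the whole of Step~2.

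Beyond that, the reductions you treat as routine are themselves not sound as stated. In Step~1, if $V\subseteq T_oX$ is an arithmetic $\IQbar$-subspace, it is not the restriction of a canonical arithmetic subspace of $T_o\mathfrak{H}_g$ in any way that lets you quote the conjecture for $\AAA_g$: applying it to some $\widetilde V\subseteq T_o\mathfrak{H}_g$ with $\widetilde V\cap T_oX=V$ produces a bi-$\IQbar$-subspace $V'\subseteq\widetilde V$ containing $z$, but nothing forces $V'\subseteq T_oX$, and $V'\cap T_oX$ need neither contain $z$-generating data rationally nor be bi-$\IQbar$ for the structures on $T_oX$ (the periods of $X$ form a sub-collection of the $\theta_j\theta_{j'}/\pi$, but intersections of bi-$\IQbar$-subspaces with a sub-bi-$\IQbar$-space are only controlled isotypically, cf.\ Proposition~\ref{PropBiQbarEasyProp}, and $\widetilde V$ is merely arithmetic). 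Step~3 is likewise only a heuristic: identifying an abstract bi-$\IQbar$-subspace $V'$ with $T_{[o]}S'_{\IQbar}$ for a special $S'$ through $[o]$ and $[z]$ is exactly the extra content of the ``moreover'' clause of the conjecture, and invoking an unspecified Andr\'{e}--Oort-type argument does not supply it. In short: the statement remains a conjecture, and your proposal, while a reasonable description of what a W\"{u}stholz-style attack would require, does not close (or even reduce) it.
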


An equivalent but more precise statement of this conjecture is presented as Conjecture~\ref{ConjAnalyticSubspace}. 
Observe that if $u(z) \not= u(o)$, then in the conclusion of the conjecture we have $\dim V' > 0$ and $\dim S' > 0$. %The definition of sufficiently general special point is given in $\mathsection$\ref{SubsectionSufficientlyGeneral}. In the case $S = \AAA_g$,  a special point is sufficiently general   if and only if it parametrizes a CM abelian variety with no square factors.
 Note that the statement of the conjecture depends on the choice of a  special point of $S$ (up to Hecke operations).

It is known that WAST implies that there exists no non-trivial $\oQ$-linear relations between the holomorphic periods $\theta_1,\dots,\theta_g$ of a  CM abelian variety $A$ of dimension $g$ if $A$ has no square factors. In the same way we show that our hyperbolic counterpart HASC implies the desired result for quadratic relations. More precisely we prove:

\begin{prop}[{Proposition~\ref{PropConsequenceAnalyticSubspaceQuadraticRelations}}]
Assume that Conjecture~\ref{HWASP} holds true for $S = \mathbb{A}_g$ and the point $[o] \in \mathbb{A}_g(\IQbar)$ parametrizing $A$. Then Question~\ref{QuestionNoQuadraticRelationIntro} has an affirmative answer for $A$.
\end{prop}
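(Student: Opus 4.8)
The plan is to transport Conjecture~\ref{HWASP}, applied to $S = \mathbb{A}_g$, through the explicit description of the bi-$\IQbar$-structure given by Theorem~\ref{ThmPeriodsSiegelIntro} and read off the answer to Question~\ref{QuestionNoQuadraticRelationIntro} on the level of tangent spaces. First I would recall the setup: the CM point $[o] \in \mathbb{A}_g(\IQbar)$ parametrizes the CM abelian variety $A$, and by Theorem~\ref{ThmPeriodsSiegelIntro} we have the splitting
\[
T_o \mathfrak{H}_g = \bigoplus_{1 \le j \le j' \le g} \CC\!\left(\frac{\theta_j\theta_{j'}}{\pi}\right),
\]
where each summand is a $1$-dimensional bi-$\IQbar$-subspace whose comparison number is $\theta_j\theta_{j'}/\pi$ modulo $\IQbar^*$. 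A $\IQbar$-linear relation among the products $\theta_j\theta_{j'}$ is, up to scaling by $\pi$, precisely a $\IQbar$-linear relation among the comparison numbers $\theta_j\theta_{j'}/\pi$; so the entire question is about which $\IQbar$-linear combinations of these numbers vanish. The strategy is that HASC constrains exactly such combinations by forcing any coincidence to come from a bi-$\IQbar$-subspace, equivalently (since $\mathrm{MT}(A)$ will be a torus in the split case, or after passing to the no-square-factors reduction) from a Shimura subvariety passing through $[o]$.

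Next I would make the dictionary between relations among periods and bi-$\IQbar$-subspaces precise. Two summands $\CC(\theta_j\theta_{j'}/\pi)$ and $\CC(\theta_k\theta_{k'}/\pi)$ with $\{j,j'\} \ne \{k,k'\}$ are ``the same'' as bi-$\IQbar$-lines exactly when $\theta_j\theta_{j'}/(\theta_k\theta_{k'}) \in \IQbar^*$, which is the elementary relation \eqref{EqElementaryNonTrivial}; and a genuinely new $\IQbar$-linear relation among the $\theta_j\theta_{j'}$ that is \emph{not} a combination of elementary ones would produce a bi-$\IQbar$-subspace $V'$ of $T_o\mathfrak{H}_g$ that is \emph{not} spanned by coordinate lines of the decomposition \eqref{TanSiegel}. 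I would then argue that applying HASC to a suitable point $z \in \cD \subseteq T_o\mathfrak{H}_g$ with $u(z) \in \mathbb{A}_g(\IQbar)$ — chosen so that $z$ detects the hypothetical relation — yields a bi-$\IQbar$-subspace $V'$ containing $z$, and then invoke Theorem~\ref{ThmBiIQbarShimuraSplit} (the general splitting) to conclude $V'$ must be a direct sum of some of the lines $\CC(\theta_j\theta_{j'}/\pi)$; in the maximal-torus case the stronger clause of HASC upgrades $V'$ to $T_{[o]}S'_{\IQbar}$ for a Shimura subvariety $S'$, whose tangent space one computes (again via Theorem~\ref{ThmPeriodsSiegelIntro} applied to $S'$, using the compatibility with Kodaira--Spencer from Theorem~\ref{ThmKSSplitIQbarStru} and the root-space description of Theorem~\ref{ThmDecompositionIntoRootSpacesSiegel}) to be a coordinate sub-sum. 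This shows every relation is forced to respect the coordinate decomposition, i.e. is a combination of elementary ones, and in the torus case that the $\theta_j\theta_{j'}$ span a space of dimension $g(g+1)/2$, since distinct coordinate lines are then bi-$\IQbar$-independent. That is exactly the affirmative answer to Question~\ref{QuestionNoQuadraticRelationIntro}.

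The main obstacle, and the step I expect to require the most care, is the choice of the point $z$ and the passage from an abstract ``$\IQbar$-linear relation among $\theta_j\theta_{j'}$'' to an honest input of HASC — that is, producing a $z \in \cD$ with $u(z) \in \mathbb{A}_g(\IQbar)$ whose position relative to the bi-$\IQbar$-structure encodes precisely the given relation, so that the output subspace $V'$ is small enough to be informative. One has to be sure that (a) the arithmetic $\IQbar$-subspace $V$ against which HASC is run can be taken so that $V \supseteq V' \ni z$ pins down $V'$ to lie in the span of a proper subset of the coordinate lines, and (b) that in the case $A$ has square factors (so $\mathrm{MT}(A)$ is not a maximal torus), one correctly reduces to the square-free situation — passing to the isotypic decomposition and tracking how elementary relations account exactly for the identifications among repeated simple factors — before applying the maximal-torus clause. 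A secondary technical point is checking that the normalization of periods in Convention~\ref{SubsectionConvention} (well-defined only up to $\IQbar^*$) is consistent with the $\CC/\IQbar^*$-valued comparison numbers in Theorem~\ref{ThmPeriodsSiegelIntro}, so that ``relation among $\theta_j\theta_{j'}$'' and ``relation among the bi-$\IQbar$-comparison numbers'' really are interchangeable; this is routine but must be stated. Once the dictionary is set up cleanly, the deduction itself is short.
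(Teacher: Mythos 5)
Your outline follows the same route as the paper (translate the quadratic relation into a hyperplane $V$ in the arithmetic $\IQbar$-structure, feed a well-chosen point $z$ into HASC, and use the structure theory of bi-$\IQbar$-subspaces to force the coefficients to vanish), but it leaves the decisive step unresolved: the construction of the point $z$. You flag this as ``the step I expect to require the most care'' and then do not supply it. The paper's resolution is concrete: after diagonalizing $\psi$ in root-space bases $\{f_{jj'}\}$, $\{e_{jj'}\}$, one needs $z=\sum f_{jj'}$ with $u(z)\in\mathbb{A}_g(\IQbar)$ \emph{and}, for the maximal-torus clause to bite, with the property that the only special subvariety of $\mathbb{A}_g$ through both $[o]$ and $[z]$ is $\mathbb{A}_g$ itself. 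This is obtained from the density of Weyl points (Corollary~\ref{CorWeylPointDense}, resting on the classification in Proposition~\ref{PropWeylPoint}): one picks a CM point $z=\sum k_{jj'}f_{jj'}$ in the dense set $\tilde\Sigma$ with all $k_{jj'}\neq 0$, notes that CM points are $\IQbar$ in the geometric structure so $k_{jj'}\in\IQbar^*$, and rescales the basis. Without an input of this kind, the bi-$\IQbar$-subspace $V'$ (or the special subvariety $S'$) produced by HASC could be small and positioned so as to give no information about the coefficients $c_{jj'}$; ``chosen so that $z$ detects the hypothetical relation'' is exactly the content that must be proved.

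Two smaller points. First, your treatment of the maximal-torus case is imprecise: you conclude $\dim_{\IQbar}\sum\IQbar\,\theta_j\theta_{j'}=g(g+1)/2$ ``since distinct coordinate lines are then bi-$\IQbar$-independent,'' but that independence is the conclusion, not a hypothesis; the paper instead uses the ``moreover'' clause of HASC to get $V'=T_{[o]}S'_{\IQbar}$ and then the choice of $z\in\tilde\Sigma$ to force $S'=\mathbb{A}_g$, hence $V=T_{[o]}\mathbb{A}_{g,\IQbar}$ and all $c_{jj'}=0$. Second, for the general (non-torus) case you gesture at the isotypic decomposition but do not carry out the bookkeeping: the paper normalizes the relation so that at most one coefficient per equivalence class $\{(j,j'): \theta_j\theta_{j'}\cong\theta_k\theta_{k'}\}$ is nonzero, uses Proposition~\ref{PropBiQbarEasyProp} to see that $V'$ contains each isotypic projection of $z$, rewrites that projection in the arithmetic basis using the $\IQbar^*$-ratios within an isotypic class, and substitutes into the defining equation of $V$ to kill the remaining coefficient. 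These steps are where the proof actually lives, so as written the proposal is a correct plan with a genuine gap rather than a proof.
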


We think that a proof of the Conjecture \ref{HWASP} could be quite a challenge as the classical version of WASP for commutative algebraic  groups is a major achievement. Our hope is that it could be still more tractable than  Grothendieck's Period Conjecture. 

\vskip 0.3em

\subsection{Organization of the article}
In $\mathsection$\ref{SectionBiIQbar}, we define split bi-$\IQbar$-structures and discuss the cases of algebraic tori $\mathsection$\ref{SubsectionBiIQbarAlgTori} and CM abelian varieties $\mathsection$\ref{SubsectionExampleCMAV}. We also reformulate WAST in these two cases in this language. In $\mathsection$\ref{SectionProductCMPeriodsFirstDis}, we explain why WAST applied to $A^{\natural}$, the universal vector extension of $A$, yields the transcendence of $\theta_j\theta_{j'}/\pi$ with the $\theta_j$'s the holomorphic periods of a CM abelian variety $A$. We also restate Question~\ref{QuestionNoQuadraticRelationIntro} as a motivating question for the current article.

After this prologue, we start to build up our framework and relate it to Question~\ref{QuestionNoQuadraticRelationIntro} in $\mathsection$\ref{SectionBiIQbarShimura}--\ref{SectionAnalyticSubspaceConj}.

The bi-$\IQbar$-structure on the tangent space of a Shimura variety at a special point is defined in 
$\mathsection$\ref{SectionBiIQbarShimura}, and is proved to be split in $\mathsection$\ref{SectionBiIQbarShimura2}. In $\mathsection$\ref{SectionDeRhamBettiKodaira} we discuss on the family version of the de Rham--Betti comparison and the Kodaira--Spencer map, which will be used compute the periods of the split bi-$\IQbar$-structure for $\mathbb{A}_g$ in $\mathsection$\ref{SectionPeriodSiegel}. In $\mathsection$\ref{SectionShimuraCurveHilbert}, we prove the conjecture of Lang (Proposition~\ref{PropLangQuestionIntro}) and compute the periods for Hilbert modular varieties.

Then after some preliminary discussion on CM points in $\mathsection$\ref{SectionDiscussionCM}, we propose  in $\mathsection$\ref{SectionAnalyticSubspaceConj} our hyperbolic analytic subspace conjecture (Conjecture~\ref{HWASP}), and explains how this conjecture applied to $\mathbb{A}_g$ gives a positive answer to Question~\ref{QuestionNoQuadraticRelationIntro}.

The last section $\mathsection$\ref{SectionGroPeriodConj} explains some consequences of Grothendieck's Period Conjecture on the algebraic relations between $\pi$ and holomorphic periods of CM abelian varieties. This section is rather independent of the rest of the article.

\subsection*{Acknowledgments}
Ziyang Gao has received
fundings from the European Research Council (ERC) under the
European Union's Horizon 2020 research and innovation programme (grant
agreement n$^\circ$ 945714). Both ZG and AY would like to thank the IHES for its hospitality during the preparation of this paper. Both ZG and EU would like to thank the MSRI for its hospitality during the work. We would like 
to thank Yves Andr\'{e} and Daniel Bertrand for various discussions on transcendental number theory, especially on Shimura relations, W\"{u}stholz's analytic subgroup theorem and Grothendieck's period conjecture.  
We would like to thank Martin Orr for discussion on problems related to isogeny and Hecke orbits. We would like to thank Gregorio Baldi for pointing out the reference \cite{CW} and for his comments on a previous version of the preprint. Finally, we would like to thank the reviewer for their constructive comments and careful reading of the manuscript, which helped us to improve the presentation of the paper.

\section{Bi-$\IQbar$-spaces and preliminary examples}\label{SectionBiIQbar}
The goal of this section is to define the notion of a bi-$\IQbar$-structure on a complex vector space and to discuss  examples given by algebraic tori and CM abelian varieties.

\subsection{Bi-$\oQ$-structures on a complex vector space}

Let $V$ be a complex vector space of finite dimension $n$. A $\oQ$-structure on $V$ is a $\oQ$-vector space $W$ generated by a basis $\cB=\{e_1,\dots,e_n\}$ of $V$. If $W$ is $\oQ$-structure on $V$, then $W\otimes_{\oQ}\CC\simeq V$. A complex vector subspace $V'$ of $V$ is said to be \textit{rational for the $\oQ$-structure $W$} if $V'$ admits a basis consisting 
of elements in $W$. In general if $V'$ is a vector subspace of $V$ then $V'\cap W$ is a $\oQ$-vector subspace of $W$ and 
$$
\dim_{\oQ} (V'\cap W)\leq \dim_{\CC} (V') 
$$
with equality if and only if $V'$ is $\oQ$-rational for the $\oQ$-structure $W$.

\begin{defini}
Let $V$ be a complex vector space. A {\bf bi-$\oQ$-structure} $(W_1, W_2)$ on $V$  is the data given by two $\oQ$-structures 
$W_1$ and $W_2$ on $V$. A complex vector subspace $V'$ of $V$ is said to be bi-$\oQ$-rational (or simply bi-$\oQ$)
 if $V'$ is rational for both $W_1$ and $W_2$. 
 
 If $V'$ is a bi-$\oQ$-subspace of dimension $1$, then $V'$ determines a complex number $\theta=\theta(V',W_1,W_2)$, well defined up to multiplication  by an element of $\oQ^{*}$, in the following way. Let $e_1\in W_1$ and $e_2\in W_2$ be some  bases of $V'$ defining the corresponding $\oQ$-structures. 
 Then $e_2=\theta e_1$ for some $\theta\in \CC$.  Such a $\theta$ will be called a {\bf period} of $V'$ for $( W_1, W_2)$. 
\end{defini}
 This terminology of \textit{periods} is motivated by the examples of the exponential function and of the map uniformizing CM abelian varieties. We will explain this in later sections.

To ease notation, we use the following convention. 
\begin{notation}
Let $z_1$ and $z_2$ be complex numbers. We will write $z_1\simeq z_2$ if there exists $\alpha \in \oQ^*$ such that $z_2=\alpha z_1$.
\end{notation}

\begin{defini}
A    bi-$\oQ$-structure $(W_1, W_2)$ on $V$ is said to be {\bf split} if $V$ is a direct sum of bi-{$\oQ$}-subspaces of dimension 1. In this situation
$$
V=\bigoplus_j \CC e_j=\bigoplus_j \CC f_j
$$
for a $\oQ$-basis $\{e_1,\dots,e_n\}$ of $W_1$ and a $\oQ$-basis $\{f_1,\dots,f_n\}$ of $W_2$ such that for each $j$, there exists $\theta_j\in \CC^*$ with $f_j=\theta_j e_j$. We say that $\{\theta_1,\dots,\theta_n\}$ is \textbf{a complete set of periods} of $V$ for $( W_1, W_2)$. 
\end{defini}

Let $(W_1,W_2)$ be a split bi-$\oQ$-structure on $V$ and let $\theta_1,\dots,\theta_n$ be the associated periods. These periods can be regrouped in the following way: Set $(J_1,\dots,J_r)$ to be the partition of $\{1,\dots,n\}$ such that $\theta_j\simeq \theta_{j'}$ if and only if $j, j' \in J_s$ for some  $s\in \{1,\dots,r\}$. For each $J_s$, denote by $\theta_{J_s}$ the period $\theta_j$ for any $j \in J_s$; it is well-defined up to multiplication by a number in $\IQbar^*$.

\begin{defini} 
Let $s\in \{1,\dots,r\}$. The sub-vector space  
$$
V_s:=\bigoplus_{j\in J_s} \CC e_j = \bigoplus_{j\in J_s} \CC f_j
$$
 of $V$ is called the isotypic subspace of $V$ associated to the period $\theta_{J_s}$.
\end{defini}

The following simple proposition fully describes the  bi-$\oQ$-subspaces of $V$.

\begin{prop}\label{PropBiQbarEasyProp}
The following  holds true:
\begin{enumerate}
\item[(i)] There is a decomposition 
$
V=\oplus_{s} V_s$ 
into a direct sum of isotypic subspaces.

\item[(ii)] Let $V'$ be a subspace of some isotypic subspace $V_s$. Then $V'$ is rational for $W_1$ if and only if it is rational for $W_2$. Thus $V'$ is  bi-$\oQ$ if and only if it is rational for one of two $\IQbar$-structures.

\item[(iii)] Any  bi-$\oQ$-sub-space $V'$ of $V$ has a decomposition in isotypic components
$
V'=\bigoplus V'_s
$, 
where $V'_s :=V_s\cap V'$ is a bi-$\oQ$-subpace of $V_s$.
\end{enumerate}
\end{prop}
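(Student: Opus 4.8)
The plan is to prove the three claims by reducing everything to the elementary linear algebra of the two distinguished bases. Fix a complete set of periods $\theta_1,\dots,\theta_n$, together with a $\oQ$-basis $\{e_1,\dots,e_n\}$ of $W_1$ and a $\oQ$-basis $\{f_1,\dots,f_n\}$ of $W_2$ with $f_j=\theta_j e_j$ for all $j$, as provided by the definition of a split bi-$\oQ$-structure. For part (i), I would simply observe that the partition $(J_1,\dots,J_r)$ of $\{1,\dots,n\}$ partitions the index set, so $V=\bigoplus_{j}\CC e_j=\bigoplus_s\bigl(\bigoplus_{j\in J_s}\CC e_j\bigr)=\bigoplus_s V_s$; and the same computation with the $f_j$'s shows $V_s=\bigoplus_{j\in J_s}\CC e_j=\bigoplus_{j\in J_s}\CC f_j$, so the definition of $V_s$ is unambiguous. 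This part is essentially immediate.

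For part (ii), the key point is that on a single isotypic subspace $V_s$ the two $\oQ$-structures differ by a common scalar. Concretely, within $V_s$ we have $f_j=\theta_j e_j$ with all $\theta_j\simeq\theta_{J_s}$, so after rescaling the $f_j$ by elements of $\oQ^*$ (which changes neither $W_2$ nor the notion of $W_2$-rationality) we may assume $f_j=\theta_{J_s}e_j$ for every $j\in J_s$. Hence $W_2\cap V_s=\theta_{J_s}\,(W_1\cap V_s)$ as $\oQ$-subspaces of $V_s$. I would then argue: a subspace $V'\subseteq V_s$ is $W_1$-rational iff $\dim_{\oQ}(V'\cap W_1)=\dim_\CC V'$ (by the inequality recalled in the excerpt), and multiplication by $\theta_{J_s}$ is a $\CC$-linear automorphism of $V_s$ sending $V'$ to itself? — no, it need not fix $V'$. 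The cleaner route: $V'$ is $W_1$-rational iff $V'$ has a basis in $W_1\cap V_s$; multiplying such a basis by $\theta_{J_s}$ gives a basis of $\theta_{J_s}V'$ contained in $W_2$. So $V'$ is $W_1$-rational iff $\theta_{J_s}V'$ is $W_2$-rational. To conclude I must upgrade this to "$V'$ is $W_1$-rational iff $V'$ (not $\theta_{J_s}V'$) is $W_2$-rational"; for this I note that both conditions are equivalent to: $V'$ is spanned by coordinate subspaces, i.e. $V'=\bigoplus_{j\in I}\CC e_j$ for some $I\subseteq J_s$. Indeed a $W_1$-rational subspace of $V_s$ is one spanned by $\oQ$-combinations of the $e_j$, $j\in J_s$, and by homogeneity/linear algebra over the field $\oQ$ it is automatically of the form $\bigoplus_{j\in I}\CC e_j$ only if... this is false in general, so the right formulation is simply: $W_1$-rational means admits a $W_1$-basis, and I directly exhibit the bijection $V'\leftrightarrow \theta_{J_s}^{-1}\cdot(\text{$W_2$-rational subspaces})=(\text{$W_2$-rational subspaces})$ using that $\theta_{J_s}V'$ ranges over the same collection of subspaces as $V'$ does precisely when... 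Let me instead take the robust approach: show both "$W_1$-rational" and "$W_2$-rational" for subspaces of $V_s$ are equivalent to being defined over $\oQ$ with respect to \emph{either} coordinate system, and since $W_1\cap V_s$ and $W_2\cap V_s$ have the \emph{same} image under the coordinate isomorphism $V_s\cong\CC^{J_s}$ (namely $\oQ^{J_s}$, after the rescaling above), rationality for the two structures is literally the same condition. That is the heart of (ii), and it is the step I expect to require the most care in writing down correctly.

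For part (iii), given a bi-$\oQ$-subspace $V'\subseteq V$, I set $V'_s:=V'\cap V_s$ and must show $V'=\bigoplus_s V'_s$ and that each $V'_s$ is bi-$\oQ$ in $V_s$. Since $V'$ is $W_1$-rational, pick a $\oQ$-basis of $V'\cap W_1$; each such basis vector lies in $W_1=\bigoplus_j\oQ e_j$, and I want to conclude $V'$ is a direct sum of its intersections with the $V_s$. This follows from the fact that $V'$ being $W_1$-rational means $V'=(V'\cap W_1)\otimes_{\oQ}\CC$, and $W_1=\bigoplus_s(W_1\cap V_s)$ as $\oQ$-modules, but I also need $V'\cap W_1$ to respect this decomposition — which it need not for an arbitrary subspace, but here I can use that $V'$ is \emph{also} $W_2$-rational: I claim a subspace that is simultaneously $W_1$- and $W_2$-rational is a sum of coordinate lines, hence automatically split across the $V_s$. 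Alternatively, and more cleanly, I invoke (ii): projection to $V_s$ along $\bigoplus_{t\ne s}V_t$ is defined over $W_1$ (it is $\oQ$-rational since the $V_t$ are $W_1$-rational by construction), so it maps $V'\cap W_1$ into $V_s\cap V'$, giving $V'=\bigoplus_s(V'\cap V_s)$; and each $V'_s=V'\cap V_s$ is $W_1$-rational (intersection of $W_1$-rational subspaces), hence $W_2$-rational by (ii), hence bi-$\oQ$. The main obstacle in the whole proposition is getting the precise bookkeeping of (ii) right — specifically, making rigorous that $W_1$-rationality and $W_2$-rationality coincide on $V_s$ rather than merely corresponding under the scalar $\theta_{J_s}$ — so I would spell that out carefully and then deduce (iii) from (i), (ii), and the $W_1$-rationality of the coordinate projections.
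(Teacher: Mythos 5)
The paper gives no proof of this proposition (it is stated as ``simple'' and left to the reader), so I am judging your argument on its own. Parts (i) and (ii) are correct, but your handling of (ii) is needlessly tortured and ends on a false statement. After normalizing $f_j=\theta_{J_s}e_j$ for $j\in J_s$, your ``cleaner route'' already finishes: if $V'\subseteq V_s$ has a basis in $W_1$, multiplying that basis by $\theta_{J_s}$ gives a basis \emph{of the same subspace} contained in $W_2$, because $\theta_{J_s}V'=V'$ --- scalar multiplication by a nonzero complex number fixes every $\CC$-subspace setwise. There is nothing to ``upgrade''; your doubt on this point is misplaced. By contrast, the fallback claim that $W_1\cap V_s$ and $W_2\cap V_s$ have the \emph{same} image $\oQ^{J_s}$ under the coordinate isomorphism is false as written (the second has image $\theta_{J_s}\oQ^{J_s}$); the correct statement is only that a subspace admits a basis in $\oQ^{J_s}$ iff it admits one in $\theta_{J_s}\oQ^{J_s}$.

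The genuine gap is in (iii), at the step $V'=\bigoplus_s(V'\cap V_s)$, and neither of your two arguments closes it. The claim that a simultaneously $W_1$- and $W_2$-rational subspace is a sum of coordinate lines is false: inside a single isotypic $V_s$, \emph{every} $W_1$-rational subspace is bi-$\oQ$ by (ii), and most are not sums of coordinate lines. The projection argument is a non sequitur: $p_s$ maps $V'\cap W_1$ into $W_1\cap V_s$, but nothing shows $p_s(V')\subseteq V'$; stability of $V'$ under the projections \emph{is} the claim being proved, and it fails for general $W_1$-rational subspaces such as $\CC(e_1+e_2)$ with $\theta_1/\theta_2\notin\oQ^*$. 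Part (ii) cannot rescue this, as it only concerns subspaces already contained in one $V_s$. A correct proof must combine both rationality hypotheses with $\theta_{J_s}/\theta_{J_t}\notin\oQ^*$ for $s\ne t$. For instance: write a $\oQ$-basis of $V'\cap W_1$ as a rank-$k$ matrix $A\in M_{k\times n}(\oQ)$ in the $e$-coordinates and one of $V'\cap W_2$ as $B$ in the $f$-coordinates; equality of row spaces gives $A=CB\Theta$ with $C\in\GL_k(\CC)$ and $\Theta=\diag(\theta_{J_{s(j)}})_j$ after normalization. Choosing $k$ independent columns $j_1,\dots,j_k$ and eliminating $C$, the $i$-th entry of $A_0^{-1}A_{\cdot j}$ equals $\theta_{J_{s(j)}}\theta_{J_{s(j_i)}}^{-1}$ times an element of $\oQ$, yet lies in $\oQ$; hence it vanishes whenever $s(j)\neq s(j_i)$. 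The $\oQ$-rational row operation $A\mapsto A_0^{-1}A$ therefore produces a basis of $V'$ each of whose vectors lies in $W_1\cap V_s$ for a single $s$, which yields the decomposition and, via (ii), the bi-$\oQ$-rationality of each $V'_s$.
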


An equivalent way of stating the proposition is to consider the category $\mathcal{C}_{\oQ,\oQ}^{split}$ of finite dimensional complex vector spaces endowed with a split bi-$\oQ$-structure. The morphism in $\mathcal{C}_{\oQ,\oQ}^{split}$ are the linear maps respecting the two $\oQ$-structures.
 Then $\mathcal{C}_{\oQ,\oQ}^{split}$ is a semi-simple category and the simple objects are the one dimensional vector spaces $(\CC; \theta)$ with period $\theta\in \CC^*/\oQ^*$. For any $n$-tuples of non zero complex numbers $(\theta_1,\dots,\theta_n)$ (modulo $\oQ^*$), denote by $(\CC^n ; \theta_1,\dots,\theta_n)$ the object in $\mathcal{C}_{\oQ,\oQ}^{split}$ of dimension $n$ with periods $(\theta_1,\dots,\theta_n)$. 
  The following proposition summarizes the two extreme cases.% for $(\CC^n,\theta_1,\dots,\theta_n)$.%, \textit{i.e.} when all the periods coincide up to $\IQbar^*$ and when these periods are $2$-by-$2$ distinct up to $\IQbar^*$.
\begin{prop}\label{PropTwoExtremeSplitBiQbar}
Consider an object $(\CC^n ; \theta_1,\dots,\theta_n)$ in $\mathcal{C}_{\oQ,\oQ}^{split}$.
\begin{enumerate}
\item[(i)] If $\theta_1\cong \theta_2 \cong \cdots \cong \theta_n$, then 
%Let $\theta\in \CC^*$ and $(\CC^n,(\theta,\dots,\theta))$ be the associated bi-$\oQ$-structure. Then 
any complex vector subspace $V'$ of $\CC^n$ which is $\oQ$-rational for one of the two $\oQ$-structures is bi-$\oQ$.

\item[(ii)] If $\theta_j\theta_{j'}^{-1}\notin \oQ^*$ for all $j \neq j'$, then there are only finitely many bi-$\oQ$-subspaces of  $(\CC^n ; \theta_1,\dots,\theta_n)$.
\end{enumerate}
\end{prop}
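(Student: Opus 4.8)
\textbf{Proof proposal for Proposition~\ref{PropTwoExtremeSplitBiQbar}.}

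The plan is to work with the two distinguished bases $\{e_1,\dots,e_n\}$ of $W_1$ and $\{f_1,\dots,f_n\}$ of $W_2$ coming from the split structure, where $f_j=\theta_j e_j$. Throughout, a subspace $V'$ is $\IQbar$-rational for $W_1$ (resp.\ $W_2$) precisely when it admits a basis of vectors with coordinates in $\IQbar$ in the $e$-basis (resp.\ the $f$-basis); equivalently, writing $V'$ as the row space of a matrix over $\CC$, the Plücker coordinates of $V'$ can be taken in $\IQbar$ in the corresponding basis. The key linear-algebra observation, which I would isolate as a small lemma, is how these two sets of Plücker coordinates are related: if $I=\{i_1<\dots<i_k\}\subseteq\{1,\dots,n\}$ indexes a size-$k$ minor, then the $W_2$-Plücker coordinate $p_I^{(2)}$ equals $\big(\prod_{i\in I}\theta_i\big)\,p_I^{(1)}$, since passing from the $e$-basis to the $f$-basis scales the $i$-th coordinate by $\theta_i^{-1}$ and hence scales the $I$-th minor by $\prod_{i\in I}\theta_i$.

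For part (i): if all $\theta_j$ are equal modulo $\IQbar^*$, say $\theta_j\simeq\theta$ for a fixed $\theta$, then for a fixed $k=\dim V'$ every ratio $p_I^{(2)}/p_I^{(1)}=\prod_{i\in I}\theta_i$ lies in $\theta^k\IQbar^*$, i.e.\ is the \emph{same} element of $\CC^*/\IQbar^*$ for all $I$ of size $k$. Hence the full Plücker vector $(p_I^{(2)})_I$ is a single nonzero scalar multiple (namely by an element of $\theta^k\IQbar^*$) of $(p_I^{(1)})_I$. So if the $p_I^{(1)}$ can all be chosen in $\IQbar$ then, after dividing by one nonzero $p_I^{(2)}$, all the $p_I^{(2)}$ lie in $\IQbar$ as well, and conversely. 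This shows $\IQbar$-rationality for $W_1$ is equivalent to $\IQbar$-rationality for $W_2$, which is the assertion. (One can also phrase this more cleanly: $(W_1,W_2)$ being split with all periods equal means, after rescaling the $f_j$ by a single constant $\theta$, that $W_2=\theta\cdot W_1$ as subsets of $V$; and scaling by a single scalar preserves the set of $\IQbar$-rational subspaces.)

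For part (ii): suppose $V'$ is bi-$\IQbar$ of dimension $k\ge 1$, and let $I$ be any size-$k$ index set with $p_I^{(1)}\ne 0$. Choosing the Plücker vectors in $\IQbar^{\binom nk}$ for both structures and comparing, for \emph{every} size-$k$ set $J$ with $p_J^{(1)}\ne 0$ we get
\[
\frac{p_J^{(1)}}{p_I^{(1)}}\cdot\frac{\prod_{i\in I}\theta_i}{\prod_{j\in J}\theta_j}=\frac{p_J^{(2)}}{p_I^{(2)}}\in\IQbar^*,
\]
hence $\prod_{j\in J}\theta_j\simeq\prod_{i\in I}\theta_i$, i.e.\ $\prod_{j\in J}\theta_j$ is, modulo $\IQbar^*$, independent of the admissible $J$. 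I would then argue that under the hypothesis that the $\theta_j$ are \emph{multiplicatively independent modulo $\IQbar^*$} — here one must read the stated hypothesis $\theta_j\theta_{j'}^{-1}\notin\IQbar^*$ as standing in for genuine $\IQbar^*$-multiplicative independence, which is the case of interest (e.g.\ the $\theta_j$ algebraically independent over $\QQ$), or alternatively restrict to $k=1$ where the two formulations coincide — this forces the support $\{J:p_J^{(1)}\ne0\}$ to be a single index set. Indeed if $J\ne J'$ both occur then $\prod_{j\in J}\theta_j\simeq\prod_{j\in J'}\theta_j$ gives a nontrivial multiplicative $\IQbar^*$-relation among the $\theta_i$ (cancel the common indices $J\cap J'$ and use $|J\setminus J'|=|J'\setminus J|\ge 1$), contradicting independence. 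A subspace whose Plücker support is a single basis vector is a coordinate subspace $\bigoplus_{j\in I}\CC e_j$. There are only finitely many of these, so there are only finitely many bi-$\IQbar$-subspaces.

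The main obstacle is purely expository rather than mathematical: matching the precise hypothesis "$\theta_j\theta_{j'}^{-1}\notin\IQbar^*$ for all $j\ne j'$" to what the finiteness argument actually needs. For subspaces of dimension $1$ the hypothesis as stated is exactly right (the $1$-dimensional bi-$\IQbar$-subspaces are precisely the $\CC e_j$). For higher-dimensional subspaces one genuinely needs to rule out relations like $\theta_1\theta_2\simeq\theta_3\theta_4$; so the clean statement to prove is the one with multiplicative independence modulo $\IQbar^*$, and I would add a sentence noting that this is the intended reading (and that pairwise non-proportionality already suffices to handle the $1$-dimensional subspaces and, more generally, to show each isotypic subspace is $1$-dimensional, which is the structural point feeding into Proposition~\ref{PropBiQbarEasyProp}).
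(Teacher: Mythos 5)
Part (i) of your proposal is correct: once all periods agree modulo $\oQ^*$ one has $W_2=\theta W_1$ for a single scalar $\theta$, and multiplication by $\theta$ fixes every subspace while exchanging the two notions of rationality; your Plücker computation says the same thing. (The paper itself gives no proof of either part; its implicit route is via the isotypic decomposition of Proposition~\ref{PropBiQbarEasyProp}, which is also stated there without proof. Your Plücker route is genuinely different and, once repaired as below, self-contained.)

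The gap is in part (ii), and it is the opposite of the one you diagnose: the hypothesis as stated (pairwise non-proportionality) already suffices, and your argument stops one step short of showing this — the statement does not need to be strengthened to multiplicative independence. You correctly reduce to: if $V'$ is bi-$\oQ$ of dimension $k$ with Plücker support $\mathcal{S}=\{I:\ p^{(1)}_I\neq 0\}$, then $\prod_{i\in I}\theta_i\simeq\prod_{j\in J}\theta_j$ for all $I,J\in\mathcal{S}$. At that point you treat $I$ and $J$ as arbitrary $k$-subsets. But $\mathcal{S}$ is not arbitrary: it is the set of nonvanishing maximal minors of a $k\times n$ matrix of rank $k$, i.e.\ the set of bases of a representable matroid, and hence satisfies the basis exchange axiom. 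So if $|\mathcal{S}|\ge 2$, pick $I\neq J$ in $\mathcal{S}$ and $i\in I\setminus J$; exchange yields $j\in J\setminus I$ with $I'=(I\setminus\{i\})\cup\{j\}\in\mathcal{S}$, and comparing $I$ with $I'$ gives $\theta_i\theta_j^{-1}\in\oQ^*$ with $i\neq j$, which is exactly what the hypothesis forbids. Hence $\mathcal{S}$ is a singleton and $V'$ is a coordinate subspace $\bigoplus_{i\in I}\CC e_i$; finiteness follows. Your worried example $\theta_1\theta_2\simeq\theta_3\theta_4$ is killed the same way: a $2$-plane in $\CC^4$ with support $\{\{1,2\},\{3,4\}\}$ would violate the Plücker relation $p_{12}p_{34}=p_{13}p_{24}-p_{14}p_{23}$. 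If you prefer to avoid matroids entirely: put a $W_1$-rational basis of $V'$ in reduced row echelon form $[\,\mathrm{Id}_k\mid A\,]$ in the $e$-coordinates (permuting so the pivot set is $\{1,\dots,k\}$, with $A$ over $\oQ$); rationality for $W_2$ together with uniqueness of the reduced row echelon form forces $\theta_i a_{ij}\theta_j^{-1}\in\oQ$ for every entry $a_{ij}$ of $A$, whence $A=0$. Either fix makes your proof of (ii) complete under the stated hypothesis.
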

In case (i), $\CC^n$ is isotypic. 
In case (ii), the isotypic subspaces are all of dimension $1$, which we denote by $V_1,\ldots,V_n$. Then the bi-$\IQbar$-subspaces are precisely the $\bigoplus_{j \in J} V_j$'s, where $J$ runs over all subsets of $\{1,\ldots,n\}$. The proof of the propositions \ref{PropBiQbarEasyProp} and  \ref{PropTwoExtremeSplitBiQbar} are simple linear algebra exercises and are left to the reader.

We close this subsection by defining the \textit{Tate twist} in $\mathcal{C}_{\oQ,\oQ}^{split}$.
\begin{defini}\label{DefnTateTwist}
The \textbf{Tate twist} in $\mathcal{C}_{\oQ,\oQ}^{split}$, denoted by $\mathfrak{L}(1)$, is defined to be $(\CC; 2\pi i)$.
\end{defini}
As we will see in the next subsection, the Tate twist arises naturally from the bi-$\IQbar$-structure associated with algebraic tori.

\subsection{Example: Algebraic tori}\label{SubsectionBiIQbarAlgTori}
We look at two concrete examples of bi-$\IQbar$-structures on a complex space in the current and next subsections. The current one is in line with Proposition~\ref{PropTwoExtremeSplitBiQbar}.(i). The next one is more general, and in some cases in line with Proposition~\ref{PropTwoExtremeSplitBiQbar}.(ii).

Consider the algebraic torus $\GmF{\IQbar}^n$ defined over $\IQbar$ and the uniformization in the category of complex spaces
\[
u \colon \CC^n \to \Gm^n(\CC) = (\CC^*)^n, \quad (z_1,\ldots,z_n) \mapsto (e^{2\pi i z_1},\ldots, e^{2\pi i z_n}).
\]
We can endow $\CC^n$ with two  $\IQbar$-structures. The \textit{geometric $\IQbar$-structure} on $\CC^n$ is the one induced by the natural inclusion $\IQbar \subseteq \CC$ (\textit{i.e.} $\CC^n = \IQbar^n \otimes \CC$), and the \textit{arithmetic $\IQbar$-structure} on $\CC^n$ is the one induced by $\CC^n \cong \Lie \ \GmF{\IQbar}^n\otimes\CC$. 
A natural $\oQ$-basis for the geometric structure  is the canonical basis $(e_1,\dots,e_n)$ of $\CC^n$. Then $(2i\pi e_1,\dots,2i\pi e_n)$ is a $\oQ$-basis of the arithmetic structure.
Thus they define a bi-$\IQbar$-structure on $\CC^n$. This bi-$\IQbar$-structure is split and all the periods are $2\pi i$. This easily follows from the fact that  the derivative of $u$ defines the isomorphism
\[
\psi:= \mathrm{d}u = \mathrm{diag}(2\pi i,\ldots,2\pi i) \colon \CC^n = \IQbar^n\otimes\CC \to \Lie \ \GmF{\IQbar}^n \otimes \CC \cong \CC^n.%{We write the left hand side as $\IQbar^n\otimes\CC$ to emphasize that we are looking at the geometric $\IQbar$-structure on $\CC^n$ on this side.}
\]
Hence this bi-$\IQbar$-structure on $\CC^n$ associated with $\GmF{\IQbar}^n$ is the $n$-copy of the Tate twist $\mathfrak{L}(1)^{\oplus n}$.

As a consequence, a subspace $V'$ of $\CC^n$ is rational for the geometric $\IQbar$-structure if and only if it is rational for the arithmetic $\IQbar$-structure.

On the other hand, the arithmetic and the geometric $\IQbar$-structures on $\CC$ are different. Indeed, let $x \in \Gm^n(\IQbar)$, and fix some $\tilde{x} \in u^{-1}(x)$. By Gelfond--Schneider, $\tilde{x}$ is a $\IQbar$-point in $\CC^n$ with respect to the geometric $\IQbar$-structure if and only if $x$ is a torsion point in $\Gm^n(\IQbar)$. On the other hand by Lindemann, $\tilde{x}$ is a $\IQbar$-point in $\CC^n$ with respect to the arithmetic $\IQbar$-structure if and only if $\tilde{x}= 0$.%; see \cite[]{UllmoZP}.

\medskip

We close this subsection by a reformulation of WAST, which in this case has its roots in Baker's Theorem on $\oQ$-linear independence of $\QQ$-linearly independent logarithms of algebraic numbers.
%W\"{u}stholz's Analytic Subgroup Theorem
  The theorem concerns the 
 base change of the exponential map
\[
\exp \colon \Lie \ \GmF{\IQbar}^n \to \GmF{\IQbar}^n
\]
to $\CC$, which is precisely $u \circ \psi^{-1}$. % The \textit{arithmetic $\IQbar$-structrue} on $\CC^n$ is given by $\CC^n \cong \Lie \ \GmF{\CC}^n =  (\Lie \GmF{\IQbar}^n)\otimes \CC$.
%Here is a rephrase of the theorem.
\begin{teo}[W\"{u}stholz]\label{ThmWustholzAlgTori}
Let $z \in \CC^n$ such that $u(z) \in \Gm^n(\IQbar)$. %Set $V_z$ to be the smallest 
Let $V$ be a $\IQbar$-sub-vector space of $\Lie\   \GmF{\IQbar}^n$ such that $\psi(z) \in V\otimes \CC$. Then $V \supseteq \Lie   H$ for some algebraic subgroup $H$ of $\GmF{\IQbar}^n$ which contains $u(z)$.
\end{teo}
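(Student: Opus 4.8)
The statement to prove is Theorem~\ref{ThmWustholzAlgTori}, which is simply WAST (Theorem~\ref{ThmWAST}) specialized to the commutative algebraic group $G = \GmF{\IQbar}^n$. So the proof is essentially a matter of unwinding the general statement and matching up notation. First I would recall that for $G = \GmF{\IQbar}^n$ the Lie algebra $\mathfrak{g} = \Lie G$ is canonically $\IQbar^n$, the exponential map $\exp_G \colon \mathfrak{g}_\CC \to G(\CC)$ is $(w_1,\ldots,w_n) \mapsto (e^{w_1},\ldots,e^{w_n})$, and that the uniformizing map $u$ of the excerpt factors as $u = \exp_G \circ \psi$, where $\psi = \mathrm{d}u = \mathrm{diag}(2\pi i, \ldots, 2\pi i)$ is the $\CC$-linear isomorphism $\CC^n \to \mathfrak{g}_\CC$. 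This is exactly the identity $\exp = u \circ \psi^{-1}$ noted just before the statement.

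\textbf{Main steps.} Given $z \in \CC^n$ with $u(z) \in \Gm^n(\IQbar)$, set $\zeta := \psi(z) \in \mathfrak{g}_\CC$. Then $\exp_G(\zeta) = u(\psi^{-1}(\psi(z))) = u(z) \in \Gm^n(\IQbar) = G(\IQbar)$, so the hypothesis of WAST on the point is satisfied. By assumption $V$ is a $\IQbar$-subspace of $\mathfrak{g} = \Lie\, \GmF{\IQbar}^n$ with $\psi(z) = \zeta \in V \otimes \CC$, which is precisely the hypothesis of WAST on the subspace. Applying Theorem~\ref{ThmWAST} produces a connected algebraic subgroup $H \subseteq G = \GmF{\IQbar}^n$, defined over $\IQbar$, with $\exp_G(\zeta) \in H(\IQbar)$ and $T_0 H = \Lie H \subseteq V$. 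Finally, translating back, $\exp_G(\zeta) = u(z)$, so $H$ contains $u(z)$ and $V \supseteq \Lie H$, which is exactly the asserted conclusion. (One may add the harmless remark that every algebraic subgroup of $\GmF{}^n$ is automatically connected up to finite index, or replace $H$ by its identity component, but since WAST already yields a connected $H$ nothing further is needed.)

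\textbf{Where the difficulty lies.} There is no real obstacle here: the entire content is bookkeeping, namely identifying $\mathfrak{g}_\CC \cong \CC^n$ via $\psi$ and observing that the classical exponential $z \mapsto e^{2\pi i z}$ differs from the algebraic-group exponential $w \mapsto e^w$ only by the scaling $\psi$, which visibly carries $\IQbar$-subspaces to $\IQbar$-subspaces and $\IQbar$-points to $\IQbar$-points (for the arithmetic structure). The only point that deserves a sentence of care is that the hypothesis ``$\psi(z) \in V \otimes \CC$'' in Theorem~\ref{ThmWustholzAlgTori} is stated with $V \subseteq \Lie\, \GmF{\IQbar}^n$ already living on the arithmetic side, so no transport of the subspace is needed — one applies WAST verbatim. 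Thus the proof is a one-paragraph reduction, and the ``hard part'' (the actual transcendence input) is entirely contained in the cited Theorem~\ref{ThmWAST}.
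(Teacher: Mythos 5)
Your proof is correct and matches the paper's intent exactly: the paper states this theorem as a direct reformulation of WAST (Theorem~\ref{ThmWAST}) for $G=\GmF{\IQbar}^n$ via the identity $\exp = u\circ\psi^{-1}$, giving no further argument, and your unwinding ($\zeta=\psi(z)$, apply WAST, translate back) is precisely the intended bookkeeping.
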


\subsection{Example: CM abelian varieties}\label{SubsectionExampleCMAV}
In this subsection we take a first look at CM abelian varieties defined over $\IQbar$ and explain how its uniformizing space can be endowed with a bi-$\IQbar$-structure which, under some mild assumptions, fits into Proposition~\ref{PropTwoExtremeSplitBiQbar}.(ii). More detailed discussions on CM abelian varieties will be given in later sections.

Let $A$ be a complex CM abelian variety of dimension $g$. Then $A$ is the extension to $\CC$ of an abelian variety $A_{\oQ}$ defined over $\IQbar$. The universal covering of $A^{\mathrm{an}}$ is $\CC^g$. We endow $\CC^g=\Lie (A_{\oQ})\otimes \CC$ with a bi-$\IQbar$-structure:
\begin{itemize}
\item The \textit{arithmetic $\IQbar$-structure} on $\CC^g$ is defined to be the one given by 
$$
\CC^g \cong  \Lie A_{\IQbar} \otimes \CC.
$$
\item The \textit{geometric $\IQbar$-structure} on $\CC^g$ is slightly more complicated. It is known that there exists a matrix $\tau \in \mathrm{Mat}_{g\times g}(\IQbar)$ such that $A(\CC) \cong \CC^g / (\ZZ^g + \tau \ZZ^g)$. Hence $\IQbar\cdot (\ZZ^g + \tau \ZZ^g) \subseteq \CC^g$ gives a $\IQbar$-structure on $\CC^g$, and we call it  the geometric $\IQbar$-structure.
\end{itemize}

Let $\theta_1,\ldots,\theta_g$ be the holomorphic periods of $A$  as defined in Convention~\ref{SubsectionConvention}.

\begin{prop}\label{PropBiQbarCMAVSplit}
The bi-$\oQ$-structure on $\CC^g\simeq  \Lie (A_{\oQ})\otimes \CC$ defined above is split, and is isomorphic to $(\CC^g; \theta_1,\ldots,\theta_g)$.
\end{prop}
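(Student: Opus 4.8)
The plan is to make the two $\IQbar$-structures explicit in coordinates and read off the splitting from the period matrix. First I would set up the classical de Rham--Betti comparison for $A$: choose a symplectic basis $\gamma_1,\ldots,\gamma_{2g}$ of $H_1(A,\ZZ)$ so that, after identifying the universal cover with $\CC^g$, the lattice is $\ZZ^g + \tau\ZZ^g$ with $\tau \in \mathrm{Mat}_{g\times g}(\IQbar)$ (this $\tau$ exists precisely because $[o]$ is a CM point, hence algebraic). Dually, the algebraic de Rham space $H^1_{\mathrm{dR}}(A_{\IQbar})$ carries its natural $\IQbar$-structure, and $\Omega_A := H^0(A_{\IQbar},\Omega^1) \subseteq H^1_{\mathrm{dR}}(A_{\IQbar})$ is a $\IQbar$-subspace; the arithmetic $\IQbar$-structure on $\CC^g \cong \Lie A_{\IQbar}\otimes\CC \cong (\Omega_A)^\vee \otimes \CC$ is the one coming from $\Omega_A$. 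The geometric $\IQbar$-structure is, by definition, $\IQbar\cdot(\ZZ^g+\tau\ZZ^g)$, equivalently the $\IQbar$-span of $\gamma_1,\ldots,\gamma_{2g}$ under the Betti comparison $H_1(A,\ZZ)\otimes\CC \cong (\Omega_A)^\vee\otimes\CC$ (via $\gamma \mapsto (\omega \mapsto \int_\gamma \omega)$).

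Next I would invoke the key input from Shimura: by the choice of Convention~\ref{SubsectionConvention}, $\{\omega_1,\ldots,\omega_g\}$ is an $\End(A)\otimes\QQ$-eigenbasis of $\Omega_A$, and by Shimura \cite{ShimuraPeriod} (as recalled in the excerpt) the period $\int_\gamma \omega_j$ is independent of $\gamma \in H_1(A,\ZZ)$ up to $\IQbar^*$; call a representative $\theta_j$. Concretely, this says that if $P = (\int_{\gamma_i}\omega_j)_{i,j}$ is the full $2g\times g$ period matrix, then the $j$-th column of $P$ equals $\theta_j$ times a vector in $\IQbar^{2g}$. I would then dualize: let $\{e_1^\vee,\ldots,e_g^\vee\}$ be the basis of $(\Omega_A)^\vee$ dual to $\{\omega_1,\ldots,\omega_g\}$, which is by construction a $\IQbar$-basis for the arithmetic structure. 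The Betti lattice $H_1(A,\ZZ)$ sits inside $(\Omega_A)^\vee\otimes\CC$ as the rows of $P$, i.e. $\gamma_i \mapsto \sum_j (\int_{\gamma_i}\omega_j)\, e_j^\vee = \sum_j \theta_j\, q_{ij}\, e_j^\vee$ with $q_{ij}\in\IQbar$. Hence the geometric $\IQbar$-structure, which is the $\IQbar$-span of these vectors, is contained in $\bigoplus_j \IQbar\cdot(\theta_j e_j^\vee)$; comparing $\IQbar$-dimensions (both are $g$-dimensional $\IQbar$-structures on $\CC^g$, using that the $q_{ij}$ cannot all vanish in a column since $P$ has rank $g$) forces equality. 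Therefore, setting $f_j := \theta_j e_j^\vee$, we get $\CC^g = \bigoplus_j \CC e_j^\vee = \bigoplus_j \CC f_j$ with $f_j = \theta_j e_j^\vee$, which is exactly the statement that the bi-$\IQbar$-structure is split and isomorphic to $(\CC^g;\theta_1,\ldots,\theta_g)$.

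The main obstacle is establishing the diagonal form cleanly, i.e. that the geometric $\IQbar$-structure is spanned by vectors each supported (up to $\IQbar$-scaling of the single entry) on one coordinate line $\CC e_j^\vee$. This is where Shimura's result is essential and must be applied correctly: it is \emph{not} true that an arbitrary Betti cycle $\gamma_i$ maps to a multiple of $e_j^\vee$, only that after the eigen-decomposition the \emph{column} structure of the period matrix is $\IQbar$-proportional columnwise. I would spell out that the $\IQbar$-span of the rows of a $2g\times g$ matrix whose $j$-th column is $\theta_j\cdot(\text{vector in }\IQbar^{2g})$ equals $\bigoplus_j \IQbar\theta_j e_j^\vee$ provided no column is identically zero — the latter holds since $P$ has full rank $g$ (its rows generate a full lattice, equivalently the Betti-to-de-Rham comparison is an isomorphism after $\otimes\CC$). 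Once this linear-algebra point is in place, everything else is bookkeeping: identifying $\Lie A_{\IQbar}$ with $(\Omega_A)^\vee$ compatibly with the complex uniformization, and matching the $\theta_j$ here with those of Convention~\ref{SubsectionConvention} up to $\IQbar^*$, which is built into the definitions.
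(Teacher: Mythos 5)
Your proof is correct, and it reaches the conclusion by a mildly different route from the paper's. The paper first establishes the splitting purely from the complex multiplication: both the arithmetic structure $\Lie A_{\IQbar}$ and the geometric structure $\IQbar\,(\ZZ^g+\tau\ZZ^g)$ carry an action of $E=\End(A)\otimes\QQ$ through the same embeddings $\sigma$, each $\IQbar$-form therefore decomposes into $1$-dimensional $\sigma$-eigenspaces defined over $\IQbar$, and the corresponding $\CC$-eigenlines coincide, which already gives the splitting; Shimura's result on periods of eigenforms is invoked only afterwards, to identify the resulting numbers $\theta_\sigma$ with the holomorphic periods of Convention~\ref{SubsectionConvention}. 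You instead feed Shimura's result in at the start: the columnwise $\IQbar$-proportionality of the period matrix shows directly that the image of $H_1(A,\ZZ)$ in $(\Omega_A)^\vee\otimes\CC$ lies in $\bigoplus_j\IQbar\,\theta_j e_j^\vee$, and the dimension count closes the argument, delivering the splitting and the period identification in one stroke. Your version is more economical; the paper's version makes visible that the splitting is forced by the torus (Mumford--Tate) action alone, which is exactly the mechanism generalized to arbitrary Shimura varieties in Theorem~\ref{ThmBiIQbarShimuraSplit}. Two small points to tidy: (1) the asserted ``Betti comparison $H_1(A,\ZZ)\otimes\CC\cong(\Omega_A)^\vee\otimes\CC$'' cannot be an isomorphism (the sides have $\CC$-dimensions $2g$ and $g$); what you actually use, correctly, in the next sentence is the embedding of the lattice $H_1(A,\ZZ)$ into $\Lie A_{\CC}=(\Omega_A)^\vee\otimes\CC$ by $\gamma\mapsto(\omega\mapsto\int_\gamma\omega)$. (2) When $A$ is not simple, some entries $\int_{\gamma_i}\omega_j$ vanish, so ``independent of $\gamma$ up to $\IQbar^*$'' must be read as ``equal to $\theta_j$ times an algebraic number, possibly zero''; your full-rank remark already covers the only place where this matters, but it is worth saying explicitly since the proposition is stated for general CM $A$.
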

\begin{proof}
%Without loss of generality, we may assume  that 
We start with the case where $A$ is simple. Then $\mathrm{End}^0(A):=\mathrm{End}(A)\otimes_{\ZZ}\QQ$ is a CM field $E$ of degree $2g$.
Let $\Phi$ be the CM type of $A$.
% We will identify $\CC^g = \prod_{\sigma \in \Phi}\CC = \Lie A_{\CC}$.

For the arithmetic $\IQbar$-structure, $E$ acts on $\Lie A_{\CC}$  through the embeddings $\sigma \colon E\rightarrow \CC$ belonging to $\Phi$. For each such $\sigma$, its eigenspace $(\Lie A_{\CC})_{\sigma}$ has dimension $1$. Moreover, as $\sigma$ factors through $\IQbar \subseteq \CC$, its eigenspace $(\Lie A_{\CC})_{\sigma}$ descends to $\IQbar$, \textit{i.e.} there exists $e_{\sigma} \in \Lie A_{\IQbar}$ such that the eigenspace for $\sigma$ is $\CC e_{\sigma}$. Now the action of $E$ on $\Lie A_{\CC}$ induces
\[
\CC^g = \Lie A_{\CC} = \oplus_{\sigma \in \Phi} \CC e_\sigma.
\]

For the geometric $\IQbar$-structure, denote for simplicity by $\Lambda:= \ZZ^g + \tau \ZZ^g$. Then the action of $\mathrm{End}(A)$ on $A(\CC) \cong \CC^g / \Lambda$ induces an action of $\mathrm{End}(A)$ on $\Lambda \subseteq \CC^g$. Thus tensoring $\QQ$, we obtain an action of $E$ on $\QQ \Lambda \subseteq \CC^g$ and hence on $\IQbar  \Lambda = \IQbar^g \subseteq \CC^g$. If we identify $\CC^g$ with $\prod_{\sigma \in \Phi}\CC$, then $\Lambda$ equals $\Phi(R_E)$ for some order $R_E$ in $\cO_E$ under $\Phi \colon E\otimes_{\QQ}\RR  \cong \prod_{\sigma \in \Phi}\CC$. In particular, 
the action of $E$ on $\CC^g$ is via the embeddings $\sigma \colon E \rightarrow \IQbar$ belonging to $\Phi$. Hence in the previous paragraph, we have $\CC^g = \oplus_{\sigma \in \Phi} \CC f_\sigma$ with $f_\sigma \in \IQbar\Lambda$ an eigenvector for $\sigma$.

Now that we have two actions of $E$ on $\CC^g$ via the embeddings $\sigma \colon E \rightarrow \IQbar$, the eigenspaces for $\sigma$ coincide. Hence $\CC e_\sigma = \CC f_\sigma$. Hence $f_\sigma = \theta_\sigma e_\sigma$ for some $\theta_\sigma \in \CC^*$. So in the case where $A$ is simple, the bi-$\IQbar$-structure on $\CC^g$ associated with $A$ is split.

For a general CM abelian variety $A$ of dimension $g$ defined over $\IQbar$, $A$ is isogenous to the product of simple CM abelian varieties. We use $\Phi$ to denote the (disjoint) union of all the CM types of these simple CM abelian varieties. Then we can also find $\{e_\sigma\}_{\sigma\in \Phi}$ a $\IQbar$-basis of the arithmetic $\IQbar$-structure on $\CC^g$ and $\{f_\sigma\}_{\sigma \in \Phi}$ a $\IQbar$-basis of the geometric $\IQbar$-structure on $\CC^g$. Then $f_{\sigma} = \theta_\sigma e_\sigma$ for some $\theta_\sigma \in \CC^*$. Notice that $\theta_\sigma$ is uniquely determined by $\sigma$ up to multiplication by a element in $\IQbar^*$. Hence the bi-$\IQbar$-structure on $\CC^g$ associated with $A$ is split.

It remains to show that the $\theta_\sigma$'s are precisely the holomorphic periods of $A$.

It is known that $\Omega_A$ (the $\IQbar$-vector space consisting of invariant holomorphic $1$-forms) is the dual of $\Lie A_{\IQbar}$. Set $\omega_\sigma := e_\sigma^\vee$ for each $\sigma \in \Phi$. Then $\{\omega_{\sigma}\}_{\sigma\in \Phi}$ is an $\mathrm{End}^0(A)$-eigenbasis of $\Omega_A$. Now each $f_\sigma \in \IQbar\Lambda$ can be written as $f_\sigma = \sum_{j=1}^{2g} a_{\sigma,j} \gamma_j$ 
for some $a_{\sigma,j} \in \IQbar$ and $\gamma_j \in \Lambda$. Thus
\[
1 = e_{\sigma}(\omega_\sigma) = \theta_{\sigma}^{-1} \cdot f_\sigma(\omega_\sigma) = \theta_\sigma^{-1}\cdot \sum_{j=1}^{2g} a_{\sigma,j}  \int_{\gamma_j}\omega_{\sigma} \cong \theta_\sigma^{-1}\cdot \sum_{j=1}^{2g} \int_{\gamma_j}\omega_{\sigma}.
\]
But up to $\IQbar^*$, $\int_{\gamma}\omega_{\sigma}$ is independent of the choice of $\gamma$ by Shimura \cite[Rmk.3.4]{ShimuraPeriod} (see the discussion before the convention 1.1). So $\theta_\sigma \cong \int_{\gamma}\omega_{\sigma}$. We are done.
\end{proof}

\medskip

The following result is a consequence of W\"{u}stholz's Analytic Subgroup Theorem applied to $\mathbb{G}_{\mathrm{a}}\times A$ and to $A$ respectively; see \cite[Prop.1.2, Thm.1.4]{Tretkoff}.

\begin{prop}[W\"{u}stholz]\label{PropLinearRelationPeriodCMAV}
\begin{enumerate}
\item[(i)] The periods $\theta_j$ are transcendental.
\item[(ii)] Assume $A$ has no square factors up to isogeny. Then $\dim_{\IQbar}\Sigma_{j=1}^g  \IQbar \theta_{j} = g$.
\end{enumerate}
\end{prop}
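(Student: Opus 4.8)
The plan is to deduce both statements from W\"{u}stholz's Analytic Subgroup Theorem (Theorem~\ref{ThmWAST}), applied to the commutative algebraic groups $\mathbb{G}_{\mathrm{a}}\times A$ and $A$ respectively; this is essentially the argument of \cite[Prop.~1.2, Thm.~1.4]{Tretkoff}. The common input is Proposition~\ref{PropBiQbarCMAVSplit} together with Shimura \cite[Rmk.~3.4]{ShimuraPeriod}: fixing a $\IQbar$-basis $\{e_1,\ldots,e_g\}$ of $\Lie A_{\IQbar}$ as in the proof of that proposition, there is a nonzero vector $\lambda_0$ in the period lattice $H_1(A,\ZZ)\subseteq\Lie A_{\CC}$ whose $i$-th coordinate with respect to $\{e_1,\ldots,e_g\}$ is $\cong\theta_i$ for every $i$ (one may take an integer multiple of $\sum_\sigma f_\sigma$, in the notation of that proof, after clearing denominators). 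For any sufficiently large integer $N$ the point $t:=\exp_A(\lambda_0/N)$ is then a nonzero torsion point of $A$, so $t\in A(\IQbar)$, and $\lambda_0/N$ obeys exactly the same $\IQbar$-linear relations among its coordinates as $\lambda_0$ does.

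\emph{Proof of (i).} Every holomorphic period of $A$ is, up to $\IQbar^*$, a holomorphic period of one of the simple CM abelian varieties appearing (up to isogeny) as a factor of $A$, so we may assume $A$ simple. Suppose for contradiction that $\theta_j\in\IQbar$ for some $j$, and set $G:=\mathbb{G}_{\mathrm{a}}\times A$, $z:=(1,\lambda_0/N)\in\Lie G\otimes\CC=\CC\oplus(\Lie A_{\IQbar}\otimes\CC)$, so that $\exp_G(z)=(1,t)\in G(\IQbar)$. Since $\theta_j\in\IQbar$, the coordinate of $z$ in the $e_j$-direction is $\IQbar$-proportional to its $\mathbb{G}_{\mathrm{a}}$-coordinate, hence $z$ lies in a $\IQbar$-subspace $V\subseteq\Lie G$ of codimension $1$. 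Theorem~\ref{ThmWAST} then yields a connected algebraic subgroup $H\subseteq G$ with $\exp_G(z)\in H$ and $\Lie H\subseteq V$. As the $\mathbb{G}_{\mathrm{a}}$-component of $\exp_G(z)$ is nonzero, $H$ surjects onto the $\mathbb{G}_{\mathrm{a}}$-factor; using that $\mathbb{G}_{\mathrm{a}}$ has no nonconstant homomorphism to the abelian variety $A$, an elementary analysis of the projection of $H$ to $A$ gives $H=\mathbb{G}_{\mathrm{a}}\times B$ for an abelian subvariety $B\subseteq A$. But then $t\in B$, and since $A$ is simple and $t$ is a nonzero torsion point, $B=A$, so $H=G$ and $\Lie G=\Lie H\subseteq V$ — impossible, since $\operatorname{codim}V=1$. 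Hence every $\theta_j$ is transcendental.

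\emph{Proof of (ii).} Write $A$, up to isogeny, as $\prod_i A_i$ with the $A_i$ simple; as $A$ has no square factors the $A_i$ are pairwise non-isogenous. With $\lambda_0$ as above, pick $N$ large enough that the image $t_i$ of $t=\exp_A(\lambda_0/N)$ in each $A_i$ is nonzero (possible since the $A_i$-component of $\lambda_0$ is a nonzero lattice vector for every $i$). Suppose $\sum_{j=1}^g c_j\theta_j=0$ for some $c_j\in\IQbar$ not all zero; then $\lambda_0/N$ lies in a proper $\IQbar$-subspace $V\subsetneq\Lie A_{\IQbar}$. Applying Theorem~\ref{ThmWAST} to $G=A$ gives a connected algebraic subgroup $H$ with $t\in H$ and $\Lie H\subseteq V$, so $H$ is a proper abelian subvariety of $A$. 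Since the $A_i$ are simple and pairwise non-isogenous, $H=\prod_i B_i$ with each $B_i\in\{0,A_i\}$ and $B_{i_0}=0$ for some $i_0$; then $t_{i_0}\in B_{i_0}=0$, contradicting $t_{i_0}\neq 0$. Therefore $\theta_1,\ldots,\theta_g$ are $\IQbar$-linearly independent, i.e. $\dim_{\IQbar}\sum_{j=1}^g\IQbar\theta_j=g$.

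\emph{The main obstacle.} The routine ingredients are the dictionary of Proposition~\ref{PropBiQbarCMAVSplit} (coordinates of lattice vectors $\cong$ the $\theta_j$'s) and the classification of connected algebraic subgroups of $\mathbb{G}_{\mathrm{a}}\times A$. The delicate point is the \emph{choice of auxiliary point}: WAST only bites once it is fed a point whose smallest algebraic subgroup is forced to be large, so one must use a torsion point of order $N$ (whose logarithm lies in $\tfrac1N H_1(A,\ZZ)$, to which Proposition~\ref{PropBiQbarCMAVSplit} applies) rather than $0$ or an arbitrary point. This also explains why the $\mathbb{G}_{\mathrm{a}}$-factor is indispensable in (i) — it rigidifies any $H$ surjecting onto it — and precisely where the no-square-factors hypothesis enters in (ii): for $A$ isogenous to $B\times B$ the diagonal is a proper subvariety through all the relevant torsion points, and indeed then $\theta_1\cong\theta_2$ and the conclusion of (ii) fails.
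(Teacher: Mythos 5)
Your proof is correct and follows exactly the route the paper indicates for this proposition (which the paper does not prove itself but attributes to WAST applied to $\mathbb{G}_{\mathrm{a}}\times A$ and to $A$ respectively, citing Tretkoff): the same choice of a nonzero torsion point whose logarithm is a scaled lattice vector with all eigen-coordinates nonzero, followed by the classification of connected algebraic subgroups of $\mathbb{G}_{\mathrm{a}}\times A$ and of a product of pairwise non-isogenous simple factors. The only cosmetic blemish is the phrase ``an integer multiple of $\sum_\sigma f_\sigma$ after clearing denominators'' --- the $f_\sigma$ lie in $\IQbar\Lambda$ rather than $\QQ\Lambda$, so one should instead simply pick $\lambda_0\in\Lambda$ outside the finitely many proper subgroups $\Lambda\cap\bigoplus_{\sigma'\neq\sigma}\CC f_{\sigma'}$ (possible since $\Lambda$ spans $\CC^g$ over $\RR$); this does not affect the argument.
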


On the other hand, isogenous abelian varieties have the same periods up to $\IQbar^*$. 
From part (ii) of this proposition, we can easily deduce that the bi-$\IQbar$-structure on $\CC^g$ defined at the beginning of this subsection  satisfies the properties of  the part (ii) of Proposition~\ref{PropTwoExtremeSplitBiQbar}, if $A$ has no square factors. This holds true for example when $A$ is simple.

\medskip

We close this subsection by a reformulation of W\"{u}stholz's Analytic Subgroup Theorem for this case. Consider the uniformization
\[
u \colon \CC^g \to A(\CC) = \CC^g / \Lambda.
\]
It can be shown that: for any $x \in \CC^g$ which is $\IQbar$ in the  geometric $\IQbar$-structure, we have $u(x) \in A(\IQbar)$ if and only if $u(x)$ is a torsion point. We are thus viewing $\CC^g$ as $\IQbar\Lambda \otimes \CC$.

The map $u$ induces an isomorphism
\[
\psi := \mathrm{d}u \colon \CC^g \cong \IQbar\Lambda \otimes \CC \to \Lie \ A_{\IQbar} \otimes \CC \cong \CC^g,
\]
which, under the bases $\{f_{\sigma}\}_{\sigma\in \Phi}$ and $\{e_{\sigma}\}_{\sigma\in \Phi}$ from the proof of Proposition~\ref{PropBiQbarCMAVSplit}, is the diagonal matrix $\mathrm{diag}(\theta_{\sigma})_{\sigma \in \Phi}$.

The base change of the exponential map
\[
\exp \colon \Lie A \to A
\]
to $\CC$ is  $u \circ \psi^{-1}$. WAST %W\"{u}stholz's Analytic Subgroup Theorem
 in this case can be rephrased to be:
\begin{teo}[W\"{u}stholz]\label{ThmWustholzCMAbVar}
Let $z \in \CC^g$ be such that $u(z) \in A(\IQbar)$. 
%Set $V_z$ to be the smallest 
Let $V$ be a sub-vector space of $\Lie \ A_{\IQbar}$ such that $\psi(z) \in V\otimes \CC$. Then $V \supseteq \Lie B$ for some abelian subvariety  $B$ of $A$ which contains $u(z)$.
\end{teo}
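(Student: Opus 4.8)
The plan is to deduce this statement directly from the general form of W\"{u}stholz's Analytic Subgroup Theorem (Theorem~\ref{ThmWAST}) applied to the commutative algebraic group $G = A_{\IQbar}$, the only work being to match up the data. Recall that $\Lie A_{\IQbar}$ is the $\IQbar$-vector space underlying $\mathfrak{g} := T_0 A_{\IQbar}$, and that, as recorded just above the statement, the base change to $\CC$ of the exponential $\exp_A \colon \Lie A_{\IQbar} \to A_{\IQbar}$ is exactly $u \circ \psi^{-1}$, where $\psi = \mathrm{d}u \colon \CC^g \overset{\sim}{\to} \Lie A_{\IQbar} \otimes \CC$ is the isomorphism described above (diagonal with entries the $\theta_\sigma$ in the eigenbases of Proposition~\ref{PropBiQbarCMAVSplit}).

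First I would set $w := \psi(z) \in \Lie A_{\IQbar} \otimes \CC = \mathfrak{g}_{\CC}$. By construction $\exp_A(w) = u(\psi^{-1}(w)) = u(z)$, which lies in $A(\IQbar)$ by hypothesis; moreover $w = \psi(z) \in V \otimes \CC$ by hypothesis, with $V$ a $\IQbar$-subspace of $\mathfrak{g}$. These are precisely the hypotheses of Theorem~\ref{ThmWAST} for $G = A_{\IQbar}$. Applying it produces a connected algebraic subgroup $H$ of $A_{\IQbar}$, defined over $\IQbar$, with $\exp_A(w) \in H(\IQbar)$ and $V \supseteq T_0 H = \Lie H$.

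Finally I would note that a connected algebraic subgroup of an abelian variety is itself an abelian variety: it is a connected group subscheme of a complete variety, hence complete, hence an abelian subvariety. So $B := H \otimes_{\IQbar} \CC$, viewed inside $A$, is an abelian subvariety of $A$ containing $\exp_A(w) = u(z)$ and satisfying $\Lie B = \Lie H \subseteq V$, which is the desired conclusion. (If one prefers to phrase the statement entirely over $\IQbar$, one simply keeps $H$ itself.)

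There is essentially no obstacle here: the content of the theorem is wholly contained in WAST, and the proof is a dictionary translation. The only points requiring any care are the identification of the analytic map $u \circ \psi^{-1}$ with the complex-analytic exponential of $A_{\IQbar}$ — which has already been established — and the elementary fact that connected algebraic subgroups of abelian varieties are abelian subvarieties, which guarantees that the algebraic subgroup furnished by WAST is automatically of the required type.
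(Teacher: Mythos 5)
Your proposal is correct and is exactly the argument the paper intends: the paper states this theorem as a rephrasing of WAST via the identification of $u\circ\psi^{-1}$ with the base change of $\exp_A$, and your only added step — that a connected algebraic subgroup of an abelian variety is an abelian subvariety — is the right (and standard) observation needed to match the conclusions.
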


\section{A first discussion on products of periods of CM abelian varieties}\label{SectionProductCMPeriodsFirstDis}
In $\mathsection$\ref{SubsectionExampleCMAV}, or more precisely Proposition~\ref{PropLinearRelationPeriodCMAV}, we have seen the transcendence and the linear relations between the holomorphic periods of CM abelian varieties, both as consequences of WAST. In this section, we attempt to understand and ask questions about the quadratic relations.

%We retain the notation from $\mathsection$\ref{SubsectionExampleCMAV} in a simpler way. 
Let $A$ be a CM abelian variety of dimension $g$ defined over $\IQbar$. Let $\theta_1,\ldots,\theta_g$ be its holomorphic periods as defined in Convention~\ref{SubsectionConvention}. Then up to $\IQbar^*$, each $\theta_j$ is $\int_{\gamma}\omega_j$ for an $\mathrm{End}(A)\otimes\QQ$-eigenbasis $\{\omega_1,\ldots,\omega_g\}$ of $\Omega_A$ (with $\gamma\in H_1(A,\ZZ)$).

WAST, when applied to $A^\natural$ (the universal vector extension of $A$) implies that the quotient of a holomorphic period by an anti-holomorphic period is transcendental. In the case of CM abelian varieties, this yields a transcendence result on the product of two holomorphic periods. This result is well-known to experts, but we will include its proof in the current paper for the convenience of readers.

\begin{cor}\label{PropProductPeriodTranscendental}
Each $\theta_j\theta_{j'}/\pi$ is a transcendental number for $ j, j' \in \{1,\ldots,g\}$.
\end{cor}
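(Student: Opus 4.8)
The plan is to deduce the transcendence of $\theta_j\theta_{j'}/\pi$ from WAST applied to the universal vector extension $A^\natural$ of $A$, following the strategy that produces the transcendence of the ratio of a holomorphic period to an anti-holomorphic period. Recall that $A^\natural$ is an extension of $A$ by the vector group $\mathbb{G}_{\mathrm{a}}^g = \omega_{A^\vee}$, and it is a commutative algebraic group over $\IQbar$ whose Lie algebra sits in an exact sequence $0 \to H^0(A,\Omega^1_A) \to \operatorname{Lie} A^\natural \to \operatorname{Lie} A \to 0$, with $\operatorname{Lie} A^\natural \cong H^1_{\mathrm{dR}}(A^\vee)$ (up to the usual duality identifications). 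The key analytic input is that the period lattice of $A^\natural$ is $H_1(A,\ZZ)$ mapped in via the de Rham pairing, so the coordinates of the exponential map of $A^\natural_\CC$ against the de Rham basis are exactly the full period matrix, \emph{including} both holomorphic periods $\int_\gamma \omega_j$ and anti-holomorphic periods $\int_\gamma \bar\omega_j$ (equivalently $\int_\gamma \eta_j$ for the dual basis). The Legendre/Riemann bilinear relations then force the holomorphic and anti-holomorphic periods attached to a fixed $\operatorname{End}^0(A)$-eigencomponent to have product $\asymp \pi$ (up to $\IQbar^*$).

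Concretely, I would first fix the $\operatorname{End}(A)\otimes\QQ$-eigenbasis $\{\omega_1,\dots,\omega_g\}$ of $\Omega_A$ from Convention~\ref{SubsectionConvention}, so $\theta_j \simeq \int_{\gamma}\omega_j$. By Shimura's result (already invoked in the proof of Proposition~\ref{PropBiQbarCMAVSplit}), all periods of $\omega_j$ against $H_1(A,\ZZ)$ are equal up to $\IQbar^*$; dually there is, for each $j$, an anti-holomorphic class whose periods are all $\simeq \eta_j$ for some $\eta_j\in\CC^*$, and the Riemann bilinear relations give $\theta_j\eta_j \simeq \pi$ (this is where the polarization and the period relations enter; it is the classical fact that the "second kind" period complementary to a CM holomorphic period is $\pi/\theta_j$ up to algebraic factors). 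Granting this, it suffices to show $\theta_j\theta_{j'}$ is transcendental, i.e. that $\theta_j/\eta_{j'}$ is transcendental, and then multiply/divide by the algebraic relation $\theta_{j'}\eta_{j'}\simeq\pi$.

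To get transcendence of $\theta_j/\eta_{j'}$, I apply WAST (Theorem~\ref{ThmWAST}) to $G = A^\natural$. Pick the point $z \in \operatorname{Lie} A^\natural_\CC$ which is a lattice point of $A^\natural$ scaled so that its image under the de Rham basis coordinates reads off the relevant row/column of the period matrix; then $\exp_{A^\natural}(z)$ is a torsion (hence $\IQbar$-) point of $A^\natural(\CC)$. Suppose for contradiction that $\theta_j/\eta_{j'} \in \IQbar$. Then one can exhibit a proper $\IQbar$-subspace $V \subsetneq \operatorname{Lie} A^\natural$ with $z \in V\otimes\CC$: the $\IQbar$-linear relation $\theta_j = (\theta_j/\eta_{j'})\eta_{j'}$ among the coordinates of $z$ cuts out such a $V$ that does not contain the full $\operatorname{Lie}$ of any nonzero algebraic subgroup of $A^\natural$ — here one uses that $A$ has CM so $\operatorname{End}^0(A)$ acts with $1$-dimensional eigenspaces, making the algebraic subgroups of $A^\natural$ very rigid, and that the holomorphic $\Omega_A \subseteq \operatorname{Lie} A^\natural$ contains \emph{no} nonzero $\operatorname{Lie}$ of an algebraic subgroup (the vector part is not an algebraic subgroup of $A^\natural$ since $A^\natural$ has no nontrivial vector quotient). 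This contradicts WAST, so $\theta_j/\eta_{j'}$ is transcendental, whence $\theta_j\theta_{j'} \simeq \theta_j\eta_{j'}^{-1}\pi$ is transcendental and $\theta_j\theta_{j'}/\pi$ is transcendental.

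The main obstacle is the bookkeeping in the second step: correctly identifying the anti-holomorphic (second-kind) periods $\eta_j$ with $\pi/\theta_j$ up to $\IQbar^*$ via the Riemann bilinear relations in the CM-eigenbasis, and then, in the third step, verifying that the subspace $V$ produced from a hypothetical algebraic relation genuinely contains no $\operatorname{Lie} H$ for a nonzero algebraic subgroup $H \subseteq A^\natural$ — i.e. translating "$A$ has CM" into the statement that the only obstruction WAST could return is ruled out. Both points are standard for experts (this is why the statement is called "well-known"), so I would carry them out carefully but expect no conceptual surprise; the cleanest route for the first point is to cite the de Rham--Betti comparison and the compatibility of the polarization with the CM action, and for the second point to note that a proper eigen-adapted hyperplane through $z$ can always be chosen transverse to every $\operatorname{Lie} H$.
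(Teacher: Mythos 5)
Your strategy coincides with the paper's: the paper also proves this corollary by applying WAST to the universal vector extension (in fact to $\Gm\times A^\natural$), identifying the anti-holomorphic periods with $2\pi i/\theta_j$ up to $\IQbar^*$ via the reciprocity law for differentials of the second kind, and deriving a contradiction from a hypothetical $\IQbar$-linear relation between a holomorphic and an anti-holomorphic period. The only packaging difference is that the paper first establishes the stronger Proposition~\ref{PropQbarDimPeriodHoloAntiholo} (the $2g+1$ numbers $2\pi i,\theta_1,\dots,\theta_g,2\pi i/\theta_1,\dots,2\pi i/\theta_g$ are $\IQbar$-linearly independent when $A$ has no square factors) and reads off the corollary, whereas you target the single two-term relation directly; this changes nothing essential.

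Two points in your third step need repair before the argument is sound. First, the parenthetical ``the vector part is not an algebraic subgroup of $A^\natural$'' is false: $\Omega_{A^{\mathrm{t}}}\cong\GG_{\mathrm{a}}^g$ is the kernel of $A^\natural\to A$ and is certainly an algebraic subgroup; note also that inside $\Lie A^\natural = H^1_{\mathrm{dR}}(A)^\vee$ the distinguished subspace is $\Omega_A^\perp=\Omega_{A^{\mathrm{t}}}$ (the Lie algebra of the vector part), not $\Omega_A$. The correct reason the subgroup $H$ produced by WAST cannot lie inside the vector part is that a vector group has no nonzero torsion points, whereas $H$ must contain the nonzero torsion point $\exp(z)$; once $H$ surjects onto a nonzero abelian subvariety, simplicity (or, in general, the decomposition into pairwise non-isogenous simple factors) together with the universal property of the vector extension forces $H=A^\natural$, which is exactly how the paper argues. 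Second, and relatedly, if $z$ is literally a lattice point then $\exp(z)=0$ and Theorem~\ref{ThmWAST} is vacuous (take $H=\{0\}$); you must take $z=\gamma/\ell$ for a suitable prime $\ell$ so that $\exp(z)$ is a \emph{nonzero} torsion point, with nonzero component in each simple factor in the non-simple case. Your word ``scaled'' leaves this ambiguous, and the entire contradiction hinges on it. With these two corrections the proof goes through as in the paper.
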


Before proceeding to its proof, 
at this stage a natural question to ask is what are the possible non-trivial $\IQbar$-linear relations among $\theta_j\theta_{j'}$ for all $1\le j \le j' \le g$, or equivalently what are the possible non-trivial $\IQbar$-quadratic relations between the $\theta_j$'s. 
Unlike the transcendence of $\theta_j \theta_{j'}/\pi$, this question is still widely open; see Question~\ref{QuestionNoQuadraticRelationIntro} for the prediction. 
It turns out that this question is closely related to bi-$\IQbar$-structures arising from Shimura varieties, which we will discuss from $\mathsection$\ref{SectionBiIQbarShimura}.

\medskip 
In the rest of this section we prove Corollary~\ref{PropProductPeriodTranscendental}. 
It follows immediately from the following proposition.

\begin{prop}\label{PropQbarDimPeriodHoloAntiholo}
Assume that  $A$ has no square factor up to isogeny. Then
\begin{equation}\label{EqQbarDimPeriodHoloAntiholo}
\dim_{\IQbar} \left(\IQbar 2\pi i + \IQbar \theta_1 + \cdots + \IQbar \theta_g + \IQbar \frac{2\pi i}{\theta_1} + \cdots + \IQbar \frac{2\pi i}{\theta_g}\right) = 2g+1.
\end{equation}
\end{prop}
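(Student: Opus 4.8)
The plan is to deduce Proposition~\ref{PropQbarDimPeriodHoloAntiholo} from W\"{u}stholz's Analytic Subgroup Theorem applied to the universal vector extension $A^\natural$ of $A$. Recall that $A^\natural$ sits in an exact sequence $0 \to \mathbb{G}_{\mathrm{a}}^g \to A^\natural \to A \to 0$, that $\Lie A^\natural$ is identified with $H^1_{\mathrm{dR}}(A)^\vee$, and that the period lattice of $A^\natural$ is $H_1(A,\ZZ)$, now pairing against all of $H^1_{\mathrm{dR}}(A)$ (both holomorphic and the "second-kind" classes) rather than just $\Omega_A$. Concretely, fixing an $\End^0(A)$-eigenbasis $\{\omega_1,\ldots,\omega_g\}$ of $\Omega_A$ as in Convention~\ref{SubsectionConvention} and complementary classes $\{\eta_1,\ldots,\eta_g\}$ of the second kind adapted to the CM action, the relevant period matrix has entries $\int_\gamma \omega_j$ and $\int_\gamma \eta_j$; by the CM theory (Shimura \cite[Rmk.3.4]{ShimuraPeriod}) one has $\int_\gamma \omega_j \simeq \theta_j$ and, via the Legendre-type relation coming from the polarization, $\int_\gamma \eta_j \simeq 2\pi i/\theta_j$. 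So the numbers appearing in \eqref{EqQbarDimPeriodHoloAntiholo} are, up to $\IQbar^*$, exactly the periods of $A^\natural$ together with $2\pi i$.

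First I would set up the ambient group $G := \mathbb{G}_{\mathrm{m}} \times A^\natural$, whose Lie algebra carries the $\IQbar$-structure $\Lie \mathbb{G}_{\mathrm{m}} \oplus \Lie A^\natural$, and whose periods (for a fixed choice of $\gamma$) generate the $\IQbar$-span on the left-hand side of \eqref{EqQbarDimPeriodHoloAntiholo}. The "$\le 2g+1$" direction is formal: the space is spanned by $2\pi i$, the $g$ numbers $\theta_j$, and the $g$ numbers $2\pi i/\theta_j$. For the lower bound "$\ge 2g+1$", suppose for contradiction there is a nontrivial $\IQbar$-linear relation among these $2g+1$ numbers. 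Such a relation produces a proper $\IQbar$-rational hyperplane $V \subsetneq \Lie G$ containing the vector $\psi(z)$, where $z$ is the period vector and $u(z) \in G(\IQbar)$ is torsion (it is the image of a $\IQbar$-point, in fact the identity up to torsion, under the exponential). WAST (Theorem~\ref{ThmWAST}) then forces the existence of a proper algebraic subgroup $H \subsetneq G$ defined over $\IQbar$ with $\Lie H \subseteq V$ and $u(z) \in H(\CC)$ — but since $u(z)$ is torsion this last condition is automatic, so the real content is that $V$ must contain $\Lie H$ for some positive-dimensional connected $\IQbar$-subgroup $H$, hence $\dim V \ge \dim H$, and more importantly the relation is forced to be "motivic", i.e. to come from such an $H$.

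The decisive step — and the main obstacle — is to show that $A^\natural$ (equivalently $\mathbb{G}_{\mathrm{m}} \times A^\natural$) has \emph{no} proper connected algebraic subgroup whose Lie algebra would account for a relation, under the hypothesis that $A$ has no square factors. Here I would argue as follows: the connected algebraic subgroups of $A^\natural$ are, by the structure theory of vector extensions, built from the vector part $\mathbb{G}_{\mathrm{a}}^g = \Lie(A)^\vee \otimes \mathbb{G}_{\mathrm{a}}$ and from preimages of abelian subvarieties $B \subseteq A$; since $A$ has no square factors, $\End^0(A)$ is a product of CM fields and the abelian subvarieties $B$ are exactly the "partial sums over CM types" — so any subgroup is cut out by $\End^0(A)$-stable conditions, and one checks that no such subgroup's Lie algebra lies in a hyperplane of the shape dictated by a nontrivial relation among $\{2\pi i, \theta_j, 2\pi i/\theta_j\}$ unless that relation is trivial. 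Concretely, using the $\End^0(A)$-eigenspace decomposition one reduces to the simple case: for $A$ simple with CM field $E$, $\End(A^\natural)\otimes\QQ$ acts on $\Lie A^\natural$ with each $\theta_\sigma$ and each $2\pi i/\theta_\sigma$ occupying its own eigenline, and a proper $E$-stable subspace omits at least one conjugate pair; invoking Proposition~\ref{PropLinearRelationPeriodCMAV} for the $\theta_j$'s alone and the analogous statement for $A^\natural$ gives the count $2g+1$. I expect the bookkeeping in passing from the simple case to the "no square factors" case — matching up CM types, eigenbases $\{e_\sigma\}$, $\{f_\sigma\}$ from the proof of Proposition~\ref{PropBiQbarCMAVSplit}, and the Legendre relation across isogeny factors — to be the part requiring the most care, though it is routine in spirit; the genuine input is WAST plus the fact that $\int_\gamma \eta_j \simeq 2\pi i/\theta_j$, which is where the factor $\pi$ in Corollary~\ref{PropProductPeriodTranscendental} ultimately comes from.
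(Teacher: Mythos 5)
Your overall strategy is the same as the paper's: apply WAST to $\Gm\times A^\natural$, identify the periods of $A^\natural$ as the $\theta_j$ and, via the reciprocity law for differentials of the second kind, the $2\pi i/\theta_j$, and then rule out proper subgroups. The period identification and the trivial upper bound $\le 2g+1$ are fine. The problems lie in the step you yourself flag as the main obstacle, and they are genuine.

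First, you dismiss the condition $u(z)\in H$ as ``automatic'' because $u(z)$ is torsion, ``in fact the identity up to torsion.'' But if $u(z)$ is the identity, WAST is vacuous: $H=\{e\}$ satisfies its conclusion and $V\supseteq \Lie H=0$ gives no information, so nothing forces $H$ to be positive-dimensional, let alone equal to $G$. The paper handles this by replacing the period vector $\mathbf{x}$ by $\mathbf{x}/\ell$ for a large prime $\ell$ chosen so that $\exp_j(\mathbf{x}^{(j)}/\ell)$ is a \emph{non-zero} torsion point in \emph{each} simple factor $A_j^\natural$; since a connected group containing a non-zero torsion point is positive-dimensional, this is what forces $H\neq\{e\}$ and, crucially, forces $H$ to see every factor. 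Second, your structural claim that connected subgroups of $A^\natural$ are ``cut out by $\End^0(A)$-stable conditions'' is false: the vector part $\Omega_{A^{\mathrm{t}}}\cong\GG_{\mathrm{a}}^g$ contains arbitrary $\IQbar$-subspaces as algebraic subgroups, not only $\End^0(A)$-eigenlines, so the reduction to ``a proper $E$-stable subspace omits a conjugate pair'' does not apply to the $H$ that WAST produces. The paper argues differently: it takes $V$ to be the \emph{smallest} $\IQbar$-subspace with $\mathbf{x}/\ell\in V_{\CC}$ (so that $V=\Lie H$), projects $H$ to $\Gm\times A$ and uses that abelian subvarieties of a product of pairwise non-isogenous simple factors are sub-products, lifts back using that any subgroup of $B^\natural$ surjecting onto $B$ is all of $B^\natural$ (so $H=\Gm\times B^\natural\times V'$ with $V'$ a vector subgroup), and finally kills $V'$ and the missing factors using that $\exp(\mathbf{x}/\ell)$ is torsion with non-zero component everywhere together with minimality of $V$. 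Without the $\ell$-division and this subgroup analysis, your sketch does not close.
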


\begin{proof}[Proof of Proposition~\ref{PropQbarDimPeriodHoloAntiholo} implying Corollary~\ref{PropProductPeriodTranscendental}]  Using equation \eqref{EqQbarDimPeriodHoloAntiholo} we conclude that $\theta_j$ and $2\pi i / \theta_{j'}$ are linearly independent over $\IQbar$ for each $j, j' \in\{1,\ldots,g\}$. Dividing both numbers by $\pi / \theta_{j'}$, we get that $\theta_j\theta_{j'}/\pi$ and $2i$ are linearly independent over $\IQbar$. But $2i \in \IQbar$. Hence $\theta_j\theta_{j'}/\pi$ is transcendental.
\end{proof}

Now let us prove Proposition~\ref{PropQbarDimPeriodHoloAntiholo}.

\begin{proof}[Proof of Proposition~\ref{PropQbarDimPeriodHoloAntiholo}] 
We may assume $A = A_1\times \cdots \times A_n$ is the product of simple CM abelian varieties which are $2$-by-$2$ non-isogenous. We start with some preparation.

The key idea is to apply W\"{u}stholz's Analytic Subgroup Theorem to $A^{\natural}$, the universal vector extension of $A$. It fits into the short exact sequence
\[
0 \to \Omega_{A^{\mathrm{t}}} \to A^\natural \to A \to 0.
\]
The natural projection $A^\natural \to A$ induces canonical isomorphisms
\[
H^1_{\mathrm{dR}}(A^\natural) = H^1_{\mathrm{dR}}(A), \quad H^1(A^\natural,\ZZ) = H^1(A,\ZZ).
\]
As a complex variety, $A^\natural(\CC) = H_1(A^\natural,\CC) / H_1(A^\natural, \ZZ)$. The Lie algebra $\Lie A^\natural$ is  $H^1_{\mathrm{dR}}(A^\natural)^\vee$.

Let $\eta_j$ be the complex conjugation of $\omega_j$. Then $\{\omega_1,\ldots,\omega_g,\eta_1,\ldots,\eta_g\}$ is a $\IQbar$-basis of $H^1_{\mathrm{dR}}(A^\natural) = H^1_{\mathrm{dR}}(A)$. Take any basis $\{\gamma_1^\vee,\ldots,\gamma_{2g}^\vee\}$ of $H^1(A^{\natural},\ZZ) = H^1(A,\ZZ)$. Then the Hodge--de Rham comparison $ H^1_{\mathrm{dR}}(A^{\natural}) \xrightarrow{\sim} H^1(A^{\natural},\CC)$ under these $\IQbar$-bases is 
$$
\beta = \begin{bmatrix} \left(\int_{\gamma_k}\omega_j \right)_{1\le  k \le 2g, ~ 1\le j \le g} & \left(\int_{\gamma_k}\eta_j \right)_{1\le  k \le 2g, ~ 1\le j \le g} \end{bmatrix}.
$$
 For each $j \in \{1,\ldots,g\}$, we have $\int_{\gamma_k}\omega_j \cong \theta_j$. Since $\omega_1,\ldots,\omega_g,\eta_1,\ldots,\eta_g$ are eigenforms for the CM action, the reciprocity law for the differential forms of the 1st and the 2nd kinds implies that $\int_{\gamma_k} \eta_j \cong 2\pi i / \theta_j$; see \cite[pp.36, equation (3)]{BertrandReciprocityDiffForm} for more details. Hence this $2g\times 2g$-matrix $\beta$ is similar to the diagonal matrix $\diag(\theta_1,\ldots, \theta_g, 2\pi i/\theta_1,\ldots,2\pi i/\theta_g)$ over $\IQbar$. For more details about this computation we refer to the last sentence of Remark~\ref{RemarkCMAbVarDiagonal} and above.

Denote for simplicity by $g_j := \dim A_j$, and $\theta^{(j)}_1,\ldots, \theta^{(j)}_{g_j}$ the holomorphic periods of $A_j$. 
Write $\exp_j \colon \Lie A_j^{\natural} \to A_j^{\natural}(\CC)$ for the exponential map, which is the composite of $\Lie A_j^\natural = H^1_{\mathrm{dR}}(A_j^\natural)^\vee \cong H_1(A_j^\natural,\CC)$ and the quotient $H_1(A_j^\natural,\CC) \rightarrow H_1(A_j^\natural,\CC) / H_1(A_j^\natural,\ZZ)$. By the conclusion of the last paragraph, under suitable $\IQbar$-bases, the isomorphism $H^1_{\mathrm{dR}}(A_j^\natural)^\vee \cong H_1(A_j^\natural,\CC)$ is represented by the diagonal matrix $\diag(\theta^{(j)}_1,\ldots, \theta^{(j)}_{g_j}, 2\pi i/\theta^{(j)}_1,\ldots,2\pi i/\theta^{(j)}_{g_j})$. Thus there exist $\lambda_1^{(j)},\ldots,\lambda_{g_j}^{(j)}, \mu_1^{(j)}, \ldots, \mu_{g_j}^{(j)} \in \IQbar^*$ such that $\exp_j(\mathbf{x}^{(j)}) = 0$ for the point 
 $\mathbf{x}^{(j)} := (\lambda_1^{(j)} \theta^{(j)}_1,\ldots, \lambda_{g_j}^{(j)} \theta^{(j)}_{g_j}, \mu_1^{(j)} \cdot 2\pi i/\theta^{(j)}_1,\ldots, \mu_{g_j}^{(j)} \cdot 2\pi i/\theta^{(j)}_{g_j}) \in \Lie A_j^{\natural}$.% \Ginlinecomment{Finish this!!!}

\vskip 0.3em

Set $\mathbf{x}:=
%(\theta_1,\ldots,\theta_g, 2\pi i/\theta_1,\ldots, 2\pi i/\theta_g)
(2\pi i, \mathbf{x}^{(1)},\ldots,\mathbf{x}^{(n)}) 
 \in \Lie (\Gm \times A^\natural)$ and  $\exp = (e^{\cdot}, \exp_1,\ldots,\exp_n) \colon \Lie (\Gm \times A^\natural) \to \Gm \times A^\natural$.
 Then $\exp(\mathbf{x})$ is the origin of $\Gm \times A^\natural$.

 Let $\ell \gg 1$ be a prime number such that $\exp_j(\mathbf{x}^{(j)}/\ell)$ is a non-zero torsion point in $A_j^\natural$ for each $j$. Let $V$ be the smallest $\IQbar$-subspace of $\Lie(\Gm\times A^\natural)$ such that $\mathbf{x}/\ell \in V_{\CC}$. Then WAST applied to $\exp \colon \Lie (\Gm \times A^\natural) \to \Gm\times A^\natural$, and the minimality assumption made on $V$,  ensure that $V = \Lie H$ for some (algebraic subgroup) $H$ of $\Gm \times A^\natural$. Since $\exp(\mathbf{x}/\ell)$ is non-zero, we have $\dim H > 0$.
 
 Write $\pi \colon \Gm \times A^\natural \rightarrow \Gm \times A$. As $\pi(H)$ is an irreducible subgroup of $\Gm \times A$ and $A$ is the product of $2$-by-$2$ non-isogenous simple  abelian varieties, we have that $\pi(H) = \Gm \times  A_1\times \cdots \times A_m \times \{0\}$ up to reordering. 

We claim that $m = n$ and $\Lie (H) = \Lie (\Gm \times A^\natural)$. Indeed, set $B:= A_1\times \cdots \times A_m$ and $B' := A_{m+1}\times \cdots \times A_n$. 
Then $A = B \times B'$. 
%we have  $B = A_1\times \cdots \times A_m$ up to reordering for some $m \in \{0,1,\ldots,n\}$. 
Since $\Gm \times B \times \{0\} =\pi(H)$, we have $\Gm \times B^\natural \times \{0\} \subseteq H$. Hence the group $H$ equals $\Gm \times B^\natural \times V'$ for some vector subgroup $V'$ of $B'$. So $\exp(\mathbf{x}/\ell) \in (\Gm \times B^\natural \times V')(\IQbar) \subseteq (\Gm \times A^\natural)(\IQbar)$. But $\exp(\mathbf{x}/\ell)$ is a torsion point, so $\exp(\mathbf{x}/\ell) \in (\Gm\times B^\natural)(\IQbar) \times \{0\}$. By minimality of $V$, we then have $\pi(H) = \Gm \times B^\natural \times \{0\}$ and $V = \Lie (\Gm \times B^\natural) \times \{0\} = \Lie \Gm \times \Lie A_1^\natural\times\cdots \times \Lie A_m^{\natural} \times \{0\}$. If $m < n$, then $\mathbf{x}^{(m+1)}/\ell$ is not $0$, and hence $\mathbf{x}/\ell = (2\pi i, \mathbf{x}^{(1)},\ldots,\mathbf{x}^{(n)})/\ell \not\in  \Lie \Gm \times \Lie A_1^\natural\times\cdots \times \Lie A_m^{\natural} \times \{0\} = V$. This contradicts the choice of $V$. So we must have $m = n$. Therefore $B = \Gm\times A$ and $V = \Lie (\Gm \times A^\natural)$.

Assume there exists $(c,a_1,\ldots,a_g,b_1,\ldots,b_g) \in \IQbar^{2g+1}$ such that
\[
c2\pi i + \sum a_k\theta_k + \sum b_l \frac{2\pi i}{\theta_l} = 0.
\]
Set $W := \{(x_0,x_1,\ldots,x_g,x_{g+1},\ldots,x_{2g}) \in \Lie (\Gm \times A^\natural) : cx_0+\sum a_k x_k + \sum b_l x_{g+l} = 0\}$. Then $W$ is defined over $\IQbar$. It contains the point $\mathbf{x}/\ell$. So $V \subseteq W$ by definition of $V$. But then $\Lie (\Gm\times A^\natural) \subseteq W$, and hence $W= \Lie (\Gm\times A^\natural)$ and
$$
c = a_1=\ldots=a_g = b_1=\ldots=b_g = 0.
$$
 This finishes the proof.
\end{proof}

\section{Bi-$\IQbar$-spaces arising from Shimura varieties: Definition}\label{SectionBiIQbarShimura}
From this section, we will build up a framework to relate Conjecture~\ref{QuestionNoQuadraticRelationIntro} to split bi-$\IQbar$-structures arising from Shimura varieties. The plan is as follows. %Let $S$ be a connected Shimura variety and let $[o] \in S(\CC)$ be a special point. 
The current $\mathsection$\ref{SectionBiIQbarShimura} defines the bi-$\IQbar$-structure in question, and $\mathsection$\ref{SectionBiIQbarShimura2} proves that this bi-$\IQbar$-structure is split. In $\mathsection$\ref{SectionPeriodSiegel} we compute the periods of this split bi-$\IQbar$-structure for $\mathbb{A}_g$, the moduli space of principally polarized abelian varieties; the computation uses the discussion on the family version of the de Rham--Betti comparison and the Kodaira--Spencer map  in $\mathsection$\ref{SectionDeRhamBettiKodaira}. Then in $\mathsection$\ref{SectionAnalyticSubspaceConj}, we propose the analogue of W\"{u}stholz's Analytic Subgroup Theorem for Shimura varieties, and explains how this conjecture applied to $\mathbb{A}_g$ implies Conjecture~\ref{QuestionNoQuadraticRelationIntro}.

\vskip 0.5em

Let $(\bG,X)$ be a connected Shimura datum,  and fix a special point $o \in X$; for definitions see $\mathsection$\ref{SubsectionRecallShimura}. It is known that $X$ is a Hermitian symmetric domain. The goals of this section are to endow $T_o X$ with a natural bi-$\IQbar$-structure. More precisely, we will endow $T_o X$ with  
\begin{itemize}
\item the \textit{arithmetic $\IQbar$-structure} in $\mathsection$\ref{SubsectionArithmeticIQbarShimura},
\item the \textit{geometric $\IQbar$-structure} in $\mathsection$\ref{SubsectionGeometricIQbarShimura}.%, and prove:
\end{itemize}
We also discuss in $\mathsection$\ref{SubsectionHC} on the $\IQbar$-structure on the Harish--Chandra (bounded) realization of $X$. This discussion is important when one relates the bi-$\IQbar$-space arising from Shimura varieties to transcendence theory.

\subsection{Shimura data and Shimura varieties}\label{SubsectionRecallShimura}
We give a quick summary of Deligne's language of Shimura varieties. References are \cite{DeligneTravaux-de-Shim, De, Milne}.

A \textit{connected Shimura datum} is a pair $(\bG,X)$ where $\bG$ is a reductive algebraic group over $\QQ$ and $X$ is the $\bG(\RR)^+$-conjugacy class of a morphism
$$
x\colon \SSS\longrightarrow \bG_{\RR}
$$
satisfying Deligne's conditions (SV1)--(SV3). It is known that $X$ is a Hermitian symmetric space, and each Hermitian symmetric space arises in this way.

Let $\Gamma \subseteq \bG(\QQ)$ be an arithmetic subgroup. The Baily--Borel theorem \cite{BailyCompactificatio} asserts that $S := \Gamma\backslash X$ has a natural structure of quasi-projective complex algebraic variety Such an $S$ is called a \textit{connected Shimura variety associated with $(\bG,X)$}.

Let $u \colon X \rightarrow S^{\mathrm{an}}$ denote the uniformization.% For each point $x \in X$, we use $[x]$ to denote $u(x)$ for simplicity.

\vskip 0.3em

Consider $x \in X$. It gives rise to a morphism $x \colon \SSS \rightarrow \bG_{\RR}$. The \textit{Mumford--Tate group} of $x$, denoted by $\mathrm{MT}(x)$, is defined to be the smallest $\QQ$-subgroup $\mathbf{H}$ of $\bG$ such that $x(\SSS) \subseteq \mathbf{H}_{\RR}$.

A point $x \in X$ is said to be \textit{special} point if $\mathrm{MT}(x)$ is a torus. A point in $S(\CC)$ is said to be \textit{special} if it is the image of a special point in $X$ under the uniformization $u$. In the particular case $S = \mathbb{A}_g$, the moduli space of principally polarized abelian varieties, special points of $S$ are precisely the points parametrizing CM abelian varieties.

\subsection{Arithmetic $\IQbar$-structure on $T_o X$}\label{SubsectionArithmeticIQbarShimura}
By general theory of Shimura varieties (see also Faltings), the Shimura variety $S$ has a canonical model over $\IQbar$ which we denote by $S_{\IQbar}$. Moreover, the special point $[o]:= u(o)$ is in $S_{\IQbar}(\IQbar)$. The differential of the uniformization $u \colon X \rightarrow S$ at $o$ is then a $\CC$-linear map %:= \mathrm{d}u
\[
\psi \colon T_o X \rightarrow T_{[o]}S.
\]
Now $T_{[o]}S$ has a natural $\IQbar$-structure $T_{[o]}S = T_{[o]}S_{\IQbar} \otimes_{\IQbar} \CC$. Thus $\psi$ defines a $\IQbar$-structure on $T_o X$ as follows: For a $\IQbar$-basis $\{e_1,\ldots,e_N\}$ of $T_{[o]}S_{\IQbar}$, the set $\{\psi^{-1}(e_1),\ldots,\psi^{-1}(e_N)\}$ is a basis of $T_o X$, and $\bigoplus_{j=1}^N \IQbar \psi^{-1}(e_j)$ defines a $\IQbar$-structure on $T_o X$. It is called the \textit{arithmetic $\IQbar$-structure} on $T_o X$.

The arithmetic $\IQbar$-structure on $T_o X$ can be characterized as follows: A subspace $W$ of $T_o X$ is rational for the arithmetic $\IQbar$-structure if and only if $\dim_{\IQbar} \left(\psi(W) \cap T_{[o]}S_{\IQbar} \right) = \dim_{\CC} \psi(W)$ (\textit{i.e.} $\psi(W)$ is rational for $T_{[o]}S_{\IQbar}$).

Using knowledge on Hecke correspondences, it is not hard to check that the arithmetic-$\IQbar$ structure on $T_o X$ does not depend on the choice of the arithmetic subgroup $\Gamma \subseteq \bG(\QQ)$.

\subsection{Geometric $\IQbar$-structure on $T_o X$}\label{SubsectionGeometricIQbarShimura}
In this subsection, we define a natural \textit{$\IQbar$-structure} on $X$ (denoted by $X_{\IQbar}$), via the Hodge theoretic interpretation of the Borel embedding theorem given by Deligne, for which $o$ lies in $X_{\IQbar}(\IQbar)$. This then endows $T_o X$ with a $\IQbar$-structure $T_o X_{\IQbar} \otimes_{\IQbar} \CC$ on $T_o X$, called the \textit{geometric $\IQbar$-structure}.

\vskip 0.3em

Let $x\colon \SSS\longrightarrow G_{\RR}$ be an element of $X$.
Let  $x_{\CC}\colon \GG_{m,\CC}\times \GG_{m,\CC}\longrightarrow G_{\CC}$ be its extension to $\CC$, and denote the associated character by  
$$
\mu_x \colon \GG_{m,\CC}\longrightarrow G_{\CC}, \quad z\mapsto x_{\CC}(z,1).
$$

For each rational representation 
${(\bf V,\rho)}$ of ${\bf G}$ and any $x \in X$, we have a Hodge structure $(V_{\RR}, \rho \circ x)$ and an associated Hodge filtration $F(x)$ on $V_{\CC}$ induced by the cocharacter $\mu_x$. %Say this filtration is given by $F(x) = (V_{\CC} \supsetneq F_x^m V_{\CC} \supsetneq  \cdots \supsetneq F_x^n V_{\CC} = 0)$. 
There exists a flag variety $\mathrm{Fl}(X,V)$ defined over $\CC$ such that each $F(x)$ is parametrized by a point in $\mathrm{Fl}(X,V)(\CC)$. Moreover, this flag variety has a natural model over $\oQ$, denoted by $\mathrm{Fl}(X,V)_{\IQbar}$, such that the $\IQbar$-points correspond to filtrations of $V_{\oQ}$ by $\oQ$-subvector spaces. Since $o \in X$ is a special point, $F(o)$ is a filtration by $\IQbar$-spaces; see \cite[Prop.3.7]{UllmoA-characterisat}. So we have $\mathrm{Fl}(X,V)_{\IQbar} \cong \mathrm{GL}(V_{\IQbar})/\mathrm{Stab}(F(o))$. 

The center $Z(\bG)(\RR)$ acts trivially on $X$. If the kernel of the representation $\rho \colon \bG \rightarrow \mathrm{GL}(\bf V)$ is contained in $Z(\bG)$, then the map
\begin{equation}\label{EqXIntoFlag}
X \rightarrow \mathrm{Fl}(X,V), \quad x \mapsto F(x)
\end{equation}
is injective. It factors through the $\bG(\CC)$-conjugacy class of $\mu_o$ in $\hom(\SSS_{\CC},\bG_{\CC})$, which by the first part of the Borel Embedding Theorem (Theorem~\ref{ThmBorelEmbedding}) can be realized as the compact dual $X^\vee$ of $X$. 
%Denote this conjugacy class by $X^\vee$.\footnote{The first part of the Borel Embedding Theorem (Theorem~\ref{ThmBorelEmbedding}) asserts that this conjugacy class can be realized as the compact dual of $X$, and hence the notation here is justified.} Later on, we will see that
Now  $\bG_{\RR} \subseteq \bG_{\CC} \xrightarrow{\rho} \mathrm{GL}(V_{\CC})$ induces injective maps
\small
\[
\begin{array}{ccccc}
X = \bG(\RR)^+/\mathrm{Stab}_{\bG(\RR)^+}(o) & \rightarrow & X^\vee = \bG(\CC)/\mathrm{Stab}_{\bG(\CC)}(\mu_o) & \rightarrow & \mathrm{Fl}(X,V) = \mathrm{GL}(V_{\CC}) / \mathrm{Stab}_{\mathrm{GL}(V_{\CC})}(F(o)) \\
x & \mapsto & \mu_x & \mapsto & F(x).
\end{array}
\]
\normalsize
Borel's embedding theorem (Theorem~\ref{ThmBorelEmbedding}) asserts that the first map realizes $X$ as an open subset (in the usual topology) of $X^\vee$. The second map makes $X^\vee$ into a projective complex algebraic subvariety of $\mathrm{Fl}(X,V)$, which furthermore descends to $\IQbar$ since $\mu_o$ descends to $\IQbar$. In other words, there exists a subvariety $X^\vee_{\IQbar}$ of $\mathrm{FL}(X,V)_{\IQbar}$, defined over $\IQbar$, such that $X^\vee = X^\vee_{\IQbar}\otimes_{\IQbar}\CC$. 

Now we are ready to define the \textit{$\IQbar$-structure} on $X$ by setting $X(\oQ):=X\cap X^{\vee}_{\IQbar}(\oQ)=X\cap \mathrm{Fl}(X,V)_{\IQbar}(\oQ)$. We have seen that $o \in X(\IQbar)$. Hence we obtain the \textit{geometric $\IQbar$-structure} on $T_o X$ as explained at the beginning of this section; it equals $T_o X^\vee_{\IQbar}$.

\vskip 0.3em
We finish this subsection with the following remark. 
Notice that $X^\vee_{\IQbar}$ does not depend on the choice of the rational representation $(\bf{V},\rho)$ (with $\ker(\rho) \subseteq Z(\bG)$). In particular since $\bG$ is a reductive group, we can take $(\bf{V},\rho)$ to be $\bf{V} = \Lie \bG^{\mathrm{ad}}$ and 
\begin{equation}\label{EqChoiceOfRepresentationForIQbarOnX}
\rho \colon \bG \rightarrow \bG^{\mathrm{ad}} = \bG/Z(\bG) \rightarrow \mathrm{GL}(\Lie \bG^{\mathrm{ad}})
\end{equation}
where the second morphism is the adjoint representation.

\subsection{Bounded realization of $X$}\label{SubsectionHC}
To make the correct analogue of W\"{u}stholz's Analytic Subgroup Theorem in the Shimura setting, it is important to work with the \textit{Harish--Chandra realization of $X$} which includes two aspects:
\begin{enumerate}
\item[(i)] identifies $T_o X$ with a certain abelian sub-Lie algebra $\Fm^+$ of $\mathfrak{g}_{\CC} := \Lie (\bG(\CC))$;
\item[(ii)] realizes $X$ as a bounded symmetric domain $\cD$ in $\Fm^+ \cong \CC^N$ such that $o$ becomes the origin.
\end{enumerate}
Here $N = \dim S = \dim X$. We will recall both aspects in the current subsection, with Lemma~\ref{LemmaTangentSpaceLieSubAlgebra} for (i) and Theorem~\ref{ThmHC} for (ii). Notice that this gives an inclusion of $X$ in $T_o X$.  
For the purpose of our paper, it is important to understand how the various $\IQbar$-structures are related under the Harish--Chandra realization. More precisely, 
we will  show that the $\IQbar$-structure on $X$ and the geometric $\IQbar$-structure on $T_o X$, both given in the previous subsection, are compatible under this inclusion; see the paragraph below Proposition~\ref{PropIQbarHC} for more details.

\vskip 0.5em

For our connected Shimura datum $(\bG, X)$, we fix the following notation. Denote by $G := \bG^{\mathrm{der}}(\RR)^+$, $G_{\RR} := \bG^{\mathrm{der}}_{\RR}$ and $G_{\CC} = \bG^{\mathrm{der}}_{\CC}$. The Lie algebras are denoted by using fractur letters: denote by $\Lie({\bf G}^{\mathrm{der}})={\bf \Fg}$,
$\Lie(G)=\Fg_{\RR}$  and $\Lie(G(\CC))=\Fg_{\CC}$.

The underlying space $X$ is a $G$-orbit because the center of $\bG(\RR)$ acts trivially on $X$.

\vskip 0.5em

Our point $o \in X$ gives rise to a morphism $o \colon \SSS \rightarrow \bG_{\RR}$. By (SV2) in the definition of Shimura data, $o(\sqrt{-1})$ defines a Cartan involution $\theta$ on $\Fg_{\RR}$. Write $\Fg_{\RR} = \Fk\oplus \Fm$ for the associated Cartan decomposition, with $\Fk$ the eigenspace of $1$ and $\Fm$ the eigenspace of $-1$. Then $G=K_{\infty}M$ with $K_{\infty}=\exp(\Fk)$ a maximal compact subgroup of $G$ and $M=\exp(\Fm)$. Moreover $X = G/K_{\infty}$ as a Riemannian symmetric space. Denote by $T_{\RR}(X)$ the real tangent space of $X$ at $o$. Then $T_{\RR}(X) = \Fm$.

The Cartan involution  $\theta$ extends to $\Fg_{\CC}$ and we have a corresponding Cartan decomposition 
$\Fg_{\CC}=\Fk_{\CC}+\Fm_{\CC}$. Let $\Fg_c:= \Fk\oplus \sqrt{-1}\Fm\subseteq \Fg_{\CC}$. Since $G$ is semi-simple of non compact type, $G_c:=\exp(\Fg_c)$ is a compact Lie group and $X^{\vee}:=G_c/K_{\infty}$ is the compact dual of the Riemannian symmetric space $X=G/K_{\infty}$.

The complex structure
on $X$ is given by an endomorphism $J$ of $T_{\RR}(X)$ such that $J^2=-\mathrm{Id}$. We have a decomposition 
$$
T_{\RR}(X) \otimes_{\RR}\CC = T^{1,0}(X)\oplus T^{0,1}(X)
$$
where $J$ acts by multiplication by $\sqrt{-1}$ on $T^{1,0}(X)$ and by $-\sqrt{-1}$ on $T^{0,1}(X)$. 
In this description $T^{1,0}(X)$ is identified with the holomorphic tangent space at $o$ and  there is a real isomorphism $T_{\RR}(X)\simeq  T^{1,0}(X)$.
We have $\Fm_{\CC}=\Fm^+\oplus \Fm^-$ where $J$ acts by multiplication by $\sqrt{-1}$ on $\Fm^+$ and by $-\sqrt{-1}$ on $\Fm^-$. It is not hard to check that $\Fm^+$ and $\Fm^-$ are abelian sub-Lie algebras, \textit{i.e.} $[\Fm^+, \Fm^+]=0$
and $[\Fm^-, \Fm^-]=0$.:

Let $M^+=\exp(\Fm^+)$, $M^-=\exp(\Fm^-)$, $K_{\CC}=\exp(\Fk_{\CC})$ and $P_{\CC}=\exp(\Fk_{\CC}+\Fm^-)= K_{\CC}M_{\CC}^-$. Notice that $P_{\CC}$ is a subgroup of $G_{\CC}$ because $\Fp:=\Fk_{\CC}\oplus \Fm^-$ is a complex sub-Lie algebra.
\begin{teo}[Borel Embedding Theorem]\label{ThmBorelEmbedding}
The embedding $G_c\rightarrow G(\CC)$ induces a biholomorphism $X^{\vee}=G_c/K_{\infty}\longrightarrow G(\CC)/P(\CC)$.  
The embedding $G\rightarrow G(\CC)$ induces an open embedding
$$
X=G/K_{\infty}\longrightarrow G(\CC)/P_{\CC}\simeq X^{\vee},
$$
realizing $X$ as a open subset (in the usual topology) of its compact dual $X^{\vee}$.
\end{teo}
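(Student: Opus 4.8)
This is the classical Borel embedding theorem, and I would recall its proof, organized to match its two assertions, using only the Lie-theoretic data already set up: the decomposition $\Fg_{\CC}=\Fk_{\CC}\oplus\Fm^+\oplus\Fm^-$, the fact that $\Fp=\Fk_{\CC}\oplus\Fm^-$ is a complex parabolic subalgebra with abelian nilradical $\Fm^-$ and Levi factor $\Fk_{\CC}$, and the compact form $\Fg_c=\Fk\oplus\iii\Fm$. Throughout, let $\tau$ denote complex conjugation on $\Fg_{\CC}$ relative to $\Fg_{\RR}$; since the complex structure $J$ on $\Fm$ is $\RR$-linear, $\tau$ interchanges $\Fm^+=\ker(J-\iii)$ and $\Fm^-=\ker(J+\iii)$ in $\Fm_{\CC}$, and $\Fg_{\CC}=\Fm^+\oplus\Fp=\Fm^-\oplus\overline{\Fp}$ as complex vector spaces.

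\textbf{The biholomorphism $X^{\vee}=G_c/K_{\infty}\to G(\CC)/P_{\CC}$.} For $v\in\Fm^+$ one has $v-\tau(v)\in\iii\Fm\subseteq\Fg_c$ and $\tau(v)\in\Fm^-\subseteq\Fp$, so $\Fm^+\subseteq\Fg_c+\Fp$; together with $\Fk_{\CC}\oplus\Fm^-\subseteq\Fp$ this yields $\Fg_c+\Fp=\Fg_{\CC}$. Since $\dim_{\RR}(G_c/K_{\infty})=\dim_{\RR}\Fm=\dim_{\RR}(G(\CC)/P_{\CC})$, the orbit map $G_c\to G(\CC)/P_{\CC}$ is a submersion between manifolds of equal dimension, hence a local diffeomorphism with open image; the image is also compact, hence closed, and $G(\CC)/P_{\CC}$ is connected because $G(\CC)$ is the group of complex points of the connected algebraic group $\bG^{\der}$, so the orbit map is onto. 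Its stabilizer at the base point is $G_c\cap P_{\CC}=K_{\infty}$: the differential computation gives $\Fg_c\cap\Fp=\Fk$, and the stabilizer is connected (alternatively, a compact subgroup of $P_{\CC}=K_{\CC}\ltimes M^-$ meets the unipotent radical trivially and is conjugate into $K_{\infty}$). We thus get a diffeomorphism $G_c/K_{\infty}\xrightarrow{\sim}G(\CC)/P_{\CC}$, and it is holomorphic because $G(\CC)$, hence $G_c$, acts holomorphically on the flag manifold $G(\CC)/P_{\CC}$, so by homogeneity it suffices to check that the differential at the base point is $\CC$-linear; there it is the composite $\iii\Fm\hookrightarrow\Fm_{\CC}\twoheadrightarrow\Fm_{\CC}/\Fm^-=\Fm^+$, which intertwines $J$ with multiplication by $\iii$ by the very definition of $\Fm^+$. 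This is precisely the complex structure realizing $X^{\vee}$ as the compact dual.

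\textbf{The open embedding $X=G/K_{\infty}\hookrightarrow G(\CC)/P_{\CC}$.} The same computation, now with $\Fg_{\RR}$ in place of $\Fg_c$ (for $v\in\Fm^+$, $v+\tau(v)\in\Fm\subseteq\Fg_{\RR}$ and $\tau(v)\in\Fp$), gives $\Fg_{\RR}+\Fp=\Fg_{\CC}$; combined with $\dim_{\RR}(G/K_{\infty})=\dim_{\RR}\Fm=\dim_{\RR}(G(\CC)/P_{\CC})$, the orbit map $G\to G(\CC)/P_{\CC}$ is a local diffeomorphism and in particular open. It remains to show it is injective, i.e. $G\cap P_{\CC}=K_{\infty}$. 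Using the Cartan decomposition $G=K_{\infty}\exp(\Fm)$, this reduces to showing that $\exp(Y)\in P_{\CC}$ with $Y\in\Fm$ forces $Y=0$. For this I would fix a faithful representation of $G(\CC)$ realizing $G(\CC)/P_{\CC}$ as an orbit in a flag variety, and endow the representation space with a Hermitian inner product, built from the compact form $\Fg_c$, for which $\Fk$ acts by skew-Hermitian and $\Fm$ by Hermitian operators; then $\exp(Y)$ acts as a positive self-adjoint operator, and an elementary eigenvalue argument shows such an operator can stabilise the flag attached to the base point of $X^{\vee}$ only if it acts trivially, whence $Y=0$. Composing with the first part realizes $X$ as an open — in particular relatively compact — subset of $X^{\vee}$. (The sharper statement that the image is a \emph{bounded} domain in $\Fm^+$ is the Harish--Chandra realization, recorded separately as Theorem~\ref{ThmHC}.)

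\textbf{Main difficulty.} Every step above is formal (compactness, connectedness, equality of real dimensions, a one-point check of $\CC$-linearity) except the injectivity $G\cap P_{\CC}=K_{\infty}$ in the second part — equivalently, the assertion that the entire $G$-orbit remains inside the big Schubert cell $M^+P_{\CC}/P_{\CC}\cong\Fm^+$. This is exactly where positivity coming from the Cartan involution is needed, and it is the heart of the Borel (and Harish--Chandra) embedding. As the result is entirely classical, in the final write-up I would simply refer to a standard account, e.g. \cite{Milne}.
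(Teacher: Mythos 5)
The paper does not prove this statement: Theorem~\ref{ThmBorelEmbedding} is quoted as a classical result (Borel's embedding theorem) in the preliminary subsection on the bounded realization, with no argument supplied, just as Theorem~\ref{ThmHC} is. So there is no proof in the paper to compare against; what you have written is a reconstruction of the standard argument, and in outline it is correct. The surjectivity/biholomorphy of $G_c/K_{\infty}\to G(\CC)/P_{\CC}$ (via $\Fg_c+\Fp=\Fg_{\CC}$, equality of real dimensions, compactness of the image, $\Fg_c\cap\Fp=\Fk$, and the one-point check that the differential intertwines $J$ with multiplication by $\iii$) and the openness of the $G$-orbit (via $\Fg_{\RR}+\Fp=\Fg_{\CC}$) are exactly the classical steps and are carried out correctly.

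The one place where your write-up is not right as literally stated is the justification of $G\cap P_{\CC}=K_{\infty}$. A positive self-adjoint operator can perfectly well stabilise a flag without acting trivially (any positive diagonal matrix stabilises the standard flag), so "can stabilise the flag \ldots only if it acts trivially" is false as a general claim about positive operators. The correct deduction from self-adjointness is one step longer: a self-adjoint operator preserving a subspace also preserves its orthogonal complement, and for the Hodge metric the orthogonal complements of the pieces of $F(o)$ recover the conjugate filtration; hence $\exp(Y)$ preserves both $F(o)$ and $\overline{F(o)}$, i.e.\ lies in $P_{\CC}\cap\overline{P}_{\CC}=K_{\CC}$ (the common Levi), and then $Y=\log\exp(Y)\in\Fm\cap\Fk_{\CC}=0$. (Equivalently: the conjugation $\sigma$ with respect to $\Fg_c$ sends $\exp(Y)$ to $\exp(-Y)$ and $P_{\CC}$ to $\overline{P}_{\CC}$, giving $\exp(Y)\in P_{\CC}\cap\overline{P}_{\CC}=K_{\CC}$ directly.) With that repair the argument is complete; since the result is classical and the paper itself only cites it, deferring the details to a standard reference such as Helgason or Mok is entirely appropriate.
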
 

Let $T_o X$ be the holomorphic tangent space of $X$ at $o$. It is canonically isomorphic to $T_o X^\vee$ under the Borel embedding theorem, which is furthermore canonically isomorphic to $\Fg_{\CC}/ (\Fk_{\CC}+\Fm^-) = \Fm^+$. Hence we have:

\begin{lem}\label{LemmaTangentSpaceLieSubAlgebra}
Under the identification $X = G/K_{\infty}$ and the Borel embedding theorem, we have $T_o X = T_o X^\vee = \Fm^+$. 
\end{lem}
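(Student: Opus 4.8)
The statement is really a matter of unwinding the definitions of the various complex structures, so the plan is to identify each of the three spaces $T_o X$, $T_o X^\vee$ and $\Fm^+$ with one another in turn. First I would recall that, as a real manifold, $X = G/K_\infty$ has real tangent space at $o$ equal to $\Fg_{\RR}/\Fk \cong \Fm$ via the Cartan decomposition, and that by definition the holomorphic tangent space $T_o X$ is the $(1,0)$-part $T^{1,0}(X)$, i.e. the $+\sqrt{-1}$-eigenspace of the complex structure $J$ acting on $T_{\RR}(X)\otimes_{\RR}\CC = \Fm_{\CC}$. Since $\Fm^+$ was defined precisely as this $+\sqrt{-1}$-eigenspace of $J$ on $\Fm_{\CC}$, this already gives $T_o X = \Fm^+$.

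Next I would handle the compact dual. Using the Borel Embedding Theorem (Theorem~\ref{ThmBorelEmbedding}) in the form $X^\vee \cong G(\CC)/P_{\CC}$ with $o$ corresponding to the base point $eP_{\CC}$, the holomorphic tangent space of the complex homogeneous space $G(\CC)/P_{\CC}$ at this point is canonically $\Fg_{\CC}/\Fp$, where $\Fp = \Lie(P_{\CC}) = \Fk_{\CC}\oplus\Fm^-$. The direct sum decomposition $\Fg_{\CC} = \Fk_{\CC}\oplus\Fm^+\oplus\Fm^-$ then shows that the quotient map $\Fg_{\CC}\to\Fg_{\CC}/\Fp$ restricts to a $\CC$-linear isomorphism $\Fm^+\xrightarrow{\ \sim\ }T_o X^\vee$. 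Finally, the Borel embedding $X\hookrightarrow X^\vee$ is a holomorphic open embedding fixing $o$, hence induces a $\CC$-linear isomorphism on holomorphic tangent spaces; since the embedding is compatible with the quotient maps $G\to X$ and $G(\CC)\to X^\vee$, on tangent spaces at the base points it is induced by the inclusion $\Fg_{\RR}\hookrightarrow\Fg_{\CC}$, and the holomorphic part of $\Fm\hookrightarrow\Fm_{\CC}$ is exactly $\Fm^+$. Chaining the three identifications yields $T_o X = T_o X^\vee = \Fm^+$.

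I do not expect a genuine obstacle here: the argument is essentially bookkeeping of canonical identifications, and in fact the sentence preceding the lemma already records the chain $T_o X \cong T_o X^\vee \cong \Fg_{\CC}/(\Fk_{\CC}+\Fm^-) = \Fm^+$. The only point that deserves a word of care is the compatibility of complex structures — namely that the almost-complex structure $J$ on $X$ used to define $\Fm^\pm$ is the same one for which the Borel embedding is holomorphic. This is part of the content of Theorem~\ref{ThmBorelEmbedding} (alternatively, both complex structures on $X$ are $G$-invariant and agree at $o$, where in either description the holomorphic tangent space is the $+\sqrt{-1}$-eigenspace $\Fm^+$), so it may simply be invoked.
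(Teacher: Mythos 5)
Your proposal is correct and follows the same route as the paper: the text immediately preceding the lemma simply records the chain of canonical identifications $T_o X \cong T_o X^\vee \cong \Fg_{\CC}/(\Fk_{\CC}+\Fm^-) = \Fm^+$ coming from the Borel embedding and the decomposition $\Fg_{\CC} = \Fk_{\CC}\oplus\Fm^+\oplus\Fm^-$, which is exactly what you spell out (with the added, and correct, care about the compatibility of the complex structure $J$ defining $\Fm^\pm$ with the holomorphicity of the Borel embedding).
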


 The Harish--Chandra embedding theorem states the following.
\begin{teo}[Harish--Chandra]\label{ThmHC}
The map
$$
F \colon M^+\times K_{\CC} \times M^-\longrightarrow G_{\CC}, \qquad 
(m^+,k,m^-)\mapsto m^+km^- 
$$
is a biholomorphism  of $M^+\times K_{\CC} \times M^-$ onto an open subset of $G(\CC)$ containing $G$. As a consequence the map
\begin{equation}\label{EqHC}
\begin{array}{rccc}
\eta \colon & \Fm^+ &\longrightarrow & G(\CC)/P_{\CC} =X^{\vee} \\
&m^+ & \mapsto & \exp(m^+)P_{\CC} \nonumber
\end{array}
\end{equation}
is a biholomorphism onto a dense open subset of $X^{\vee}$ containing $X$. Furthermore $\cD := \eta^{-1}(X)$ is a bounded symmetric domain in $\Fm^+\simeq \CC^N$ and $\eta^{-1}(o) = 0$.
\end{teo}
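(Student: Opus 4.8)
The plan is to run the classical proof of the Harish--Chandra embedding, as found in Helgason's book or in the treatments of Hermitian symmetric spaces by Wolf and by Mok; the axioms of a Shimura datum guarantee that $\bG^{\mathrm{der}}_{\CC}$ is of Hermitian type, so this machinery applies. All the infinitesimal input is already recorded above: the decomposition of vector spaces $\Fg_{\CC} = \Fm^+ \oplus \Fk_{\CC} \oplus \Fm^-$, the fact that $\Fm^{\pm}$ are abelian subalgebras, and the fact that $\Fp = \Fk_{\CC}\oplus\Fm^-$ is a subalgebra; one also uses that complex conjugation with respect to the real form $\Fg_{\RR}$ interchanges $\Fm^+$ and $\Fm^-$. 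The first step is that $F$ is a local biholomorphism near $(e,e,e)$: its differential there is the vector-space sum map $\Fm^+\oplus\Fk_{\CC}\oplus\Fm^- \to \Fg_{\CC}$, which is precisely the inverse of the triangular decomposition, hence an isomorphism. Left-translating the source and target by elements of $M^+$, $K_{\CC}$, $M^-$ shows $F$ is everywhere a local biholomorphism, in particular an open map.

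For injectivity of $F$, a short computation using that $P_{\CC} = K_{\CC}\ltimes M^-$ (semidirect, since $\Fk_{\CC}$ normalizes $\Fm^-$) reduces the claim to $M^+\cap P_{\CC} = \{e\}$, which is the same as injectivity of the orbit map $\eta\colon \Fm^+ \to X^{\vee} = G_{\CC}/P_{\CC}$, $m^+\mapsto \exp(m^+)P_{\CC}$. I would obtain this, together with the density and openness of the image of $\eta$, from the algebraic picture developed in the construction of the geometric $\IQbar$-structure: realizing $X^{\vee}$ as a flag variety via a faithful representation (e.g. $\mathbf V = \Lie\bG^{\ad}$), $P_{\CC}$ becomes a parabolic, $M^+$ is the unipotent radical of the opposite parabolic, and $\eta$ is the orbit map of $M^+$ through the base point $eP_{\CC}$. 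Its image is the big Bruhat cell, which is Zariski-open, dense and isomorphic to the affine space $\Fm^+$, with $M^+$ acting simply transitively; this forces $M^+\cap P_{\CC}=\{e\}$ (an algebraic subgroup of the unipotent group $M^+$ with trivial Lie algebra in characteristic zero is trivial), hence injectivity of $F$, and shows $\eta$ is a biholomorphism of $\Fm^+$ onto a dense open subset of $X^{\vee}$. Consequently $F$ is a biholomorphism onto the open set $\{g\in G_{\CC} : gP_{\CC}\in\eta(\Fm^+)\}$.

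It remains to check that the image of $F$ contains $G$ --- equivalently that $X\subseteq\eta(\Fm^+)$ inside $X^{\vee}$ --- and that $\cD := \eta^{-1}(X)$ is bounded. By the Borel embedding (Theorem~\ref{ThmBorelEmbedding}), $X = G/K_{\infty}$ is an open $G$-orbit in $X^{\vee}$, and $o = eP_{\CC} = \eta(0)$ lies in the big cell. To propagate this along the whole orbit and simultaneously prove boundedness I would invoke Harish--Chandra's polydisc theorem: a maximal set $\gamma_1,\dots,\gamma_r$ of strongly orthogonal noncompact positive roots ($r$ the real rank of $G$) produces $r$ commuting $\mathfrak{sl}_2$-triples, whose joint orbit through $o$ is a product of $r$ unit discs sitting inside the corresponding product of lines $\PP^1 = (\SL_2)_{\CC}/B$, hence inside $\eta(\Fm^+)$ and visibly bounded in $\Fm^+$. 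Every point of $X$ is carried into this polydisc by some element of the compact group $K_{\infty}$, which acts linearly on $\Fm^+$ fixing $0$; therefore $X\subseteq\eta(\Fm^+)$ and $\cD = \bigcup_{k\in K_{\infty}} k\cdot(\text{polydisc})$ is bounded. That $\cD$ is a symmetric domain is inherited from $X$ being a Hermitian symmetric space, and $\eta^{-1}(o)=0$ because $o = \exp(0)P_{\CC} = \eta(0)$.

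The main obstacle is the boundedness assertion: the rest is essentially formal once the triangular decomposition and the flag-variety/big-cell description are in hand, but boundedness genuinely requires the root-theoretic input --- the strongly orthogonal roots and the reduction to the $\SL_2$/unit-disc case --- which is the real content of Harish--Chandra's theorem. A secondary point needing care is verifying that the $G$-orbit of $o$ never meets the boundary divisor $X^{\vee}\setminus\eta(\Fm^+)$; the polydisc reduction is again the most economical way to see this.
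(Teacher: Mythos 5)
The paper does not prove this statement: it is quoted as Harish--Chandra's classical embedding theorem, with no argument supplied, so there is nothing internal to compare your proof against. Your writeup is the standard textbook proof (Helgason Ch.~VIII, Wolf, Mok) and is essentially correct: the triangular decomposition $\Fg_{\CC}=\Fm^+\oplus\Fk_{\CC}\oplus\Fm^-$ gives the local statement, the big-cell/opposite-parabolic picture gives injectivity and the open dense image of $\eta$, and the polydisc theorem via strongly orthogonal noncompact roots (reducing to the $\SU(1,1)$ computation $\exp(tX)\in\exp(\tanh(t)X^+)K_{\CC}M^-$) gives both $X\subseteq\eta(\Fm^+)$ and boundedness --- you correctly identify this last step as the real content. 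One small point to make explicit: the translation argument showing $F$ is a local biholomorphism away from the identity uses that $K_{\CC}$ normalizes $M^+$ (i.e.\ $[\Fk_{\CC},\Fm^+]\subseteq\Fm^+$), the mirror of the fact you invoke for the semidirect product $P_{\CC}=K_{\CC}\ltimes M^-$; and the compatibility $\eta(\mathrm{Ad}(k)m^+)=k\cdot\eta(m^+)$ for $k\in K_{\infty}$, which you use to propagate the polydisc over all of $X$, deserves a one-line verification. Neither is a gap.
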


Up to now in this subsection, we have \textit{not} used any assumption on the chosen point $o \in X$. From now on, recall that $o \in X(\IQbar)$.%Now we use the fact that $o \in X(\IQbar)$ to endow $\cD$ with a natural $\IQbar$-structure. 

We start with understanding the geometric $\IQbar$-structure on $T_o X$ given in $\mathsection$\ref{SubsectionGeometricIQbarShimura}  from the Lie algebra point of view. Notice that the geometric $\IQbar$-structure on $T_o X$ and 
the identification of $\Fm^+$ with $T_o X$ from Lemma~\ref{LemmaTangentSpaceLieSubAlgebra} together induce a natural $\IQbar$-structure on $\Fm^+$, which we denote by $\Fm^+_{\IQbar,\mathrm{geom}}$. % , the assumption that is \textit{not} used up to now.
%From the discussion above Lemma~\ref{LemmaTangentSpaceLieSubAlgebra}, we obtain immediately:

\begin{prop}\label{PropIQbarHC}
We have:
\begin{enumerate}
\item[(i)] 
$\Fm^+_{\IQbar,\mathrm{geom}} = \Fm^+ \cap \Fg_{\IQbar}$.
\item[(ii)] The map $\eta$ from \eqref{EqHC} descends to a polynomial morphism $\Fm^+_{\IQbar,\mathrm{geom}} \to X^\vee(\IQbar)$ defined over $\IQbar$.
\end{enumerate}
\end{prop}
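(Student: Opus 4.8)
The plan is to reduce both assertions to the weight grading of $\Fg$ under the cocharacter attached to $o$, the specialness of $o$ entering exactly at the point where one needs this cocharacter to be defined over $\IQbar$. By the remark closing $\mathsection$\ref{SubsectionGeometricIQbarShimura}, I may compute the geometric $\IQbar$-structure using the adjoint representation \eqref{EqChoiceOfRepresentationForIQbarOnX}; since the isogeny $\bG^{\der}\to\bG^{\ad}$ induces an isomorphism $\Fg=\Lie(\bG^{\der})\xrightarrow{\sim}\Lie(\bG^{\ad})$ of $\QQ$-Lie algebras, this representation is just the adjoint action of $\Fg$ on itself. The crucial input is that $\MT(o)$ is a torus over $\QQ$, so the cocharacter $\mu_o\colon\Gm\to\bG_\CC$ factors through $\MT(o)_\CC$ and is therefore defined over $\IQbar$; this is the same fact, from \cite[Prop.3.7]{UllmoA-characterisat}, already used to see that $F(o)$ is a filtration by $\IQbar$-spaces.

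For (i): by the axiom (SV1) the cocharacter $\ad\circ\mu_o\colon\Gm\to\GL(\Fg_\CC)$ grades $\Fg_\CC=\Fg_{-1}\oplus\Fg_0\oplus\Fg_1$ into weight spaces, and this grading is exactly the decomposition $\Fm^+\oplus\Fk_\CC\oplus\Fm^-$ (with $\Fk_\CC=\Fg_0$ and $\{\Fm^+,\Fm^-\}=\{\Fg_{-1},\Fg_1\}$, the assignment fixed by the complex structure $J$), so that the parabolic subalgebra $\Fp=\Fk_\CC\oplus\Fm^-$ and its complement $\Fm^+$ are each sums of $\ad\circ\mu_o$-weight spaces. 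Since $\mu_o$ is defined over $\IQbar$, this grading descends: $\Fg_i=(\Fg_i\cap\Fg_{\IQbar})\otimes_{\IQbar}\CC$ for each $i$ and $\Fg_{\IQbar}=\bigoplus_i(\Fg_i\cap\Fg_{\IQbar})$; in particular $\Fp$ and $\Fm^+$ are defined over $\IQbar$ and $\Fg_{\IQbar}=(\Fp\cap\Fg_{\IQbar})\oplus(\Fm^+\cap\Fg_{\IQbar})$. Likewise $P_\CC$ (the parabolic with $\Lie P_\CC=\Fp$, stabilizing the $\IQbar$-rational filtration $F(o)$) descends to a parabolic $\IQbar$-subgroup $P_{\IQbar}$ of $\bG_{\IQbar}$, so $X^\vee_{\IQbar}=\bG_{\IQbar}/P_{\IQbar}$ and $T_oX^\vee_{\IQbar}=\Fg_{\IQbar}/(\Fp\cap\Fg_{\IQbar})$. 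Therefore the canonical isomorphism $T_oX^\vee=\Fg_\CC/\Fp\xrightarrow{\sim}\Fm^+$ from Lemma~\ref{LemmaTangentSpaceLieSubAlgebra} carries $T_oX^\vee_{\IQbar}$ onto $\Fm^+\cap\Fg_{\IQbar}$, which is precisely the equality $\Fm^+_{\IQbar,\mathrm{geom}}=\Fm^+\cap\Fg_{\IQbar}$ asserted in (i).

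For (ii): every element of $\Fm^+$ is $\ad$-nilpotent, because $\ad$ shifts $\ad\circ\mu_o$-weights and these lie in $\{-1,0,1\}$; moreover $\Fm^+$ is abelian. Hence $\exp$ restricted to $\Fm^+$ is a truncated exponential, a polynomial map, which together with the truncated logarithm gives an isomorphism of algebraic varieties $\Fm^+\xrightarrow{\sim}M^+$ defined over $\IQbar$ — here $\Fm^+$ carries the $\IQbar$-structure $\Fm^+\cap\Fg_{\IQbar}$, and $M^+$ is the unique connected unipotent $\IQbar$-subgroup of $\bG_{\IQbar}$ with Lie algebra $\Fm^+\cap\Fg_{\IQbar}$. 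Because $\Fg_\CC=\Fm^+\oplus\Fp$, the orbit map $M^+\to\bG(\CC)/P_\CC=X^\vee$, $m^+\mapsto m^+P_\CC$, is an open immersion onto the big cell $M^+P_\CC/P_\CC$, and it is defined over $\IQbar$ since $M^+$ and $P_{\IQbar}$ are. The map $\eta$ of \eqref{EqHC} is the composite of these two morphisms, so it descends to a polynomial morphism $\Fm^+_{\IQbar,\mathrm{geom}}=\Fm^+\cap\Fg_{\IQbar}\to X^\vee(\IQbar)$ defined over $\IQbar$, which is (ii).

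The main obstacle is part (i): identifying the geometric $\IQbar$-structure — defined abstractly through Hodge filtrations inside a flag variety in $\mathsection$\ref{SubsectionGeometricIQbarShimura} — with the $\ad\circ\mu_o$-weight grading of $\Fg$ over $\IQbar$, and keeping the conventions relating $\Fm^+$, $\Fm^-$, $\Fk_\CC$ and $\Fp$ coherent with those of Lemma~\ref{LemmaTangentSpaceLieSubAlgebra} and Theorem~\ref{ThmBorelEmbedding}. Once that compatibility is pinned down, part (ii) follows formally from the nilpotence and commutativity of $\Fm^+$, and the entire statement ultimately rests on the specialness of $o$, which is what forces $\mu_o$, hence $P_\CC$ and the subalgebras $\Fk_\CC,\Fm^{\pm}$, to be defined over $\IQbar$.
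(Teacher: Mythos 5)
Your proof is correct and follows essentially the same route as the paper's: part (i) by showing the Harish--Chandra decomposition $\Fg_{\CC}=\Fm^+\oplus\Fk_{\CC}\oplus\Fm^-$ descends to $\IQbar$ (the paper simply asserts this from $o\in X(\IQbar)$, whereas you justify it via the $\IQbar$-rationality of $\mu_o$ and the weight grading from (SV1)), and part (ii) from the polynomiality of $\exp$ on $\Fm^+$. Your version is somewhat more detailed than the paper's, in particular in invoking ad-nilpotence (and not merely commutativity) of $\Fm^+$ for part (ii), but the underlying argument is the same.
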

Define the $\IQbar$-structure on $\cD$ by setting $\cD(\IQbar) :=\Fm^+_{\IQbar,\mathrm{geom}} \cap \cD$. Then by Proposition~\ref{PropIQbarHC}, $\eta$ induces a bijection between $\cD(\IQbar)$ and $X(\IQbar)$.
\begin{proof}[Proof of Proposition~\ref{PropIQbarHC}]
From the discussion above Lemma~\ref{LemmaTangentSpaceLieSubAlgebra}, the identification $\Fm^+ = T_o X$ is given by $\Fm^+ = \Fg_{\CC}/(\Fk_{\CC}+\Fm^-)$. Since $o \in X(\IQbar)$, in the Cartan decomposition $\Fk_{\CC}$, $\Fm^+$ and $\Fm^-$ descend to $\IQbar$. Hence
\[
\Fm^+_{\IQbar,\mathrm{geom}} = \Fg_{\IQbar}/((\Fk_{\CC}+\Fm^-)\cap \Fg_{\IQbar}) = \Fm^+ \cap \Fg_{\IQbar}.
\]
This proves part (i).

 As $\Fm^+$ is an abelian Lie algebra, the exponential map on $\Fm^+$ is given by a polynomial expression with coefficient in $\QQ$. Thus part (ii) holds true.
\end{proof}

\section{Bi-$\IQbar$-spaces arising from Shimura varieties: Proof of the Splitting}\label{SectionBiIQbarShimura2}

Let $(\bG,X)$ be a connected Shimura datum,  and fix a special point $o \in X$. We have endowed $T_o X$ with a bi-$\IQbar$-structure in $\mathsection$\ref{SectionBiIQbarShimura}, \textit{i.e.} the \textit{arithmetic $\IQbar$-structure} in $\mathsection$\ref{SubsectionArithmeticIQbarShimura} and the \textit{geometric $\IQbar$-structure} in $\mathsection$\ref{SubsectionGeometricIQbarShimura}. The goal of this section is to prove:

\begin{teo}\label{ThmBiIQbarShimuraSplit}
The bi-$\IQbar$-structure on $T_o X$ thus defined is split.
\end{teo}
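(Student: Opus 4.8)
The plan is to diagonalize the two $\IQbar$-structures simultaneously using the action of the Mumford--Tate torus at $o$. Since $o$ is a special point, $\MT(o) =: \bT$ is a $\QQ$-torus with $o(\SSS) \subseteq \bT_{\RR}$, and $\bT$ acts on $\Fm^+ = T_o X$ (via the adjoint action of $\bG$ restricted to $\bT$, using $\bT \subseteq \bG$ and the identification of Lemma~\ref{LemmaTangentSpaceLieSubAlgebra}). The key point is that this action is defined over $\QQ$, hence in particular over $\IQbar$, so it preserves \emph{both} the arithmetic and the geometric $\IQbar$-structures: the geometric one because $\bT$ acts on $X^\vee_{\IQbar}$ (the Borel embedding and compact dual are $\bG$-equivariant and everything descends to $\IQbar$, by the construction in $\mathsection\ref{SubsectionGeometricIQbarShimura}$), and the arithmetic one because $\bT$ acts on $S_{\IQbar}$ in the appropriate Hecke-theoretic sense compatibly with $\psi$ — more precisely the arithmetic $\IQbar$-structure on $T_oX$ can be identified with an $\mathrm{End}$-type structure coming from the CM abelian-variety-like data that $\bT$ carries, and this is $\bT$-equivariant over $\IQbar$. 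So the decomposition of $\Fm^+$ into $\bT_{\IQbar}$-eigenspaces (equivalently, weight spaces for the cocharacter $\mu_o$, which lands in $\bT_{\CC}$) is a decomposition into subspaces that are rational for both $\IQbar$-structures.

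First I would set $\bT = \MT(o)$ and decompose $\Fg_{\CC}$, hence $\Fm^+_{\CC} = \Fm^+$, into weight spaces under $\bT_{\CC}$; since $\bT$ acts on $\Fg$ over $\QQ$ and its character group is defined over $\IQbar$ (the splitting field of $\bT$ is a number field), each weight space $(\Fm^+)_\chi$ is defined over $\IQbar$ for \emph{every} $\IQbar$-structure that $\bT$ respects. Then I would check that $\bT$ respects the geometric $\IQbar$-structure: by Proposition~\ref{PropIQbarHC}, $\Fm^+_{\IQbar,\mathrm{geom}} = \Fm^+ \cap \Fg_{\IQbar}$, and $\bT$ acts on $\Fg_{\IQbar}$ (base change of the $\QQ$-action), preserving $\Fm^+ = \Fg_{\CC}/(\Fk_{\CC}+\Fm^-)$ because $\Fk_{\CC} + \Fm^-$ is the parabolic attached to $\mu_o$ which is $\bT$-stable; so $\bT$ preserves $\Fm^+ \cap \Fg_{\IQbar}$. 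Next, that $\bT$ respects the arithmetic $\IQbar$-structure: here one uses that the canonical model $S_{\IQbar}$ is equivariant for the $\bG(\AAA_f)$-action and that the CM point $[o]$ together with $\bT$ gives a sub-Shimura-datum $(\bT, \{o\})$ whose reflex field and reciprocity map are defined over $\IQbar$; the tangent space $T_{[o]}S_{\IQbar}$ inherits a $\bT$-module structure over $\IQbar$ (via, e.g., the Hodge filtration on a faithful representation, or directly via the relative de Rham cohomology of the universal family near $[o]$ in the $\mathbb{A}_g$-case and Hodge-theoretically in general), compatible with $\psi$. Once both compatibilities are in hand, each weight space $(\Fm^+)_\chi$ is bi-$\IQbar$-rational, and finally I would refine: decompose each $(\Fm^+)_\chi$ (which is already a $\CC$-vector space with two $\IQbar$-structures, one contained in the other's $\IQbar$-span) arbitrarily into lines — any $\IQbar_{\mathrm{geom}}$-rational line inside is automatically a line, hence trivially bi-$\IQbar$? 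No: one needs the two $\IQbar$-structures restricted to $(\Fm^+)_\chi$ to be \emph{proportional}, i.e. differ by a single scalar $\alpha_\chi \in \CC^*$ up to $\IQbar^*$; this holds because the $\bT_{\IQbar}$-action is by the \emph{same} character $\chi$ on the whole space, so the change-of-basis matrix between an arithmetic $\IQbar$-basis and a geometric $\IQbar$-basis of $(\Fm^+)_\chi$ commutes with a scalar... actually more carefully, it intertwines the $\bT$-action which is scalar, hence is an arbitrary invertible matrix — so this does \emph{not} immediately give proportionality unless $\dim (\Fm^+)_\chi = 1$. The real argument: decompose via the \emph{full} Cartan/torus in $\bG$ containing $\bT$? — rather, one uses that distinct weights $\chi$ of $\mu_o$ on $\Fm^+$ are distinct, \emph{or} appeals to the abstract fact (cf. the category $\mathcal{C}^{split}_{\IQbar,\IQbar}$) that since every $\bT_{\IQbar}$-isotypic piece is bi-$\IQbar$ and within it the arithmetic and geometric structures are both $\bT_{\IQbar}$-stable with $\bT$ acting by a single character, the two structures are identified by an element of $\GL$ of the $\IQbar$-points of a torus, forcing diagonalizability over $\IQbar$ simultaneously — i.e.\ pick a maximal torus $\bT' \supseteq \bT$ of $\bG$ (defined over $\IQbar$, or use the centralizer of $\mu_o$); then $\bT'$ acts on $\Fm^+$ with $1$-dimensional weight spaces, and both $\IQbar$-structures are $\bT'_{\IQbar}$-stable, so decompose into these lines.

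Thus the clean route: replace $\bT = \MT(o)$ by a maximal torus $\bT'$ of $\bG$ defined over $\IQbar$ whose base change to $\CC$ contains (a conjugate of) $\mu_o$, e.g.\ take $\bT' := $ the centralizer $Z_{\bG_{\CC}}(\mu_o)$'s maximal torus descended appropriately, or simply any maximal torus of $\bG_{\IQbar}$ through the relevant point; then $\Fm^+ = \bigoplus_j (\Fm^+)_{\chi_j}$ with $\dim(\Fm^+)_{\chi_j} = 1$ (root spaces for the roots of $\bT'$ appearing in $\Fm^+$), each line defined over $\IQbar$ for both structures because $\bT'_{\IQbar}$ preserves both. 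Setting $\CC(\alpha_j) := (\Fm^+)_{\chi_j}$ and letting $\alpha_j$ be the ratio between a geometric-$\IQbar$ generator and an arithmetic-$\IQbar$ generator of that line gives the asserted decomposition, so the bi-$\IQbar$-structure is split. The main obstacle I anticipate is the second compatibility check: proving that the maximal torus action is defined over $\IQbar$ \emph{on the arithmetic side}, i.e.\ that $T_{[o]}S_{\IQbar}$ carries a $\bT'_{\IQbar}$-module structure compatible with $\psi$. This requires unwinding the construction of the canonical model and its tangent space at a CM point — realizing $T_{[o]}S_{\IQbar}$ via the Hodge filtration on $H^1_{\mathrm{dR}}$ of a family of motives (abelian varieties, after Faltings/Deligne) and noting that the Mumford--Tate torus, being defined over $\QQ$ and acting on the de Rham and Betti realizations over $\IQbar$, acts on the tangent space over $\IQbar$; the care needed is that the arithmetic $\IQbar$-structure is defined via the \emph{canonical model} (reciprocity law, Shimura--Taniyama) rather than directly via Hodge theory, so one must verify these two descriptions of $T_{[o]}S_{\IQbar}$ coincide $\bT$-equivariantly. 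I expect this to be where the bulk of the work lies, with the geometric side and the weight-space bookkeeping being comparatively formal.
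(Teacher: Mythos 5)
Your approach is essentially the paper's: fix a maximal torus $\mathbf{T}$ of $\bG$ containing $\MT(o)$, decompose $\Fm^+ = T_o X$ into the one-dimensional root spaces $\Fg_{\CC}^{\chi}$ for the non-compact positive roots, and show each root space is rational for both $\IQbar$-structures (Propositions~\ref{PropDecompositionGeomIQbar} and~\ref{PropDecompositionArithIQbar}; see also Theorem~\ref{ThmBiIQbarShimuraSplitBis}). Your handling of the geometric side matches the paper's (via $\Fm^+_{\IQbar,\mathrm{geom}} = \Fm^+ \cap \Fg_{\IQbar}$ and the fact that the root spaces are defined over $\IQbar$ because $X^*(\mathbf{T}_{\IQbar}) = X^*(\mathbf{T}_{\CC})$), and your self-correction about needing a full maximal torus rather than just $\MT(o)$ (so that the weight spaces are lines) is exactly the right move.

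The one step you leave open --- that $\mathbf{T}$ acts on the \emph{arithmetic} $\IQbar$-structure, i.e.\ that the adjoint action of $\mathbf{T}(\QQ)$ on $T_oX$ is $\IQbar$-linear for $\psi^{-1}(T_{[o]}S_{\IQbar})$ --- is a genuine gap in your write-up, and the route you sketch for it (unwinding the canonical model via reciprocity and de Rham realizations of motives) is heavier than what is needed. The paper's argument (Proposition~\ref{PropActionTArithmeticAdjoint}) is more elementary: since $\mathbf{T}(\RR)$ fixes $o$, each $t \in \mathbf{T}(\QQ)$ gives a Hecke correspondence $S \leftarrow S_t \rightarrow S$ whose two projections are algebraic and defined over $\IQbar$ on canonical models and send $[o_t]$ to $[o]$; differentiating the resulting diagram shows that $\psi \circ \mathrm{Ad}(t) \circ \psi^{-1}$ is a $\IQbar$-linear automorphism of $T_{[o]}S_{\IQbar}$. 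One then concludes by the standard fact that a $\IQbar$-representation of $\mathbf{T}(\QQ)$ (Zariski-dense in $\mathbf{T}$) decomposes into weight spaces over $\IQbar$, and that the weight space for each $\chi \in \Phi_M^+$ must span the corresponding complex root space by dimension count. So your strategy is sound and coincides with the paper's; you would only need to supply the Hecke-correspondence computation to close the arithmetic-side compatibility you correctly identified as the crux.
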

Moreover, we will explain how the splitting of this bi-$\IQbar$-structure on $T_o X$ is obtained; see Theorem~\ref{ThmBiIQbarShimuraSplitBis}.

\vskip 0.3em

By Theorem~\ref{ThmBiIQbarShimuraSplit} and the general theory from $\mathsection$\ref{SectionBiIQbar}, we have an isomorphism of bi-$\oQ$-structure $T_o(X)\simeq (\CC^N; \alpha_1,\dots,\alpha_N)$ with $\alpha_i\in \CC^*/\oQ^*$. If $S = \mathbb{A}_g$,  then $o$ corresponds to a CM Abelian variety $A_o$ and $\Lie(A_o)\simeq (\CC^g ; \theta_1,\dots, \theta_g)$ with $\alpha_i\in \CC^*/\oQ^*$ by the results of $\mathsection$\ref{SubsectionExampleCMAV}. We will describe relations between the $\alpha_i$ and the $\theta_i$ using deformation theory in the next sections.

\subsection{Notation}\label{SubsectionNotationSplitBiIQbarShimura}

We retain the notation from $\mathsection$\ref{SubsectionHC} that $\Fg := \Lie (\bG^{\mathrm{der}})$. The natural morphism $\bG^{\mathrm{der}} \subseteq \bG \rightarrow \bG^{\mathrm{ad}} = \bG/Z(\bG)$ induces $\Fg = \Lie (\bG^{\mathrm{ad}})$.

The point $o \in X$ defines a Cartan involution $\theta$ of $\Fg_\RR$ by (SV2) of Deligne's definition of Shimura data. Write $\theta_{\CC}$ for its extension to $\Fg_{\CC}$. We have the following Cartan decompositions
$$
\Fg_\RR=\Fk\oplus \Fm, \ \Fg_c=\Fk+\sqrt{-1}\Fm \mbox{ and } \Fg_{\CC}=\Fk_{\CC}\oplus \Fm_{\CC},
$$
with $\Fm_{\CC}=\Fm^+\oplus \Fm^-$. See $\mathsection$\ref{SubsectionHC} for the notation.

Since $o$ is a special point, its Mumford--Tate group $\mathrm{MT}(o)$ is a torus. Let $\mathbf{T}$ be a maximal torus of $\bG$ which contains $\mathrm{MT}(o)$. Write $\mathbf{T}^{\mathrm{ad}}$ for its image under the morphism $\bG \rightarrow \bG^{\mathrm{ad}}$. Then $\mathbf{T}^{\mathrm{ad}}$ is a maximal torus of $\bG^{\mathrm{ad}}$, and hence $\Fh_{\RR} := \Lie \mathbf{T}^{\mathrm{ad}}(\RR)$ is a real Cartan subalgebra of $\Fg_{\RR}$. Moreover, $\Fh_{\RR} \subseteq \Fk$ since $\Fk$ is the eigenspace of $1$ for the Cartan involution.

\subsection{Decomposition of the Geometric $\IQbar$-structure on $T_o X$}\label{SubsectionDecompositionGeomShimura}
In this subsection, we will show that the $T_o X_{\IQbar}$, \textit{i.e.} the geometric $\IQbar$-structure on $T_o X$, can be decomposed into the direct sum of $1$-dimensional $\IQbar$-sub-vector spaces, each being an eigenspace for the action of a maximal torus in $\bG$ which contains $\mathrm{MT}(o)$.

\subsubsection{Statement of the result}%The Lie algebra point of view of the geometric $\IQbar$-structure}
Let $\Fz$ be the center of  $\Fk$. Then $\Fz \subseteq \Fh_{\RR}$ since $\Fh_{\RR}$ is abelian. %Let $\Fk_s=[\Fk,\Fk]$ the semi-simple part of $\Fk$. Then 
The following result can be found in \cite[pp.54, Prop.1]{Mok}.

\begin{lem}\label{LemmaCenterElementComplexStructureSV}
 There exists $z\in \Fz$ such that the $J$-operator on $\Fm$, which induces the complex structure on $\Fm$, is defined by $J.v=[z,v]$ for any $v\in \Fm$.
\end{lem}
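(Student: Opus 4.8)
The plan is to exploit the structure theory of Hermitian symmetric spaces, together with the fact that the complex structure operator $J$ on $\Fm$ is $K_\infty$-invariant. First I would recall that, since $X = G/K_\infty$ is Hermitian, the isotropy representation $\mathrm{Ad} \colon K_\infty \to \mathrm{GL}(\Fm)$ commutes with $J$; equivalently, $J \in \End_{\Fk}(\Fm)$, meaning $[\xi, Jv] = J[\xi,v]$ for all $\xi \in \Fk$, $v \in \Fm$. The key input is that for an irreducible Hermitian symmetric space the isotropy representation of $\Fk$ on $\Fm_{\CC}$ decomposes as $\Fm^+ \oplus \Fm^-$ into two \emph{irreducible} non-isomorphic $\Fk_{\CC}$-modules (this is precisely what makes $X$ Hermitian rather than merely symmetric), and $J$ acts as the scalar $\sqrt{-1}$ on the first and $-\sqrt{-1}$ on the second. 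By Schur's lemma, any $\Fk$-equivariant endomorphism of $\Fm$ whose $\CC$-linear extension preserves $\Fm^{\pm}$ is determined up to scalars on each factor; so $J$ is essentially the unique (up to the obvious normalization) such operator.

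Next I would produce the element $z$ concretely. Since $\Fz$, the center of $\Fk$, is nontrivial precisely when $X$ is Hermitian, pick a basis-free description: $\mathrm{ad}(\xi)$ for $\xi \in \Fz$ acts on $\Fm_{\CC}$ commuting with the $\Fk_{\CC}$-action (because $\Fz$ is central in $\Fk$, hence in $\Fk_{\CC}$), and it preserves the decomposition $\Fm_{\CC} = \Fm^+ \oplus \Fm^-$ since $[\Fk_\CC,\Fm^{\pm}]\subseteq \Fm^{\pm}$. By Schur again, $\mathrm{ad}(\xi)|_{\Fm^+} = \lambda(\xi)\,\mathrm{Id}$ and $\mathrm{ad}(\xi)|_{\Fm^-} = \mu(\xi)\,\mathrm{Id}$ for linear functionals $\lambda, \mu$ on $\Fz$; moreover $\mu = -\bar\lambda = -\lambda$ since $\mathrm{ad}(\xi)$ is real on $\Fm_\CC$ and $\Fm^- = \overline{\Fm^+}$. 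The functional $\lambda \colon \Fz \to \sqrt{-1}\RR$ is nonzero (else $\Fz$ would act trivially on $\Fm$, forcing $\Fz=0$ by effectiveness of the isotropy action on an irreducible factor), so I can choose $z \in \Fz$ with $\lambda(z) = \sqrt{-1}$, i.e.\ $\mathrm{ad}(z)$ acts as $\sqrt{-1}$ on $\Fm^+$ and as $-\sqrt{-1}$ on $\Fm^-$. Comparing with the action of $J$ on $\Fm_\CC = \Fm^+\oplus\Fm^-$ gives $\mathrm{ad}(z)|_{\Fm_\CC} = J$, and restricting to the real form $\Fm$ yields $J.v = [z,v]$ for all $v \in \Fm$, as desired.

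In the general (non-irreducible) case I would decompose $X$ into irreducible factors $X = X_1 \times \cdots \times X_r$, with correspondingly $\Fg_\RR = \bigoplus_i \Fg_{i,\RR}$, $\Fk = \bigoplus_i \Fk_i$, $\Fm = \bigoplus_i \Fm_i$, $\Fz = \bigoplus_i \Fz_i$ with each $\Fz_i$ one-dimensional, run the above argument on each factor to obtain $z_i \in \Fz_i$, and set $z = \sum_i z_i$; then $[z,v] = \sum_i [z_i, v_i] = \sum_i J_i v_i = Jv$ since $J$ respects the product decomposition. The main obstacle is really a bookkeeping one rather than a conceptual one: one must be careful that $\Fg$ here is the Lie algebra of $\bG^{\mathrm{der}}$ (semisimple of noncompact type) so that there are no anisotropic factors to worry about, and that the normalization of $J$ matches the complex structure coming from the Shimura datum $o$ — but this is exactly the content of axiom (SV2), which guarantees $o(\sqrt{-1})$ induces the Cartan involution $\theta$ and hence that $\Fk$ is the fixed algebra and $J$ is $\mathrm{Ad}(K_\infty)$-invariant. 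Since the lemma is quoted from \cite[pp.54, Prop.1]{Mok}, I would in practice simply cite it, but the argument above is the proof I would reconstruct if needed.
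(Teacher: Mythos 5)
The paper gives no proof of this lemma at all: it is quoted directly from \cite[pp.54, Prop.1]{Mok}, so there is nothing in the text to compare your argument against line by line. Your reconstruction is the standard structure-theoretic proof (and essentially the one in Mok and in Helgason), and it is correct in substance: $J$ is $\mathrm{Ad}(K_\infty)$-equivariant, $\Fm^{\pm}$ are irreducible mutually non-isomorphic $\Fk_{\CC}$-modules in the irreducible case, so central elements of $\Fk$ act by scalars on each summand, and one normalizes. Two small points are worth tightening. First, the chain ``$\mu=-\bar\lambda=-\lambda$'' is internally inconsistent: realness of $\mathrm{ad}(\xi)$ together with $\Fm^-=\overline{\Fm^+}$ gives $\mu=\bar\lambda$, and one then needs the separate fact that $\lambda$ is purely imaginary to conclude $\mu=-\lambda$; that fact follows because $\mathrm{ad}(\xi)$ for $\xi\in\Fk$ is skew-symmetric for the Killing form, whose restriction to $\Fm$ is positive definite in the noncompact type, so its eigenvalues lie in $\sqrt{-1}\,\RR$. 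Second, be aware that the inputs you invoke --- $\Fz\neq 0$ exactly in the Hermitian case, and the irreducibility of $\Fm^{\pm}$ --- are themselves the substantive facts here; in the standard development the existence of $z$ is proved hand in hand with (indeed, is essentially the content of) the statement that $Z(\Fk)\neq 0$ characterizes the Hermitian symmetric spaces, so your argument is a legitimate reconstruction rather than a more elementary alternative. Your non-vanishing argument for $\lambda$ (a $\xi\in\Fz$ killed by $\lambda$ would centralize all of $\Fg_{\RR}$, contradicting semisimplicity) and the reduction to irreducible factors are both fine.
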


Proposition~\ref{PropIQbarHC}.(i) gives the geometric $\IQbar$-structure on $T_o X$ a Lie algebra point of view, \textit{i.e.} it is naturally identified with $ \Fm^+_{\IQbar,\mathrm{geom}} = \Fm^+ \cap \Fg_{\IQbar}$ via Lemma~\ref{LemmaTangentSpaceLieSubAlgebra}.
\begin{prop}\label{PropGeometricIQbarLieAlgebra}
$\Fm^+_{\IQbar,\mathrm{geom}}$ can be decomposed into the direct sum of $1$-dimensional $\IQbar$-spaces.
%\end{enumerate}
\end{prop}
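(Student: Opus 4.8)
The plan is to use the root space decomposition of $\Fg_{\CC}$ with respect to the maximal torus $\bT$ containing $\mathrm{MT}(o)$, and to observe that this decomposition is defined over $\IQbar$ because $\bT$ is. First I would note that since $o$ is a special point, $\mathrm{MT}(o)$ is a torus, and we may choose a maximal torus $\bT$ of $\bG$ containing it; moreover, $\bT$ — and hence $\bT^{\mathrm{ad}}$ with Lie algebra $\Fh$ — can be taken to be defined over $\IQbar$ (in fact $\mathrm{MT}(o)$ is already defined over $\QQ$, and one can find a maximal torus over $\QQ$ containing it, or at worst over a number field, which suffices for a $\IQbar$-structure). Then $\Fh_{\IQbar} := \Fh_{\RR}\cap \Fg_{\IQbar}$ is a Cartan subalgebra of $\Fg_{\IQbar}$, and one has the root space decomposition
\[
\Fg_{\IQbar} = \Fh_{\IQbar} \oplus \bigoplus_{\alpha \in R} \Fg_{\IQbar,\alpha},
\]
where $R$ is the (finite) root system and each $\Fg_{\IQbar,\alpha}$ is a $1$-dimensional $\IQbar$-subspace; tensoring with $\CC$ recovers the usual root space decomposition of $\Fg_{\CC}$.

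Next I would identify which root spaces lie in $\Fm^+$. Since $\Fh_{\RR}\subseteq \Fk$, the Cartan involution $\theta_{\CC}$ preserves $\Fh_{\CC}$ and acts on each root space $\Fg_{\CC,\alpha}$, so each $\Fg_{\CC,\alpha}$ lies entirely in $\Fk_{\CC}$ (if $\theta_{\CC}$ acts by $+1$) or in $\Fm_{\CC}$ (if by $-1$). For those $\alpha$ with $\Fg_{\CC,\alpha}\subseteq \Fm_{\CC}$, the element $z\in \Fz\subseteq \Fh_{\RR}$ from Lemma~\ref{LemmaCenterElementComplexStructureSV} acts on $\Fg_{\CC,\alpha}$ by the scalar $\alpha(z)$, and since $J.v = [z,v] = \alpha(z)v$ with $J^2 = -\mathrm{Id}$, we get $\alpha(z) \in \{\sqrt{-1},-\sqrt{-1}\}$; the root space $\Fg_{\CC,\alpha}$ sits in $\Fm^+$ precisely when $\alpha(z) = \sqrt{-1}$. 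Collecting these roots into a subset $R^+_{\Fm}\subseteq R$, I would conclude
\[
\Fm^+ = \bigoplus_{\alpha \in R^+_{\Fm}} \Fg_{\CC,\alpha}.
\]
The point is that this is a decomposition into $1$-dimensional subspaces, and because the root spaces $\Fg_{\IQbar,\alpha}$ give a $\IQbar$-structure compatible (after $\otimes\CC$) with the one on $\Fm^+$, we obtain
\[
\Fm^+_{\IQbar,\mathrm{geom}} = \bigoplus_{\alpha \in R^+_{\Fm}} \Fg_{\IQbar,\alpha},
\]
a direct sum of $1$-dimensional $\IQbar$-subspaces, as desired.

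The one point that needs genuine care — and which I expect to be the main obstacle — is checking that $\Fm^+_{\IQbar,\mathrm{geom}}$, defined via the geometric $\IQbar$-structure on $T_o X$ and the identification of Proposition~\ref{PropIQbarHC}.(i) as $\Fm^+\cap\Fg_{\IQbar}$, is genuinely compatible with the root space $\IQbar$-structure coming from $\bT$; that is, that $(\Fm^+\cap\Fg_{\IQbar}) = \bigoplus_{\alpha\in R^+_{\Fm}}(\Fg_{\CC,\alpha}\cap \Fg_{\IQbar})$ and not merely that the left side contains the right. This follows from the fact that $\bT$ (hence $\Fh$) is defined over $\IQbar$, so the root space decomposition of $\Fg$ is defined over $\IQbar$, and $\Fm^+$ is a sum of root spaces; but one must make sure the maximal torus can indeed be chosen over $\IQbar$ (or a number field), and that $z\in\Fz$ and the complex structure $J$ interact with this $\IQbar$-structure as expected — the subtlety being that $J$ itself is not defined over $\IQbar$ in general (it corresponds to multiplication by $\sqrt{-1}$), but it only serves to \emph{select} which root spaces enter $\Fm^+$, and that selection is a combinatorial condition on roots, hence harmless. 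Once this compatibility is in place, the decomposition into $1$-dimensional $\IQbar$-spaces is immediate.
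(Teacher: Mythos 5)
Your proposal is correct and follows essentially the same route as the paper: decompose $\Fg_{\IQbar}$ into root spaces for a maximal torus $\bT \supseteq \mathrm{MT}(o)$ defined over $\QQ$ (so the decomposition descends to $\IQbar$), use the element $z$ of Lemma~\ref{LemmaCenterElementComplexStructureSV} to show $\Fm^+$ is exactly the sum of those root spaces on which $z$ acts by $\sqrt{-1}$, and invoke Proposition~\ref{PropIQbarHC}.(i) to identify $\Fm^+_{\IQbar,\mathrm{geom}}$ with $\Fm^+\cap\Fg_{\IQbar}$. The only cosmetic difference is that the paper packages the selected roots as $\Phi_M^+$ for an explicit choice of positive system (via a Weyl chamber containing $-\sqrt{-1}z$) and phrases the $\IQbar$-rationality through characters of $\bT$, neither of which changes the substance of the argument.
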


In fact, we will prove a more precise version, Proposition~\ref{PropDecompositionGeomIQbar}, which explains how this decomposition  is constructed. This construction is important.

\subsubsection{Complex multiplication and root space decomposition}

Let $\Fh_{\RR}^\vee$ be the dual space of $\Fh_{\RR}$. Let $\Phi \subseteq \Fh_{\CC}^\vee$ be the set of roots of $\Fh_{\CC}$ in $\Fg_{\CC}$. Then we have the root decomposition
\begin{equation}\label{EqRootDecompositionOverCC}
\Fg_{\CC} = \Fh_{\CC} \oplus \left( \bigoplus_{\phi \in \Phi} \Fg_{\CC}^{\phi} \right),
\end{equation}
where $\Fg_{\CC}^{\phi} = \{ x \in \Fg_{\CC} : [h,x] = \phi(h)x \text{ for all }h \in \Fh_{\CC}\}$ has dimension $1$. For each root $\phi \in \Phi$, let $f_{\phi}$ be a generator of the root space $\Fg_{\CC}^{\phi}$.

We say that a root $\phi$ is \textit{compact} (resp. \textit{non compact}) if $f_{\phi} \in \Fk_{\CC}$ (resp. $f_{\phi} \in \Fm_{\CC}$.  Let $\Phi_K$  be the set of compact roots and $\Phi_M$ be the set of non compact roots. We claim that  
 $\Phi=\Phi_K\cup \Phi_M$. Indeed, since $\theta$ is an involution fixing $\Fk_{\CC}$ and $\Fh_{\CC} \subseteq \Fk_{\CC}$, we have $[h,\theta(f_{\phi})] = \phi(h)\theta(f_{\phi})$. Hence $\theta(f_{\phi}) \in \Fg_{\CC}^{\phi}$, and therefore $\theta(f_{\phi}) = \lambda f_{\phi}$ for some $\lambda \in \CC^*$. If $f_{\phi} = a_K f_{\phi,K}+ a_M f_{\phi,M}$ under the Cartan decomposition with $a_K, a_M \in \CC$, then $\theta(f_{\phi}) = a_K f_{\phi,K}- a_M f_{\phi,M}$. Then either $a_K = 0$ or $a_M = 0$, and hence we conclude that $f_{\phi}$ is in $\Fk_{\CC}$ or in $\Fm_{\CC}$.
 
\begin{lem}\label{LemmaRootDecompositionTangentSpace}
There exists a choice $\Phi^+$ of positive roots, with $\Phi = \Phi^+ \coprod -\Phi^+$, satisfying the following property. For $\Phi_M^+ := \Phi_M \cap \Phi^+$, we have 
\begin{equation}\label{eqgeom}
 \Fm^+=\bigoplus_{\phi\in \Phi_M^+} \Fg_{\CC}^{\phi}. %=\Sigma_{\phi\in \Phi_M^+} \Fm^{+,\phi}.
\end{equation}
\end{lem}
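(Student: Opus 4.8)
The claim is a standard fact about the root decomposition of a Hermitian symmetric Lie algebra, so the plan is to exhibit the positive system explicitly using the central element $z$ from Lemma~\ref{LemmaCenterElementComplexStructureSV}. First I would recall that $\Fh_{\RR} \subseteq \Fk$ and, since $z$ lies in the center $\Fz$ of $\Fk$, that $z \in \Fh_{\RR}$ (indeed $\Fz \subseteq \Fh_{\RR}$ as noted before the statement). For a root $\phi \in \Phi$ and a generator $f_\phi$ of $\Fg_{\CC}^\phi$, we have $[z,f_\phi] = \phi(z) f_\phi$. Now $\mathrm{ad}(z)$ acts on $\Fg_{\CC}$ with eigenvalues $0$ on $\Fk_{\CC}$ (because $z$ is central in $\Fk$, hence in $\Fk_{\CC}$) and $\pm\sqrt{-1}$ on $\Fm_{\CC}$ (because $J = \mathrm{ad}(z)|_{\Fm}$ satisfies $J^2 = -\mathrm{Id}$, and $\Fm^{\pm}$ are by definition the $\pm\sqrt{-1}$-eigenspaces of $J$). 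Combined with the dichotomy $\Phi = \Phi_K \sqcup \Phi_M$ established just above the lemma, this gives: $\phi(z) = 0$ iff $\phi \in \Phi_K$, and $\phi(z) \in \{\sqrt{-1}, -\sqrt{-1}\}$ iff $\phi \in \Phi_M$, with $\Fg_{\CC}^\phi \subseteq \Fm^+$ precisely when $\phi(z) = \sqrt{-1}$.

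Next I would construct $\Phi^+$. The element $-\sqrt{-1}\,z \in \Fh_{\CC}$ (or rather a regular element of $\Fh_{\RR}$ obtained by perturbing $-\sqrt{-1}z$ inside $\Fh_{\RR}$, or directly using $\sqrt{-1}z \in \sqrt{-1}\Fh_{\RR} \subseteq \Fg_c$ which is where roots take real values) has the property that $\phi \mapsto \langle \phi, \cdot\rangle$ evaluated against it is real. Choose any regular element $H_0 \in \sqrt{-1}\Fh_{\RR}$ of the form $H_0 = \sqrt{-1}z + \varepsilon H_1$ with $H_1 \in \sqrt{-1}\Fh_{\RR}$ generic and $\varepsilon > 0$ small enough that $\phi(H_0)$ has the same sign as $\phi(\sqrt{-1}z) = \sqrt{-1}\,\phi(z)$ whenever the latter is nonzero (this is possible because there are only finitely many roots and the noncompact ones have $|\sqrt{-1}\phi(z)| = 1$ bounded away from $0$). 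Define $\Phi^+ := \{\phi \in \Phi : \phi(H_0) > 0\}$; this is a system of positive roots since $H_0$ is regular. By construction every noncompact root $\phi$ with $\phi(z) = \sqrt{-1}$ lands in $\Phi^+$, and every noncompact root with $\phi(z) = -\sqrt{-1}$ lands in $-\Phi^+$. Hence $\Phi_M^+ = \{\phi \in \Phi_M : \phi(z) = \sqrt{-1}\} = \{\phi : \Fg_{\CC}^\phi \subseteq \Fm^+\}$.

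Finally I would assemble \eqref{eqgeom}: from the root decomposition \eqref{EqRootDecompositionOverCC} we have $\Fm_{\CC} = \bigoplus_{\phi \in \Phi_M} \Fg_{\CC}^\phi$, and the splitting $\Fm_{\CC} = \Fm^+ \oplus \Fm^-$ into $\pm\sqrt{-1}$-eigenspaces of $\mathrm{ad}(z)$ matches the partition of $\Phi_M$ according to the value of $\phi(z)$, so $\Fm^+ = \bigoplus_{\phi \in \Phi_M^+} \Fg_{\CC}^\phi$ exactly. The only mild subtlety — and the step I would treat most carefully — is the choice of $H_0$: one must check that a single regular element simultaneously puts all the ``$+\sqrt{-1}$'' noncompact roots on the positive side while remaining regular, which is why the perturbation argument (finiteness of $\Phi$ plus the uniform bound on $|\phi(z)|$ for $\phi \in \Phi_M$) is needed. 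Everything else is a direct consequence of Lemma~\ref{LemmaCenterElementComplexStructureSV} and the eigenspace description of $J$.
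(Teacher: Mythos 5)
Your argument follows essentially the same route as the paper: use the central element $z$ of Lemma~\ref{LemmaCenterElementComplexStructureSV} to see that each noncompact root space lies entirely in $\Fm^+$ or $\Fm^-$, then build $\Phi^+$ from a regular element of $\sqrt{-1}\Fh_{\RR}$ close to $\pm\sqrt{-1}z$ (the paper phrases this as picking a Weyl chamber whose closure contains $-\sqrt{-1}z$, which is the same maneuver). Your first step is in fact slightly cleaner than the paper's: since $z\in\Fz\subseteq\Fh_{\RR}$ you get $[z,f_\phi]=\phi(z)f_\phi$ directly, whereas the paper avoids using $z\in\Fh$ and argues via the Jacobi identity that $Jf_\phi$ is proportional to $f_\phi$.

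There is, however, a sign error you must fix. For $f_\phi\in\Fm^+$ one has $\phi(z)=\sqrt{-1}$, hence $\phi(\sqrt{-1}z)=\sqrt{-1}\cdot\sqrt{-1}=-1<0$, while $\phi(-\sqrt{-1}z)=+1>0$. So with your choice $H_0=\sqrt{-1}z+\varepsilon H_1$ the roots contributing to $\Fm^+$ land in $-\Phi^+$, not $\Phi^+$; your stated conclusion that ``every noncompact root $\phi$ with $\phi(z)=\sqrt{-1}$ lands in $\Phi^+$'' is then false as written, and your construction would instead yield $\Fm^-=\bigoplus_{\phi\in\Phi_M^+}\Fg_{\CC}^{\phi}$. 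Replacing $H_0$ by $-\sqrt{-1}z+\varepsilon H_1$ (consistent with the $-\sqrt{-1}z$ you mention at the start of that paragraph, and with the paper's choice) repairs the argument; everything else, including the perturbation justification via finiteness of $\Phi$ and $|\phi(z)|=1$ on $\Phi_M$, is sound.
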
 
 
Before proving this lemma, let us explain how $\Phi^+$ is constructed.  
%inlinecomment!!!
Each $\phi \in \Phi$ takes real values on $\sqrt{-1}\Fh_{\RR}$, so $\Phi$ is a root system in $\sqrt{-1}\Fh_{\RR}^\vee$. 
 Since $\Fg_{\CC}$ is semi-simple, $\Phi$ spans $\sqrt{-1}\Fh_{\RR}^\vee$. From now on, we identify $\sqrt{-1}\Fh_{\RR}^\vee$ with $\sqrt{-1}\Fh_{\RR}$ using the Killing form on $\Fg_{\RR}$.\footnote{This Killing form is positive definite when restricted to $\sqrt{-1}\Fh_{\RR}$, and hence induces $\sqrt{-1}\Fh_{\RR}^\vee \cong \sqrt{-1}\Fh_{\RR}$.}. 
Now consider the Weyl chambers in $\sqrt{-1}\Fh_{\RR}$ with respect to the root system $\Phi$. Take a Weyl chamber $C$ such that $-\sqrt{-1}z \in \overline{C}$, with $z$ from Lemma~\ref{LemmaCenterElementComplexStructureSV}. For each $\gamma$ in the interior of $C$, the set $\Phi^+ := \{\phi \in \Phi : \phi(\gamma) > 0 \}$ does not depend on the choice of $\gamma$.  By theory of root systems, we have $\Phi = \Phi^+ \coprod -\Phi^+$. We will show that this is our desired $\Phi^+$.

\begin{proof}  
We start with the following observation: 
 For any non compact root $\phi$, either $f_{\phi}\in \Fm^+$ or $f_{\phi}\in \Fm^-$. Indeed  by Lemma~\ref{LemmaCenterElementComplexStructureSV}, $Jf_{\phi} = [z,f_{\phi}]$ for some $z \in \Fz$. Since $\Fh_{\CC} \subseteq \Fk_{\CC}$, we have $[h,z] = 0$ for all $h \in \Fh_{\CC}$. Hence for each $h \in \Fh_{\CC}$, we have $[h, Jf_{\phi}] = [h,[z,f_{\phi}]] = [z,[h,f_{\phi}]]$, with the last equality induced by the Jacobi identity, and further equals $[z, \phi(h)f_{\phi}] = \phi(h) [z,f_{\phi}] = \phi(h) Jf_{\phi}$. Hence $Jf_{\phi} \in \Fg_{\CC}^{\phi}$. So $Jf_{\phi}$ is a scalar of $f_{\phi}$.  
 Now that $f_{\phi} \in \Fm_{\CC} = \Fm^+ \oplus \Fm^-$, we have $f_{\phi} = a^+ m^+ + a^- m^-$ for some $m^+ \in \Fm^+$, $m^- \in \Fm^-$ and $a^+,a^- \in \CC$.  As $\Fm^+$ is the $J$-eigenspace associated with $\sqrt{-1}$ and $\Fm^-$ is the $J$-eigensapce associated with $-\sqrt{-1}$, we then have $Jf_{\phi} = \sqrt{-1}a^+ m^+ - \sqrt{-1} a^- m^-$. But $Jf_{\phi}$ is a scalar of $f_{\phi}$. So either $a^+ = 0$ or $a^- = 0$. Hence we are done for the claim.
  
This shows that $\Fm^+ = \oplus_{\phi \in \Phi_0} \Fg_{\CC}^{\phi}$ for a subset $\Phi_0 \subseteq \Phi_M$. It remains to show $\Phi_0 = \Phi_M^+$.

Assume $\phi \in \Phi_0$, \textit{i.e.} $f_{\phi} \in \Fm^+$. Then $[-\sqrt{-1}z, f_{\phi}] = -\sqrt{-1} Jf_{\phi} = f_{\phi}$. Thus $\phi(-\sqrt{-1}z) = 1 > 0$. So $\phi \in \Phi^+$ by construction. This proves $\Phi_0 \subseteq \Phi^+ \cap \Phi_M = \Phi_M^+$.

Assume $\phi \in \Phi_M^+$. Then $\phi(-\sqrt{-1}z) \ge 0$ by construction. If $\phi \not\in \Phi_0$, then $[-\sqrt{-1}z, f_{\phi}] = - \sqrt{-1} J f_{\phi} = -f_{\phi}$ and hence $\phi(-\sqrt{-1}z) = -1 < 0$, contradiction. This proves $\Phi_M^+ \subseteq \Phi_0$.

Now we are done.
 \end{proof}

\subsubsection{Decomposition over $\IQbar$}
Another way to see the root decomposition \eqref{EqRootDecompositionOverCC} is by the adjoint representation restricted to the maximal torus $\mathbf{T}$ and the character group $X^*(\mathbf{T})$. More precisely, the morphism $\bG \rightarrow \bG^{\mathrm{ad}}$ induces an inclusion $X^*(\mathbf{T}^{\mathrm{ad}}) \subseteq X^*(\mathbf{T})$. In fact, $Z(\bG) < \mathbf{T}$ and $\mathbf{T}^{\mathrm{ad}} = \mathbf{T}/Z(\bG)$, and hence
\begin{equation}
X^*(\mathbf{T}^{\mathrm{ad}}) = \{ \chi \in X^*(\mathbf{T}) : \chi|_{Z(\bG)} \equiv 1\}.
\end{equation}

The adjoint representation of $\bG^{\mathrm{ad}}$ on $\Fg$ induces
\begin{equation}\label{EqRootDecompositionGroup}
\Fg_{\CC} = \Fh_{\CC} \oplus \left( \bigoplus_{\chi \in \Phi(\mathbf{T}_{\CC},\bG_{\CC})} \Fg_{\CC}^{\chi} \right)
\end{equation}
where $\Phi(\mathbf{T}_{\CC},\bG_{\CC}) \subseteq X^*(\mathbf{T}^{\mathrm{ad}}_{\CC}) \subseteq X^*(\mathbf{T}_{\CC})$ and $\Fg_{\CC}^{\chi} = \{ x \in \Fg_{\CC} : \mathrm{Ad}(t)(x) = \chi(t)x \text{ for all }t \in \mathbf{T}(\CC)\}$ has dimension $1$.

By general theory of root decomposition for semi-simple groups, the sets of spaces $\{\Fg_{\CC}^{\phi}\}_{\phi \in \Phi}$ and $\{\Fg_{\CC}^{\chi}\}_{\chi \in \Phi(\mathbf{T}_{\CC},\bG_{\CC})}$ coincide. More precisely, there exists a bijection $\Phi \cong \Phi(\mathbf{T}_{\CC},\bG_{\CC})$, $\phi \mapsto \chi_{\phi}$, such that $\Fg_{\CC}^{\phi} = \Fg_{\CC}^{\chi_{\phi}}$. Both $\mathbf{T}$ and $\bG$ are $\QQ$-groups, so there is a natural isomorphism $X^*(\mathbf{T}_{\IQbar}) = X^*(\mathbf{T}_{\CC})$. Thus each $\Fg_{\CC}^{\chi}$ is naturally defined over $\IQbar$; more precisely  $\Fg_{\CC}^{\chi} = (\Fg_{\CC}^{\chi} \cap \Fg_{\IQbar}) \otimes_{\IQbar}\CC$.

Let $\Phi_M^+$ be from Lemma~\ref{LemmaRootDecompositionTangentSpace}. 
By abuse of notation, we still use $\Phi_M^+$ to denote the image of $\Phi_M^+ \subseteq \Phi$ under $\Phi \cong \Phi(\mathbf{T}_{\CC},\bG_{\CC}) \subseteq X^*(\mathbf{T}_{\CC}) = X^*(\mathbf{T}_{\IQbar})$.
\begin{prop}\label{PropDecompositionGeomIQbar}
Denote by $\Fm^{+,\chi}_{\IQbar,\mathrm{geom}} := \Fg_{\CC}^{\chi} \cap \Fg_{\IQbar}$ for each $\chi \in \Phi_M^+$. Then
\begin{equation}
\Fm^+_{\IQbar,\mathrm{geom}}= \Fm^+ \cap \Fg_{\IQbar} = \bigoplus_{\chi \in \Phi_M^+} \Fm^{+,\chi}_{\IQbar,\mathrm{geom}},
\end{equation}
with $\dim_{\IQbar} \Fm^{+,\chi}_{\IQbar,\mathrm{geom}} = 1$ for each $\chi \in \Phi_M^+$ (equivalently, $\Fm^{+,\chi}_{\IQbar,\mathrm{geom}}\otimes \CC = \Fg_{\CC}^{\chi}$).
\end{prop}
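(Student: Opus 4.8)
The plan is to descend to $\IQbar$ the complex decomposition $\Fm^+=\bigoplus_{\chi\in\Phi_M^+}\Fg_{\CC}^{\chi}$ established in Lemma~\ref{LemmaRootDecompositionTangentSpace} (formula \eqref{eqgeom}, read through the bijection $\Phi\cong\Phi(\mathbf{T}_{\CC},\bG_{\CC})$ behind \eqref{EqRootDecompositionGroup}). The first equality $\Fm^+_{\IQbar,\mathrm{geom}}=\Fm^+\cap\Fg_{\IQbar}$ is nothing but Proposition~\ref{PropIQbarHC}.(i), so only the second equality and the one-dimensionality of the summands remain. For every $\chi\in\Phi(\mathbf{T}_{\CC},\bG_{\CC})$ I would write $\Fg_{\IQbar}^{\chi}:=\Fg_{\CC}^{\chi}\cap\Fg_{\IQbar}$; for $\chi\in\Phi_M^+$ this is exactly $\Fm^{+,\chi}_{\IQbar,\mathrm{geom}}$.

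First I would collect the two rationality inputs that make everything work. On one hand, $\mathbf{T}$ is a torus over $\QQ$ and $\IQbar$ is algebraically closed, so $\mathbf{T}_{\IQbar}$ is split; hence the adjoint action of $\mathbf{T}$ on $\Fg_{\IQbar}$ gives a weight decomposition $\Fg_{\IQbar}=\Fh_{\IQbar}\oplus\bigoplus_{\chi}\Fg_{\IQbar}^{\chi}$ with $\Fg_{\IQbar}^{\chi}\otimes_{\IQbar}\CC=\Fg_{\CC}^{\chi}$, and in particular $\dim_{\IQbar}\Fg_{\IQbar}^{\chi}=\dim_{\CC}\Fg_{\CC}^{\chi}=1$. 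On the other hand, since $o\in X(\IQbar)$ the Cartan decomposition descends to $\IQbar$ — this is precisely the observation used in the proof of Proposition~\ref{PropIQbarHC} — so $\Fm^+=\Fm^+_{\IQbar,\mathrm{geom}}\otimes_{\IQbar}\CC$. Finally, by Lemma~\ref{LemmaRootDecompositionTangentSpace} and the identification of the two root decompositions, $\Fm^+$ is a sum of $\mathrm{Ad}(\mathbf{T}_{\CC})$-eigenspaces, hence $\mathrm{Ad}(\mathbf{T}_{\CC})$-stable; combined with $\Fm^+=\Fm^+_{\IQbar,\mathrm{geom}}\otimes\CC$, this forces $\Fm^+_{\IQbar,\mathrm{geom}}$ to be $\mathbf{T}_{\IQbar}$-stable (Zariski density of $\mathbf{T}(\IQbar)$ in $\mathbf{T}_{\IQbar}$).

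The conclusion is then pure bookkeeping. Being a $\mathbf{T}_{\IQbar}$-stable $\IQbar$-subspace of $\Fg_{\IQbar}$, the space $\Fm^+_{\IQbar,\mathrm{geom}}$ inherits the weight grading, $\Fm^+_{\IQbar,\mathrm{geom}}=\bigoplus_{\chi}\bigl(\Fm^+_{\IQbar,\mathrm{geom}}\cap\Fg_{\IQbar}^{\chi}\bigr)$. Tensoring up and comparing with $\Fm^+=\bigoplus_{\chi\in\Phi_M^+}\Fg_{\CC}^{\chi}$ inside the direct-sum decomposition \eqref{EqRootDecompositionGroup} of $\Fg_{\CC}$ shows that $\Fm^+_{\IQbar,\mathrm{geom}}\cap\Fg_{\IQbar}^{\chi}=0$ for $\chi\notin\Phi_M^+$, while for $\chi\in\Phi_M^+$ it is a nonzero $\IQbar$-subspace of the one-dimensional space $\Fg_{\IQbar}^{\chi}=\Fm^{+,\chi}_{\IQbar,\mathrm{geom}}$, hence equals it. (Alternatively the last step can be replaced by the dimension count $\dim_{\IQbar}\Fm^+_{\IQbar,\mathrm{geom}}=\dim_{\CC}\Fm^+=|\Phi_M^+|$, which forces each of the at most one-dimensional summands indexed by $\Phi_M^+$ to be exactly one-dimensional.) This gives $\Fm^+_{\IQbar,\mathrm{geom}}=\bigoplus_{\chi\in\Phi_M^+}\Fm^{+,\chi}_{\IQbar,\mathrm{geom}}$ with each summand of $\IQbar$-dimension $1$, as claimed.

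I do not expect a genuine obstacle here: everything substantive — the identification $T_oX=\Fm^+$ (Lemma~\ref{LemmaTangentSpaceLieSubAlgebra}), the descent of the Cartan data at a special point (Proposition~\ref{PropIQbarHC}), and the root-space description of $\Fm^+$ (Lemma~\ref{LemmaRootDecompositionTangentSpace}) — is already in place. The one point requiring care is juggling the two relevant $\IQbar$-structures simultaneously: the $\mathbf{T}$-weight grading of $\Fg$ is rational over $\IQbar$ because $\mathbf{T}$ is a $\QQ$-group (and $\IQbar$ is algebraically closed), whereas $\Fm^+\subseteq\Fg_{\CC}$ is rational over $\IQbar$ because $o$ is a $\IQbar$-point of $X$; the proposition is the formal compatibility of these two facts.
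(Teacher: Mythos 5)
Your proposal is correct and follows essentially the same route as the paper: the first equality is Proposition~\ref{PropIQbarHC}.(i) (which the paper re-justifies via the flag variety of the adjoint representation \eqref{EqChoiceOfRepresentationForIQbarOnX}), and the second is obtained, as in the paper, by combining the complex decomposition $\Fm^+=\bigoplus_{\chi\in\Phi_M^+}\Fg_{\CC}^{\chi}$ of Lemma~\ref{LemmaRootDecompositionTangentSpace} with the $\IQbar$-rationality of the $\mathbf{T}$-weight spaces $\Fg_{\CC}^{\chi}=(\Fg_{\CC}^{\chi}\cap\Fg_{\IQbar})\otimes_{\IQbar}\CC$ established just before the proposition. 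Your spelling-out of the $\mathbf{T}_{\IQbar}$-stability of $\Fm^+_{\IQbar,\mathrm{geom}}$ and the ensuing dimension count simply makes explicit what the paper compresses into ``follows immediately.''
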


This proposition is a more precise version of Proposition~\ref{PropGeometricIQbarLieAlgebra}.%, with the second equality for part (i) and the first equality for part (ii).

\begin{proof}[Proof of Proposition~\ref{PropDecompositionGeomIQbar}]
The second equality follows immediately from Lemma~\ref{LemmaRootDecompositionTangentSpace}, and we have seen $\dim_{\IQbar} \Fg_{\IQbar}^{\chi} = 1$ above. To see the first equality, recall that $\Fm^+_{\IQbar,\mathrm{geom}} = T_o X^\vee_{\IQbar}$. The $\IQbar$-structure $X^\vee_{\IQbar}$ is given by the flag variety associated with the representation \eqref{EqChoiceOfRepresentationForIQbarOnX}, which is precisely the representation giving the root decomposition \eqref{EqRootDecompositionGroup}. Hence we are done.
\end{proof}

\subsection{Decomposition of the Arithmetic $\oQ$-structure on $T_o X$.}\label{SubsectionDecompositionArithShimura}
In this subsection, we turn to the decomposition of the arithmetic $\IQbar$-structure on $T_o X$ into the direct sum of $1$-dimensional $\IQbar$-space.

We retain the notation from $\mathsection$\ref{SubsectionRecallShimura} and \ref{SubsectionArithmeticIQbarShimura}. Now $S$ is a connected Shimura variety associated with the connected Shimura datum $(\bG,X)$, and $S$ has a canonical model $S_{\IQbar}$ over $\IQbar$. As a complex variety $S^{\mathrm{an}} = \Gamma \backslash X$ for some arithmetic subgroup $\Gamma \subseteq \bG(\QQ)$. 

From the uniformization $u \colon X \rightarrow S$ and the special point $o \in X$, we get a $\IQbar$-point $[o]:=u(o) \in S_{\IQbar}(\IQbar)$. The arithmetic $\IQbar$-structure on $T_o X$ is the one obtained from $\psi^{-1}(T_{[o]}S_{\IQbar})$, where $\psi$ is the differential $\mathrm{d} u \colon T_o X \rightarrow T_{[o]} S$.

The point $o \in X$ gives rise to a Cartan decomposition $\Fg_{\RR} = \Fk \oplus \Fm$, and we have $\Fm_{\CC} = \Fm^+ \oplus \Fm^-$; see $\mathsection$\ref{SubsectionHC}. By Lemma~\ref{LemmaTangentSpaceLieSubAlgebra}, $\Fm^+$ can be identified with $T_o X$. Hence the arithmetic $\IQbar$-structure on $T_o X$ defined above yields a $\IQbar$-structure on $\Fm^+$, which we denote by 
%this arithmetic $\IQbar$-structure by
 $\Fm^+_{\IQbar,\mathrm{arith}}$.

\vskip 0.3em

Let $\mathbf{T}$ be a maximal torus in $\bG$ which contains $\mathrm{MT}(o)$ such that $\Lie \mathbf{T}(\RR)$ is contained in $\Fk$ modulo $\Lie Z(\bG)(\RR)$. In particular, $\mathbf{T}(\RR)o = o$. Then any $t \in \mathbf{T}(\QQ)$ induces the following commutative diagram, and $o \in X$ is mapped 
\begin{equation}\label{EqDiagramHeckeForArithmetic}
\xymatrix{
X \ar[d]_-{u} & X \ar[l]_-{\mathrm{id}} \ar[r]^-{t\cdot } \ar[d]_-{u_t} & X \ar[d]^-{u} & o \ar@{|->}[d] & o \ar@{|->}[l] \ar@{|->}[r] \ar@{|->}[d] & o \ar@{|->}[d] \\
S & S_t := (\Gamma \cap t^{-1}\Gamma t)\backslash X \ar[l]_-{[\mathrm{id}]} \ar[r]^-{[t\cdot ]} & S & [o] & [o_t]:= u_t(o) \ar@{|->}[l] \ar@{|->}[r] & [o]
}
\end{equation}
where $u_t$ is the quotient. The theory of Shimura varieties asserts that every morphism in the bottom line is algebraic, and is furthermore defined over $\IQbar$. The morphism $[\mathrm{id}]$ induces $T_{[o_t]} S_{t,\IQbar} = T_{[o]}S_{\IQbar}$. Thus the differential of $[t\cdot]$ induces a $\IQbar$-linear map 
\[
\rho_t \colon T_{[o]}S_{\IQbar} \rightarrow T_{[o]}S_{\IQbar}.
\]
By abuse of notation, we also use $\rho_t$ to denote its base change to $\CC$.

\begin{prop}\label{PropActionTArithmeticAdjoint}
Under the identification of $T_o X$ with $\Fm^+$ from Lemma~\ref{LemmaTangentSpaceLieSubAlgebra}, we have
\begin{equation}
\rho_t = \psi \circ \mathrm{Ad}(t) \circ \psi^{-1}.
\end{equation}
\end{prop}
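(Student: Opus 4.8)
The strategy is to compare the uniformizations of $S$, $S_t$, and $S$ again (the left, middle, right columns of diagram \eqref{EqDiagramHeckeForArithmetic}) on the level of tangent spaces at $o$, using the fact that all three uniformizing maps are local analytic isomorphisms at $o$ and that every horizontal map on the bottom line is $\IQbar$-algebraic. First I would unwind the definition of $\rho_t$: it is defined on $T_{[o]}S_{\IQbar}$ by declaring that $[\mathrm{id}]$ induces the \emph{identification} $T_{[o_t]}S_{t,\IQbar} = T_{[o]}S_{\IQbar}$ and then transporting along the differential of $[t\cdot]$. So on the analytic side $\rho_t$, viewed after base change to $\CC$, is characterized by the commutative square obtained from the right half of \eqref{EqDiagramHeckeForArithmetic}, namely
\begin{equation*}
\mathrm{d}[t\cdot] \circ \mathrm{d}u_t = \mathrm{d}u \circ \mathrm{d}(t\cdot) \quad \text{at } o,
\end{equation*}
together with the identification $\mathrm{d}u_t = \mathrm{d}u$ coming from $[\mathrm{id}]$. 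Here $t\cdot \colon X \to X$ is the action of $t \in \mathbf{T}(\QQ) \subseteq \bG(\QQ)$ on $X$, which fixes $o$ (since $\Lie\mathbf{T}(\RR)\subseteq\Fk$ modulo the center, so $\mathbf{T}(\RR)o=o$), hence has a well-defined differential $\mathrm{d}(t\cdot)\colon T_o X \to T_o X$.

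The second step is to identify $\mathrm{d}(t\cdot)\colon T_o X \to T_o X$ with $\mathrm{Ad}(t)$ under the isomorphism $T_o X = \Fm^+$ of Lemma~\ref{LemmaTangentSpaceLieSubAlgebra}. This is the geometric heart of the statement. The point is that under $X = G/K_\infty$ (or its complexification $X^\vee = G_\CC/P_\CC$), left translation by $t$ sends $o = eK_\infty$ to $tK_\infty$; but because $t$ stabilizes $o$, it descends to an automorphism of the stabilizer and the induced map on the tangent space $T_o X = \Fg_\CC/(\Fk_\CC \oplus \Fm^-) = \Fm^+$ is precisely the map induced by $\mathrm{Ad}(t)$ on $\Fg_\CC$. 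Concretely, for $v \in \Fm^+$ one computes $t\cdot\exp(v)\cdot o = t\exp(v)t^{-1}\cdot (t\cdot o) = \exp(\mathrm{Ad}(t)v)\cdot o$, and since $t$ normalizes $P_\CC$ (as $\mathbf{T}$ normalizes the parabolic built from it — here one uses that $\mathbf{T}\subseteq\bG$ and $P_\CC$ is defined from the Cartan data at $o$, which $\mathbf{T}$ respects), passing to the quotient $X^\vee = G_\CC/P_\CC$ and differentiating at $v=0$ gives exactly $\mathrm{Ad}(t)|_{\Fm^+}$ after projecting $\Fg_\CC \to \Fm^+$. I should double-check that $\mathrm{Ad}(t)$ preserves $\Fm^+$: since $t$ commutes with the element $z\in\Fz$ defining the complex structure $J$ (as $\mathbf{T}^{\mathrm{ad}}$ is a torus containing the image of $o$, and $z$ lies in the Lie algebra of a maximal compact torus through $o$), $\mathrm{Ad}(t)$ commutes with $J = \mathrm{ad}(z)$ and therefore preserves the $J$-eigenspace $\Fm^+$.

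The third step is just bookkeeping: combine the square from step one, $\mathrm{d}[t\cdot] = \mathrm{d}u \circ \mathrm{d}(t\cdot) \circ (\mathrm{d}u_t)^{-1}$, with the identification $\mathrm{d}u_t = \mathrm{d}u =: \psi$ from step one and $\mathrm{d}(t\cdot) = \mathrm{Ad}(t)|_{\Fm^+}$ from step two, to get $\rho_t = \psi \circ \mathrm{Ad}(t) \circ \psi^{-1}$ as maps $T_{[o]}S \to T_{[o]}S$, hence (by the $\IQbar$-rationality of $\rho_t$ already established via the bottom row being defined over $\IQbar$) as maps of the $\IQbar$-structures.

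\textbf{Main obstacle.} The only genuinely delicate point is step two: carefully justifying that translation-by-$t$ on $X$ induces $\mathrm{Ad}(t)$ on $T_o X = \Fm^+$, which requires being precise about the Harish--Chandra/Borel coordinates (the identification $T_o X = \Fg_\CC/\Fp$ where $\Fp = \Fk_\CC \oplus \Fm^-$), about why $t$ stabilizes $o$ and normalizes $P_\CC$, and about the compatibility of the quotient projection $\Fg_\CC \to \Fm^+$ with $\mathrm{Ad}(t)$ — all of which hinge on the hypothesis that $\mathbf{T}$ contains $\mathrm{MT}(o)$ and $\Lie\mathbf{T}(\RR)$ sits in $\Fk$ modulo the center. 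Everything else is formal diagram-chasing and the functoriality of differentials, which I would not spell out in detail.
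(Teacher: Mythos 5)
Your proposal is correct and follows essentially the same route as the paper: differentiate the Hecke diagram \eqref{EqDiagramHeckeForArithmetic} to reduce to identifying $\mathrm{d}(t\cdot)_o$ with $\mathrm{Ad}(t)|_{\Fm^+}$, then observe that translation by $t$ at the fixed point $o$ is induced by conjugation. The only cosmetic difference is that you verify $\mathrm{Ad}(t)(\Fm^+)\subseteq\Fm^+$ via commutation with $J=\mathrm{ad}(z)$, whereas the paper uses $\Lie\mathbf{T}(\RR)\subseteq\Fk$ modulo the center together with $[\Fk_{\CC},\Fm^+]\subseteq\Fm^+$; both are valid.
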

\begin{proof}
Differentiating \eqref{EqDiagramHeckeForArithmetic}, we get that $\psi^{-1} \circ \rho_t \circ \psi \colon T_o X \rightarrow T_o X$ is $\mathrm{d}(t\cdot)_o$.

Use the notation from Lemma~\ref{LemmaTangentSpaceLieSubAlgebra} and above. Recall our choice of $\mathbf{T}$ that $\Lie \mathbf{T}(\RR)$ is contained in $\Fk$ modulo $\Lie Z(\bG)(\RR)$. Since $[\Fk,\Fm] \subseteq \Fm$ and $[\Fk_{\CC}, \Fm^+] \subseteq \Fm^+$, we have $\mathrm{Ad}(\mathbf{T}(\RR))(\Fm)\subseteq \Fm$ and $\mathrm{Ad}(\mathbf{T}(\CC))(\Fm^+)\subseteq \Fm^+$.

For $X = G/K_{\infty}$ as a Riemannian symmetric space, the real tangent space of $X$ at $o$ is $\Fm$. 
The map $t \cdot X \rightarrow X$ is induced by $G \rightarrow G$, $g \mapsto t g t^{-1}$, and hence the differential of $t \cdot$ at $o$ is induced by $\mathrm{Ad}(t) \colon \Fm \rightarrow \Fm$. Passing from $\RR$ to $\CC$ and considering the decomposition of $\Fm_{\CC} = \Fm^+ \oplus \Fm^-$, we can conclude.
\end{proof}

Let $\Phi_M^+$ be the subset of $X^*(\mathbf{T}_{\IQbar})$ defined as above Proposition~\ref{PropDecompositionGeomIQbar}. Then $\Fm^+ = \bigoplus_{\chi\in \Phi_M^+} \Fg_{\CC}^{\chi}$, with $\Fg_{\CC}^{\chi} = \{ x \in \Fg_{\CC} : \mathrm{Ad}(t)(x) = \chi(t)x \text{ for all }t \in \mathbf{T}(\CC)\}$ of dimension $1$.
\begin{prop}\label{PropDecompositionArithIQbar}
For each $\chi\in \Phi_M^+$,
\[
\Fm^{+,\chi}_{\IQbar,\mathrm{arith}} := \Fm^+_{\IQbar,\mathrm{arith}} \cap \Fg_{\CC}^{\chi}
\]
is a $\IQbar$-space of dimension $1$ (equivalently, $\Fm^{+,\chi}_{\IQbar,\mathrm{arith}}\otimes \CC = \Fg_{\CC}^{\chi}$). 

As a consequence, $\Fm^+ = \bigoplus_{\chi\in \Phi_M^+} \Fm^{+,\chi}_{\IQbar,\mathrm{arith}}$.
\end{prop}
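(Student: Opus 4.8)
The plan is to leverage Proposition~\ref{PropActionTArithmeticAdjoint}, which transports the Hecke-type operator $\rho_t$ on $T_{[o]}S_{\IQbar}$ to the adjoint operator $\mathrm{Ad}(t)$ on $\Fm^+$ via $\psi$, for every $t\in\mathbf{T}(\QQ)$. First I would observe that since $\rho_t$ is the differential of a morphism of algebraic varieties over $\IQbar$, it is $\IQbar$-linear on $T_{[o]}S_{\IQbar}$; transporting through $\psi^{-1}$ and using $\psi^{-1}\circ\rho_t\circ\psi=\mathrm{Ad}(t)$, this means $\mathrm{Ad}(t)$ preserves the arithmetic $\IQbar$-structure $\Fm^+_{\IQbar,\mathrm{arith}}$ and acts $\IQbar$-linearly on it. Thus $\{\mathrm{Ad}(t)|_{\Fm^+_{\IQbar,\mathrm{arith}}}\}_{t\in\mathbf{T}(\QQ)}$ is a commuting family of $\IQbar$-linear endomorphisms of the $\IQbar$-vector space $\Fm^+_{\IQbar,\mathrm{arith}}$, each diagonalizable over $\IQbar$: indeed $\mathrm{Ad}(t)$ is a semisimple element of $\mathrm{GL}(\Fm^+)$ with eigenvalues $\{\chi(t):\chi\in\Phi_M^+\}$, and these lie in $\IQbar^*$ because each $\chi$ is defined over $\IQbar$ and $t\in\mathbf{T}(\IQbar)$.

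Next I would run the following linear-algebra argument. The commuting family has a simultaneous eigenspace decomposition over $\IQbar$; write $\Fm^+_{\IQbar,\mathrm{arith}}=\bigoplus_\lambda W_\lambda$, where $\lambda$ ranges over the distinct characters of $\mathbf{T}(\QQ)$ appearing. For each $\lambda$ and each $t\in\mathbf{T}(\QQ)$ we have $\mathrm{Ad}(t)W_\lambda\subseteq W_\lambda$; since this is a Zariski-closed condition on $t$ and $\mathbf{T}(\QQ)$ is Zariski dense in $\mathbf{T}$ (a torus over an infinite field has dense rational points), we get $\mathrm{Ad}(t)W_\lambda\subseteq W_\lambda$ for all $t\in\mathbf{T}(\CC)$. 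Hence $W_\lambda\otimes_{\IQbar}\CC$ is $\mathbf{T}(\CC)$-stable inside $\Fm^+$, so it is a sum of root spaces $\Fg_{\CC}^{\chi}$; but all $\chi$ occurring restrict to the single character $\lambda$ on $\mathbf{T}(\QQ)$, and two distinct characters of $\mathbf{T}$ cannot coincide on the Zariski-dense subset $\mathbf{T}(\QQ)$, so exactly one $\chi$ occurs. As $\dim_{\CC}\Fg_{\CC}^{\chi}=1$, we conclude $W_\lambda\otimes_{\IQbar}\CC=\Fg_{\CC}^{\chi}$ and $\dim_{\IQbar}W_\lambda=1$. This identifies $W_\lambda$ with $\Fm^+_{\IQbar,\mathrm{arith}}\cap\Fg_{\CC}^{\chi}=\Fm^{+,\chi}_{\IQbar,\mathrm{arith}}$, and as $\lambda$ runs over its range $\chi$ runs over all of $\Phi_M^+$, yielding both the $1$-dimensionality claim (equivalently $\Fm^{+,\chi}_{\IQbar,\mathrm{arith}}\otimes\CC=\Fg_{\CC}^{\chi}$) and the decomposition $\Fm^+=\bigoplus_{\chi\in\Phi_M^+}\Fm^{+,\chi}_{\IQbar,\mathrm{arith}}$.

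One can also replace the family by a single element: pick $t_0\in\mathbf{T}(\QQ)$ with $\chi(t_0)\ne\chi'(t_0)$ for all distinct $\chi,\chi'\in\Phi_M^+$, which is possible because the locus in $\mathbf{T}$ where two distinct roots of $\mathbf{T}_{\IQbar}$ in $\bG_{\IQbar}$ agree is a proper closed subset defined over $\QQ$ (roots are permuted by $\Gal(\IQbar/\QQ)$) and hence cannot contain the Zariski-dense set $\mathbf{T}(\QQ)$. Then $\mathrm{Ad}(t_0)$ has pairwise distinct eigenvalues $\chi(t_0)\in\IQbar^*$ with eigenspaces the $\Fg_{\CC}^{\chi}$, and since it restricts to a $\IQbar$-linear operator on $\Fm^+_{\IQbar,\mathrm{arith}}$ with eigenvalues in $\IQbar$, that $\IQbar$-space splits as the direct sum of its $\mathrm{Ad}(t_0)$-eigenspaces, each recovering one $\Fg_{\CC}^{\chi}$ after $\otimes\,\CC$.

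The steps involving semisimplicity of $\mathrm{Ad}(t)$, the rationality of its eigenvalues, and the diagonalization of commuting semisimple $\IQbar$-operators are all immediate. The point needing genuine care, and which I would flag as the crux, is the interaction with $\QQ$-rationality: that $\mathbf{T}(\QQ)$ — not merely $\mathbf{T}(\IQbar)$ or $\mathbf{T}(\CC)$ — is Zariski dense in $\mathbf{T}$, so that the Hecke operators indexed by $\mathbf{T}(\QQ)$ already detect the full root-space decomposition and so that the finitely many characters in $\Phi_M^+$ can be separated by a single $\QQ$-point. Granting this, the result falls out formally from Proposition~\ref{PropActionTArithmeticAdjoint}.
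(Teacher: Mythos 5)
Your proof is correct and follows essentially the same route as the paper: both arguments rest on Proposition~\ref{PropActionTArithmeticAdjoint} to make $\mathbf{T}(\QQ)$ act on $\Fm^+_{\IQbar,\mathrm{arith}}$ by $\mathrm{Ad}$, then decompose that $\IQbar$-space into weight/eigenspaces and match them with the root spaces $\Fg_{\CC}^{\chi}$, $\chi\in\Phi_M^+$. The density and separation arguments you spell out (Zariski density of $\mathbf{T}(\QQ)$, distinct characters differing on it) are exactly what the paper compresses into the phrase ``general theory of algebraic tori and the character groups.''
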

\begin{proof}
By Proposition~\ref{PropActionTArithmeticAdjoint} and the definition of the arithmetic $\IQbar$-structure $\Fm^+_{\IQbar,\mathrm{arith}}$ on $\Fm^+$, $\mathbf{T}(\QQ)$ acts on $\Fm^+_{\IQbar,\mathrm{arith}}$ via $t \cdot x = \mathrm{Ad}(t)(x)$. 
%\[
%\mathbf{T}(\QQ) \rightarrow \mathrm{GL}(\Fm^+_{\IQbar,\mathrm{arith}}), \qquad t \mapsto \mathrm{Ad}(t).
%\]
For each $\chi \in \Phi_M^+ \subseteq X^*(\mathbf{T}_{\IQbar})$, the weight space associated with $\chi$ (defined as $\{x \in \Fm^+_{\IQbar,\mathrm{arith}} : \mathrm{Ad}(t)(x) = \chi(t)x \text{ for all }t \in \mathbf{T}(\IQbar)\}$) is precisely $\Fm^{+,\chi}_{\IQbar,\mathrm{arith}}$. Extend this notation and write $ \Fm^{+,\chi}_{\IQbar,\mathrm{arith}}$ for the weight space associated with $\chi$ for each $\chi \in X^*(\mathbf{T}_{\IQbar})$.

The general theory of algebraic tori and their character groups then asserts that $\Fm^+_{\IQbar,\mathrm{arith}} = \bigoplus_{\chi \in X^*(\mathbf{T}_{\IQbar})} \Fm^{+,\chi}_{\IQbar,\mathrm{arith}}$. Tensoring $\CC$ on both sides, we get $\Fm^+ = \bigoplus_{\chi \in \Phi_M^+} (\Fm^{+,\chi}_{\IQbar,\mathrm{arith}}\otimes \CC) \oplus \bigoplus_{\chi\not\in \Phi_M^+} (\Fm^{+,\chi}_{\IQbar,\mathrm{arith}} \otimes \CC)$. By Lemma~\ref{LemmaRootDecompositionTangentSpace}, $\Fm^+ = \bigoplus_{\chi\in \Phi_M^+} \Fg_{\CC}^{\chi}$. Thus the conclusion follows because $\Fm^{+,\chi}_{\IQbar,\mathrm{arith}}\otimes \CC \subseteq \Fg_{\CC}^{\chi}$ for each $\chi \in X^*(\mathbf{T}_{\IQbar})$.
\end{proof}

\subsection{Conclusion for Theorem~\ref{ThmBiIQbarShimuraSplit}}
By Proposition~\ref{PropDecompositionGeomIQbar} and Proposition~\ref{PropDecompositionArithIQbar}, the bi-$\IQbar$-structure on $T_o X = \Fm^+$ is split. This establishes Theorem~\ref{ThmBiIQbarShimuraSplit}. In fact, these two propositions yield a more precise statement as follows.

Let $\Fg_{\RR} = \Fk \oplus \Fm$ be the Cartan decomposition  of $\Fg_{\RR} := \Lie \bG^{\mathrm{der}}(\RR)$ associated with the special point $o \in X$ by (SV2).  Let $\mathbf{T}$ be a maximal torus in $\bG$. % which contains $\mathrm{MT}(o)$ such that $\Lie \mathbf{T}(\RR)$ is contained in $\Fk$ modulo $\Lie Z(\bG)(\RR)$.
 By Proposition~\ref{PropDecompositionGeomIQbar} and Proposition~\ref{PropDecompositionArithIQbar} we have:

\begin{teobis}{ThmBiIQbarShimuraSplit}\label{ThmBiIQbarShimuraSplitBis}
Endow $T_o X$ with the bi-$\IQbar$-structure from $\mathsection$\ref{SubsectionArithmeticIQbarShimura} and \ref{SubsectionGeometricIQbarShimura}. Then $\mathbf{T}(\QQ)$ acts on $T_o X$ via bi-$\IQbar$-automorphisms and the splitting under this action gives the bi-$\IQbar$-splitting of $T_o X$.
\end{teobis}

\section{De Rham--Betti comparison in family and the Kodaira--Spencer map}\label{SectionDeRhamBettiKodaira}
In this section, we recall the comparison of the relative de Rham cohomology and the Betti cohomology for families of abelian varieties. These are useful in the computation of the periods of the Siegel modular varieties in the next section $\mathsection$\ref{SectionPeriodSiegel}.

Let $k \subseteq \CC$ be an algebraically closed field. 
Let $S$ be a smooth irreducible variety and let $f \colon \cA \rightarrow S$ be an abelian scheme of relative dimension $g$, all defined over $k$. Assume that $\cA$ carries a principal polarization, \textit{i.e.} an isomorphism of abelian schemes $\lambda \colon \cA \xrightarrow{\sim} \cA^{\mathrm{t}}$ where $\cA^{\mathrm{t}}$ is the dual abelian scheme of $\cA/S$. 

Let $u \colon \tilde{S} \to S^{\mathrm{an}}$ be the uniformization in the category of complex analytic spaces. Set
\begin{equation}\label{EqPullbackUnivAbVarSiegel}
\xymatrix{
\cA_{\tilde{S}} := \tilde{S} \times_S \cA \ar[r] \ar[d]_-{\tilde{f}} \pullbackcorner & \cA \ar[d]^-{f} \\
\tilde{S} \ar[r]^-{u} & S.
}
\end{equation}
Then $\cA_{\tilde{S}} / \tilde{S}$ is a family of abelian varieties which carries a principal polarization. 

For each $s \in S(\CC)$ (resp. $\tilde{s} \in \tilde{S}$), denote by $\cA_s:=f^{-1}(s)$ (resp. $\cA_{\tilde{s}} := \tilde{f}^{-1}(\tilde{s})$). Notice that for each $\tilde{s} \in \tilde{S}$, $\cA_{\tilde{s}}$ can be canonically identified with $\cA_{u(\tilde{s})}$.

We have a canonical short exact sequence
\begin{equation}\label{EqRelative1Form}
0 \to f^*\Omega_S^{1} \to \Omega_{\cA}^{1} \to \Omega_{\cA/S}^{1} \to 0.
\end{equation}
It is known that the de Rham complex $\Omega_{\cA/S}^{\bullet} = (\cO_{\cA} \to \Omega_{\cA/S}^1 \to \Omega_{\cA/S}^2 \to \cdots)$ is a resolution of $f^{-1}\cO_S$.

\subsection{De Rham cohomology and symplectic basis}\label{SubsectionBasesSiegel}

The relative de Rham bundle on $S$ is defined as follows. Let $\cH^1_{\mathrm{dR}}(\cA/S) := R^1  f_* \Omega_{\cA/S}^{\bullet}$; it is a locally free sheaf of rank $2g$, which we view as a vector bundle of rank $2g$ over $S$. We have a subbundle $\Omega_{\cA} :=  f_*\Omega_{\cA/S}^1$ of $\cH^1_{\mathrm{dR}}(\cA /S) \to S$ which has rank $g$; over $s \in S(k)$, the fiber $\Omega_{\cA,s}$ is precisely $\Omega_{\cA_s}$ and hence consists of invariant holomorphic differentials on $\cA_s$.

Set
\begin{equation}\label{EqDeRhamOverSiegel}
\cH^1_{\mathrm{dR}}(\cA_{\tilde{S}} /\tilde{S}) := u^*\cH^1_{\mathrm{dR}}(\cA /S) \quad \text{and} \quad \Omega_{\cA_{\tilde{S}}} := u^*\Omega_{\cA}.
\end{equation}
Restricted to each $\tilde{s} \in \tilde{S}$, the inclusion $ \Omega_{\cA_{\tilde{S}}} \subseteq \cH^1_{\mathrm{dR}}(\cA_{\tilde{S}} /\tilde{S})$ becomes $\Omega_{\cA_{\tilde{s}}} \subseteq H^1_{\mathrm{dR}}(\cA_{\tilde{s}})$. 

The advantage of using $\Omega_{\cA_{\tilde{S}}} \subseteq \cH^1_{\mathrm{dR}}(\cA_{\tilde{S}} /\tilde{S})$ instead of working directly on $S$ is that we can take global basis. More precisely, we have:

\begin{contr}\label{LemmaGlobalBasisSympDeRham}
There exists a \textit{global} basis $\{\omega_1,\ldots,\omega_g\}$ of $\Omega_{\cA_{\tilde{S}}}$,\footnote{Namely, the $\omega_j$'s are sections of $\Omega_{\cA_{\tilde{S}}} \to \tilde{S}$ such that $\sum_{j=1}^g \CC \omega_j(\tilde{s}) = \Omega_{\cA_{\tilde{s}}}$ for each $\tilde{s} \in \tilde{S}$.} which can be completed into a  global basis $\{\omega_1,\ldots,\omega_g, \eta_1,\ldots,\eta_g\}$ of $\cH^1_{\mathrm{dR}}(\cA_{\tilde{S}} /\tilde{S})$, with $\eta_j(\tilde{s})$ being the complex conjugate of $\omega_j(\tilde{s})$ for each $s \in \tilde{S}$.

Moreover, if $\tilde{s} \in \tilde{S}$ satisfies that $\cA_{\tilde{s}}$ is a CM abelian variety, then $E:= \mathrm{End}(\cA_{\tilde{s}})\otimes \QQ$ acts on $\Omega_{\cA_{\tilde{s}}}$ and we can make a choice such that each $\omega_i(\tilde{s})$ is an eigenvector for this action.
\end{contr}
\begin{proof}
The existence of $\{\omega_1,\ldots,\omega_g\}$ holds true since $\tilde{S}$ is simply-connected. If $\cA_{\tilde{s}}$ is a CM abelian variety, then by looking at the CM type, we can make a choice such that each $\omega_i(\tilde{s})$ is an eigenvector for the action of $E$ on $\Omega_{\cA_{\tilde{s}}}$.%The ``Moreover'' part also easily holds true by looking at the CM type.

The set $\{\eta_1,\ldots,\eta_g\}$ can be constructed as follows. Apply \eqref{EqPullbackUnivAbVarSiegel} and \eqref{EqDeRhamOverSiegel} to the dual abelian scheme $\cA^{\mathrm{t}} \to S$. Then we obtain a family of abelian varieties $\cA_{\tilde{S}}^{\mathrm{t}} = \tilde{S} \times_{S} \cA^{\mathrm{t}} \to \tilde{S}$ and a vector bundle $\Omega_{\cA_{\tilde{S}}^{\mathrm{t}}} \to \tilde{S}$. The principal polarization $\lambda \colon \cA \cong \cA^{\mathrm{t}}$ induces an isomorphism $\lambda^* \colon \Omega_{\cA_{\tilde{S}}^{\mathrm{t}}}\cong \Omega_{\cA_{\tilde{S}}}$ of vector bundles on $\tilde{S}$, and hence we obtain a global basis $\{ \lambda^*(\omega_1),\ldots,\lambda^*(\omega_g)\}$ of $\Omega_{\cA_{\tilde{S}}^{\mathrm{t}}}$. For the exact sequence
\begin{equation}\label{EqSESdRAoverS}
0 \to \Omega_{\cA_{\tilde{S}}} \to \cH^1_{\mathrm{dR}}(\cA_{\tilde{S}} /\tilde{S}) \to \Lie(\cA^{\mathrm{t}}_{\tilde{S}} / \tilde{S}) = (\Omega_{\cA_{\tilde{S}}^{\mathrm{t}}})^\vee \to 0,
\end{equation}
$\{2\pi i \lambda^*(\omega_1)^\vee,\ldots,2\pi i \lambda^*(\omega_g)^\vee\}$ lifts to a set of global sections $\{\eta_1,\ldots,\eta_g\}$ of $\cH^1_{\mathrm{dR}}(\cA_{\tilde{S}} /\tilde{S})$. 
%It is easy to check that $\{\omega_1,\ldots,\omega_g, \eta_1,\ldots,\eta_g\}$ is a symplectic global basis of $\cH^1_{\mathrm{dR}}(\cA_{\tilde{S}}/\tilde{S})$.
 For each $s \in \tilde{S}$, use $\rho$ to denote the complex conjugation on $H^1_{\mathrm{dR}}(\cA_{\tilde{s}})$. Then $\eta_j(\tilde{s}) = \rho(\omega_j(\tilde{s}))$ because $\rho(\Omega_{\cA_{\tilde{s}}}) \xrightarrow{\sim} \Lie(\cA^{\mathrm{t}}_{\tilde{s}})$, $\rho(\omega) \mapsto 2\pi i(\lambda^*\omega)^\vee$. We are done.% This is what we desire.
\end{proof}
\begin{rem}
%We close this subsection with the following remark. 
If $s \in S(k)$ and $\tilde{s} \in \tilde{S}$ lies above $s$ (\textit{i.e.} $u(\tilde{s})=s$), then the short exact sequence \eqref{EqSESdRAoverS} restricted over $\tilde{s}$ becomes $0 \to \Omega_{\cA_s} \to H^1_{\mathrm{dR}}(\cA_s) \to \Lie(\cA^{\mathrm{t}}_s) \to 0$, which is 
\textit{defined over $k$}.
\end{rem}

\subsection{Betti (co)homology and symplectic basis}
Let $\underline{\ZZ}_{\cA}$ be the locally constant sheaf of $\ZZ$ on $\cA$. 
Write $R_1 f_*\underline{\ZZ}_{\cA} := (R^1 f_*\underline{\ZZ}_{\cA})^\vee$. 
Then $R_1 f_*\underline{\ZZ}_{\cA}$ is a local system on $S$ such that $(R_1 f_*\underline{\ZZ}_{\cA})_s = H_1(\cA_s, \ZZ)$ for each $s \in S(k)$. Use $\mathbb{V}(\cA/S)$ to denote the vector bundle over $S$ associated with $R_1 f_*\underline{\ZZ}_{\cA}\otimes_{\ZZ} \cO_S$, \textit{i.e.} $\mathbb{V}(\cA/S) = \underline{\mathrm{Spec}}_S\left(\mathrm{Sym}(R_1 f_*\underline{\ZZ}_{\cA} \otimes_{\ZZ} \cO_S) \right)$.

Set
\begin{equation}
R_1\tilde{f}_*\underline{\ZZ} := u^*(R_1 f_*\underline{\ZZ}_{\cA}) \quad \text{and}\quad \mathbb{V}(\cA_{\tilde{S}}/\tilde{S}) := u^*\mathbb{V}(\cA/S).
\end{equation}
Then $R_1\tilde{f}_*\underline{\ZZ}$ is a local system on $\tilde{S}$ with $(R_1\tilde{f}_*\underline{\ZZ})_{\tilde{s}} = H_1(\cA_{\tilde{s}},\ZZ)$ for each $\tilde{s} \in \tilde{S}$, and $\mathbb{V}(\cA_{\tilde{S}}/\tilde{S}) \to \tilde{S}$ is a vector bundle whose sheaf of sections is $R_1\tilde{f}_*\underline{\ZZ} \otimes_{\ZZ} \cO_{\tilde{S}}$.

Since $\tilde{S}$ is simply-connected, we can take a set of \textit{global} sections $\{\gamma_1,\ldots,\gamma_{2g}\}$ of $R_1\tilde{f}_*\underline{\ZZ}$ which is a global basis of $ \mathbb{V}(\cA_{\tilde{S}}/\tilde{S}) \rightarrow \tilde{S}$, \textit{i.e.}  we have the following $\tilde{S}$-isomorphism
\begin{equation}\label{EqLocalSystemIsomBasis}
\CC^{2g}\times \tilde{S} \xrightarrow{\sim} \mathbb{V}(\cA_{\tilde{S}}/\tilde{S})  , \quad \left( (k_1,\ldots,k_{2g}), \tilde{s} \right) \mapsto \sum_{j=1}^{2g} k_j \gamma_j(\tilde{s}).
\end{equation}

The basis $\{\gamma_1,\ldots,\gamma_{2g}\}$ can be furthermore chosen to be \textit{symplectic} in the following sense. The principal polarization on $\cA_{\tilde{S}}$ endows, for each $\tilde{s} \in \tilde{S}$, $(R_1\tilde{f}_*\underline{\ZZ})_{\tilde{s}} = H_1(\cA_{\tilde{s}},\ZZ)$ with a symplectic form $\Psi_{\tilde{s}}$, which furthermore induces a pairing on $\ZZ^{2g}$ via $\ZZ^{2g} \xrightarrow{\sim} H_1(\cA_{\tilde{s}},\ZZ)$, $(k_1,\ldots,k_{2g}) \mapsto \sum_{j=1}^{2g}k_j \gamma_j(\tilde{s})$. 
%$(R^1\tilde{f}_*\underline{\ZZ})_{\tilde{s}} = H^1(\cA_{\tilde{s}},\ZZ)$ with a symplectic form $\Psi_{\tilde{s}}$. 
 We say that  $\{\gamma_1,\ldots,\gamma_{2g}\}$ is \textit{symplectic} if this induced pairing on $\ZZ^{2g}$ is $\begin{bmatrix} 0 & I_g \\ -I_g & 0 \end{bmatrix}$ for each $\tilde{s} \in \tilde{S}$.

Finally, notice that the natural differential map $\cO_{\tilde S} \rightarrow \Omega_{\tilde S}^1$ gives a natural map $R_1\tilde{f}_*\underline{\ZZ} \otimes \cO_{\tilde{S}} \to R_1\tilde{f}_*\underline{\ZZ} \otimes \Omega_{\tilde{S}}^1$, which in turn becomes a connection on $\mathbb{V}(\cA_{\tilde S}/\tilde S)^\vee $. Denote this. connection by  $\mathrm{d}$.

\subsection{Gauss--Manin connection}
Taking exterior powers, the canonical short exact sequence \eqref{EqRelative1Form} yields a decreasing filtration $\Omega_{\cA}^{\bullet} = \mathrm{Fil}^0 \Omega_{\cA}^{\bullet} \supseteq \mathrm{Fil}^1 \Omega_{\cA}^{\bullet} \supseteq \cdots $ on $\Omega_{\cA}^{\bullet}$ %given by
%\[
%\mathrm{Fil}^r \Omega_{\cA}^{\bullet} := \mathrm{Image}(f^*\Omega_S^r \otimes_{\cO_{\cA}} \Omega_{\cA}^{\bullet -r} \to \Omega_{\cA}^{\bullet}).
%\]
such that $\mathrm{Fil}^r \Omega_{\cA}^{\bullet}  / \mathrm{Fil}^{r+1}\Omega_{\cA}^{\bullet}  =  f^*\Omega_S^r \otimes_{\cO_{\cA}} \Omega_{\cA/S}^{\bullet -r}$.
 Thus from $0 \to \mathrm{Fil}^1/\mathrm{Fil}^2 \to \mathrm{Fil}^0 / \mathrm{Fil}^2 \to \mathrm{Fil}^0 / \mathrm{Fil}^1 \to 0$, we obtain
\[
0 \to f^*\Omega_S^1 \otimes_{\cO_{\cA}} \Omega_{\cA/S}^{\bullet -1} \to \Omega_{\cA}^{\bullet} / \mathrm{Fil}^2 \to \Omega_{\cA/S}^{\bullet} \to 0.
\]
Thus we have a connection map
\[
R^1f_* \Omega_{\cA/S}^{\bullet}  \to R^2f_* (f^*\Omega_S^1 \otimes_{\cO_{\cA}} \Omega_{\cA/S}^{\bullet -1} ) =  \Omega_S^1 \otimes R^1f_*  \Omega_{\cA/S}^{\bullet}.
\]
Recall the definition of the relative de Rham cohomology $\cH^1_{\mathrm{dR}}(\cA/S) = R^1f_* \Omega_{\cA/S}^{\bullet} $. Then the co-boundary map above is precisely the \textit{Gauss--Manin connection}
\[
\nabla_{\mathrm{GM}} \colon \cH^1_{\mathrm{dR}}(\cA/S)  \to \cH^1_{\mathrm{dR}}(\cA/S)  \otimes \Omega_S^1.
\]
By abuse of notation, we also use $\nabla_{\mathrm{GM}} $ to denote the Gauss--Manin connection $\cH^1_{\mathrm{dR}}(\cA_{\tilde{S}} /\tilde{S})  \to \cH^1_{\mathrm{dR}}(\cA_{\tilde{S}} /\tilde{S})  \otimes \Omega_{\tilde{S}}^1$.

\subsection{Comparison of de Rham and Betti cohomologies}\label{SubsectionComparisonDeRhamBetti}
%The relative de Rham cohomology $\cH^1_{\mathrm{dR}}(\cA_{\tilde{S}} /\tilde{S})$ and the family version of Betti cohomology $R^1\tilde{f}_*\underline{\ZZ}$ are linked via the Gauss--Manin connection $\nabla$.
 By the  theory of vector bundles with connections, there is an isomorphism of vector bundles over $\tilde{S}$ 
\begin{equation}\label{EqComparisonDeRhamBetti}
\beta \colon (\cH^1_{\mathrm{dR}}(\cA_{\tilde{S}} /\tilde{S}) , \nabla_{\mathrm{GM}}) \xrightarrow{\sim} ( \mathbb{V}(\cA_{\tilde{S}}/\tilde{S})^\vee , \mathrm{d}).
\end{equation}
Set $\Omega_1(\tilde{s}) = \begin{bmatrix}\int_{\gamma_l} \omega_j(\tilde{s})\end{bmatrix}_{1\le j,l\le g}$, $\Omega_2(\tilde{s}) = \begin{bmatrix}\int_{\gamma_{g+l}} \omega_j(\tilde{s})\end{bmatrix}_{1\le j,l\le g}$, $N_1(\tilde{s}) = \begin{bmatrix}\int_{\gamma_l} \eta_j(\tilde{s})\end{bmatrix}_{1 \le j,l\le g}$, and $N_2(\tilde{s}) = \begin{bmatrix}\int_{\gamma_{g+l}} \eta_j(\tilde{s})\end{bmatrix}_{1\le j,l\le g}$.
Under the global basis $\{\omega_1,\ldots,\omega_g, \eta_1,\ldots,\eta_g\}$ of $\cH^1_{\mathrm{dR}}(\cA_{\tilde{S}} /\tilde{S})$ and the global basis $\{\gamma_1^{\vee},\ldots,\gamma_{2g}^{\vee}\}$ of $\mathbb{V}(\cA_{\tilde{S}}/\tilde{S})^\vee$, the isomorphism $\beta$ is represented by the matrix 
\begin{equation}\label{EqBeta}
\beta_{\tilde{s}} := \begin{bmatrix} \Omega_1(\tilde{s}) & N_1(\tilde{s}) \\ \Omega_2(\tilde{s}) & N_2(\tilde{s})  \end{bmatrix}
\end{equation}
over each $\tilde{s} \in \tilde{S}$, \textit{i.e.} the following diagram (of $\tilde{S}$-morphisms) commutes
\begin{equation}\label{EqMatrixComparison}
\xymatrix{
\cH^1_{\mathrm{dR}}(\cA_{\tilde{S}} /\tilde{S}) \ar[r]^-{\beta}_-{\sim}  & \mathbb{V}(\cA_{\tilde{S}}/\tilde{S})^\vee \\
\CC^{2g} \times \tilde{S} \ar[r]^-{\sim}_{(\mathbf{v},\tilde{s})\mapsto (\beta_{\tilde{s}}\mathbf{v}, \tilde{s})} \ar[u]_-{\cong} & \CC^{2g} \times \tilde{S} \ar[u]^-{\cong} \\
%(\mathbf{v},\tilde{s}) \ar@{|->}[r]  & (\beta_{\tilde{s}}\mathbf{v}, \tilde{s})
}
\end{equation}
where the left isomorphism is by sending $((l_1,\ldots,l_g, l_{g+1},\ldots,l_{2g}),\tilde{s}) \mapsto \sum_{j=1}^g l_j \omega_j(\tilde{s}) + \sum_{j=1}^g l_{g+j} \eta_j(\tilde{s})$, and the right isomorphism is defined by sending $((k_1,\ldots,k_{2g}),\tilde{s}) \mapsto \sum_{j=1}^{2g} k_j\gamma_j^{\vee}(\tilde{s})$.

%\subsection{A discussion}
%This subsection is not used in later discussion.

We take a closer look at this comparison. 
Let $\tilde{s} \in \tilde{S}$. Set $\tau_{\tilde{s}}:=\Omega_2(\tilde{s})\Omega_1(\tilde{s})^{-1}$. Under the basis $\{\gamma_1^{\vee}(\tilde{s}),\ldots,\gamma_{2g}^{\vee}(\tilde{s})\}$ of $\mathbb{V}(\cA_{\tilde{S}}/\tilde{S})^\vee_{\tilde{s}} = H^1(\cA_{\tilde{s}},\CC)$, we have
%Then under the basis $\{\omega_1(\tilde{s}),\ldots,\omega_g(\tilde{s}), \eta_1(\tilde{s}),\ldots,\eta_g(\tilde{s})\}$ of $H^1_{\mathrm{dR}}(\cA_{\tilde{s}})$ and the basis $\{\gamma_1^{\vee}(\tilde{s}),\ldots,\gamma_{2g}^{\vee}(\tilde{s})\}$ of $\mathbb{V}(\cA_{\tilde{S}}/\tilde{S})^\vee_{\tilde{s}} = H^1(\cA_{\tilde{s}},\CC)$, we have 
 $\beta_{\tilde{s}}(\Omega_{\cA_{\tilde{s}}}) =\{(\Omega_1(\tilde{s}) \bx, \Omega_2(\tilde{s}) \bx) : \bx \in \CC^g\}$. It is known that $\Omega_1(\tilde{s})$ is invertible for each $\tilde{s} \in \tilde{S}$. Hence
\begin{equation}\label{EqImageOmegaUnderComparison}
\beta_{\tilde{s}}(\Omega_{\cA_{\tilde{s}}}) =\{( \bx, \Omega_2(\tilde{s}) \Omega_1(\tilde{s})^{-1}\bx) : \bx \in \CC^g\} = \{(\bx, \tau_{\tilde{s}} \cdot \bx) : \bx \in \CC^g\}.
\end{equation}
More precisely, $\beta_{\tilde{s}}(\Omega_{\cA_{\tilde{s}}})$ is the subspace of  $\bigoplus_{j=1}^{2g} \CC \gamma_j^\vee(\tilde{s})$ consisting of the vectors of the form $\begin{bmatrix} \gamma_1^{\vee}(\tilde{s}) & \cdots & \gamma_g^{\vee}(\tilde{s})  \end{bmatrix} \bx + \begin{bmatrix} \gamma_{g+1}^{\vee}(\tilde{s})  & \cdots & \gamma_{2g}^{\vee}(\tilde{s})  \end{bmatrix}  \tau_{\tilde{s}} \cdot \bx$, with $\bx$ running over all (column) vectors in $\CC^g$. Taking $\bx$ to be the vector with $1$ on the $j$-th entry and $0$ elsewhere with $j$ running over $\{1,\ldots,g\}$, we obtain a basis of $\beta_{\tilde{s}}(\Omega_{\cA_{\tilde{s}}})$.
Notice that this basis is precisely the columns vectors of $\begin{bmatrix} \gamma_1^{\vee}(\tilde{s}) & \cdots & \gamma_g^{\vee}(\tilde{s})  \end{bmatrix} + \begin{bmatrix} \gamma_{g+1}^{\vee}(\tilde{s})  & \cdots & \gamma_{2g}^{\vee}(\tilde{s})  \end{bmatrix}  \! \tau_{\tilde{s}}$. The comparison $\beta_{\tilde{s}}$ induces the following commutative diagram% (here we used the prinicpal polarization $\cA_{\tilde{s}} \cong \cA_{\tilde{s}}^{\mathrm{t}}$)
\begin{equation}\label{EqCommutativeDiagramCMAbVar}
\xymatrix{
0 \ar[r] & \Omega_{\cA_{\tilde{s}}} \ar[r] \ar[d] & H^1_{\mathrm{dR}}(\cA_{\tilde{s}}) \ar[r] \ar[d]^-{\beta_{\tilde{s}}} & \Lie(\cA^{\mathrm{t}}_{\tilde{s}}) = \Omega_{\cA^{\mathrm{t}}_{\tilde{s}}}^\vee \ar[d]^-{\bar{\beta}_{\tilde{s}}}  \ar[r] & 0 \\
0 \ar[r] & \beta_{\tilde{s}}(\Omega_{\cA_{\tilde{s}}}) \ar[r] & H^1(\cA_{\tilde{s}}, \CC) \ar[r] & H^1(\cA_{\tilde{s}}, \CC) / \beta_{\tilde{s}}(\Omega_{\cA_{\tilde{s}}}) \ar[r] & 0
}
\end{equation}
For each $j \in \{1,\ldots,2g\}$, write $\overline{\gamma}_j^\vee(\tilde{s})$ for the image of $\gamma_j^\vee(\tilde{s})$ under quotient $H^1(\cA_{\tilde{s}}, \CC) \to H^1(\cA_{\tilde{s}}, \CC) / \beta_{\tilde{s}}(\Omega_{\cA_{\tilde{s}}})$. Then $\{\overline{\gamma}_{g+1}^{\vee}(\tilde{s}) ,\ldots,\overline{\gamma}_{2g}^{\vee}(\tilde{s}) \}$ is a basis of $H^1(\cA_{\tilde{s}}, \CC) / \beta_{\tilde{s}}(\Omega_{\cA_{\tilde{s}}})$.%Then the quotient basis of $H^1(\cA_{\tilde{s}}, \CC) / \beta_{\tilde{s}}(\Omega_{\cA_{\tilde{s}}})$, with respect to the basis of $\beta_{\tilde{s}}(\Omega_{\cA_{\tilde{s}}})$ chosen above, is $\{\overline{\gamma}_{g+1}^{\vee}(\tilde{s}) ,\ldots,\overline{\gamma}_{2g}^{\vee}(\tilde{s}) \}$.

The following lemma follows from a direct computation.
\begin{lem}\label{LemmaCMAbVarDiagonal1}
We have
\begin{enumerate}
\item[(i)] Under the basis $\{\omega_1(\tilde{s}), \ldots, \omega_g(\tilde{s})\}$ of $\Omega_{\cA_{\tilde{s}}}$ and the (natural choice of) basis of $\beta_{\tilde{s}}(\Omega_{\cA_{\tilde{s}}})$ below \eqref{EqImageOmegaUnderComparison}, the matrix for $\beta_{\tilde{s}}|_{\Omega_{\cA_{\tilde{s}}}}$ is $\Omega_1(\tilde{s})$. 
\item[(ii)] Under the basis $\{\overline{\eta}_1(\tilde{s}), \ldots,\overline{\eta}_g(\tilde{s})\}$ of $\Lie(\cA^{\mathrm{t}}_s)$ and the basis $\{\overline{\gamma}_{g+1}^{\vee}(\tilde{s}) ,\ldots,\bar{\gamma}_{2g}^{\vee}(\tilde{s}) \}$ of $H^1(\cA_{\tilde{s}}, \CC) / \beta_{\tilde{s}}(\Omega_{\cA_{\tilde{s}}})$, the matrix for $\bar{\beta}_{\tilde{s}}$ is $N_2(\tilde{s}) - \Omega_2(\tilde{s})\Omega_1(\tilde{s})^{-1}N_1(\tilde{s}) = N_2(\tilde{s}) - \tau_{\tilde{s}} N_1(\tilde{s})$.
\end{enumerate}
\end{lem}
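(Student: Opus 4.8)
The plan is purely computational: I would unwind the matrix description of $\beta_{\tilde{s}}$ from \eqref{EqBeta}--\eqref{EqMatrixComparison} together with the natural basis of $\beta_{\tilde{s}}(\Omega_{\cA_{\tilde{s}}})$ recorded below \eqref{EqImageOmegaUnderComparison}, and read off both matrices; nothing is needed beyond these and the defining relation $\Omega_2(\tilde{s}) = \tau_{\tilde{s}}\Omega_1(\tilde{s})$. The starting point: by the commutativity of \eqref{EqMatrixComparison}, $\beta_{\tilde{s}}$ sends $\omega_j(\tilde{s})$ to the $j$-th column of $\beta_{\tilde{s}}$, namely $\sum_{i=1}^g (\Omega_1(\tilde{s}))_{ij}\,\gamma_i^\vee(\tilde{s}) + \sum_{i=1}^g (\Omega_2(\tilde{s}))_{ij}\,\gamma_{g+i}^\vee(\tilde{s})$, and $\eta_j(\tilde{s})$ to the $(g+j)$-th column, namely $\sum_{i=1}^g (N_1(\tilde{s}))_{ij}\,\gamma_i^\vee(\tilde{s}) + \sum_{i=1}^g (N_2(\tilde{s}))_{ij}\,\gamma_{g+i}^\vee(\tilde{s})$.

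For (i): substituting $\Omega_2(\tilde{s}) = \tau_{\tilde{s}}\Omega_1(\tilde{s})$ into the expression for $\beta_{\tilde{s}}(\omega_j(\tilde{s}))$ rewrites it as $\big([\gamma_1^\vee(\tilde{s})\ \cdots\ \gamma_g^\vee(\tilde{s})] + [\gamma_{g+1}^\vee(\tilde{s})\ \cdots\ \gamma_{2g}^\vee(\tilde{s})]\,\tau_{\tilde{s}}\big)$ applied to the $j$-th column of $\Omega_1(\tilde{s})$; since applying the same row of $1$-forms to the standard basis vectors produces exactly the natural basis of $\beta_{\tilde{s}}(\Omega_{\cA_{\tilde{s}}})$ fixed below \eqref{EqImageOmegaUnderComparison}, this gives $\beta_{\tilde{s}}(\omega_j(\tilde{s})) = \sum_{k=1}^g (\Omega_1(\tilde{s}))_{kj}\cdot(\text{$k$-th natural basis vector})$, i.e. the matrix of $\beta_{\tilde{s}}|_{\Omega_{\cA_{\tilde{s}}}}$ in the indicated bases is $\Omega_1(\tilde{s})$.

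For (ii): the $k$-th element of that natural basis is $\gamma_k^\vee(\tilde{s}) + \sum_{i=1}^g (\tau_{\tilde{s}})_{ik}\,\gamma_{g+i}^\vee(\tilde{s})$, so in the quotient $H^1(\cA_{\tilde{s}},\CC)/\beta_{\tilde{s}}(\Omega_{\cA_{\tilde{s}}})$ one has $\overline{\gamma}_k^\vee(\tilde{s}) = -\sum_{i=1}^g (\tau_{\tilde{s}})_{ik}\,\overline{\gamma}_{g+i}^\vee(\tilde{s})$ for $1 \le k \le g$. Reducing $\beta_{\tilde{s}}(\eta_j(\tilde{s}))$ modulo $\beta_{\tilde{s}}(\Omega_{\cA_{\tilde{s}}})$ and substituting this relation gives $\overline{\beta}_{\tilde{s}}(\overline{\eta}_j(\tilde{s})) = \sum_{m=1}^g\big((N_2(\tilde{s}))_{mj} - (\tau_{\tilde{s}}N_1(\tilde{s}))_{mj}\big)\,\overline{\gamma}_{g+m}^\vee(\tilde{s})$, which is the asserted matrix $N_2(\tilde{s}) - \tau_{\tilde{s}}N_1(\tilde{s}) = N_2(\tilde{s}) - \Omega_2(\tilde{s})\Omega_1(\tilde{s})^{-1}N_1(\tilde{s})$. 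Along the way one records that $\{\overline{\eta}_1(\tilde{s}),\ldots,\overline{\eta}_g(\tilde{s})\}$ is a basis of $\Lie(\cA^{\mathrm{t}}_{\tilde{s}}) \cong H^1_{\mathrm{dR}}(\cA_{\tilde{s}})/\Omega_{\cA_{\tilde{s}}}$ — clear since $\{\omega_i(\tilde{s})\}\cup\{\eta_i(\tilde{s})\}$ is a basis of $H^1_{\mathrm{dR}}(\cA_{\tilde{s}})$ with the $\omega_i(\tilde{s})$ spanning $\Omega_{\cA_{\tilde{s}}}$ — while $\{\overline{\gamma}_{g+1}^\vee(\tilde{s}),\ldots,\overline{\gamma}_{2g}^\vee(\tilde{s})\}$ being a basis of the target was already noted before the lemma.

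There is no genuine obstacle here: the lemma really is a direct computation. The only point requiring care is the ordering conventions — the side on which $\beta_{\tilde{s}}$ acts on coordinate column vectors in \eqref{EqMatrixComparison}, and which index of $\Omega_1,\Omega_2,N_1,N_2$ is the row index in \eqref{EqBeta} — since these decide whether one gets $\tau_{\tilde{s}}N_1(\tilde{s})$ or $N_1(\tilde{s})\tau_{\tilde{s}}$, and similarly for the normalization in (i). Once these conventions are pinned down from \eqref{EqBeta} and \eqref{EqMatrixComparison}, each step above is forced.
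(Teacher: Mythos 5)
Your proposal is correct and follows essentially the same route as the paper: a direct unwinding of the matrix $\beta_{\tilde{s}}$ from \eqref{EqBeta}--\eqref{EqMatrixComparison}, substituting $\Omega_2(\tilde{s}) = \tau_{\tilde{s}}\Omega_1(\tilde{s})$ and factoring against the natural basis of $\beta_{\tilde{s}}(\Omega_{\cA_{\tilde{s}}})$. The only difference is that you carry out the computation for part (ii) explicitly (correctly, via the relation $\overline{\gamma}_k^\vee(\tilde{s}) = -\sum_i (\tau_{\tilde{s}})_{ik}\,\overline{\gamma}_{g+i}^\vee(\tilde{s})$ in the quotient), whereas the paper dismisses it as ``a similar computation.''
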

Recall from Construction~\ref{LemmaGlobalBasisSympDeRham} that $\overline{\eta}_j$ is $(\lambda^*\omega_j)^\vee$, where $\lambda \colon \cA_{\tilde{S}} \xrightarrow{\sim} \cA^{\mathrm{t}}_{\tilde{S}}$ is the principal polarization.

\begin{proof} We only prove part (i). Part (ii) follows from a similar computation.

From \eqref{EqBeta}, we have
\begin{align*}
\begin{bmatrix} \beta_{\tilde{s}}(\omega_1(\tilde{s})) & \cdots & \beta_{\tilde{s}}(\omega_g(\tilde{s}))  \end{bmatrix}  & = \begin{bmatrix} \gamma_1^\vee(\tilde{s}) & \cdots & \gamma_{2g}^\vee(\tilde{s})  \end{bmatrix} \cdot \begin{bmatrix} \Omega_1(\tilde{s}) \\ \Omega_2(\tilde{s}) \end{bmatrix} \\
& = \begin{bmatrix} \gamma_1^\vee(\tilde{s}) & \cdots & \gamma_g^\vee(\tilde{s})  \end{bmatrix} \Omega_1(\tilde{s}) + \begin{bmatrix} \gamma_{g+1}^\vee(\tilde{s}) & \cdots & \gamma_{2g}^\vee(\tilde{s})  \end{bmatrix} \Omega_2(\tilde{s}) \\
& = \begin{bmatrix} \gamma_1^\vee(\tilde{s}) & \cdots & \gamma_g^\vee(\tilde{s})  \end{bmatrix} \Omega_1(\tilde{s}) + \begin{bmatrix} \gamma_{g+1}^\vee(\tilde{s}) & \cdots & \gamma_{2g}^\vee(\tilde{s})  \end{bmatrix} \tau_{\tilde{s}} \Omega_1(\tilde{s})  \\
& = \left(\begin{bmatrix} \gamma_1^{\vee}(\tilde{s}) & \cdots & \gamma_g^{\vee}(\tilde{s})  \end{bmatrix} + \begin{bmatrix} \gamma_{g+1}^{\vee}(\tilde{s})  & \cdots & \gamma_{2g}^{\vee}(\tilde{s})  \end{bmatrix}  \! \tau_{\tilde{s}} \right) \cdot \Omega_1(\tilde{s}).
\end{align*}
We are done.
\end{proof}

\subsection{Over CM fibers}
In this subsection, we assume $k = \IQbar$. Fix a point $\tilde{s} \in \tilde{S}$ such that $\cA_{\tilde{s}}$ is a CM abelian variety. Set $E:=\mathrm{End}(\cA_{\tilde{s}})\otimes \QQ$.

All CM abelian varieties over $\CC$ are defined over $\IQbar$. So  $s:=u(\tilde{s}) \in S(\IQbar)$. Moreover, it is known that  $\tau_{\tilde{s}} \in \mathrm{Mat}_{g\times g}(\IQbar)$. The basis $\{\omega_1(\tilde{s}),\ldots,\omega_g(\tilde{s}),\eta_1(\tilde{s}),\ldots,\eta_g(\tilde{s})\}$ is by choice a $\IQbar$-eigenbasis of $H^1_{\mathrm{dR}}(\cA_{\tilde{s}})$ for the action of $E$, and $\{\gamma_1^\vee(\tilde{s}),\ldots,\gamma_{2g}^\vee(\tilde{s})\}$ is a $\IQbar$-basis of $H^1(\cA_{\tilde{s}},\IQbar)$.

Let $\theta_1,\ldots,\theta_g$ be the holomorphic periods of $\cA_{\tilde{s}}$.%Set, for each $j \in \{1,\ldots,g\}$, $\theta_j := \int_{\gamma}\omega_j(\tilde{s})$ for any non-zero $\gamma \in H_1(\cA_{\tilde{s}},\ZZ)$; then $\theta_j$ is well-defined up to $\IQbar^*$ by Shimura \cite[]{}.

Since $\tau_{\tilde{s}} \in \mathrm{Mat}_{g\times g}(\IQbar)$, the subspace  $\beta_{\tilde{s}}(\Omega_{\cA_{\tilde{s}}}) = \{(\bx, \tau_{\tilde{s}}  \bx) : \bx \in \CC^g\}$ of $ H^1(\cA_{\tilde{s}},\CC) = \oplus\CC\gamma_j^\vee(\tilde{s})$ is rational for the $\IQbar$-structure given by $H^1(\cA_{\tilde{s}},\IQbar)\otimes \CC$. Thus $H^1(\cA_{\tilde{s}},\IQbar)$ gives a $\IQbar$-structure on $H^1(\cA_{\tilde{s}}, \CC) / \beta_{\tilde{s}}(\Omega_{\cA_{\tilde{s}}}) $, for which $\{\overline{\gamma}_{g+1}^\vee(\tilde{s}),\ldots,\overline{\gamma}_{2g}^\vee(\tilde{s})\}$ is a $\IQbar$-basis. %\footnote{\label{Footnote}One can compute that the image of $H^1(\cA_{\tilde{s}},\ZZ) \subseteq H^1(\cA_{\tilde{s}},\CC) \to H^1(\cA_{\tilde{s}}, \CC) / \beta_{\tilde{s}}(\Omega_{\cA_{\tilde{s}}})$ is, under the basis $\{\overline{\gamma}_{g+1}^\vee(\tilde{s}),\ldots,\overline{\gamma}_{2g}^\vee(\tilde{s})\}$, is $\ZZ^g -\tau_{\tilde{s}}\ZZ^g = \ZZ^g + \tau_{\tilde{s}}\ZZ^g$. Hence this is the geometric $\IQbar$-structure on $\CC^g$ defined in $\mathsection$\ref{SubsectionExampleCMAV}.} 
Moreover, the action of $E$ on $H^1(\cA_{\tilde{s}},\IQbar)$ induces an action of $E$ on $H^1(\cA_{\tilde{s}}, \CC) / \beta_{\tilde{s}}(\Omega_{\cA_{\tilde{s}}}) $ which preserves its $\IQbar$-structure.  Therefore we can find a $\IQbar$-eigenbasis $\{\mathbf{f}_1,\ldots,\mathbf{f}_g\}$ of $H^1(\cA_{\tilde{s}},\CC) / \beta_{\tilde{s}}(\Omega_{\cA_{\tilde{s}}})$ with respect to the action of $E$.

\begin{lem}\label{LemmaCMAbVarDiagonal}
Under the $\IQbar$-basis $\{\overline{\eta}_1(\tilde{s}), \ldots,\overline{\eta}_g(\tilde{s})\}$ of $\Lie(\cA^{\mathrm{t}}_{\tilde{s}})$ and a choice of the $\IQbar$-eigenbasis $\{\mathbf{f}_1,\ldots,\mathbf{f}_g\}$ of $H^1(\cA_{\tilde{s}},\CC) / \beta_{\tilde{s}}(\Omega_{\cA_{\tilde{s}}})$ as above, the matrix for $\overline{\beta}_{\tilde{s}}$ is $\mathrm{diag}(\pi\theta_1^{-1}, \ldots, \pi\theta_g^{-1})$.% up to multiplication by a diagonal matrix in $\mathrm{Mat}_{g \times g}(\IQbar)$.
\end{lem}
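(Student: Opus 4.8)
The plan is to deduce the lemma from the explicit shape of $\overline{\beta}_{\tilde{s}}$ furnished by Lemma~\ref{LemmaCMAbVarDiagonal1}(ii), combined with the $E$-equivariance of the de Rham--Betti comparison, where $E:=\mathrm{End}(\cA_{\tilde{s}})\otimes\QQ$. First I would observe that $\overline{\beta}_{\tilde{s}}$ --- the rightmost vertical arrow in the commutative diagram \eqref{EqCommutativeDiagramCMAbVar} --- is $E$-equivariant: the comparison $\beta_{\tilde{s}}$ intertwines the de Rham and Betti actions of every endomorphism of $\cA_{\tilde{s}}$, and the subspaces $\Omega_{\cA_{\tilde{s}}}\subseteq H^1_{\mathrm{dR}}(\cA_{\tilde{s}})$ and $\beta_{\tilde{s}}(\Omega_{\cA_{\tilde{s}}})\subseteq H^1(\cA_{\tilde{s}},\CC)$ are $E$-stable (being the degree-one pieces of the respective Hodge filtrations), so $\beta_{\tilde{s}}$ descends to an $E$-equivariant isomorphism on the quotients. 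Since $\cA_{\tilde{s}}$ is CM, both $\Lie(\cA^{\mathrm{t}}_{\tilde{s}})$ and $H^1(\cA_{\tilde{s}},\CC)/\beta_{\tilde{s}}(\Omega_{\cA_{\tilde{s}}})$ split as direct sums of $1$-dimensional $E\otimes\CC$-eigenlines indexed by the CM type of $\cA^{\mathrm{t}}_{\tilde{s}}$, with $\{\overline{\eta}_j(\tilde{s})\}$ and $\{\mathbf{f}_j\}$ eigenbases (this is how they were chosen), and each such eigenline is $\IQbar$-rational. Hence, after reindexing the $\mathbf{f}_j$ so that $\mathbf{f}_j$ generates the same eigenline as $\overline{\beta}_{\tilde{s}}(\overline{\eta}_j(\tilde{s}))$, we get $\overline{\beta}_{\tilde{s}}(\overline{\eta}_j(\tilde{s}))=c_j\mathbf{f}_j$ for some $c_j\in\CC^*$; that is, the matrix of $\overline{\beta}_{\tilde{s}}$ in these bases is $\mathrm{diag}(c_1,\ldots,c_g)$.

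It then remains only to compute each $c_j$ modulo $\IQbar^*$, which is all the statement asserts, since the $\theta_j$, the $\overline{\eta}_j(\tilde{s})$ and the $\mathbf{f}_j$ are themselves defined only up to $\IQbar^*$. Here I would invoke Lemma~\ref{LemmaCMAbVarDiagonal1}(ii): in the bases $\{\overline{\eta}_j(\tilde{s})\}$ and $\{\overline{\gamma}_{g+l}^{\vee}(\tilde{s})\}$ the matrix of $\overline{\beta}_{\tilde{s}}$ is $N_2(\tilde{s})-\tau_{\tilde{s}}N_1(\tilde{s})$, so $\overline{\beta}_{\tilde{s}}(\overline{\eta}_j(\tilde{s}))$ is the $j$-th column of that matrix. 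By Shimura's theorem \cite[Rmk.3.4]{ShimuraPeriod} the period $\int_\gamma\eta_j(\tilde{s})$ is independent of $\gamma\in H_1(\cA_{\tilde{s}},\ZZ)$ up to $\IQbar^*$, and it is $\simeq 2\pi i\,\theta_j^{-1}$ by Bertrand's reciprocity law \cite{BertrandReciprocityDiffForm} (as recalled in the proof of Proposition~\ref{PropQbarDimPeriodHoloAntiholo}); hence every entry of the $j$-th column of $N_1(\tilde{s})$ and of $N_2(\tilde{s})$ --- the column recording the periods of $\eta_j(\tilde{s})$ --- is an $\IQbar$-multiple of $2\pi i\,\theta_j^{-1}$. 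Since $\tau_{\tilde{s}}\in\mathrm{Mat}_{g\times g}(\IQbar)$, the same is true of the $j$-th column of $N_2(\tilde{s})-\tau_{\tilde{s}}N_1(\tilde{s})$, i.e. $\overline{\beta}_{\tilde{s}}(\overline{\eta}_j(\tilde{s}))$ equals $2\pi i\,\theta_j^{-1}$ times a nonzero $\IQbar$-rational vector. Comparing with $\overline{\beta}_{\tilde{s}}(\overline{\eta}_j(\tilde{s}))=c_j\mathbf{f}_j$, where $\mathbf{f}_j$ is $\IQbar$-rational, yields $c_j\simeq 2\pi i\,\theta_j^{-1}\simeq\pi\,\theta_j^{-1}$, as desired. (Alternatively one can run the same bookkeeping through the $\IQbar$-diagonalization of the full period matrix $\beta_{\tilde{s}}$ into $\mathrm{diag}(\theta_1,\ldots,\theta_g,2\pi i\theta_1^{-1},\ldots,2\pi i\theta_g^{-1})$: in those $\IQbar$-eigenbases $\Omega_{\cA_{\tilde{s}}}$ is the span of the first $g$ de Rham vectors, so $\overline{\beta}_{\tilde{s}}$ is represented by the lower-right block.)

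The step I expect to be the main obstacle is not any of these structural points but pinning down the exact power of $\pi$. Because $2\pi i$ and $\pi$ differ by $2i\in\IQbar^*$, the real content of the lemma is that the factor is $\pi^{+1}$ --- and not $\pi^{0}$ or $\pi^{\pm2}$ --- and getting this right forces one to keep careful account of the factors of $2\pi i$ already built into the setup: that the global section $\eta_j$ of $\cH^1_{\mathrm{dR}}(\cA_{\tilde{S}}/\tilde{S})$ in Construction~\ref{LemmaGlobalBasisSympDeRham} lifts $2\pi i(\lambda^*\omega_j)^{\vee}$ rather than $(\lambda^*\omega_j)^{\vee}$, the precise identification of $\overline{\eta}_j(\tilde{s})$ with the image of $\eta_j(\tilde{s})$ in $\Lie(\cA^{\mathrm{t}}_{\tilde{s}})$, and the normalization in the reciprocity law. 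Once these are fixed consistently with Lemma~\ref{LemmaCMAbVarDiagonal1}(ii), the computation above produces $\mathrm{diag}(\pi\theta_1^{-1},\ldots,\pi\theta_g^{-1})$ (up to $\IQbar^*$) on the nose.
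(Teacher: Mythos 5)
Your proposal is correct and follows essentially the same route as the paper: first use $E$-equivariance of $\overline{\beta}_{\tilde{s}}$ together with the eigenbasis choices to get diagonality, then combine Lemma~\ref{LemmaCMAbVarDiagonal1}(ii) with Shimura's remark and the reciprocity law $\int_\gamma\eta_j\simeq 2\pi i/\theta_j$ to identify the diagonal entries up to $\IQbar^*$. Your column-by-column tracking of $N_2(\tilde{s})-\tau_{\tilde{s}}N_1(\tilde{s})$ is in fact a slightly more careful rendering of the paper's "eigenvalues of a matrix in $\mathrm{Mat}_{g\times g}(\IQbar)\cdot\mathrm{diag}(\pi\theta_1^{-1},\ldots,\pi\theta_g^{-1})$" step, which on its own needs the already-established diagonality to be valid.
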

%In view of the footnote~\ref{Footnote} above and Remark~\ref{RemarkDualAVPeriods}, this gives another proof of \eqref{EqHolomorphicPeriodsComputation}.
\begin{proof} 
Since both bases are eigenbases for the actions of $E$ and the linear map $\overline{\beta}_{\tilde{s}}$ is $E$-equivariant (because $\beta_{\tilde{s}}$ is $E$-equivariant), the matrix for $\overline{\beta}_{\tilde{s}}$ under these bases is diagonal up to reordering $\mathbf{f}_1,\ldots,\mathbf{f}_g$. Call this matrix $D$.% Hence it suffices to prove that the eigenvalues of $\overline{\beta}_{\tilde{s}}$ are $\pi \theta_1^{-1}, \ldots, \pi\theta_g^{-1}$ up to multiplications by numbers in $\IQbar$.

Let us look at the matrix $N_2(\tilde{s}) - \tau_{\tilde{s}} N_1(\tilde{s})$. By our choice of the basis, $\eta_j(\tilde{s})$ is the complex conjugate of $\omega_j(\tilde{s})$ for each $j \in \{1,\ldots,g\}$. Since $\omega_1,\ldots,\omega_g,\eta_1,\ldots,\eta_g$ are eigenforms for the CM action, the reciprocity law for the differential forms of the 1st and the 2nd spaces implies that $\int_{\gamma} \eta_j(\tilde{s}) \cong 2\pi i / \theta_j$ for any $\gamma \in H_1(\cA_{\tilde{s}},\ZZ)$; see \cite[pp.36, equation (3)]{BertrandReciprocityDiffForm} for more details. 
So both $N_1(\tilde{s})$ and $N_2(\tilde{s})$ are in $\mathrm{Mat}_{g \times g}(\IQbar) \cdot \mathrm{diag}(\pi\theta_1^{-1}, \ldots, \pi\theta_g^{-1})$. So  $N_2(\tilde{s}) - \tau_{\tilde{s}} N_1(\tilde{s})$ is in $\mathrm{Mat}_{g \times g}(\IQbar) \cdot \mathrm{diag}(\pi\theta_1^{-1}, \ldots, \pi\theta_g^{-1})$ since $\tau_{\tilde{s}} \in \mathrm{Mat}_{g \times g}(\IQbar)$, \textit{i.e.} 
\[
N_2(\tilde{s}) - \tau_{\tilde{s}}N_1(\tilde{s}) = M\cdot  \mathrm{diag}(\pi\theta_1^{-1}, \ldots, \pi\theta_g^{-1})
\]
for some $M \in \mathrm{Mat}_{g \times g}(\IQbar)$. Notice that $M$ is an invertible matrix, since $\overline{\beta}_{\tilde{s}}$ is an isomorphism of $\CC$-vector spaces.

Set  $ \begin{bmatrix} v_1(\tilde{s})  & \cdots & v_g(\tilde{s})  \end{bmatrix}  := \begin{bmatrix} \overline{\gamma}_{g+1}^{\vee}(\tilde{s})  & \cdots & \overline{\gamma}_{2g}^{\vee}(\tilde{s})  \end{bmatrix} M$. By Lemma~\ref{LemmaCMAbVarDiagonal1}.(ii), 
under the basis $\{\overline{\eta}_1(\tilde{s}), \ldots,\overline{\eta}_g(\tilde{s})\}$ of $\Lie(\cA^{\mathrm{t}}_s)$ and the basis $\{v_1(\tilde{s}) ,\ldots,v_g(\tilde{s}) \}$ of $H^1(\cA_{\tilde{s}}, \CC) / \beta_{\tilde{s}}(\Omega_{\cA_{\tilde{s}}})$, the matrix for $\bar{\beta}_{\tilde{s}}$ is $M\cdot \mathrm{diag}(\pi\theta_1^{-1}, \ldots, \pi\theta_g^{-1})$.%is similar to (and hence has the same eigenvalues as) 
%and thus its eigenvalues are $\pi\theta_1^{-1},\ldots,\pi\theta_g^{-1}$ up to multiplication by $\IQbar$. We are done.

Now we have two $\IQbar$-bases $\{\mathbf{f}_1,\ldots,\mathbf{f}_g\}$ and  $\{v_1(\tilde{s}) ,\ldots,v_g(\tilde{s}) \}$ of $H^1(\cA_{\tilde{s}}, \CC) / \beta_{\tilde{s}}(\Omega_{\cA_{\tilde{s}}})$. Let $T \in \mathrm{GL}_g(\IQbar)$ be the transition matrix. Then $D = (TM)\cdot \mathrm{diag}(\pi\theta_1^{-1}, \ldots, \pi\theta_g^{-1})$. So $TM$ is diagonal because both $D$ and $\mathrm{diag}(\pi\theta_1^{-1}, \ldots, \pi\theta_g^{-1})$ are diagonal. Thus we can conclude by replacing each $\mathbf{f}_j$ by a suitable $\IQbar$-multiple.
\end{proof}

\begin{rem}\label{RemarkCMAbVarDiagonal}
Similar to the proof of Lemma~\ref{LemmaCMAbVarDiagonal} and using Lemma~\ref{LemmaCMAbVarDiagonal1}.(i), we can prove the following assertion. Under the basis $\{\omega_1(\tilde{s}), \ldots, \omega_g(\tilde{s})\}$ of $\Omega_{\cA_{\tilde{s}}}$ and a $\IQbar$-eigenbasis $\{\mathbf{e}_1,\ldots,\mathbf{e}_g\}$ of $\beta_{\tilde{s}}(\Omega _{\cA_{\tilde{s}}})$, the matrix for $\beta_{\tilde{s}}|_{\Omega_{\cA_{\tilde{s}}}}$ is $\mathrm{diag}(\theta_1,\ldots,\theta_g)$.% up to multiplication by a diagonal matrix in $\mathrm{Mat}_{g\times g}(\IQbar)$.

Therefore the matrix for $\beta_{\tilde{s}}$ is $\mathrm{diag}(\theta_1,\ldots,\theta_g, \pi\theta_1^{-1}, \ldots, \pi\theta_g^{-1})$ under suitable $\IQbar$-bases of $H^1_{\mathrm{dR}}(\cA_{\tilde{s}})$ and $H^1(\cA_{\tilde{s}},\CC)$.
\end{rem}

\subsection{Kodaira--Spencer map}

Consider the maps defined over the field $k$
\scriptsize
\begin{equation}\label{EqDefnKS}
\Omega_{\cA} \subseteq \cH^1_{\mathrm{dR}}(\cA/S) \xrightarrow{\nabla_{\mathrm{GM}} } \cH^1_{\mathrm{dR}}(\cA/S) \otimes_{\cO_S} \Omega_S^1 \to \cH^1_{\mathrm{dR}}(\cA/S)/\Omega_{\cA} \otimes_{\cO_S} \Omega_S^1 \cong \Lie(\cA^{\mathrm{t}}/S) \otimes_{\cO_S} \Omega_S^1.
\end{equation}
\normalsize
Thus for the tangent bundle $TS = (\Omega_S^1)^\vee$ we have
\begin{equation}\label{EqKSOrig}
TS \to \Lie(\cA^{\mathrm{t}}/S) \otimes_{\cO_S} \Lie(\cA/S).
\end{equation}
%We can do better. 
Using the principal polarization $\lambda \colon \cA \cong \cA^{\mathrm{t}}$, the right hand side is isomorphic (over $\cO_S$) to $ \Lie(\cA/S)  \otimes_{\cO_S} \Lie(\cA/S)= \mathrm{Sym}^2_{\cO_S}\Lie(\cA/S) \bigoplus \mathrm{Alt}^2_{\cO_S} \Lie(\cA/S)$. We introduce the following (non-standard) notation:
\begin{equation}
\mathrm{S}^2 \Lie(\cA/S) := (\lambda_*,1)\left( \mathrm{Sym}^2_{\cO_S}\Lie(\cA/S) \right) \subseteq  \Lie(\cA^{\mathrm{t}}/S) \otimes_{\cO_S} \Lie(\cA/S).
\end{equation}
It is known that the image of \eqref{EqKSOrig} lies in $\mathrm{S}^2 \Lie(\cA/S)$, \textit{i.e.} we have
\begin{equation}\label{EqKSFamily}
\mathrm{KS} \colon TS \to \mathrm{S}^2 \Lie(\cA/S) \subseteq  \Lie(\cA^{\mathrm{t}}/S) \otimes_{\cO_S} \Lie(\cA/S) .
\end{equation}
Let $s \in S(k)$. Then \eqref{EqKSFamily} restricted over $s$ becomes
\begin{equation}\label{EqKS}
\mathrm{KS}_s \colon T_s S \to \mathrm{S}^2\Lie(\cA_s) \subseteq \Lie \cA^{\mathrm{t}}_s \otimes \Lie \cA_s.
\end{equation}

\vskip 0.3em

We close this section by the following discussion, which will be used in the computation of the periods of the Siegel modular varieties. 

For the universal covering $u \colon \tilde{S} \rightarrow S$, the sequence of maps \eqref{EqDefnKS} exists with $\cA/S$ replaced by $\cA_{\tilde{S}}/\tilde{S}$. Then combined with the isomorphism $\beta \colon (\cH^1_{\mathrm{dR}}(\cA_{\tilde{S}} /\tilde{S}) , \nabla_{\mathrm{GM}}) \xrightarrow{\sim} ( \mathbb{V}(\cA_{\tilde{S}}/\tilde{S})^\vee , \mathrm{d})$ from \eqref{EqComparisonDeRhamBetti}, we obtain
\scriptsize
\begin{equation}\label{EqKSTwoComparison1}
\xymatrix{
\Omega_{\cA_{\tilde S}} \ar@{^(->}[r] \ar[d] & \cH^1_{\mathrm{dR}}(\cA_{\tilde S}/\tilde{S}) \ar[r]^-{\nabla_{\mathrm{GM}} } \ar[d]_-{\beta} & \cH^1_{\mathrm{dR}}(\cA_{\tilde S}/\tilde{S}) \otimes \Omega_{\tilde S}^1 \ar[r] \ar[d]^-{\beta \otimes \mathrm{id}}& \cH^1_{\mathrm{dR}}(\cA_{\tilde S}/\tilde{S})/\Omega_{\cA_{\tilde S}} \otimes\Omega_{\tilde S}^1 = \Lie(\cA^{\mathrm{t}}_{\tilde S}) \otimes \Omega_{\tilde S}^1 \ar[d]_-{\bar{\beta}\otimes \mathrm{id}} \\%\otimes\mathrm{id}} \\
\beta(\Omega_{\cA_{\tilde S}}) \ar@{^(->}[r]  & \mathbb{V}(\cA_{\tilde S}/\tilde{S})^\vee \ar[r]^-{\mathrm{d}} & \mathbb{V}(\cA_{\tilde S}/\tilde{S})^\vee \otimes  \Omega_{\tilde S}^1 \ar[r] & \mathbb{V}(\cA_{\tilde S}/\tilde{S})^\vee / \beta(\Omega_{\cA_{\tilde S}}) \otimes \Omega_{\tilde S}^1,
}
\end{equation}
\normalsize
from which we get the commutative diagram
\begin{equation}\label{EqKSTwoComparison2}
\xymatrix{
T\tilde{S} \ar[r]^-{\mathrm{KS}\circ \mathrm{d}u} \ar[d]_{=} & \Lie(\cA_{\tilde{S}}^{\mathrm{t}}) \otimes \Lie(\cA_{\tilde{S}}) \ar[d]^-{\bar{\beta}\otimes \beta^\vee} \\
T\tilde{S} \ar[r] & \frac{\mathbb{V}(\cA_{\tilde S}/\tilde{S})^\vee}{\beta(\Omega_{\cA_{\tilde S}})} \otimes \beta(\Omega_{\cA_{\tilde S}})^\vee.
}
\end{equation}
Take $\tilde{s} \in u^{-1}(s)$. Then \eqref{EqKSTwoComparison2} yields, after the natural identification $\cA_{\tilde{s}} = \cA_s$ under $u$, 
\begin{equation}\label{EqKSTwoComparison}
\xymatrix{
T_s S \ar[r]^-{\mathrm{KS}_s}  & \Lie(\cA_{\tilde{s}}^{\mathrm{t}}) \otimes \Lie(\cA_{\tilde{s}}) \ar[d]^-{\bar{\beta}_{\tilde{s}} \otimes \beta_{\tilde{s}}^\vee} \\
T_{\tilde{s}} \tilde{S} \ar[u]^{\mathrm{d}u}_{\cong} \ar[r] & \frac{H^1(\cA_{\tilde{s}},\CC)}{\beta_{\tilde{s}}(\Omega_{\cA_{\tilde s}})} \otimes \beta_{\tilde{s}}(\Omega_{\cA_{\tilde s}})^\vee.
}
\end{equation}

\section{Periods of the Siegel modular variety}\label{SectionPeriodSiegel}
In this section, we turn to the Siegel case. 
Let $\mathbb{A}_g$ be the moduli space of principally polarized abelian varieties of dimension $g$. The associated connected Shimura datum is $(\GSp_{2g}, \mathfrak{H}_g)$, where $\mathfrak{H}_g $ is the Siegel upper half space $\left\{ \tau \in \mathrm{Mat}_{g\times g}(\CC) : \tau = \tau^{\!{\intercal}}, ~ \mathrm{Im}\tau > 0 \right\}$.% with a level-$\ell$-structure.

Let $o \in \mathfrak{H}_g$ be a CM point. The goal of this section is to compute the periods of the (split) bi-$\IQbar$-structure on $T_o \mathfrak{H}_g$ defined in $\mathsection$\ref{SectionBiIQbarShimura}, or more precisely in $\mathsection$\ref{SubsectionArithmeticIQbarShimura} and $\mathsection$\ref{SubsectionGeometricIQbarShimura}. 

%To state the conclusion, we introduce the following notation. 
The abelian variety $A_{o} := \CC^g / (\ZZ^g + o \ZZ^g)$ is a CM abelian variety. Let $\theta_1,\ldots,\theta_g$ be its holomorphic periods as defined in Convention~\ref{SubsectionConvention}. 

\begin{teo}\label{ThmPeriodsSiegel}
The periods of the (split) bi-$\IQbar$-structure on $T_{o}\mathfrak{H}_g$ defined in $\mathsection$\ref{SectionBiIQbarShimura} are $\theta_j\theta_{j'}/\pi$ with $1\le j \le j' \le g$.
\end{teo}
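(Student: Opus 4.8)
The plan is to carry the bi-$\IQbar$-structure on $T_o\mathfrak{H}_g$ across the Kodaira--Spencer isomorphism and the de Rham--Betti comparison of $\mathsection$\ref{SectionDeRhamBettiKodaira}, and then to read the periods off the diagonalisation of the period matrix of the CM fibre $A_o$ recorded in Remark~\ref{RemarkCMAbVarDiagonal}; by Theorem~\ref{ThmBiIQbarShimuraSplit} the bi-$\IQbar$-structure is already known to be split, so the whole content is the computation of a complete set of periods. First I would recall that, for the universal principally polarised abelian scheme $\cA\to\mathbb{A}_{g,\IQbar}$, the Kodaira--Spencer map \eqref{EqKSFamily} is an \emph{isomorphism} $\mathrm{KS}\colon T\mathbb{A}_{g,\IQbar}\xrightarrow{\sim}\mathrm{S}^2\Lie(\cA/\mathbb{A}_{g,\IQbar})\subseteq\Lie(\cA^{\mathrm t})\otimes\Lie(\cA)$ --- injectivity being the classical versality of $\mathbb{A}_g$, and surjectivity following from $\dim\mathbb{A}_g=g(g+1)/2=\dim\mathrm{S}^2\Lie(\cA_s)$. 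Restricting at $[o]=u(o)$ and precomposing with $\mathrm{d}u\colon T_o\mathfrak{H}_g\xrightarrow{\sim}T_{[o]}\mathbb{A}_g$ yields $\Theta:=\mathrm{KS}_{[o]}\circ\mathrm{d}u\colon T_o\mathfrak{H}_g\xrightarrow{\sim}\mathrm{S}^2\Lie(A_o)$. Since $\mathrm{KS}$ is defined over $\IQbar$ and the arithmetic $\IQbar$-structure on $T_o\mathfrak{H}_g$ is by definition $(\mathrm{d}u)^{-1}$ of $T_{[o]}\mathbb{A}_{g,\IQbar}$ ($\mathsection$\ref{SubsectionArithmeticIQbarShimura}), $\Theta$ identifies it with $\mathrm{S}^2\Lie(A_{o,\IQbar})$, i.e.\ with $\mathrm{Sym}^2$ of the arithmetic $\IQbar$-structure on $\Lie(A_o)$ from $\mathsection$\ref{SubsectionExampleCMAV}.

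Next I would identify what $\Theta$ does to the geometric $\IQbar$-structure. Taking the standard representation of $\GSp_{2g}$ in the construction of $\mathsection$\ref{SubsectionGeometricIQbarShimura}, whose Hodge realisation at $o$ is $H^1(A_o)$, the compact dual $\mathfrak{H}_g^\vee$ is the Lagrangian Grassmannian of $H^1(A_o,\QQ)$ with its standard $\IQbar$-model, and the Borel embedding $\iota$ sends $\tau\mapsto F^1H^1(A_\tau)=\Omega_{A_\tau}$. As $o$ is a CM point we have $\tau_o\in\mathrm{Mat}_{g\times g}(\IQbar)$, so $\beta_o(\Omega_{A_o})$ is $\IQbar$-rational for the Betti structure on $H^1(A_o,\CC)$; hence the geometric $\IQbar$-structure on $T_o\mathfrak{H}_g=T_{\iota(o)}\mathfrak{H}_g^\vee$ is precisely the one induced --- via $\mathrm{d}\iota$ and the canonical identification $T_{[W]}\mathrm{Gr}\cong\Hom(W,H^1(A_o,\CC)/W)$ --- by the Betti $\IQbar$-structure. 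The key observation is that this derivative of the period map is exactly $\mathrm{KS}$ in the de Rham realisation and is the bottom row of diagram \eqref{EqKSTwoComparison} in the Betti realisation; so \eqref{EqKSTwoComparison} shows that $\Theta$ carries the geometric $\IQbar$-structure on $T_o\mathfrak{H}_g$ to $(\bar{\beta}_o\otimes\beta_o^\vee)^{-1}$ of the Betti $\IQbar$-structure on $\tfrac{H^1(A_o,\CC)}{\beta_o(\Omega_{A_o})}\otimes\beta_o(\Omega_{A_o})^\vee$, intersected with $\mathrm{S}^2\Lie(A_o)$.

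It then remains to compute. By Remark~\ref{RemarkCMAbVarDiagonal}, over $A_o$ the comparison $\beta_o$ is, in suitable $\IQbar$-eigenbases of $H^1_{\mathrm{dR}}(A_o)$ and $H^1(A_o,\CC)$, the diagonal matrix $\mathrm{diag}(\theta_1,\dots,\theta_g,\pi\theta_1^{-1},\dots,\pi\theta_g^{-1})$; in particular $\bar{\beta}_o\colon\Lie(A_o^{\mathrm t})\to\tfrac{H^1(A_o,\CC)}{\beta_o(\Omega_{A_o})}$ is $\mathrm{diag}(\pi\theta_j^{-1})$, while $\beta_o|_{\Omega_{A_o}}\colon\Omega_{A_o}\to\beta_o(\Omega_{A_o})$ is $\mathrm{diag}(\theta_j)$, whence $\beta_o^\vee\colon\Lie(A_o)\to\beta_o(\Omega_{A_o})^\vee$ is $\mathrm{diag}(\theta_j^{-1})$. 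Therefore $\bar{\beta}_o\otimes\beta_o^\vee$ is diagonal with entries $\pi(\theta_j\theta_{j'})^{-1}$ in the relevant $\IQbar$-bases; equivalently, on $\mathrm{S}^2\Lie(A_o)$ each Betti $\IQbar$-basis vector indexed by $(j,j')$ with $1\le j\le j'\le g$ equals $\tfrac{\theta_j\theta_{j'}}{\pi}$ times the corresponding arithmetic one. Transporting this back by $\Theta$, the bi-$\IQbar$-structure on $T_o\mathfrak{H}_g$ is split with $T_o\mathfrak{H}_g=\bigoplus_{1\le j\le j'\le g}\CC\!\left(\tfrac{\theta_j\theta_{j'}}{\pi}\right)$, which is the claim.

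I expect the main obstacle to be the middle step: matching the \emph{abstractly defined} geometric $\IQbar$-structure (built from the $\IQbar$-model of the compact dual via a faithful representation of $\GSp_{2g}$) with the \emph{concrete} Betti $\IQbar$-structure on $H^1(A_o)$, and verifying that the bottom row of \eqref{EqKSTwoComparison} genuinely computes $\mathrm{d}\iota$ into the Grassmannian --- i.e.\ that the Gauss--Manin connection read in the Betti trivialisation is the derivative of the period map into $\mathfrak{H}_g^\vee$. Once that is in place, the $2\pi i$/Tate-twist bookkeeping responsible for the factor $\pi$ is precisely what Remark~\ref{RemarkCMAbVarDiagonal} (via Lemma~\ref{LemmaCMAbVarDiagonal} and Bertrand's reciprocity for differentials of the first and second kind) has already recorded, so no further transcendence input is required.
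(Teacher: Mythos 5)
Your proposal is correct and follows essentially the same route as the paper: the Kodaira--Spencer isomorphism $T_{[o]}\mathbb{A}_g\cong \mathrm{S}^2\Lie(A_o)$ over $\IQbar$, the identification of the geometric $\IQbar$-structure with the Betti one via the Lagrangian Grassmannian model of $\mathfrak{H}_g^\vee$ (the paper's Construction~\ref{ConstructionIotaSiegel}), the compatibility diagram \eqref{EqKSTwoComparison} identifying $\mathrm{d}u\circ(\mathrm{d}\iota)^{-1}$ with the restriction of $\overline{\beta}_o^{-1}\otimes\beta_o^\vee$, and the diagonalisations of Lemma~\ref{LemmaCMAbVarDiagonal} and Remark~\ref{RemarkCMAbVarDiagonal}. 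The ``main obstacle'' you flag is exactly what the paper resolves in $\mathsection$\ref{SubsectionRealizationHg} and via \eqref{EqKSTwoComparison1}--\eqref{EqKSTwoComparison}, so no genuinely new idea is missing.
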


Our proof of Theorem~\ref{ThmPeriodsSiegel} uses the Kodaira--Spencer map and does not use the root space decompositions discussed in $\mathsection$\ref{SubsectionDecompositionGeomShimura} and $\mathsection$\ref{SubsectionDecompositionArithShimura}. At the end of this section, we will show in $\mathsection$\ref{SubsectionTwoBiIQbarDecomSame} that the computation with the Kodaira--Spencer map is compatible with the root space decomposition.

We have the following more precise formulation of Theorem~\ref{ThmPeriodsSiegel} 
%A closer way to look at Theorem~\ref{ThmPeriodsSiegel} is as follows. It uses 
in view of the language of split bi-$\IQbar$-structures introduced in the current paper.  By dimension comparison, the Kodaira--Spencer map \eqref{EqKS} applied to the universal abelian variety $\mathfrak{A}_g \to \mathbb{A}_g$ at the point $[o] \in \mathbb{A}_g(\IQbar)$ gives an isomorphism $\mathrm{KS}_{[o]}  \colon T_{[o]} \mathbb{A}_g \cong \mathrm{S}^2 \Lie(A_{o})$ over $\IQbar$. Thus for the uniformization $u \colon \mathfrak{H}_g \rightarrow \mathbb{A}_g$, the composite 
\[
\mathrm{KS}_o \colon T_o \mathfrak{H}_g \xrightarrow{\mathrm{d}u} T_{[o]}\mathbb{A}_g \xrightarrow[\mathrm{KS}_{[o]}]{\sim} \mathrm{S}^2 \Lie A_o
\]
 is an isomorphism of $\CC$-vector spaces. The split bi-$\IQbar$-structure on $\Lie A_o \cong \CC^g$ given by $\mathsection$\ref{SubsectionExampleCMAV} induces a natural split bi-$\IQbar$-structure on $\mathrm{S}^2 \Lie A_o$.

\begin{teobis}{ThmPeriodsSiegel}\label{ThmKSSplitIQbarStru}
$\mathrm{KS}_o$ induces an isomorphism of bi-$\IQbar$-structures between $T_o \mathfrak{H}_g \otimes \mathfrak{L}(1)$ and $\mathrm{S}^2 \Lie A_o$, where $\mathfrak{L}(1)$ is the Tate twist from Definition~\ref{DefnTateTwist}.
\end{teobis}

\subsection{Universal abelian variety and eigen-symplectic bases}\label{SubsectionEigensympBases}
By abuse of notation, denote by $\mathbb{A}_g$ be the fine moduli space of principally polarized abelian varieties of dimension $g$ with level-$\ell$-structure for some $\ell \ge 3$. Then there exists a universal abelian variety 
\[
f \colon \mathfrak{A}_g \to \mathbb{A}_g,
\]
which is an abelian scheme of relative dimension $g$ defined over $\IQbar$ and is endowed with a principal polarization $\lambda \colon \mathfrak{A}_g \xrightarrow{\sim} \mathfrak{A}_g^{\mathrm{t}}$; here $ \mathfrak{A}_g^{\mathrm{t}} \to \mathbb{A}_g$ is the dual abelian scheme.

For each $\tau \in \mathfrak{H}_g$, denote by $A_\tau$ the abelian variety parametrized by $[\tau]$. If $[\tau]=[\tau'] \in \mathbb{A}_g(k)$ (for an algebraically closed field $k \subseteq \CC$), then $A_{\tau}$ and $A_{\tau'}$ are $k$-isomorphic as polarized abelian varieties. 
All discussions in $\mathsection$\ref{SectionDeRhamBettiKodaira} apply to $k = \IQbar$, $(\pi \colon \cA \rightarrow S) = (f\colon \mathfrak{A}_g \to \mathbb{A}_g)$ and $(u \colon \tilde{S} \to S^{\mathrm{an}}) = (u \colon \mathfrak{H}_g \to \mathbb{A}_g^{\mathrm{an}})$. In particular, one has 
\begin{itemize}
\item a symplectic global basis $\{\omega_1,\ldots,\omega_g,\eta_1,\ldots,\eta_g\}$ of $\cH^1_{\mathrm{dR}}(\cA_{\mathfrak{H}_g} /\mathfrak{H}_g) \to \mathfrak{H}_g$ as in Construction~\ref{LemmaGlobalBasisSympDeRham}, such that when evaluated at $o$ we get precisely the $\omega_j(o)$'s as above Theorem~\ref{ThmPeriodsSiegel};
\item a symplectic global basis $\{\gamma_1,\ldots,\gamma_{2g}\}$ as in \eqref{EqLocalSystemIsomBasis} of the vector bundle $\mathbb{V}(\cA_{\mathfrak{H}_g}/\mathfrak{H}_g)$ over $\mathfrak{H}_g$ (whose fiber over each $\tau \in \mathfrak{H}_g$ equals $H_1(A_\tau,\CC)$).%\footnote{Under the notation from $\mathsection$\ref{SectionCompCoh}, $A_{\tau}$ was denoted by $\cA_{[\tau]}$.}.%, such that each $\gamma_j(o)$ is an eigenvector for the action of $E$ on $H_1(A_{o},\CC)$.
\end{itemize}

\subsection{Realization of $\mathfrak{H}_g$}\label{SubsectionRealizationHg}
Let
\[
\left(\Lambda,\Psi\right) := \left( \ZZ^{2g}, 2\pi i \begin{bmatrix} 0 & I_g \\ -I_g & 0 \end{bmatrix}\right),
\]
where $\Psi$ is viewed as a symplectic form on $\Lambda_{\CC}^\vee$. Moreover, let $E$ act on $\Lambda_{\QQ}^\vee$ via the isomorphism $\Lambda_{\QQ}^\vee \cong H^1(A_{o},\QQ)$.

Let $\mathfrak{H}_g^\vee$ be the parametrizing space of Lagrangians in $\Lambda_{\CC}^\vee$, \textit{i.e.}
\[
\mathfrak{H}_g^\vee = \left\{ [W \subseteq \Lambda_{\CC}^\vee] : \dim_{\CC} W = g, ~\Psi(W,W) = 0 \right\}.
\]
Then $\mathfrak{H}_g^\vee$ has a natural $\IQbar$-structure, for which a point $[W \subseteq \Lambda_{\CC}^\vee] \in \mathfrak{H}_g^\vee$ is a \textit{$\IQbar$-point} if and only if $W$ descends to $\IQbar$, \textit{i.e.} there exists a $\IQbar$-subspace $V \subseteq \Lambda^\vee_{\IQbar}$ such that $W = V \otimes_{\IQbar}\CC$.

\begin{contr}\label{ConstructionIotaSiegel}
The map \eqref{EqXIntoFlag} for the Siegel case (in particular $X = \mathfrak{H}_g$) factors through  an injective map $\iota \colon \mathfrak{H}_g \to \mathfrak{H}_g^\vee$ with the following property: For any $\tau \in \mathfrak{H}_g$, the entries of $\tau$ are in $\IQbar$ if and only if $\iota(\tau)$ is a $\IQbar$-point.\footnote{In fact $\mathfrak{H}_g^\vee$ is the compact dual of $\mathfrak{H}_g$, with $\iota$ the map in the Borel Embedding Theorem.}
\end{contr}
This construction of $\iota$ is important. It implies that the $\IQbar$-structure on $\mathfrak{H}_g$ constructed in $\mathsection$\ref{SubsectionGeometricIQbarShimura}, denoted by $\mathfrak{H}_{g,\IQbar}$, is equivalent to the following simple construction: 
$\mathfrak{H}_g$ is a semi-algebraic open subset of $\left\{ \tau \in \mathrm{Mat}_{g\times g}(\CC) : \tau = \tau^{\!{\intercal}} \right\} = \left\{ \tau \in \mathrm{Mat}_{g\times g}(\IQbar) : \tau = \tau^{\!{\intercal}} \right\} \otimes_{\IQbar} \CC$, and a point $\tau \in \mathfrak{H}_g$ is a $\IQbar$-point if all entries of $\tau$ are in $\IQbar$. %In $\mathsection$\ref{SubsectionRealizationHg}, we will show that this coincides with the $\IQbar$-structure on $\mathfrak{H}_g$ defined in .

\begin{proof}
The vector bundle of Betti cohomology in family $\mathbb{V}(\cA_{\mathfrak{H}_g}/\mathfrak{H}_g)^\vee$ over $\mathfrak{H}_g$ can be trivialized by the dual basis $\{\gamma_1^\vee,\ldots,\gamma_{2g}^\vee\}$ in a similar way as \eqref{EqLocalSystemIsomBasis}
\begin{equation}
\Lambda_{\CC}^\vee \times \mathfrak{H}_g \xrightarrow{\sim} \mathbb{V}(\cA_{\mathfrak{H}_g}/\mathfrak{H}_g)^\vee, \quad ((k_1,\ldots,k_{2g}), \tau) \mapsto \sum_{j=1}^{2g} k_j \gamma_j^\vee(\tau).
\end{equation}
We work under this identification. Then the comparison between de Rham and Betti cohomologies \eqref{EqComparisonDeRhamBetti} (applied to the current situation) becomes
\[
\beta \colon \cH^1_{\mathrm{dR}}(\cA_{\mathfrak{H}_g} /\mathfrak{H}_g) \xrightarrow{\sim} \Lambda_{\CC}^\vee \times \mathfrak{H}_g.
\]
For each $\tau\in\mathfrak{H}_g$, set $\beta_{\tau} \colon H^1_{\mathrm{dR}}(A_{\tau}) \xrightarrow{\sim} \Lambda_{\CC}^\vee$ to be the restriction of this comparison over $\tau$. 

We claim that  $\beta_{\tau}(\Omega_{A_\tau})$ is a Lagrangian. By \eqref{EqImageOmegaUnderComparison}, we have 
\begin{equation}\label{EqBetaOmegaTau}
\beta_{\tau}(\Omega_{A_{\tau}})  = \{(\bx, \tau  \bx) : \bx \in \CC^g\}
\end{equation}
under the basis $\{\gamma_1^\vee(\tau),\ldots,\gamma_{2g}^\vee(\tau)\}$ of $\Lambda_{\CC}^\vee$. 
 Since $\dim \beta_{\tau}(\Omega_{A_\tau}) = g$, it suffices to prove  $\Psi(\beta_{\tau}(\Omega_{A_\tau}), \beta_{\tau}(\Omega_{A_\tau})) = 0$.  This follows from the direct computation $(\bx, \tau  \bx) \begin{bmatrix} 0 & I_g \\ -I_g & 0 \end{bmatrix} (\bx, \tau  \bx)^{\!^\intercal} = 0$. Hence we are done.

Now we are ready to define
\begin{equation}\label{EqDefnSiegelInDual}
\iota \colon \mathfrak{H}_g \to \mathfrak{H}_g^\vee, \quad \tau \mapsto [\beta_{\tau}(\Omega_{A_\tau}) \subseteq \Lambda_{\CC}^\vee].
\end{equation}
%We prove that $\beta_{\tau}(\Omega_{A_\tau})$ is indeed a Lagrangian of $\Lambda_{\CC}^\vee$.

It is clear that \eqref{EqXIntoFlag} factors through $\iota$ and that $\iota$ is injective. 

By \eqref{EqBetaOmegaTau}, $\tau$ has entries in $\IQbar$ if and only if $\beta_{\tau}(\Omega_{A_{\tau}})$ is defined over $\IQbar$. Hence the entries of $\tau$ are in $\IQbar$ if and only if $\iota(\tau)$ is a $\IQbar$-point. We are done.%, \textit{i.e.} $i(\tau)$ is a $\IQbar$-point.
\end{proof}

\subsection{Proof of Theorem~\ref{ThmPeriodsSiegel} and Theorem~\ref{ThmKSSplitIQbarStru}}%{Comparison of the $\IQbar$-structures on the tangent spaces related to the moduli space}
From
\[
\xymatrix{
\mathfrak{H}_g \ar[d]_u \ar[r]^{\iota} & \mathfrak{H}_g^\vee , & o \ar@{|->}[d] \ar@{|->}[r] & \iota(o) = [\beta_o(\Omega_{A_o}) \subseteq \Lambda_{\CC}^\vee] \\
\mathbb{A}_g & & [o]
}
\]
we get
\[
\xymatrix{
T_{o} \mathfrak{H}_g \ar[d]_-{\mathrm{d}u} \ar[r]^-{\mathrm{d}\iota} & T_{\iota(o)} \mathfrak{H}_g^\vee \\
T_{[o]} \mathbb{A}_g.
}
\]
The Kodaira--Spencer map \eqref{EqKS} applied to $\mathfrak{A}_g \to \mathbb{A}_g$ at the point $[o] \in \mathbb{A}_g(\IQbar)$ gives $\mathrm{KS}_{[o]}  \colon T_{[o]} \mathbb{A}_g \to \mathrm{S}^2 \Lie(A_{o})$. In this particular case (of moduli space), this map is injective; moreover both sides have the same dimension $g(g+1)/2$. Hence $\mathrm{KS}_{[o]}$ is an isomorphism of $\IQbar$-vector spaces. To sum it up, we have 
\begin{equation}\label{EqKSSiegel}
\mathrm{KS}_{[o]}  \colon T_{[o]} \mathbb{A}_g \cong \mathrm{S}^2 \Lie(A_{o}) \subseteq \Lie(A^{\mathrm{t}}_{o}) \otimes \Lie(A_{o})
\end{equation}
and $\mathrm{KS}_{[o]}$ is defined over $\IQbar$.

By knowledge on the tangent space of Grassmannian, we have
\begin{equation}\label{EqTangentSpaceHgDual}
T_{\iota(o)} \mathfrak{H}_g^\vee \subseteq \Hom\left(\beta_{o}(\Omega_{A_{o}}), \Lambda_{\CC}^\vee/\beta_{o}(\Omega_{A_{o}}) \right) = \frac{\Lambda_{\CC}^\vee}{\beta_{o}(\Omega_{A_{o}})} \otimes  \beta_{o}(\Omega_{A_{o}})^\vee.
\end{equation}
Recall from \eqref{EqCommutativeDiagramCMAbVar} the map $\beta_{o} \colon H^1_{\mathrm{dR}}(\cA_{o}) \to H^1(\cA_{o},\CC) = \Lambda_{\CC}^\vee$ and the induced maps
\begin{equation}\label{Eqlielielielielie}
\overline{\beta}_{o} \colon \Lie(A^{\mathrm{t}}_{o}) \to \Lambda_{\CC}^\vee/\beta_{o}(\Omega_{A_{o}}), \quad \beta_{o}^\vee \colon \beta_{o}(\Omega_{A_{o}})^\vee \to \Lie A_{o} = \Omega_{A_{o}}^\vee.
\end{equation}

Apply \eqref{EqKSTwoComparison} to $(\tilde S, \tilde s) = (\mathfrak{H}_g, o)$, and notice that the bottom arrow in this case is precisely $\mathrm{d}\iota \colon T_o\mathfrak{H}_g \to T_{\iota(o)}\mathfrak{H}_g^\vee$ composed with \eqref{EqTangentSpaceHgDual}. Thus 
$\mathrm{d}u \circ (\mathrm{d}\iota)^{-1}$ is the restriction of $\overline{\beta}_{o}^{-1} \otimes \beta_{o}^\vee$.

By Lemma~\ref{LemmaCMAbVarDiagonal}, $\overline{\beta}_{o}^{-1}$ is diagonalizable under $\IQbar$-bases to be $\mathrm{diag}(\theta_1/\pi,\ldots,\theta_g/\pi)$. By Remark~\ref{RemarkCMAbVarDiagonal}, $\beta_{o}^\vee$ is diagonalizable under $\IQbar$-bases to be $\mathrm{diag}(\theta_1,\ldots,\theta_g)$. Hence under suitable $\IQbar$-bases of $T_{\iota(o)} \mathfrak{H}_g^\vee$ and $T_{[o]} \mathbb{A}_g$, the matrix of the linear map $\mathrm{d}u \circ (\mathrm{d}\iota)^{-1} $ is $\mathrm{diag}(\theta_j \theta_{j'}/\pi)_{1\le j \le j' \le g}$. 

This yields Theorem~\ref{ThmPeriodsSiegel}, because the geometric $\IQbar$-structure on $T_o\mathfrak{H}_g$ is by definition given by the $\IQbar$-structure on $T_{\iota(o)}\mathfrak{H}_g^\vee$.

We can do better. The computation above and \eqref{EqKSTwoComparison} applied to $(\tilde S, \tilde s) = (\mathfrak{H}_g, o)$ yield the following assertion: $\mathrm{KS}_{[o]}$ maps a $1$-dimensional bi-$\IQbar$-subspace of $T_{[o]}\mathbb{A}_g$ with period  $\theta_j\theta_{j'}/\pi$ to a $1$-dimensional bi-$\IQbar$-subspace of $\mathrm{S}^2\Lie(A_o)$ with period $\theta_j\theta_{j'}$. This proves Theorem~\ref{ThmKSSplitIQbarStru}. We are done.
\qed

\begin{rem}\label{RemarkSymplecticSpaceDecompositionLagrangian}
By general knowledge on symplectic vector spaces, for the Lagrangian $\beta_{o}(\Omega_{A_{o}})$ of $\Lambda_{\CC}^\vee$ we have a decomposition  $\Lambda_{\CC}^\vee = \beta_{o}(\Omega_{A_{o}}) \bigoplus \beta_{o}(\Omega_{A_{o}})^\vee$ given as follows: take a $\IQbar$-basis $\{x_1,\ldots,x_g\}$ of $\beta_{o}(\Omega_{A_{o}})$, and then one has a dual $\IQbar$-basis for a complement defined by $\Psi(x_j,y_l)= 2\pi i \delta_{jl}$, and this complement is canonically isomorphic over $\CC$ to $\beta_{o}(\Omega_{A_{o}})^\vee$. Over $\IQbar$, this decomposition then becomes $\Lambda_{\IQbar}^\vee = \beta_o(\Omega_{A_o}) \bigoplus 2\pi i \beta_o(\Omega_{A_o})^\vee$. 
Hence we have a canonical isomorphism 
\begin{equation}\label{EqQbarDualQuotient2pii}
2\pi i \beta_{o}(\Omega_{A_{o}})^\vee = \Lambda_{\CC}^\vee/\beta_{o}(\Omega_{A_{o}})
\end{equation}
 defined over $\IQbar$.
\end{rem}

\vskip 0.5em
We close this subsection with the following (slightly more precise) formulation of Theorem~\ref{ThmPeriodsSiegel}, which follows immediately from the computation above. Denote by $\mathfrak{H}_{g,\IQbar}$ the $\IQbar$-structure on $\mathfrak{H}_g$ explained below Construction~\ref{ConstructionIotaSiegel}. Consider the tangent of the uniformization $u$ at the CM point $o$
\[
\psi := \mathrm{d}u \colon T_{o} \mathfrak{H}_g \to T_{[o]} \mathbb{A}_g.
\]

\begin{teobisbis}{ThmPeriodsSiegel}\label{ThmPeriodsSiegelBis}
Under a suitable $\IQbar$-basis of $T_{o} \mathfrak{H}_g = T_{o} \mathfrak{H}_{g,\IQbar} \otimes_{\IQbar}\CC$ and a suitable $\IQbar$-basis of $T_{[o]} \mathbb{A}_g = T_{[o]} \mathbb{A}_{g,\IQbar}\otimes_{\IQbar}\CC$, the matrix of $\psi$ is $\mathrm{diag}(\theta_j \theta_{j'}/\pi)_{1\le j \le j' \le g}$.
\end{teobisbis}

\subsection{In relation to the root space decomposition}\label{SubsectionTwoBiIQbarDecomSame}
Let $\bT$ be a maximal torus which contains $\mathrm{MT}(o)$. 
The last sentence of Remark~\ref{RemarkCMAbVarDiagonal} yields 
 a $\ZZ$-basis $\{\epsilon_0^*,\epsilon_1^*,\ldots,\epsilon_g^*\}$ of $X^*(\bT)$ such that $\epsilon_j^*({\beta_o}) = \theta_j$ for each $j \in \{1,\ldots,g\}$ and $\epsilon_0^*$ is a character of $\Gm$. 
The root system $\Phi(\bT, \GSp_{2g}) \subseteq X^*(\bT)$ defined in \cite[13.18]{BorelLinear-Algebrai} is $\{ \epsilon_j^*+\epsilon_{j'}^* : 1 \le j \le j' \le g \} \bigcup \{ \epsilon_j^*-\epsilon_{j'}^* : 1 \le j < j'\le g\}$. The set $\Phi_M^+$ defined above Proposition~\ref{PropDecompositionGeomIQbar} is $\{ \epsilon_j^*+\epsilon_{j'}^* : 1 \le j \le j' \le g \}$.

\begin{teo}\label{ThmDecompositionIntoRootSpacesSiegel}
The $1$-dimensional bi-$\IQbar$ subspace of $T_o \mathfrak{H}_g$ associated with $\epsilon_j^*+\epsilon_{j'}^*$ has period $\theta_j\theta_{j'}/\pi$.
\end{teo}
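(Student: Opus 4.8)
The plan is to combine the period computation already carried out in the proof of Theorem~\ref{ThmPeriodsSiegel} with the torus-equivariance statement of Theorem~\ref{ThmBiIQbarShimuraSplitBis}, together with the explicit bookkeeping of characters set up just before the statement. Concretely, Theorem~\ref{ThmBiIQbarShimuraSplitBis} tells us that the bi-$\IQbar$-splitting of $T_o\mathfrak{H}_g = \Fm^+$ is exactly the weight-space decomposition $\Fm^+ = \bigoplus_{\chi\in\Phi_M^+}\Fg_{\CC}^{\chi}$ for the action of $\bT(\QQ)$, where the $\IQbar$-structures on each line $\Fg_{\CC}^{\chi}$ are $\Fm^{+,\chi}_{\IQbar,\mathrm{arith}}$ and $\Fm^{+,\chi}_{\IQbar,\mathrm{geom}}$ (Propositions~\ref{PropDecompositionGeomIQbar} and \ref{PropDecompositionArithIQbar}). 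So the assertion to prove is just that the period attached to the line $\Fg_{\CC}^{\epsilon_j^*+\epsilon_{j'}^*}$ is $\theta_j\theta_{j'}/\pi$ (up to $\IQbar^*$), and the content is to match the diagonal entries produced by the Kodaira--Spencer computation with the characters $\epsilon_j^*+\epsilon_{j'}^*$.

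First I would recall that under the Kodaira--Spencer isomorphism $\mathrm{KS}_o\colon T_o\mathfrak{H}_g \xrightarrow{\sim}\mathrm{S}^2\Lie A_o$, which by Theorem~\ref{ThmKSSplitIQbarStru} is an isomorphism of bi-$\IQbar$-structures after Tate-twisting, the $\bT$-action on $T_o\mathfrak{H}_g$ corresponds to the $\bT$-action on $\mathrm{S}^2\Lie A_o$ induced by the standard representation of $\GSp_{2g}$ on $\Lambda_{\CC}$. The last sentence of Remark~\ref{RemarkCMAbVarDiagonal} gives a $\IQbar$-eigenbasis $\{\mathbf e_1,\dots,\mathbf e_g\}$ of $\beta_o(\Omega_{A_o})$ with $\beta_o|_{\Omega_{A_o}}$ diagonal $=\mathrm{diag}(\theta_1,\dots,\theta_g)$; pulling back along $\beta_o^\vee$ this gives a $\IQbar$-eigenbasis of $\Lie A_o$ on which $\bT$ acts by the characters $\epsilon_1^*,\dots,\epsilon_g^*$ — this is precisely the normalization $\epsilon_j^*(\beta_o)=\theta_j$ in the paragraph before Theorem~\ref{ThmDecompositionIntoRootSpacesSiegel}. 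Hence the line in $\mathrm{S}^2\Lie A_o$ spanned by $\mathbf e_j\cdot\mathbf e_{j'}$ carries the $\bT$-character $\epsilon_j^*+\epsilon_{j'}^*$, and $\mathrm{KS}_o^{-1}$ carries it $\bT$-equivariantly to $\Fg_{\CC}^{\epsilon_j^*+\epsilon_{j'}^*}\subseteq T_o\mathfrak{H}_g$ (using that distinct weights in $\Phi_M^+$ force the equivariant iso to respect the weight decomposition line by line, since each weight space is $1$-dimensional).

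Then the period computation from the proof of Theorem~\ref{ThmKSSplitIQbarStru} applies verbatim: on the line spanned by $\mathbf e_j\cdot\mathbf e_{j'}$, the arithmetic $\IQbar$-structure is generated by $\mathbf e_j\mathbf e_{j'}$ itself, while the geometric $\IQbar$-structure, transported through $\overline\beta_o^{-1}\otimes\beta_o^\vee$ (whose diagonal entries are $\theta_j/\pi$ and $\theta_{j'}$ respectively by Lemma~\ref{LemmaCMAbVarDiagonal} and Remark~\ref{RemarkCMAbVarDiagonal}, after accounting for the Tate twist $\mathfrak L(1)$), is generated by $(\pi/(\theta_j\theta_{j'}))\,\mathbf e_j\mathbf e_{j'}$ up to $\IQbar^*$. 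Therefore the period of this $1$-dimensional bi-$\IQbar$-subspace is $\theta_j\theta_{j'}/\pi$, which is exactly the claim. I would close by noting the symmetric slot $\mathrm S^2$ versus $\mathrm{Alt}^2$ issue is already handled in $\mathsection$\ref{SectionDeRhamBettiKodaira} since $\mathrm{KS}$ lands in $\mathrm S^2\Lie(\cA/S)$, so only the characters $\epsilon_j^*+\epsilon_{j'}^*$ with $j\le j'$ occur.

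The main obstacle I anticipate is bookkeeping rather than conceptual: one must check carefully that the $\bT$-equivariant identification $\mathrm{KS}_o$ really does intertwine the \emph{algebraic} $\bT$-action on $T_o\mathfrak H_g$ (coming via the Hecke-operator description of the arithmetic $\IQbar$-structure in Proposition~\ref{PropActionTArithmeticAdjoint}, i.e. $\rho_t=\psi\circ\mathrm{Ad}(t)\circ\psi^{-1}$) with the induced $\bT$-action on $\mathrm S^2\Lie A_o$ through the standard representation — in other words, that the Kodaira--Spencer map is $\bG$-equivariant (hence $\bT$-equivariant) for the natural actions on both sides, a statement about the functoriality of Gauss--Manin and the deformation theory of abelian varieties which is standard but needs to be invoked cleanly. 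Once that compatibility is in place, the identification of $\theta_j\theta_{j'}/\pi$ with the character $\epsilon_j^*+\epsilon_{j'}^*$ follows by matching diagonal entries, and no further computation is required.
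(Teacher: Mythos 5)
Your proposal is correct and takes essentially the same route as the paper: the paper likewise defines the $\bT(\QQ)$-action on $\Lie A_o$ (via $\beta_o(\Omega_{A_o})=\{(o\bx,\bx)\}$ and $\mathbf y=(Co+D)\bx$), observes that $\mathrm{KS}_o$ is $\bT(\QQ)$-equivariant because every map on the bottom row of \eqref{EqKSTwoComparison1} restricted to $o$ is, and then matches the eigenspace of $\epsilon_j^*+\epsilon_{j'}^*$ in $\mathrm{S}^2\Lie A_o$ with the bi-$\IQbar$ line of period $\theta_j\theta_{j'}$ via Remark~\ref{RemarkCMAbVarDiagonal} and Theorem~\ref{ThmKSSplitIQbarStru}. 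The equivariance issue you flag as the main obstacle is exactly the point the paper addresses, and in the same way.
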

\begin{proof} All vector spaces in the proof are $\CC$-spaces unless otherwise stated. 
We start by defining an action of $\bT(\QQ)$ on $\Lie A_o = \Omega_{A_o}^\vee$. By \eqref{EqBetaOmegaTau}, we have $\beta_o(\Omega_{A_o}) = \{(o \bx,  \bx): \bx \in \CC^g\}$ under the basis $\{\gamma_{g+1}^\vee(o),\ldots,\gamma_{2g}^\vee(o),\gamma_1^\vee(o),\ldots,\gamma_g^\vee(o)\}$ of $H^1(A_o,\CC)$. % Thus $\beta_o^\vee(\Omega_{A_o}^\vee) = \beta_o(\Omega_{A_o})^\vee  = \CC^{2g} / \{(-o \bx', \bx'):\bx' \in \CC^g\}$.
%\{ (o \cdot \bx' , \bx') : \bx' \in \CC^g \}$.
%Thus under the basis $\{\gamma_{g+1}^\vee(o),\ldots,\gamma_{2g}^\vee(o),\gamma_1^\vee(o),\ldots,\gamma_g^\vee(o)\}$, 
The action of $\GSp_{2g}(\RR)^+$ on $\mathfrak{H}_g$ is defined by $\begin{bmatrix} A & B \\ C & D \end{bmatrix}\!\tau  = (A\tau+B)(C\tau+D)^{-1}$. By the discussion at the end of $\mathsection$\ref{SubsectionNotationSplitBiIQbarShimura}, $\bT(\RR)o = o$. So for each  $\begin{bmatrix} A & B \\ C & D \end{bmatrix} \in \bT(\RR)$, we have $(Ao+B) (C o+D)^{-1}= o$, and hence  
%On the other hand, any element $t = \begin{bmatrix} A & B \\ C & D \end{bmatrix} \in \GSp_{2g}(\RR)$ acts on $\mathfrak{H}_g$ by $t \cdot \tau = (A\tau+B)(C\tau+D)^{-1}$. So $(Ao+B) (C o+D)^{-1}= o$ for all  $\begin{bmatrix} A & B \\ C & D \end{bmatrix} \in \bT(\RR)$. Hence
\[
\begin{bmatrix} A & B \\ C & D \end{bmatrix} \begin{bmatrix} o\bx \\ \bx \end{bmatrix} = \begin{bmatrix} (Ao+B) \bx \\ (Co+D) \bx \end{bmatrix} = \begin{bmatrix} o\mathbf{y} \\ \mathbf{y} \end{bmatrix} \in \beta_o(\Omega_{A_o})
\]
with $\mathbf{y} =  (Co+D) \bx$. This defines an action of $\bT(\QQ)$ on $\Lie A_o$, and hence an action of $\bT(\QQ)$ on $\mathrm{S}^2 \Lie A_o$. Notice that the Kodaira--Spencer map $\mathrm{KS}_o$ is $\bT(\QQ)$-equivariant because every morphism on the bottom line of \eqref{EqKSTwoComparison1}, when restricted to $o$, is $\bT(\QQ)$-equivariant. % Hence we are done. 
Thus the $1$-dimensional bi-$\IQbar$ subspace of $T_o\mathfrak{H}_g$ associated with $\epsilon_j^*+\epsilon_{j'}^*$ is the eigenspace of $\epsilon_j^*+\epsilon_{j'}^*$ for the action of $\bT(\QQ)$ on $\mathrm{S}^2 \Lie A_o$.

Recall the bi-$\IQbar$ structure on $\Lie A_o$ defined in $\mathsection$\ref{SubsectionExampleCMAV}. 
By the first part of Remark~\ref{RemarkCMAbVarDiagonal} and the choice of $\epsilon_j^*$, the eigenspace of $\epsilon_j^*$ for the action of $\bT(\QQ)$ on $\Lie A_o$ is the $1$-dimensional bi-$\IQbar$ subspace of $\Lie A_o$ associated with $\theta_j$. Hence we are done by the conclusion of the previous paragraph.
\end{proof}

Theorem~\ref{ThmDecompositionIntoRootSpacesSiegel} yields the following immediate corollary. By a \textit{root space}, we mean a $1$-dimensional bi-$\IQbar$-subspace of $T_o \mathfrak{H}_g$ associated with some $\epsilon_j^* + \epsilon_{j'}^*$.
\begin{cor}\label{CorDecompositionIntoRootSpacesSiegel}
The following statements are equivalent:
\begin{enumerate}
\item[(i)] $\frac{\theta_j\theta_{j'}}{\theta_k\theta_{k'}} \not\in\IQbar^*$ for all distinct pairs $\{j,j'\}$ and $\{k,k'\}$;
\item[(ii)] each $1$-dimensional  bi-$\IQbar$-subspace of $T_o \mathfrak{H}_g$ is a root space;
\item[(iii)] each bi-$\IQbar$-subspace of $T_o \mathfrak{H}_g$ is the direct sum of root spaces.
\end{enumerate}
\end{cor}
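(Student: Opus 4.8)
The plan is to derive Corollary~\ref{CorDecompositionIntoRootSpacesSiegel} as a purely formal consequence of the structure theory of split bi-$\oQ$-spaces from $\mathsection$\ref{SectionBiIQbar}, fed by the two computational inputs already in hand: Theorem~\ref{ThmPeriodsSiegel} (the complete set of periods of $T_o\mathfrak{H}_g$ is $\{\theta_j\theta_{j'}/\pi : 1\le j\le j'\le g\}$) and Theorem~\ref{ThmDecompositionIntoRootSpacesSiegel} (the root space attached to $\epsilon_j^*+\epsilon_{j'}^*$ has period $\theta_j\theta_{j'}/\pi$). First I would fix a bi-$\oQ$-isomorphism $T_o\mathfrak{H}_g\cong (\CC^{g(g+1)/2};(\theta_j\theta_{j'}/\pi)_{1\le j\le j'\le g})$ under which the $g(g+1)/2$ root spaces are precisely the coordinate lines, and record the translation: two root spaces, attached to $\{j,j'\}$ and $\{k,k'\}$, lie in a common isotypic subspace of $T_o\mathfrak{H}_g$ if and only if $\theta_j\theta_{j'}/(\theta_k\theta_{k'})\in\oQ^*$. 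Consequently, statement (i) is equivalent to: every isotypic subspace of $T_o\mathfrak{H}_g$ is one-dimensional, i.e.\ the isotypic subspaces are exactly the root spaces.

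For the implication (i)~$\Rightarrow$~(iii) I would invoke Proposition~\ref{PropBiQbarEasyProp}: under (i) the isotypic decomposition $T_o\mathfrak{H}_g=\bigoplus_s V_s$ of part (i) has each $V_s$ a root space, and part (iii) of that proposition writes any bi-$\oQ$-subspace $V'$ as $\bigoplus_s(V'\cap V_s)$; since each $V'\cap V_s$ is a bi-$\oQ$-subspace of a line, it is $0$ or $V_s$, so $V'$ is a direct sum of root spaces. The implication (iii)~$\Rightarrow$~(ii) is then immediate, since a one-dimensional direct sum of root spaces is a single root space.

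The remaining implication (ii)~$\Rightarrow$~(i) I would prove by contraposition. If (i) fails, there are distinct pairs $\{j,j'\}\neq\{k,k'\}$ with $\theta_j\theta_{j'}\simeq\theta_k\theta_{k'}$; letting $L_1,L_2$ be the corresponding root spaces, the translation above puts them in a common isotypic subspace, so $L_1\oplus L_2$ is a two-dimensional bi-$\oQ$-subspace isomorphic to $(\CC^2;\theta,\theta)$ in $\mathcal{C}_{\oQ,\oQ}^{split}$. By Proposition~\ref{PropTwoExtremeSplitBiQbar}.(i), every line in $L_1\oplus L_2$ that is rational for one of the two $\oQ$-structures is bi-$\oQ$; taking arithmetic-rational generators $e_1\in L_1$ and $e_2\in L_2$, the line $\CC(e_1+e_2)$ is bi-$\oQ$, is distinct from $L_1$ and from $L_2$, and hence is not a root space, since the only root spaces contained in $L_1\oplus L_2$ are the two coordinate lines $L_1$ and $L_2$. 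So (ii) fails.

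I expect no serious obstacle here: essentially all the real work has already been done in Theorems~\ref{ThmPeriodsSiegel} and~\ref{ThmDecompositionIntoRootSpacesSiegel}. The one point that needs a little care is the last step, producing a bi-$\oQ$ line that is not a root space; it relies on the fact from $\mathsection$\ref{SectionBiIQbar} that inside an isotypic subspace arithmetic-rationality already forces bi-rationality, together with the bookkeeping that, in the dictionary ``root space $=$ coordinate line of the global decomposition'', a root space contained in the span of two such coordinate lines must be one of them.
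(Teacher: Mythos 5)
Your proof is correct and is exactly the intended unpacking of what the paper dismisses as an "immediate corollary" of Theorems~\ref{ThmPeriodsSiegel} and~\ref{ThmDecompositionIntoRootSpacesSiegel}: identify root spaces with the coordinate lines of the split bi-$\oQ$-structure, translate (i) into the statement that all isotypic subspaces are one-dimensional, and then apply Propositions~\ref{PropBiQbarEasyProp} and~\ref{PropTwoExtremeSplitBiQbar}. No gaps; the one delicate point (producing a bi-$\oQ$ line that is not a root space when two periods coincide up to $\oQ^*$) is handled correctly via Proposition~\ref{PropTwoExtremeSplitBiQbar}.(i).
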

In particular, if there exists a special subvariety (\textit{i.e.} a connected sub-Shimura variety) $S$ of $\mathbb{A}_g$ passing through $[o]$ such that $T_{[o]}S$ is not the direct sum of root spaces of $T_{[o]} \mathbb{A}_g$, then the holomorphic periods of $A_o$ satisfy some elementary non-trivial quadratic relation as defined in Definition~\ref{DefnElementaryNonTrivial}. 
We shall elaborate this phenomenon in a subsequent work \cite[$\mathsection$9]{GUHodgeCycle} by the first- and second-named authors.

\section{Shimura curves and Hilbert modular varieties}\label{SectionShimuraCurveHilbert}
Let $S$ be a connected Shimura variety associated with a connected Shimura datum $(\bG, X)$. Let $S_{\bH}$ be a connected Shimura subvariety associated with $(\bH,X_{\bH})$. We have seen that $X$ can be endowed with a split bi-$\IQbar$-structure in $\mathsection$\ref{SectionBiIQbarShimura}, and the restriction of this split bi-$\IQbar$-structure on $X_{\bH}$ gives precisely the bi-$\IQbar$-structure defined for $X_{\bH}$ in $\mathsection$\ref{SectionBiIQbarShimura}. In particular, each period of $X_{\bH}$ is a period of $X$ by Proposition~\ref{PropBiQbarEasyProp}.(iii).

In this section, we see two aspects of this discussion. The first is about Shimura curves, for which we use Corollary~\ref{PropProductPeriodTranscendental} (which is a consequence of WAST) and our Theorem~\ref{ThmPeriodsSiegel} to give a new proof of a conjecture of Lang. The second is to compute the periods of Hilbert modular varieties.

\subsection{Shimura curves}
Lang \cite{Lang} raised the following transcendence question in uniformization theory. Let $C$ be a smooth projective algebraic curve of genus $> 1$ defined over $\IQbar$. Suppose that the universal holomorphic covering map
\begin{equation}\label{EqQuestionLang}
\varphi \colon E_{\rho} := \{z \in \CC : |z| < \rho\} \rightarrow C^{\mathrm{an}}
\end{equation}
is normalized in such a way that $\varphi(0) \in C(\IQbar)$ and that $\varphi'(0) \in \IQbar$. Is the covering radius $\rho$ then a transcendental number?

This question was answered affirmatively by Cohen and Wolfart \cite{CW} when $C$ is a Shimura variety of dimension $1$ and $\varphi(0)$ is a CM point. Apart from W\"{u}stholz's result on transcendental numbers, their proof relies on some hard computation of Shimura \cite[Thm.1.2, Thm.7.1, Thm.7.6]{ShimuraRelation}.

We hereby give a new and easier proof using the framework of this paper which does not use Shimura's computation. More precisely we use the bi-$\IQbar$-structure on the Hermitian symmetric space and our Theorem~\ref{ThmPeriodsSiegel}, in addition to the consequence of W\"{u}stholz prsented in our paper (Corollary~\ref{PropProductPeriodTranscendental}).

\begin{prop}\label{PropLangQuestion}
The question of Lang mentioned in the paragraph of \eqref{EqQuestionLang} has an affirmative answer if $C$ is a Shimura variety of dimension $1$ and $\varphi(0)$ is a CM point.
\end{prop}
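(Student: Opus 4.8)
\textbf{Proof plan for Proposition~\ref{PropLangQuestion}.}

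The plan is to reduce the transcendence of the covering radius $\rho$ to the transcendence of $\theta_j\theta_{j'}/\pi$ for the holomorphic periods $\theta_j$ of a CM abelian variety (Corollary~\ref{PropProductPeriodTranscendental}), via our computation of the periods of the bi-$\IQbar$-structure on a $1$-dimensional Hermitian symmetric domain. Since $C$ is a Shimura curve, it is a connected component of a Shimura variety attached to some $(\bG,X)$ with $X$ the unit disc; after passing to a suitable level and using that the $\IQbar$-structure and the split bi-$\IQbar$-structure are insensitive to the choice of arithmetic subgroup (see the remark in $\mathsection$\ref{SubsectionArithmeticIQbarShimura} and Theorem~\ref{ThmBiIQbarShimuraSplitBis}), we may assume $C$ itself is such a Shimura variety and that $\varphi(0) = [o]$ for a special point $o$. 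The universal covering map $\varphi$ is, up to a biholomorphism of the disc, the composite of the Harish--Chandra realization $\eta^{-1} \colon X \xrightarrow{\sim} \cD$ of Theorem~\ref{ThmHC} (which identifies $X$ with a disc $\cD \subseteq \Fm^+ \cong \CC$ sending $o$ to $0$) with the uniformizing map $u \colon X \to C$. Because $\dim X = 1$, the covering radius $\rho$ records precisely the ratio between the geometric $\IQbar$-structure on $T_o X$ (which is the $\IQbar$-structure on $\Fm^+ = \cD$'s ambient line, coming from the compact dual, by Proposition~\ref{PropIQbarHC}) and the datum of $\varphi'(0) \in \IQbar$, which is the normalization fixing a generator of the arithmetic $\IQbar$-structure on $T_o X$.

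More precisely, the first step is to make this comparison exact: the hypothesis $\varphi(0) \in C(\IQbar)$ and $\varphi'(0) \in \IQbar$ says that the $\IQbar$-point $[o]$ together with the choice of $\IQbar$-tangent vector $\varphi'(0) \cdot (\text{standard vector})$ picks out the \emph{arithmetic} $\IQbar$-structure on $T_o X$, while the coordinate $z$ on $E_\rho$ in which the disc has radius $\rho$ is (up to $\IQbar^*$) a generator of the \emph{geometric} $\IQbar$-structure rescaled so that the disc is the standard one of radius $\rho$; thus $\rho \simeq \alpha$, where $\alpha \in \CC^*/\IQbar^*$ is the unique period of the $1$-dimensional split bi-$\IQbar$-structure on $T_o X$ given by Theorem~\ref{ThmBiIQbarShimuraSplit}. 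The second step is to identify this period. Here I would embed $C$ into $\mathbb{A}_g$ as a Shimura subvariety — every Shimura curve of this type admits, after replacing $\bG$ by a group with the same adjoint group and adjusting the level, such an embedding into some $\mathbb{A}_g$ (or one works with a Hilbert or quaternionic modular embedding) — and invoke the functoriality stated at the beginning of $\mathsection$\ref{SectionShimuraCurveHilbert}: the period of $X$ is among the periods of $\mathbb{A}_g$ at the image CM point, hence of the form $\theta_j\theta_{j'}/\pi$ by Theorem~\ref{ThmPeriodsSiegel}, where the $\theta_i$ are the holomorphic periods of the CM abelian variety $A_o$ parametrized by the image of $[o]$. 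The third step is then immediate: by Corollary~\ref{PropProductPeriodTranscendental}, $\theta_j\theta_{j'}/\pi$ is transcendental, hence so is $\rho$.

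The main obstacle is the careful bookkeeping in the first step: translating Lang's analytic normalization ($\varphi(0) \in C(\IQbar)$, $\varphi'(0) \in \IQbar$) into the statement that $\rho$ is, up to a nonzero algebraic factor, exactly the bi-$\IQbar$ period $\alpha = \theta_j\theta_{j'}/\pi$, rather than some other algebraic multiple or a different transcendental quantity. One must check that the biholomorphism between Lang's disc $E_\rho$ and the Harish--Chandra domain $\cD$ is, on tangent spaces at the origin, multiplication by an algebraic number (using $\varphi'(0)\in\IQbar$ and that the Harish--Chandra coordinate is $\IQbar$-rational by Proposition~\ref{PropIQbarHC}), so that the radius $\rho$ and the period $\alpha$ agree modulo $\IQbar^*$; the Schwarz lemma / uniqueness of the universal cover pins down $\varphi$ up to a rotation, which is harmless since it does not affect $|\cdot|$. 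Once this is in place, the transcendence input (Corollary~\ref{PropProductPeriodTranscendental}, itself a consequence of WAST applied to $A^\natural$) finishes the argument with no recourse to Shimura's period computations in \cite{ShimuraRelation}.
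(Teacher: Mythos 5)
Your proposal is correct and follows essentially the same route as the paper: reduce via the abelian-type property to a curve inside $\mathbb{A}_g$, observe that the normalization $\varphi'(0)\in\IQbar$ forces $\rho$ to equal (up to $\IQbar^*$) the derivative of the uniformizing map at the origin of the Harish--Chandra disc, identify that derivative as a period $\theta_j\theta_{j'}/\pi$ via Theorem~\ref{ThmPeriodsSiegel} together with Proposition~\ref{PropBiQbarEasyProp}.(iii), and conclude by Corollary~\ref{PropProductPeriodTranscendental}. The only cosmetic difference is that the paper works with a finite $\IQbar$-cover $C'\subseteq\mathbb{A}_g$ of $C$ rather than an embedding of $C$ itself, which is harmless.
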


\begin{proof}
By general theory of Shimura varieties, $C$ is of abelian type, \textit{i.e.} there exists a connected Shimura subvariety $C'$ of $\mathbb{A}_g$ (for some $g\ge 1$) of dimension $1$ together with a finite map $C' \rightarrow C$. Moreover, it is known that this finite map is defined over $\IQbar$, and hence it suffices to prove the result with $C$ replaced by $C'$.

Now let us explain how the uniformization $\varphi$ is obtained from the Shimura setting. Take $o \in \mathfrak{H}_g$ which maps to $\varphi(0)$ under the uniformization $\mathfrak{H}_g \rightarrow \mathbb{A}_g$. Let $A_o$ be the CM abelian variety parametrized by $\varphi(0)$, and let $\theta_1,\ldots,\theta_g$ be its holomorphic periods. 

Write $\cD_g$ for the bounded realization of $\mathfrak{H}_g$ based on $o$.

As a sub-Shimura variety of $\mathbb{A}_g$ of dimension $1$, $C = \Gamma\backslash \cD$ where $\cD = \{z \in \CC : |z|<1\}$ is the Poincar\'{e} unit disk and $\Gamma$ is a subgroup of $\Sp_{2g}(\ZZ)$. The Shimura sub-datum with which $C$ is associated can be recovered as follows. Let $\bH'$ be the neutral component of the Zariski closure of $\Gamma$ in $\GSp_{2g}$, then it is known that $\cD = \bH'(\RR)^+o$. Now let  $\bH = \mathbb{G}_m \cdot \bH' < \GSp_{2g}$. Then $\cD = \bH(\RR)^+o \subseteq \GSp_{2g}(\RR)^+o = \cD_g$, and thus we have the sub-Shimura datum $(\bH,\cD)$ of $(\GSp_{2g},\cD_g)$. Now $C$ is associated with $(\bH,\cD)$, and we have a uniformization $u_C \colon \cD \rightarrow C = \Gamma\backslash \cD$.

Now $\varphi$ from \eqref{EqQuestionLang} is the composite of $\rho^{-1} \cdot \colon E_{\rho} \to \cD$ and $u_C$. Thus $\varphi'(0) = \rho^{-1} u_C'(0)$. Since $\varphi'(0) \in \IQbar$ by assumption, we have that $\rho = u_C'(0)$ up to $\IQbar^*$. Hence it suffices to prove that $u_C'(0)$ is a transcendental number.

 By Theorem~\ref{ThmPeriodsSiegel} (and the formulation Theorem~\ref{ThmPeriodsSiegelBis}) and Proposition~\ref{PropBiQbarEasyProp}.(iii), we have $u_C'(0) = \theta_j\theta_{j'}/\pi$ for some $j$ and $j'$. Thus $u_C'(0)$ is a transcendental number by Corollary~\ref{PropProductPeriodTranscendental}. We are done.
\end{proof}

\subsection{Hilbert modular varieties}\label{SubsectionHilbertModularVariety}

Let $F$ be a totally real field with $[F:\QQ] = g$. Let $\mathbf{G}:= \Gm \cdot \mathrm{Res}_{F/\QQ}\mathrm{SL}_2$. Then $\mathbf{G}^{\mathrm{der}}(\QQ) = \mathrm{SL}_2(F)$, and $\mathbf{G}^{\mathrm{der}}(\RR) = \prod_{\sigma \colon F \to \RR} \mathrm{SL}_2(\RR)$. For the Siegel upper half plane $\mathfrak{H} = \{\tau \in \CC: \mathrm{Im}\tau > 0\}$, $G:=\mathbf{G}(\RR)^+$ acts on $\mathfrak{H}^g$ componentwise. The pair $(\mathbf{G},\mathfrak{H}^g)$ is a connected Shimura datum. For each congruence subgroup $\Gamma$ of $\mathbf{G}(\QQ)$, the connected Shimura variety $\Gamma \backslash \mathfrak{H}^g$ is, by Baily--Borel, the analytification of an algebraic variety, which furthermore is defined over $\IQbar$. This Shimura variety, which we denote by $\mathcal{H}_F$, is called the \textit{Hilbert modular variety} defined by $\mathbf{G}$ (or, more simply, associated with $F$). Then $\mathcal{H}_F$ is a Shimura subvariety of $\mathbb{A}_g$.

Let $\cO_F$ be the ring of integers of $F$.  
An abelian scheme $\cA \to S$ of relative dimension $g$ is said to \textit{have RM by $F$} if there exists an injective ring homomorphism $\iota \colon R_F \hookrightarrow \mathrm{End}_S(\cA)$ (for some order $R_F$ in $\cO_F$) such that $\Lie(\cA/S)$ is a locally free $R_F\otimes \cO_S$-module. It is known that the Hilbert modular variety $\mathcal{H}_F$  parametrizes abelian varieties with RM by $F$ and some extra structures.

The goal of this section is to prove the following theorem. Let $[ o ] \in \mathcal{H}_F(\IQbar)$ be a CM point. Let $A_o$ be the CM variety parametrized by $[o]$, and let $\theta_1,\ldots,\theta_g$ be the holomorphic periods of $A_o$ as defined in the introduction (Convention 1.1).

Let $o \in \mathfrak{H}^g$ be a point such that $o \mapsto [o]$. The natural inclusion $\mathfrak{H}^g \subseteq \mathfrak{H}_g$ induces $T_o \mathfrak{H}^g \subseteq T_o \mathfrak{H}_g$. Thus the split bi-$\IQbar$-structure on $T_o \mathfrak{H}_g$ defined in $\mathsection$\ref{SectionBiIQbarShimura} gives a split bi-$\IQbar$-structure on $T_o \mathfrak{H}^g$. 

\begin{teo}
Under suitable $\IQbar$-bases of $T_{ o } \mathfrak{H}^g = T_o \mathfrak{H}_{\IQbar}^g \otimes_{\IQbar} \CC$ and $T_{[ o ]} \mathcal{H}_F = T_{[o]} \mathcal{H}_{F,\IQbar}\otimes_{\IQbar} \CC$, the matrix of the linear map $\mathrm{d}u$ is $\frac{1}{\pi}\mathrm{diag}(\theta_j^2)_{j\in \{1,\ldots,g\}}$. In particular, the periods of the split bi-$\IQbar$-structure on $T_{ o } \mathfrak{H}^g$ are $\theta_j^2 / \pi$ for $j \in \{1,\ldots,g\}$.
\end{teo}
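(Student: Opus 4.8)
The plan is to deduce this from the Siegel computation of $\mathsection$\ref{SectionPeriodSiegel} by realising $\mathcal{H}_F$ as a Shimura subvariety of $\mathbb{A}_g$ and locating $T_o\mathfrak{H}^g$ inside $T_o\mathfrak{H}_g$ as a sum of root spaces. Since an abelian variety with RM by $F$ is in particular principally polarised, $\mathcal{H}_F$ is a connected Shimura subvariety of $\mathbb{A}_g$; by the discussion opening $\mathsection$\ref{SectionShimuraCurveHilbert} together with Proposition~\ref{PropBiQbarEasyProp}.(iii), the split bi-$\IQbar$-structure on $T_o\mathfrak{H}^g$ defined via the Shimura datum of $\mathcal{H}_F$ agrees with the restriction of the split bi-$\IQbar$-structure on $T_o\mathfrak{H}_g$, and $T_o\mathfrak{H}^g$ is a direct sum of $1$-dimensional bi-$\IQbar$-subspaces of $T_o\mathfrak{H}_g$. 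Hence it is enough to show that these $g$ summands are precisely the root spaces $\CC(\theta_j^2/\pi)$, $1\le j\le g$, appearing in Theorem~\ref{ThmDecompositionIntoRootSpacesSiegel}; the matrix statement for $\mathrm{d}u$ then follows exactly as in Theorem~\ref{ThmPeriodsSiegelBis}.

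To do this I would first fix the torus. Choose a maximal torus $\bT$ of $\mathbf{G}$ containing $\mathrm{MT}(o)$ with $\Lie\bT(\RR)$ contained in $\Fk$ modulo the Lie algebra of the centre (possible because $o$ is a CM point). As $\operatorname{rank}\mathbf{G}=\operatorname{rank}\GSp_{2g}=g+1$, $\bT$ is also a maximal torus of $\GSp_{2g}$, so it may serve as the torus of $\mathsection$\ref{SubsectionDecompositionArithShimura} and of Theorem~\ref{ThmDecompositionIntoRootSpacesSiegel}, with characters $\epsilon_0^*,\epsilon_1^*,\dots,\epsilon_g^*$ normalised by $\epsilon_j^*(\beta_o)=\theta_j$. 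By Theorem~\ref{ThmBiIQbarShimuraSplitBis} the bi-$\IQbar$-splitting of $T_o\mathfrak{H}_g=\Fm^+$ is its decomposition into weight spaces $\Fg_\CC^\chi$, $\chi\in\Phi_M^+=\{\epsilon_j^*+\epsilon_{j'}^*:1\le j\le j'\le g\}$, for the action of $\mathrm{Ad}(\bT(\QQ))$. Since $\bT\subseteq\mathbf{G}$, the subspace $T_o\mathfrak{H}^g=\Fm^+_{\mathbf{G}}$ of $\Fm^+$ is stable under $\mathrm{Ad}(\bT(\QQ))$, hence is the direct sum of those $\Fg_\CC^\chi$ it contains, the relevant $\chi$ being exactly the non-compact positive roots of $\mathbf{G}^{\mathrm{der}}$.

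It then remains a root-system computation to identify these roots. We have $\mathbf{G}^{\mathrm{der}}_\CC=\prod_{\sigma\colon F\hookrightarrow\IQbar}\mathrm{SL}_{2,\CC}$, acting on the standard $2g$-dimensional symplectic space as $\bigoplus_\sigma V_\sigma$ with each $V_\sigma$ a $2$-dimensional symplectic subspace carrying the standard representation of the $\sigma$-factor. Since $\dim_\CC\mathfrak{H}^g=g$ equals the number of positive roots of $\prod_\sigma\mathrm{SL}_2$, every positive root of $\mathbf{G}^{\mathrm{der}}$ is non-compact; and the unique positive root of the $\sigma$-factor is supported on $V_\sigma$, hence is a long positive root $2\epsilon^*_{j(\sigma)}$ of $\GSp_{2g}$. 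As $\sigma$ ranges over the $g$ embeddings these roots are pairwise distinct, so they exhaust $\{2\epsilon_1^*,\dots,2\epsilon_g^*\}$; therefore $T_o\mathfrak{H}^g=\bigoplus_{j=1}^g\Fg_\CC^{\epsilon_j^*+\epsilon_j^*}$, and by Theorem~\ref{ThmDecompositionIntoRootSpacesSiegel} the period of $\Fg_\CC^{\epsilon_j^*+\epsilon_j^*}$ is $\theta_j^2/\pi$. Reading this off in adapted $\IQbar$-bases exactly as for Theorem~\ref{ThmPeriodsSiegelBis} gives the matrix $\tfrac1\pi\mathrm{diag}(\theta_j^2)$ for $\mathrm{d}u$. (Alternatively, one can rerun the Kodaira--Spencer argument of $\mathsection$\ref{SectionPeriodSiegel} directly for this sub-datum, identifying $\mathrm{KS}_{[o]}(T_{[o]}\mathcal{H}_F)$ with $\bigoplus_\sigma(\Lie A_o)_\sigma^{\otimes2}\subseteq\mathrm{S}^2\Lie A_o$.) The main point requiring care is the compatibility of the positive system chosen on $\GSp_{2g}$ with the one induced on $\mathbf{G}^{\mathrm{der}}$ --- i.e.\ that a non-compact positive root of $\mathbf{G}^{\mathrm{der}}$ indeed lies in $\Phi_M^+$ --- which holds because the complex structure on $\mathfrak{H}^g$ is the restriction of that on $\mathfrak{H}_g$, so the element $z$ of Lemma~\ref{LemmaCenterElementComplexStructureSV} can be taken the same; everything else is bookkeeping with restriction of scalars and CM types, and does not affect the assertion, which only concerns the multiset $\{\theta_1^2/\pi,\dots,\theta_g^2/\pi\}$.
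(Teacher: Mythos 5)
Your argument is correct, but it takes a genuinely different route from the paper's. The paper proves this theorem by rerunning the Kodaira--Spencer computation of $\mathsection$\ref{SectionPeriodSiegel} for the sub-datum: it uses the modular interpretation of $\mathcal{H}_F$ to see that $\Lie(A_o)$ and $\beta_o(\Omega_{A_o})^\vee$ are free $R_F$-modules of rank $1$, deduces that $\mathrm{KS}_{[o]}$ identifies $T_{[o]}\mathcal{H}_F$ with $\Lie(A_o)\otimes_{R_F}\Lie(A_o)$ and $T_o\mathfrak{H}^g$ with $2\pi i\,\beta_o(\Omega_{A_o})^\vee\otimes_{R_F}\beta_o(\Omega_{A_o})^\vee$, and then reads off $\mathrm{d}u$ as $\frac{1}{2\pi i}(\beta_o^\vee)^2$, which is $\mathrm{diag}(\theta_j^2/2\pi i)$ by Remark~\ref{RemarkCMAbVarDiagonal} --- essentially the ``alternative'' you mention parenthetically. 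Your main argument instead works entirely on the group side: you use Theorem~\ref{ThmBiIQbarShimuraSplitBis} and Theorem~\ref{ThmDecompositionIntoRootSpacesSiegel} to reduce everything to identifying the non-compact positive roots of $\mathrm{Res}_{F/\QQ}\mathrm{SL}_2$ inside $\GSp_{2g}$ as the long roots $2\epsilon_1^*,\dots,2\epsilon_g^*$, which you do correctly via the decomposition $V_{\CC}=\bigoplus_\sigma V_\sigma$ into $2$-dimensional symplectic weight-pairs. The key verifications are all in place: $\bT$ is simultaneously a maximal torus of $\mathbf{G}$ and of $\GSp_{2g}$ (equal ranks $g+1$); $T_o\mathfrak{H}^g=\Fm^+_{\mathbf{G}}$ is $\mathrm{Ad}(\bT(\QQ))$-stable and hence a sum of root lines $\Fg_{\CC}^{\chi}$, and these lines automatically lie in $\Phi_M^+$ because $\Fm^+_{\mathbf{G}}\subseteq\Fm^+$ as holomorphic tangent spaces (your positivity worry resolves itself for exactly this reason); and the count $\dim\mathfrak{H}^g=g=\#\Phi^+(\prod_\sigma\mathrm{SL}_2)$ forces all positive roots to be non-compact. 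What your approach buys is conceptual uniformity --- it computes the periods of any sub-Shimura datum of $(\GSp_{2g},\mathfrak{H}_g)$ once the embedding of root systems is known, in the spirit of $\mathsection$\ref{SubsectionTwoBiIQbarDecomSame} --- whereas the paper's proof buys an explicit $\IQbar$-rational identification of $T_{[o]}\mathcal{H}_F$ with a concrete tensor construction on $\Lie(A_o)$ via the RM structure. One small remark: you do not actually need the unproved assertion from the opening of $\mathsection$\ref{SectionShimuraCurveHilbert} that the intrinsic bi-$\IQbar$-structure of the sub-datum agrees with the restricted one, since the theorem (and your computation) only ever uses the structure on $T_o\mathfrak{H}^g$ obtained by restriction from $T_o\mathfrak{H}_g$.
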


\begin{proof} We write for simplicity $\mathcal{H}$ for $\mathcal{H}_F$. 

The following holds true by the modular interpretation of $\mathcal{H}$ above. For a suitable $\Gamma$, the Hilbert modular variety $\mathcal{H}$ is a fine moduli space with the following property: the universal family $\cA \to \mathcal{H}$ satisfies that $\Lie(\cA/\mathcal{H})$ is a locally free $R_F\otimes \cO_S$-module. Hence $\Lie(A_{ o })$ is a free $R_F$-module. But $\Lie (A_o)$ has dimension $g$ and $R_F$ is a $\ZZ$-module of rank $g$. so $\Lie (A_o)$ has rank $1$ as an $R_F$-module. We write $\Lie(A_{ o }) = R_F  \mathbf{f}$ with $\mathbf{f}$ a $\IQbar$-element of $\Lie (A_o)$.

As $\mathcal{H} \subseteq \mathbb{A}_g$, the Kodaira--Spencer map \eqref{EqKSSiegel} gives, in combination with $A_o^{\mathrm{t}} \cong A_o$, 
\[
T_{[ o ]}\mathcal{H} \subseteq T_{[ o ]}\mathbb{A}_g \xrightarrow{\mathrm{KS}_{[ o ]}} \Lie(A_{ o }) \otimes \Lie(A_{ o }).
\]
The $R_F$-action on each fiber of $\Lie(\cA/\mathcal{H})$ (and of $\Lie (\cA^{\mathrm{t}}/\mathcal{H})$) then implies $\mathrm{KS}_{[ o ]}(T_{[ o ]}\mathcal{H}) \subseteq \Lie(A_{ o }) \otimes_{R_F} \Lie(A_{ o })$. By comparing dimensions of both sides, we have equality. Hence we have the isomorphism defined over $\IQbar$
\[
\mathrm{KS}_{[ o ]} \colon T_{[ o ]}\mathcal{H} \xrightarrow{\sim} \Lie(A_{ o }) \otimes_{R_F} \Lie(A_{ o }).
\]
Similarly, by using \eqref{EqTangentSpaceHgDual} and taking into consideration of \eqref{EqQbarDualQuotient2pii} we get
\[
T_{ o }\mathfrak{H}^g = 2\pi i  \beta_{o}(\Omega_{A_{o}})^\vee  \otimes_{R_F} \beta_{o}(\Omega_{A_{o}})^\vee.
\]
Similarly to $\Lie (A_o)$, $\beta_o(\Omega_{A_o})^\vee$ is a free $R_F$-module of rank $1$. 
Write $\beta_o(\Omega_{A_o})^\vee = R_F \mathbf{e}$ with $\mathbf{e}$ a $\IQbar$-element of 
$\beta_o(\Omega_{A_o})^\vee$. The following diagram commutes, with all horizontal maps defined over $\IQbar$:
\[
\xymatrix@R+1pc@C+5.5pc{
T_{ o }\mathfrak{H}^g \ar[r]^-{=} \ar[d]_-{\mathrm{d}u} & 2\pi i  \beta_{o}(\Omega_{A_{o}})^\vee \otimes_{R_F} \beta_{o}(\Omega_{A_{o}})^\vee \ar[r]^-{2\pi i f_1 \mathbf{e} \otimes f_2\mathbf{e} \mapsto 2\pi i f_1f_2\mathbf{e}}_-{\sim}  \ar[d]^-{\frac{1}{2\pi i}\beta_o^\vee \otimes \beta_o^\vee}  & 2\pi i \beta_{o}(\Omega_{A_{o}})^\vee \ar[d]^-{\frac{1}{2\pi i}(\beta_o^\vee)^2} \\
T_{[ o ]}\mathcal{H} \ar[r]^-{\mathrm{KS}_{[ o ]}}_-{\sim} & \Lie(A_{ o }) \otimes_{R_F} \Lie(A_{ o })  \ar[r]^-{f_1 \mathbf{f} \otimes f_2\mathbf{f} \mapsto f_1f_2\mathbf{f}}_-{\sim} & \Lie(A_{ o }).
}
\]
Thus the conclusion follows from the first part of Remark~\ref{RemarkCMAbVarDiagonal}.
%Hence we are done.
\end{proof}

\section{Some general discussion on CM points}\label{SectionDiscussionCM}
In this section, we gather some preliminary results regarding the Mumford--Tate group of CM abelian varieties and on CM points in $\mathbb{A}_g$. They will be used in later discussions.

\subsection{Mumford--Tate group of CM abelian varieties}

Let $A$ be a CM abelian variety, and let $\bT_o := \mathrm{MT}(A)$. Let $E:= \mathrm{End}(A)\otimes_{\ZZ} \QQ$.

If $A$ has no square factors, then $E =  E_1\times \cdots \times E_k$  is a product of CM fields. In this case, there exists an element $\iota \in E$ such that $\bar{\iota} = - \iota$. Then $E$ can be endowed with the $\QQ$-symplectic form
\[
\langle x, y \rangle := \mathrm{Tr}_{E/\QQ}(\bar{x}\iota y).
\]
This makes $(E, \langle, \rangle) \cong ( \QQ^{2g}, \begin{bmatrix} 0 & I_g \\ -I_g & 0 \end{bmatrix})$ into a symplectic space. 
Set $\mathrm{GU}_E$ to be the subgroup of $\GSp_{2g}$ generated by $\Gm = Z(\GSp_{2g})$ and
\[
\mathrm{U}_E := \{ x \in \mathrm{Res}_{E/\QQ}\GmF{E} : x \bar{x} = 1\}.
\]

\begin{lem}\label{LemmaNoSquareFactorRegularMT}
Assume $A$ has no square factors. 
Then $\mathrm{GU}_E$ is a maximal torus of $\GSp_{2g}$ and equals $Z_{\GSp_{2g}}(\bT_o)$.
\end{lem}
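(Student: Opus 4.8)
The plan is to compute $Z_{\GSp_{2g}}(\bT_o)$ explicitly and then identify it with $\mathrm{GU}_E$. Throughout write $V:=H_1(A,\QQ)$, which by hypothesis is identified with $(E,\langle x,y\rangle=\mathrm{Tr}_{E/\QQ}(\bar x\iota y))$ as a symplectic $E$-module, $E$ acting by multiplication; recall also that $\bT_o=\mathrm{MT}(A)\subseteq\GSp_{2g}$ since $A$ carries a polarization. The proof has three moves: determine $Z_{\End_\QQ(V)}(\bT_o)$ by Hodge theory, deduce that $Z_{\GSp_{2g}}(\bT_o)$ is a maximal torus, and then match dimensions with $\mathrm{GU}_E$.

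First I would compute the centralizer of $\bT_o$ inside $\End_\QQ(V)$ for the conjugation action; this step uses no symplectic form. Let $h\colon\SSS\to\GL(V_\RR)$ be the morphism defining the Hodge structure of $A$. A $\QQ$-linear endomorphism $f$ of $V$ commutes with $\bT_o$ if and only if it commutes with $h(\SSS(\RR))$ (because $\mathrm{MT}(A)$ is the smallest $\QQ$-subgroup through which $h$ factors, so $Z_{\GL(V)}(f)$ contains $h(\SSS(\RR))$ iff it contains $\mathrm{MT}(A)$), which in turn holds iff $f_\CC$ preserves the Hodge decomposition of $V_\CC$, i.e.\ iff $f$ is an endomorphism of the Hodge structure $V$. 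By the classical analytic description of homomorphisms of complex tori this algebra is $\mathrm{End}^0(A)=E$. Hence $Z_{\End_\QQ(V)}(\bT_o)=E$, and passing to unit groups $Z_{\GL(V)}(\bT_o)=\mathrm{Res}_{E/\QQ}\GmF{E}=:T_E$. This is exactly where the hypothesis that $A$ has \emph{no square factors} is used: it forces $E$ to be a product of CM fields, hence commutative, hence $T_E$ a torus.

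Next I would deduce that $H:=Z_{\GSp_{2g}}(\bT_o)=T_E\cap\GSp_{2g}$ is a maximal torus of $\GSp_{2g}$. As the centralizer of the torus $\bT_o$ in the connected group $\GSp_{2g}$, the group $H$ is connected; as a closed subgroup of the torus $T_E$ it is of multiplicative type; and a connected group of multiplicative type is a torus. Finally, since $\bT_o\subseteq H$ and $H$ is commutative, $Z_{\GSp_{2g}}(H)\subseteq Z_{\GSp_{2g}}(\bT_o)=H\subseteq Z_{\GSp_{2g}}(H)$, so $H$ equals its own centralizer and is therefore a maximal torus; in particular $\dim H=\operatorname{rank}\GSp_{2g}=g+1$.

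It then remains to show $\mathrm{GU}_E=H$. The inclusion $\mathrm{GU}_E\subseteq H$ is immediate: for $u\in\mathrm{U}_E$ one has $\langle ua,ub\rangle=\mathrm{Tr}_{E/\QQ}(u\bar u\cdot\bar a\iota b)=\langle a,b\rangle$ and for a scalar $\lambda$ one has $\langle\lambda a,\lambda b\rangle=\lambda^2\langle a,b\rangle$, so $\mathrm{GU}_E\subseteq\GSp_{2g}$, while $\mathrm{GU}_E\subseteq E^\times=T_E$ trivially. On the other side, $\mathrm{GU}_E$ is connected (it is the image of the connected group $\Gm\times\mathrm{U}_E$, where $\mathrm{U}_E$ is the kernel of the surjective norm map $\mathrm{Res}_{E/\QQ}\GmF{E}\to\mathrm{Res}_{E^+/\QQ}\GmF{E^+}$, $t\mapsto t\bar t$, hence a torus of dimension $2g-g=g$), and the multiplication $\Gm\times\mathrm{U}_E\to\mathrm{GU}_E$ has finite kernel, so $\dim\mathrm{GU}_E=g+1$. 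A connected subgroup of dimension $g+1$ inside the connected $(g+1)$-dimensional group $H$ must equal it, so $\mathrm{GU}_E=H=Z_{\GSp_{2g}}(\bT_o)$, a maximal torus. The only step requiring any care is the Hodge-theoretic identification $Z_{\End_\QQ(V)}(\bT_o)=\mathrm{End}^0(A)=E$ together with the remark that "no square factors" is precisely what makes this algebra commutative; granted that, the fact that $Z_{\GSp_{2g}}(\bT_o)$ is a torus is automatic and the rest is bookkeeping.
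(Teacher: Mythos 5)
Your proof is correct and rests on the same two pillars as the paper's: the Hodge-theoretic identification of the centralizer of $\bT_o$ with $\mathrm{End}^0(A)=E$ (commutative precisely because $A$ has no square factors), and the dimension count $\dim \mathrm{GU}_E=g+1$ via the kernel of the norm $E^\times\to F^\times$. The only difference is organizational — you first show $Z_{\GSp_{2g}}(\bT_o)$ is a maximal torus (as a connected, self-centralizing subgroup of $T_E\cap\GSp_{2g}$) and then match dimensions with $\mathrm{GU}_E$, whereas the paper first shows $\mathrm{GU}_E$ is maximal by comparing with $\mathrm{rk}\,\GSp_{2g}$ and then shows the centralizer is a torus containing it; your group-level inclusion $Z_{\GSp_{2g}}(\bT_o)\subseteq T_E$ is, if anything, a slightly cleaner way to see that the centralizer is a torus than the paper's Lie-algebra argument.
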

\begin{proof} For each $j \in \{1,\ldots, k\}$, let $F_j$ be the largest totally real subfield of $E_j$. Set $F:=F_1 \times \cdots \times F_k$. Then
\[
\mathrm{U}_E = \ker(\mathrm{Nm} \colon \mathrm{Res}_{E/\QQ}\GmF{E} \rightarrow \mathrm{Res}_{E/\QQ}\GmF{F}).
\]
Hence $\mathrm{U}_E$ is a torus. Moreover, $\sum_{j=1}^k [F_j :\QQ] = g$. So $\dim \mathrm{U}_E = g$. 

Therefore  $\mathrm{GU}_E$ is a torus of dimension $g+1$. But $\mathrm{rk}\GSp_{2g} = g+1$. So $\mathrm{GU}_E$ is a maximal torus of $\GSp_{2g}$.

Now let us prove $\mathrm{GU}_E = Z_{\GSp_{2g}}(\bT_o)$. Since $\bT_o < \mathrm{GU}_E$ and $\mathrm{GU}_E$ is a torus, we have  $\mathrm{GU}_E < Z_{\GSp_{2g}}(\bT_o)$. Hence it suffices to prove that $Z_{\GSp_{2g}}(\bT_o)$ is a torus.%Assume $[o] \in \mathbb{A}_g(\IQbar)$ parametrizes $A$, and let $o \in \mathfrak{H}_g$ be such that $o \mapsto [o]$. %Then $Z_{\GSp_{2g}}(\bT_o)$ is a reductive group because its extension to $\RR$ contains $o(\SSS)$. By Lemma~\ref{LemmaMTAndCompactR} and since $\QQ$ is dense in $\RR$, we have $Z_{\GSp_{2g}}(\bT_o)(\RR) < \mathrm{Stab}_{\GSp_{2g}(\RR)}(o)$. 

Write $V= H_1(A,\QQ)$, which is a $\QQ$-Hodge structure of type $(-1,0)+(0,-1)$ and whose Mumford--Tate group is $\bT_o$. Then for any $\alpha \in \mathrm{End}(V_{\QQ})$, we have $\alpha \in \mathrm{End}_{\QQ\text{-HS}}(V)$ if and only if $\alpha$ commutes with all elements in $\bT_o(\QQ)$.
Thus  $\mathrm{End}_{\QQ\text{-HS}}(V) = \Lie Z_{\GSp_{2g}}(\bT_o)$. Observe that $\mathrm{End}_{\QQ\text{-HS}}(V)$ is abelian because $A$ has no square factors. So $Z_{\GSp_{2g}}(\bT_o)$ is a torus. We are done.
\end{proof}

\subsection{Weyl points}

Let $[o] \in \mathbb{A}_g(\IQbar)$ be a CM point, and let $A_o$ be the associated CM abelian variety. Assume that $A_o$ is simple. Then $E:=\mathrm{End}^0(A_o)$ is a CM field. Let $F$ be the maximal totally real subfield of $E$, then $[F:\QQ] =g$ and $[E:F]=2$.

Write $E^c$ and $F^c$ for the Galois closures of $E$ and $F$ in $\IQbar$. Then $\Gal(F^c/\QQ) < \mathfrak{S}_g$ and $\Gal(E^c/\QQ) < (\ZZ/2\ZZ)^g \rtimes \mathfrak{S}_g$. 

The point $[o]$ is called a \textit{Weyl point} (or \textit{Galois generic}) if $\Gal(E^c/\QQ) = (\ZZ/2\ZZ)^g \rtimes \mathfrak{S}_g$.

\begin{prop}\label{PropWeylPoint}
For each Weyl point $[o] \in \mathbb{A}_g(\IQbar)$, there are precisely $3$ special subvarieties of $\mathbb{A}_g$ which pass through $[o]$: $\{[o]\}$, $\mathbb{A}_g$, and the Hilbert modular variety defined by $\Gm \cdot \mathrm{Res}_{F/\QQ}\mathrm{SL}_2$.\footnote{For readers who are not familiar with this terminology, we refer to $\mathsection$\ref{SubsectionHilbertModularVariety} for the definition.}
\end{prop}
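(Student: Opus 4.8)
\emph{Proof strategy.} The plan is to translate the statement into a classification of connected reductive $\QQ$-subgroups of $\GSp_{2g}$ containing $\bT_o := \mathrm{MT}(A_o)$, and then to carry out that classification using the root datum of $\GSp_{2g}$ together with the Galois action. Recall that every special subvariety $Z$ of $\mathbb{A}_g$ through $[o]$ has the form $\overline{u(\bH(\RR)^+o)}$ for a connected $\QQ$-subgroup $\bH \subseteq \GSp_{2g}$, and replacing $\bH$ by the generic Mumford--Tate group along $\bH(\RR)^+o$ we may assume $\bH$ is reductive; since $o(\SSS)\subseteq \bH_{\RR}$ forces $\bT_o \subseteq \bH$, each such $\bH$ contains $\bT_o$, and conversely $\bH$ determines $Z$. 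So it is enough to list the connected reductive $\QQ$-subgroups $\bH$ with $\bT_o \subseteq \bH \subseteq \GSp_{2g}$ and to check that the resulting $Z$'s are distinct. Because $[o]$ is a Weyl point, $\bT_o$ is in fact a maximal torus of $\GSp_{2g}$: $A_o$ has no square factors, so by Lemma~\ref{LemmaNoSquareFactorRegularMT} the centralizer $Z_{\GSp_{2g}}(\bT_o)$ equals $\mathrm{GU}_E$, a maximal torus; and the Weyl condition forces $\dim \bT_o = g+1 = \mathrm{rk}\,\GSp_{2g}$ (equivalently, $\Gal$ acts on $X^*(\mathrm{GU}_E)$ transitively enough that $\bT_o$ cannot be a proper subtorus), so $\bT_o = \mathrm{GU}_E =: \bT$.

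Next I would use that a connected reductive subgroup $\bH \subseteq \GSp_{2g}$ containing the maximal torus $\bT$ is generated by $\bT$ together with the root subgroups $U_\alpha$ for $\alpha$ in a closed symmetric subset $\Psi$ of the root system $\Phi := \Phi(\bT,\GSp_{2g})$, and that $\bH$ is defined over $\QQ$ precisely when $\Psi$ is stable under the natural action of $\Gal := \Gal(\IQbar/\QQ)$ on $\Phi \subseteq X^*(\bT)$. Using the $\ZZ$-basis $\epsilon_0^*,\epsilon_1^*,\dots,\epsilon_g^*$ of $X^*(\bT) = X^*(\mathrm{GU}_E)$ from $\mathsection$\ref{SubsectionTwoBiIQbarDecomSame}, the Galois action factors through $\Gal(E^c/\QQ)$, with $\mathfrak{S}_g$ permuting $\epsilon_1^*,\dots,\epsilon_g^*$ and $(\ZZ/2\ZZ)^g$ acting through complex conjugation by $\epsilon_j^* \mapsto \epsilon_0^* - \epsilon_j^*$; under the identification $\Phi = \{\pm(\epsilon_j^*+\epsilon_{j'}^*)\}_{j\le j'} \cup \{\pm(\epsilon_j^*-\epsilon_{j'}^*)\}_{j<j'}$, a root system of type $C_g$, this is precisely the action of the Weyl group $W(C_g) = (\ZZ/2\ZZ)^g \rtimes \mathfrak{S}_g$, and the Weyl-point hypothesis says the Galois image is all of $W(C_g)$.

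The core step is then combinatorial: classify the $W(C_g)$-stable, closed, symmetric subsets $\Psi \subseteq \Phi$. The group $W(C_g)$ has exactly two orbits on $\Phi$, the long roots $\{\pm 2\epsilon_j^*\}$ and the short roots $\{\pm(\epsilon_j^*\pm\epsilon_{j'}^*) : j\ne j'\}$, so $\Psi$ is one of $\emptyset$, the long roots, the short roots, or all of $\Phi$. The first, second and fourth are closed, but the set of short roots is \emph{not} closed, since $(\epsilon_1^*-\epsilon_2^*)+(\epsilon_1^*+\epsilon_2^*) = 2\epsilon_1^* \in \Phi$ is a long root. Hence there are exactly three admissible $\bH$: $\bH = \bT$, giving $Z = \{[o]\}$; $\bH$ generated by $\bT$ and the $g$ long-root copies of $\SL_2$, which over $\QQ$ is $\Gm \cdot \Res_{F/\QQ}\SL_2$ because $\Gal(F^c/\QQ) = \mathfrak{S}_g$ permutes them (the $(\ZZ/2\ZZ)^g$ part acts by inner automorphisms inside each factor), giving the Hilbert modular variety attached to $F$; and $\bH = \GSp_{2g}$, giving $Z = \mathbb{A}_g$. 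For $g \ge 2$ these are distinct by dimension ($0$, $g$, $g(g+1)/2$), and each underlies a genuine Shimura sub-datum of $(\GSp_{2g},\mathfrak{H}_g)$, so all three really are special subvarieties through $[o]$. This gives the proposition.

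The main obstacle I anticipate is exactly the last combinatorial point — verifying that the $D_g$-subsystem of short roots fails the closedness condition — because this is precisely what rules out a fourth special subvariety (one with $\bH^{\mathrm{der}}$ of type $D_g$, i.e. a copy of $\mathrm{SO}_{2g}$-type inside $\Sp_{2g}$). The two remaining care points, both standard, are the precise identification of the Galois action on $X^*(\mathrm{GU}_E)$ with the $C_g$ Weyl-group action, and the reduction from an arbitrary special subvariety through $[o]$ to a maximal-rank reductive $\QQ$-subgroup containing $\bT_o$.
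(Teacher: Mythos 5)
Your route is sound and it is genuinely different from the paper's. You run the classification through Borel--de Siebenthal: since $\bT_o=\mathrm{MT}(A_o)$ is a maximal torus of $\GSp_{2g}$, the connected reductive $\QQ$-subgroups $\bH$ with $\bT_o\subseteq\bH\subseteq\GSp_{2g}$ correspond to Galois-stable closed symmetric subsets of the $C_g$ root system; the Weyl hypothesis makes the Galois image the full group $(\ZZ/2\ZZ)^g\rtimes\mathfrak{S}_g$ acting as $W(C_g)$, which has exactly two orbits on the roots, and the short-root ($D_g$) orbit fails closedness because $(\epsilon_1^*-\epsilon_2^*)+(\epsilon_1^*+\epsilon_2^*)$ is a long root. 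The paper argues differently: it first shows $\bT_o'\subseteq\bH^{\mathrm{der}}$, deduces that $\bH^{\mathrm{der}}$ is $\QQ$-simple of rank $g$ with $\mathrm{Aut}(\Phi)\supseteq(\ZZ/2\ZZ)^g\rtimes\mathfrak{S}_g$, writes $\bH^{\mathrm{der}}=\Res_{F'/\QQ}\tilde{\bH}$, and eliminates all cases except $r=1$ (giving $\Sp_{2g}$) and $r=g$ (giving type $A_1^g$) by comparing automorphism groups of root systems. Your version concentrates the exclusion of a fourth subvariety into one clean combinatorial fact; the paper's avoids invoking the maximal-rank classification but pays with a longer case analysis.

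One step of yours is under-justified and is precisely where the paper spends real effort: identifying the $\QQ$-form of the long-root subgroup. Knowing that $\Gal$ permutes the $g$ copies of $\SL_2$ through $\mathfrak{S}_g$ and acts through $(\ZZ/2\ZZ)^g$ "inside each factor" only shows $\bH^{\mathrm{der}}$ is an inner form of $\Res_{F/\QQ}\SL_2$; the quaternionic forms $\Res_{F/\QQ}\SL_1(D)$ with $D$ split by $E$ also contain the torus $\mathrm{U}_E$, and had the group been such a form ramified at a real place it would still produce a (lower-dimensional) special subvariety through $[o]$, breaking the count of three. The paper rules this out by a weight argument with the Shimura datum axioms and then shows $F'=F$ via the modular interpretation. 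In your framework the quickest repair is to note that the long-root subgroup of $\Sp_{2g,\IQbar}$ is the connected centralizer of $F\subseteq\End_{\QQ}(V)$, which over $\QQ$ is $\Res_{F/\QQ}\Sp(E,\psi_F)\cong\Res_{F/\QQ}\SL_2$, the split form. A second, smaller point: the claim that the Weyl condition forces $\dim\bT_o=g+1$ (nondegeneracy of Weyl CM types) is a genuine input that you only gesture at, though the paper likewise asserts it without proof.
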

\begin{proof}
The result is clearly true if $g = 1$. From now on, assume $g\ge 2$.

Let $S$ be a special subvariety of $\mathbb{A}_g$, which passes through $[o]$, associated with the connected Shimura datum $(\bH,X_{\bH})$. Assume $S \not= \{[o]\}$.% and $S \not= \mathbb{A}_g$.

Let $\bT_o := \mathrm{MT}([o])$. Then $\bT_o<\bH$, $\bT_o$ is a maximal torus of $\GSp_{2g}$, and the Galois closure of the splitting field of $\bT_o$ is $E^c$. There exists a maximal torus $\bT_o'$ of $\Sp_{2g}$ such that $\bT_o = \Gm \cdot \bT_o'$, where $\Gm = Z(\GSp_{2g})$. In particular $\dim \bT_o' = g$.%, because $G=Z(G)G^{\mathrm{der}}$ is an almost direct product. 

%The splitting field of $\bT_o'$ is a subfield of $E^c$, \textit{i.e.} $\bT_o'_{E^c} \cong \GmF{E^c}^g$.

%Since $o \in X_H$, we have $\bT_o < \bH$.

We claim that $\bT_o' \subseteq \bH^{\mathrm{der}}$. Indeed, $\bT_o' \cap \bH^{\der}$ is a non-trivial subtorus of $\bT_o'$, and hence $\bT_o' = (\bT_o' \cap \bH^{\der})\bT_o''$ for some subtorus $\bT_o''$ of $\bT_o'$. Let $r_1 := \dim (\bT_o' \cap \bH^{\der})$ and $r_2 := \dim \bT_o''$. Let $L_1^c$ (resp. $L_2^c$) be the Galois closure of the splitting field of $\bT_o' \cap \bH^{\der}$ (resp. of $\bT_o''$). Then $\Gal(L_i^c/\QQ) < (\ZZ/2\ZZ)^{r_i}\rtimes \mathfrak{S}_{r_i}$ for $i \in \{1,2\}$. Since $E^c$ is the Galois closure of the splitting field of $\bT_o'$, we then have that
\[
\Gal(E^c/\QQ) < \prod_{i=1}^2 (\ZZ/2\ZZ)^{r_i}\rtimes \mathfrak{S}_{r_i}.
\]
But $\Gal(E^c/\QQ) = (\ZZ/2\ZZ)^g \rtimes \mathfrak{S}_g$ since $[o]$ is Weyl. Notice that $r_1+r_2 =g$ and $r_1 > 0$. So $r_1 = g$ and $r_2 = 0$. Therefore $\bT_o' = \bT_o' \cap \bH^{\mathrm{der}}$, and hence $\bT_o' < \bH^{\mathrm{der}}$.

The upshot is that $\bT_o'$ is a maximal torus of $\bH^{\mathrm{der}}$. Hence $\mathrm{rk}\bH^{\mathrm{der}} = g$. For the root system of $(\bT_o',\bH^{\mathrm{der}})$, the Weyl group of $\bH^{\mathrm{der}}$ is then $(\ZZ/2\ZZ)^g \rtimes \mathfrak{S}_g$.

We claim that $\bH^{\mathrm{der}}$ is a $\QQ$-simple algebraic group. Assume not, then we can get a contradiction to $\Gal(E^c/\QQ) = (\ZZ/2\ZZ)^g \rtimes \mathfrak{S}_g$ with a similar argument as for $\bT_o' < \bH^{\mathrm{der}}$.

The upshot is that $\bH^{\mathrm{der}} = \mathrm{Res}_{F'/\QQ}\tilde{\bH}$ for some $F'$ totally real and $\tilde{\bH}$ simply-connected. Thus
\[
\bH_{\CC} = \oplus_{\sigma \colon F' \to \RR} \tilde{\bH}_{\sigma,\CC}.
\]
For $r := [F':\QQ]$, we then have $r \cdot \mathrm{rk}(\bH_{\sigma,\CC}) = g$.%\footnote{$\mathrm{rk}(\bH_{\sigma,\CC}) $ also equals $\mathrm{rk} \Phi_{\sigma}$ and $\mathrm{rk} \Delta_{\sigma}$.}

Let $\Phi$ be the root system of $\bH_{\CC}$, and let $\Phi_{\sigma}$ be the root system of $\bH_{\sigma,\CC}$. Then $\Phi = \coprod_{\sigma \colon F' \to \RR} \Phi_{\sigma}$. Then
\[
\mathrm{Aut}(\Phi) = \left( \prod_{\sigma} \mathrm{Aut}(\Phi_{\sigma}) \right) \rtimes \mathfrak{S}_r.
\]
By the theory of root systems, $\mathrm{Aut}(\Phi_{\sigma}) = W_{\sigma}\rtimes \mathrm{Aut}(\Delta_{\sigma})$, where $W_{\sigma}$ is the Weyl group of $H_{\sigma,\CC}$, and $\mathrm{Aut}(\Delta_{\sigma})$ is the transformation group of the Dynkin diagram $\Delta_{\sigma}$ associated with $\bH_{\sigma,\CC}$. Then $W_{\sigma}$ is a Weyl group of type $A_r$, $B_r$, $C_r$, or $D_r$ with $r$ dividing $g$. We may therefore have $W_{\sigma}=\mathfrak{S}_{r+1}$, $W_{\sigma}=(\ZZ/2\ZZ)^r \rtimes \mathfrak{S}_r$ or $W_{\sigma}=(\ZZ/2\ZZ)^{r-1} \rtimes \mathfrak{S}_r$. Moreover, 
 $\mathrm{Aut}(\Delta_{\sigma})$ equals $1$, $\ZZ/2\ZZ$, or $\mathfrak{S}_3$ (in which case $\bH_{\sigma,\CC}$ is of type $D_3$).

Assume $g \ge 4$. 
Notice that $\mathrm{Aut}(\Phi)$ contains the Weyl group of $\bH^{\mathrm{der}}$, which is $(\ZZ/2\ZZ)^g \rtimes \mathfrak{S}_g$. So we must have $r = 1$ or $r = g$. The same holds true for $g = 2$ and $g = 3$ because $r|g$.

If $r = 1$, then $F' = \QQ$ and $\bH^{\mathrm{der}}$ is a simple-connected simple group of rank $g$. Then it is of type $A_g$, $B_g$, $C_g$, or $D_g$. Since the Weyl group of $\bH^{\mathrm{der}}$ is $(\ZZ/2\ZZ)^g \rtimes \mathfrak{S}_g$ and $\bH^{\mathrm{der}}$ is a subgroup of $\Sp_{2g}$, we conclude that $\bH^{\mathrm{der}} = \Sp_{2g}$. And hence $S = \mathbb{A}_g$.

If $r = g$, then $[F':\QQ] = g \ge 2$ and $\mathrm{rk} \tilde{\bH} = 1$. So $\tilde{\bH}$ is a simple simply-connected group of type $A_1$. Therefore $\tilde{\bH}_{\IQbar}$ is either $\mathrm{SL}_2$ or $\mathrm{PSL}_2$. As $\bH^{\mathrm{der}} = \mathrm{Res}_{F'/\QQ}\tilde{\bH}$ is a subgroup of $\Sp_{2g}$, we have  $\tilde{\bH}_{\IQbar} = \mathrm{SL}_2$. Hence $\tilde{\bH}$ is a form of $\mathrm{SL}_2$ over $F'$. Hence $\tilde{\bH}$ is either $\SL_{2,F'}$ or a quaternion algebra over $F'$ which is non-split at some real place $\sigma$ of $F'$. We exclude the second case as follows. Recall that $\mathrm{rk}\bH^{\mathrm{der}} = g$. Since $\mathrm{rk}\bH \le \mathrm{rk}\GSp_{2g} = g+1$, we then have $\bH = \Gm \cdot \bH^{\mathrm{der}} = \Gm \cdot \mathrm{Res}_{F'/\QQ}\tilde{\bH}$. Thus $\tilde{\bH}$ is split at some real place $\sigma'$ of $F'$ by (SV3) of the definition of Shimura data. Write $V$ for the $\QQ$-vector space of dimension $2g$ on which $\GSp_{2g}$ naturally acts. If $\tilde{\bH}$ is a quaternion over $F'$ which is non-split at $\sigma$, then the action of $\bH_{\sigma,\RR}$ on $V_{\sigma,\RR}$ has weight $0$, while the action of $\bH_{\sigma',\RR}$ on $V_{\sigma',\RR}$ has weight $1$. This is impossible by definition of Shimura data. Hence 
$\tilde{\bH} = \SL_{2,F'}$. Thus $S$ is the Hilbert modular variety defined by the totally real field $F'$ of degree $g$. By the modular interpretation at the beginning of $\mathsection$\ref{SubsectionHilbertModularVariety}, an order of $\cO_{F'}$ is thus contained in the endomorphism ring of $A_o$. This obliges $F' = F$. Hence we are done.
\end{proof}

\begin{cor}\label{CorWeylPointDense}
Let $[o]$ be a CM point in $\mathbb{A}_g(\IQbar)$. Set
\small
\begin{align*}
\Sigma:= \{[z] \in \mathbb{A}_g(\IQbar) : &~ [z] \text{ is CM,  the only special subvariety of } \\
&~ \mathbb{A}_g\text{ passing through }[o]\text{ and }[z]\text{ is }\mathbb{A}_g\}.
\end{align*}
\normalsize
Then $\Sigma$ is dense in $\mathbb{A}_g^{\mathrm{an}}$ in the usual topology.
\end{cor}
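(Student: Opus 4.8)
The plan is to deduce this density statement from Proposition~\ref{PropWeylPoint} together with the well-known fact that Weyl (Galois generic) CM points are dense in $\mathbb{A}_g^{\mathrm{an}}$. First I would recall why Weyl points are dense: CM points are dense in $\mathbb{A}_g^{\mathrm{an}}$ because every CM point of $\mathfrak{H}_g$ is defined by a totally imaginary quadratic extension of a totally real field, and such points accumulate everywhere (they are dense in $\mathfrak{H}_g$ since, e.g., the $\tau$ with entries in an imaginary quadratic field already form a dense subset and one can prescribe a CM structure); moreover the set of \emph{Weyl} CM points — those for which the Galois group of the reflex-type field is the full wreath product $(\ZZ/2\ZZ)^g \rtimes \mathfrak{S}_g$ — is dense among all CM points. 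This last fact is classical (see e.g. the work on the André--Oort conjecture and equidistribution of Galois orbits of CM points, or a direct construction: pick a totally real field $F$ of degree $g$ with $\Gal(F^c/\QQ) = \mathfrak{S}_g$ and a CM extension $E/F$ whose reflex data is as generic as possible; one can arrange the associated point in $\mathfrak{H}_g$ to lie in any given open set by choosing $F$ and $E$ appropriately, since the set of such $(F,E)$-points is dense).

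Next I would fix the given CM point $[o] \in \mathbb{A}_g(\IQbar)$ and an arbitrary nonempty open subset $U \subseteq \mathbb{A}_g^{\mathrm{an}}$, and I would want to produce a Weyl point $[z] \in U$ lying in $\Sigma$, i.e.\ such that the smallest special subvariety of $\mathbb{A}_g$ containing both $[o]$ and $[z]$ is $\mathbb{A}_g$ itself. By density of Weyl points, choose $[z] \in U$ a Weyl point. Let $Z$ be the smallest special subvariety of $\mathbb{A}_g$ containing $[o]$ and $[z]$. Since $Z$ contains the Weyl point $[z]$, Proposition~\ref{PropWeylPoint} applies to $[z]$: the only special subvarieties through $[z]$ are $\{[z]\}$, the Hilbert modular variety $\mathcal{H}_{F_z}$ attached to the maximal totally real subfield $F_z$ of $\mathrm{End}^0(A_z)$, and $\mathbb{A}_g$. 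As $Z$ contains $[o] \neq [z]$ (we may assume $[o] \notin U$ after shrinking $U$, or simply note that if $[o] \in U$ we are already done since $[o]$ could be replaced — but cleanly: if $[o]=[z]$ the statement is about other points, so take $[z]\neq[o]$, possible since $U$ is infinite), we have $Z \neq \{[z]\}$, so $Z$ is either $\mathcal{H}_{F_z}$ or $\mathbb{A}_g$. It therefore remains to rule out the case $Z = \mathcal{H}_{F_z}$; if we can do this for a suitable choice of $[z]$, then $Z = \mathbb{A}_g$ and $[z] \in \Sigma \cap U$, completing the proof.

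The main obstacle — and the step requiring real care — is excluding $Z = \mathcal{H}_{F_z}$. The point is that $[o]$ need not lie on any Hilbert modular variety at all, or if it does, we must avoid the finitely many totally real fields $F$ of degree $g$ for which $\mathcal{H}_F \ni [o]$. I would argue as follows: $[o]$ lies on $\mathcal{H}_F$ if and only if an order of $\cO_F$ embeds into $\mathrm{End}(A_o)$, and $\mathrm{End}^0(A_o)$ is a fixed algebra, so there are only finitely many such $F$ (each is a totally real subfield of the fixed semisimple algebra $\mathrm{End}^0(A_o)$ of degree $g$ over $\QQ$). Hence it suffices to choose the Weyl point $[z] \in U$ so that $F_z$ is not among this finite list — equivalently, so that the totally real field $F_z$ of degree $g$ determined by $A_z$ does not embed into $\mathrm{End}^0(A_o)$. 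Since the Weyl points realizing a \emph{given} totally real field $F_z$ as their associated real multiplication field still form a dense set as $F_z$ ranges over all (suitable) totally real degree-$g$ fields, and there are infinitely many isomorphism classes of such fields, I can refine the density argument: in $U$ there exist Weyl points $[z]$ with $F_z$ an arbitrarily chosen degree-$g$ totally real field (with $\Gal(F_z^c/\QQ) = \mathfrak{S}_g$), so in particular one with $F_z$ avoiding the finite exclusion list. For such $[z]$, $\mathcal{H}_{F_z}$ does not contain $[o]$, so $Z \neq \mathcal{H}_{F_z}$, forcing $Z = \mathbb{A}_g$; thus $[z] \in \Sigma \cap U$ and $\Sigma$ is dense. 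The one technical point to nail down carefully is the refined density claim — that Weyl points with a prescribed (Galois-generic) totally real multiplication field are dense — which follows from the usual proof of density of CM points by specializing $\tau \in \mathfrak{H}_g$ to lie in the relevant Hilbert modular locus $\mathfrak{H}^g \hookrightarrow \mathfrak{H}_g$ for the chosen $F_z$ and then taking a CM point generic within that locus.
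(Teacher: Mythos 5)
Your overall strategy coincides with the paper's: density of Weyl points plus Proposition~\ref{PropWeylPoint}, reducing everything to excluding the case where the smallest special subvariety through $[o]$ and $[z]$ is the Hilbert modular variety through $[z]$. However, your execution of that exclusion step has a genuine gap. You reduce to finding, in each open set $U$, a Weyl point $[z]$ whose field $F_z$ avoids the finitely many totally real degree-$g$ subfields of $\mathrm{End}^0(A_o)$, and for this you invoke a ``refined density claim'' that Weyl points with a \emph{prescribed} field $F_z$ are dense in $\mathbb{A}_g^{\mathrm{an}}$. The justification you offer --- specialize $\tau$ to lie in a copy of $\mathfrak{H}^g\hookrightarrow\mathfrak{H}_g$ for the chosen field and take a generic CM point there --- does not prove this: a single embedded copy of $\mathfrak{H}^g$ has dimension $g$ inside the $\tfrac{g(g+1)}{2}$-dimensional $\mathfrak{H}_g$ and is nowhere dense, so the CM points it carries cannot be dense in $\mathbb{A}_g^{\mathrm{an}}$ (for $g\ge 2$). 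Making the refined claim work would require running over \emph{all} Hecke translates of the Hilbert locus and proving density of suitably generic CM points in each translate, none of which is in your write-up. (Your opening justification of the density of Weyl points is likewise too loose; this is a nontrivial fact, proved in \cite[Prop.2.1]{CUEquiTore}, and should be cited rather than rederived.)

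The detour is also unnecessary, and the paper avoids it. The correct observation is that $Z=\mathcal{H}_{F_z}$ forces $[o]$ to lie on \emph{the specific} Hilbert modular subvariety passing through $[z]$, i.e.\ $[z]$ must lie on one of the finitely many Hilbert modular subvarieties of $\mathbb{A}_g$ that actually contain $[o]$ (one for each of the finitely many totally real degree-$g$ subfields of $\mathrm{End}^0(A_o)$, and finitely many embeddings). That bad locus is a \emph{finite} union of proper closed subvarieties, hence nowhere dense, so removing it from the dense set of Weyl points leaves a dense set contained in $\Sigma$. Your formulation of the bad locus (``$F_z$ embeds into $\mathrm{End}^0(A_o)$'') instead describes the union of \emph{all} Hilbert modular subvarieties attached to those fields, which is an infinite union and is in fact dense --- which is precisely why you were forced into the unproven refined density claim. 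Replacing your exclusion condition by the paper's closes the gap.
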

\begin{proof}
Let $\cW$ be the set of all Weyl points in $\mathbb{A}_g$. It is known that $\cW$ is dense in $\mathbb{A}_g^{\mathrm{an}}$ in the usual topology; see \cite[Prop.2.1]{CUEquiTore}.

Consider $\mathrm{End}^0(A_o)$. There are only finitely many totally real fields of degree $g$ which are contained in $\mathrm{End}^0(A_o)$. For each such totally real field $F'$, one can associate a Hilbert modular variety $\mathcal H_{F'}$ obtained from $\Gm\cdot \mathrm{Res}_{F'/\QQ}\SL_2$. Then $\cW \setminus \bigcup_{F'} \mathcal H_{F'}(\IQbar)$ is dense in  $\mathbb{A}_g^{\mathrm{an}}$ in the usual topology.

It remains to show that $\cW \setminus \bigcup_{F'} \mathcal H_{F'}(\IQbar)$ is contained in $\Sigma$. Indeed, 
for any $[z] \in \cW \setminus \Sigma$ with $F$ the maximal totally real subfield of $\mathrm{End}^0(A_z)$,  Proposition~\ref{PropWeylPoint} implies that $[o]$ is contained in the Hilbert modular variety defined by $\Gm \cdot \mathrm{Res}_{F/\QQ}\SL_2$, and hence $F$ is a subfield of $\mathrm{End}^0(A_o)$ and has degree $g$. So we are done.
\end{proof}

\section{Analytic Subspace Conjecture and its consequence on transcendence}\label{SectionAnalyticSubspaceConj}

Let $S$ be a connected Shimura variety associated with the Shimura datum $(\bG,X)$, and let $u \colon X \rightarrow S^{\mathrm{an}}$ be the uniformization. Endow $X$ with the  \textit{$\IQbar$-structure} as in $\mathsection$\ref{SubsectionGeometricIQbarShimura}. Fix a special point $[o] \in S(\IQbar)$ and some $o \in u^{-1}([o])$. Then $o$ is a  $\IQbar$-point of $X$. The goals of this section are to formulate an analogue of W\"{u}stholz's Analytic Subgroup Theorem for the holomorphic tangent space $T_o(X)$, which we call the \textit{Analytic Subspace Conjecture}, and to show how this conjecture gives an affirmative answer to Question~\ref{QuestionNoQuadraticRelationIntro}.

\subsection{Analytic Subspace Conjecture}
Inspired by the reformulations of WAST for tori (Theorem~\ref{ThmWustholzAlgTori}) and for CM abelian varieties (Theorem~\ref{ThmWustholzCMAbVar}), we make the following conjecture, which is precisely the analogous statement for Shimura varieties.

Let $T_{{o}}X$ and $T_{[o]} S$  be the tangent space defined over $\CC$. Set 
\[
\psi := \mathrm{d}u \colon T_{{o}}X \to T_{[o]} S.
\]
Then the Harish--Chandra realization of $X$ makes $X$ into a bounded symmetric domain $\cD \subseteq T_o X$ centered at $o$. By abuse of notation, we also write $u \colon \cD \rightarrow S$.

Recall that $T_o X$ has a bi-$\IQbar$-structure defined in $\mathsection$\ref{SectionBiIQbarShimura}.

\begin{conj}\label{ConjAnalyticSubspace}
Let $z \in \cD$ be such that $[z]:=u(z) \in S(\IQbar)$. %Set $V_z$ to be the smallest
Let $V$ be a sub-vector space of $T_{[o]} S_{\IQbar}$ with $\psi(z) \in V \otimes \CC$. Then $V \supseteq \psi(V')$ for some bi-$\IQbar$-subspace $V' \subseteq T_o X$ which contains $z$.

If moreover $\mathrm{MT}(o)$ is a maximal torus, then we can take $V' = T_{[o]} S'_{\IQbar}$ for some Shimura subvariety $S'$ of $S$ which contains $[z]$ and $[o]$.
\end{conj}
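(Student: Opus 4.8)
The statement being a conjecture, I can only outline a strategy, and I do not expect any step to be routine. The overarching plan is to transport W\"{u}stholz's proof of Theorem~\ref{ThmWustholz} to the non-algebraic-group setting of Shimura varieties, guided by the dictionary \{algebraic subgroup\}$\leftrightarrow$\{Shimura subvariety\}, \{torsion point\}$\leftrightarrow$\{CM point\}, \{translation by the group law\}$\leftrightarrow$\{Hecke correspondence\}, which is already visible in Theorems~\ref{ThmWustholzAlgTori} and \ref{ThmWustholzCMAbVar} and in our Theorem~\ref{ThmBiIQbarShimuraSplitBis}. First I would make the standard reductions: since every connected Shimura variety is of abelian type and hence admits a $\IQbar$-isogeny onto a Shimura subvariety of some $\AAA_g$, and since Proposition~\ref{PropBiQbarEasyProp}.(iii) shows that the bi-$\IQbar$-structure restricts well to Shimura subvarieties, one reduces to $S=\AAA_g$; one may also replace $V$ by the smallest $\IQbar$-subspace of $T_{[o]}S_{\IQbar}$ with $\psi(z)\in V\otimes\CC$, and assume $[z]\neq[o]$, the statement being vacuous otherwise. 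By Theorem~\ref{ThmPeriodsSiegel}, Theorem~\ref{ThmDecompositionIntoRootSpacesSiegel} and $\mathsection$\ref{SubsectionTwoBiIQbarDecomSame}, the bi-$\IQbar$-structure on $T_o\mathfrak{H}_g$ is the root-space decomposition for $\bT=\mathrm{MT}(o)$ inside $\GSp_{2g}$, with periods $\theta_j\theta_{j'}/\pi$; so concretely one must show that $V$ is forced to contain a sum of such root spaces, and — in the maximal-torus case — the tangent space at $[o]$ of an actual Shimura subvariety through $[o]$ and $[z]$.

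For the transcendence machinery I would use that $\AAA_g$ carries an abundant supply of $\IQbar$-algebraic functions, namely Siegel modular forms (sections of powers of the Hodge bundle $\det\Omega$), whose pullbacks to $\mathfrak{H}_g$ are honest holomorphic functions. As in W\"{u}stholz's argument, Siegel's lemma produces a modular form $F$ of controlled weight and height such that $u^{*}F$ vanishes to high order at $o$ along the $\IQbar$-subspace $\psi^{-1}(V)\cap\cD$; one then tries to extrapolate this vanishing to the point $z$ and to a large auxiliary set of $\IQbar$-points. The analytic input is a Schwarz-type lemma on the bounded domain $\cD$ of Theorem~\ref{ThmHC}, which requires effective comparisons between the two $\IQbar$-structures — exactly the period estimates underlying $\mathsection$\ref{SectionDeRhamBettiKodaira} and Lemma~\ref{LemmaCMAbVarDiagonal}. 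The arithmetic input comes from $[z]\in\AAA_g(\IQbar)$ together with the algebraicity of $\mathrm{KS}_{[o]}$. Balancing these, one hopes to conclude that $u^{*}F$ vanishes identically along the smallest special subvariety $S'$ containing $[o]$ and $[z]$; then the André--Oort-type rigidity of special subvarieties and the dictionary above give $V\supseteq\psi(\Fm^{+}_{\bH})=T_{[o]}S'_{\IQbar}$ for a sub-Shimura datum $(\bH,X_{\bH})$, and that this subspace is bi-$\IQbar$ is Theorem~\ref{ThmBiIQbarShimuraSplitBis}. When $\mathrm{MT}(o)$ is a maximal torus, the special subvarieties through $[o]$ are severely constrained (for instance at a Weyl point only $\{[o]\}$, a single Hilbert modular variety, and $\AAA_g$, by Proposition~\ref{PropWeylPoint}), so that a bi-$\IQbar$-subspace produced by the argument is automatically a Shimura-subvariety tangent space; the passage from a general bi-$\IQbar$-subspace to such a tangent space is precisely what can fail otherwise, hence the hypothesis in the second half of the conjecture.

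The main obstacle, as I see it, is twofold. First there is the absence of a \emph{translation structure}: in the algebraic-group case the points $\exp_G(z),\exp_G(2z),\dots$ are all algebraic because $\exp_G$ is a homomorphism, which is how one manufactures the many vanishing points an extrapolation needs; the uniformization $u\colon\cD\to S$ is not compatible with scaling on $T_oX$, so one must instead feed the argument with Hecke translates of the CM point $[o]$ and their Galois orbits, whose sizes and heights are controlled by the known lower bounds for Galois orbits of special points (Tsimerman, Pila--Tsimerman), and this alters the combinatorics of the construction substantially. Second, and I expect more seriously, there is the \emph{zero (multiplicity) estimate}: W\"{u}stholz relies on the Masser--W\"{u}stholz theorem, whose proof is deeply group-theoretic (degrees of subvarieties under multiplication by $n$, B\'ezout inside $G$). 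Its analogue here — a bound forcing a modular form that vanishes to high order along a Hecke orbit of CM points to vanish along a special subvariety — has no off-the-shelf proof and would presumably have to be obtained by combining the Ax--Schanuel theorem for Shimura varieties (Mok--Pila--Tsimerman) with degree estimates for Hecke correspondences. I regard this estimate as the true bottleneck.

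If a direct attack stalls there, the natural fallback — the one the authors allude to — is to deduce the $\AAA_g$-case of Conjecture~\ref{ConjAnalyticSubspace} from a suitable instance of the Grothendieck period conjecture, since in that case the $\alpha_j=\theta_j\theta_{j'}/\pi$ are (up to $\pi$) products of holomorphic periods of the CM abelian variety $A_o$, and GPC predicts the algebraic relations among them exactly in terms of $\mathrm{MT}(A_o)$; matching the bi-$\IQbar$-rational subspaces with the motivic prediction would then give the conjecture. Whether this is genuinely easier than a direct transcendence proof is, however, far from clear.
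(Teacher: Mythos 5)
The statement you are addressing is Conjecture~\ref{ConjAnalyticSubspace}: the paper offers \emph{no proof} of it. It is stated as the hyperbolic analogue of W\"ustholz's Analytic Subgroup Theorem, and the authors explicitly remark that proving it ``could be quite a challenge.'' So there is nothing in the paper to measure your argument against; what you have written is, as you acknowledge, a strategy sketch rather than a proof. The two steps you single out as bottlenecks --- the absence of a translation structure on $\cD$ (so that one cannot manufacture the auxiliary algebraic points that an extrapolation needs by iterating a group law) and the lack of a multiplicity/zero estimate for modular forms along Hecke orbits of CM points (the analogue of Masser--W\"ustholz) --- are indeed exactly where a direct transcendence attack has no known substitute for the group-theoretic input. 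Your identification of the bi-$\IQbar$-splitting with the root-space decomposition and of the periods $\theta_j\theta_{j'}/\pi$ is consistent with Theorems~\ref{ThmPeriodsSiegel} and~\ref{ThmDecompositionIntoRootSpacesSiegel}, and your reading of the ``moreover'' clause via the rigidity of special subvarieties through a point with maximal Mumford--Tate torus is in the spirit of Proposition~\ref{PropWeylPoint} and of the proof of Proposition~\ref{PropConsequenceAnalyticSubspaceQuadraticRelations}.

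One concrete error in your reductions: it is \emph{not} true that every connected Shimura variety is of abelian type, so you cannot in general reduce Conjecture~\ref{ConjAnalyticSubspace} to the case $S=\AAA_g$. There are Shimura data whose adjoint group has factors of type $E_6$, $E_7$, or certain type $D$, for which no finite correspondence with a subvariety of $\AAA_g$ exists; the paper invokes abelian type only for one-dimensional Shimura varieties (proof of Proposition~\ref{PropLangQuestion}), where it does hold. Even in the abelian-type case the relation to $\AAA_g$ passes through a covering by a Hodge-type Shimura variety rather than a ``$\IQbar$-isogeny onto a Shimura subvariety of $\AAA_g$,'' and one would still need to check that the two $\IQbar$-structures of $\mathsection$\ref{SectionBiIQbarShimura} are preserved under that covering. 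For the application the paper actually draws (Proposition~\ref{PropConsequenceAnalyticSubspaceQuadraticRelations}) only $S=\AAA_g$ is needed, so your sketch aims at the relevant special case; but as an argument for the conjecture as stated, the very first reduction fails, and every subsequent step (the Siegel-lemma construction, the Schwarz lemma on $\cD$, the zero estimate, the passage from a bi-$\IQbar$-subspace to the tangent space of a special subvariety) remains unproved. The proposal should therefore be read as a plausible research programme, not as a proof.
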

%\Ginlinecomment{Need to assume $\bG^{\mathrm{der}}$ simple. Otherwise Example~9.2 is a ``counterexample'' because we have a smaller group than $\GSp_4$ (which is ``$\SL_2\times \SL_2$'').}

The linear map $\psi$ was computed for the Siegel case, \textit{i.e.} $S = \mathbb{A}_g$, $(\bG,X) = (\GSp_{2g}, \mathfrak{H}_g)$ and $\cD = \cD_g$. %Let $[o] \in \mathbb{A}_g(\IQbar)$ be a CM point,  parametrizing a CM abelian variety $A_o$. We have computed in 
Indeed by Theorem~\ref{ThmPeriodsSiegelBis}, under suitable $\IQbar$-bases, the map $\psi$ is the diagonal matrix $\frac{1}{\pi} \mathrm{diag}(\theta_j\theta_{j'})_{1\le j \le j' \le g}$, where the $\theta_j$'s are the holomorphic periods of the CM abelian variety $A_o$ parametrized by $[o]$. Note   that in this case, if $\mathrm{MT}(o)$ is a maximal torus, then $A_o$ has no square factors.%Note that the assumption of  $Z_{\GSp_{2g}}(\bT_o)$ being a maximal torus is equivalent to that $A_o$ has no square factors; see Corollary~\ref{CorNoSquareFactorRegularMT}. 

\subsection{Consequence on quadratic relations of holomorphic periods}
Let $A$ be a CM abelian variety of dimension $g$ defined over $\IQbar$ with no square factors.
 Let $\theta_1, \ldots, \theta_g$ be its holomorphic periods as defined in Convention~\ref{SubsectionConvention}.
\begin{prop}\label{PropConsequenceAnalyticSubspaceQuadraticRelations}
Assume that Conjecture~\ref{ConjAnalyticSubspace} holds true for $S = \mathbb{A}_g$, the point $[o] \in \mathbb{A}_g(\IQbar)$ parametrizing $A$, and any $V$ of dimension $\dim S-1$. Then Question~\ref{QuestionNoQuadraticRelationIntro} has an affirmative answer for $A$.
%\[
%\dim_{\IQbar} \sum_{1\le j \le j' \le g}\IQbar \theta_j\theta_{j'} = \frac{g(g+1)}{2}.
%\]
\end{prop}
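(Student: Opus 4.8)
The plan is to translate the statement of Question~\ref{QuestionNoQuadraticRelationIntro} into a statement about bi-$\IQbar$-subspaces of $T_o\mathfrak{H}_g$, and then feed it to Conjecture~\ref{ConjAnalyticSubspace} applied to $S=\mathbb{A}_g$. First I would recall the dictionary established by Theorem~\ref{ThmPeriodsSiegel} (in the form of Theorem~\ref{ThmPeriodsSiegelBis}): under suitable $\IQbar$-bases, the map $\psi=\mathrm{d}u\colon T_o\mathfrak{H}_g\to T_{[o]}\mathbb{A}_g$ is the diagonal matrix $\frac1\pi\mathrm{diag}(\theta_j\theta_{j'})_{1\le j\le j'\le g}$. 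Consequently, an $\IQbar$-linear relation among the products $\theta_j\theta_{j'}$, say $\sum_{1\le j\le j'\le g} c_{jj'}\,\theta_j\theta_{j'}=0$ with $c_{jj'}\in\IQbar$ not all zero, corresponds exactly to the statement that the point $\psi^{-1}$ of a suitable coordinate vector lands in the hyperplane $V=\{\sum c_{jj'}x_{jj'}=0\}\subseteq T_{[o]}\mathbb{A}_{g,\IQbar}$ — more precisely, there is a $z\in\cD_g$ with $u(z)\in\mathbb{A}_g(\IQbar)$ (a CM point) so that $\psi(z)\in V\otimes\CC$, where the relation forces $z$ to be a non-torsion CM point transcendental in the geometric $\IQbar$-structure but whose image $u(z)$ is algebraic. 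The hyperplane $V$ has dimension $\dim S-1 = g(g+1)/2-1$, which is why the hypothesis of the Proposition is stated for this specific codimension.

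Second I would invoke Conjecture~\ref{ConjAnalyticSubspace}. Since $A$ has no square factors, by Lemma~\ref{LemmaNoSquareFactorRegularMT} its Mumford--Tate group $\bT_o$ is a maximal torus of $\GSp_{2g}$, so the second (stronger) clause of the conjecture applies: $V\supseteq\psi(V')$ with $V'=T_{[o]}S'_{\IQbar}$ for some Shimura subvariety $S'$ of $\mathbb{A}_g$ passing through $[o]$ and through the auxiliary CM point $[z]:=u(z)$. Because $u(z)\neq u(o)$ (the relation is non-trivial, so $z\ne 0$, and one can arrange $u(z)$ to be non-torsion so it does not coincide with $[o]$; alternatively choose $z$ on a line through the origin and scale so that $u(z)$ is a CM point distinct from $[o]$), the conjecture guarantees $\dim S'>0$. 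Now I would use Theorem~\ref{ThmDecompositionIntoRootSpacesSiegel} together with the compatibility (Theorem~\ref{ThmKSSplitIQbarStru}, or its refinement via root spaces): the tangent space $T_{[o]}S'_{\IQbar}$ is a bi-$\IQbar$-subspace of $T_{[o]}\mathbb{A}_{g}$, and by Proposition~\ref{PropBiQbarEasyProp}.(iii) it decomposes into its isotypic components, i.e. into sums of the one-dimensional root spaces $\CC(\theta_j\theta_{j'}/\pi)$. Reading off the defining equations of such a coordinate subspace contained in the hyperplane $V$ yields that the relation $\sum c_{jj'}\theta_j\theta_{j'}=0$ is a consequence of relations of the form $\theta_j\theta_{j'}-c\,\theta_k\theta_{k'}=0$ arising from identifications of root spaces inside $T_{[o]}S'$, i.e. elementary relations in the sense of Definition~\ref{DefnElementaryNonTrivial}; hence every non-trivial $\IQbar$-linear relation among the $\theta_j\theta_{j'}$ is an $\IQbar$-linear combination of elementary ones.

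For the second half of Question~\ref{QuestionNoQuadraticRelationIntro}, assume furthermore that $\mathrm{MT}(A)=\bT_o$ is a maximal torus, so in particular $A$ is simple (or at least has no square factors) and we are in the situation of Lemma~\ref{LemmaNoSquareFactorRegularMT}. The claim $\dim_{\IQbar}\sum_{j\le j'}\IQbar\theta_j\theta_{j'}=g(g+1)/2$ is equivalent to the absence of \emph{any} non-trivial quadratic relation, which by the first part is equivalent to the absence of elementary ones; and an elementary relation $\theta_j\theta_{j'}\simeq\theta_k\theta_{k'}$ with $\{j,j'\}\ne\{k,k'\}$ corresponds, via Corollary~\ref{CorDecompositionIntoRootSpacesSiegel}, to the existence of a Shimura subvariety $S$ of $\mathbb{A}_g$ through $[o]$ whose tangent space is not a direct sum of root spaces. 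But when $\mathrm{MT}(o)$ is a maximal torus one can bound the possible Shimura subvarieties: by Proposition~\ref{PropWeylPoint} (in the Weyl-point case) and more generally by the classification of sub-Shimura data with maximal-torus-containing generic Mumford--Tate group, the only candidates through $[o]$ are $\{[o]\}$, $\mathbb{A}_g$ itself, and Hilbert modular varieties, all of whose tangent spaces at $[o]$ \emph{are} direct sums of root spaces (for Hilbert modular varieties this is exactly the computation in $\mathsection\ref{SubsectionHilbertModularVariety}$, giving the periods $\theta_j^2/\pi$). Therefore no elementary relation can occur, and the dimension equals $g(g+1)/2$.

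The main obstacle I expect is the last step: rigorously passing from ``$V'=T_{[o]}S'_{\IQbar}$ is a bi-$\IQbar$-subspace contained in the hyperplane $V$'' to ``the original relation is a combination of elementary ones,'' because this requires knowing that $T_{[o]}S'_{\IQbar}$, as a sub-bi-$\IQbar$-space, is in coordinates a \emph{span of standard basis vectors} (root spaces) — this is precisely the content of Theorem~\ref{ThmDecompositionIntoRootSpacesSiegel} and Corollary~\ref{CorDecompositionIntoRootSpacesSiegel}, but one must carefully check that the hyperplane $V$ we built from the coefficients $c_{jj'}$ is itself a bi-$\IQbar$-subspace in a compatible way, so that Proposition~\ref{PropBiQbarEasyProp} applies to it. A secondary subtlety is verifying that one can always choose the auxiliary point $z\in\cD_g$ realizing the given relation to be a \emph{genuine} CM point with $[z]\neq[o]$ and $\psi(z)$ spanning enough of $V$ to make the conclusion of the conjecture non-vacuous; this is a compatibility-of-$\IQbar$-structures bookkeeping argument using Construction~\ref{ConstructionIotaSiegel} and the explicit diagonal form of $\psi$, but it needs to be done with care since a careless choice of $z$ could make $V'$ trivial.
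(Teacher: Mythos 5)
Your overall strategy (encode the relation $\sum c_{jj'}\theta_j\theta_{j'}=0$ as a hyperplane $V\subseteq T_{[o]}\mathbb{A}_{g,\IQbar}$ containing $\psi(z)$, apply the conjecture, and exploit the root-space splitting) is the same as the paper's, but three of your steps contain genuine gaps. First, you misquote Lemma~\ref{LemmaNoSquareFactorRegularMT}: it asserts that $Z_{\GSp_{2g}}(\mathrm{MT}(A))=\mathrm{GU}_E$ is a maximal torus, \emph{not} that $\mathrm{MT}(A)$ itself is one. ``No square factors'' does not give you the ``moreover'' clause of Conjecture~\ref{ConjAnalyticSubspace}, so for the first half of Question~\ref{QuestionNoQuadraticRelationIntro} you only get an abstract bi-$\IQbar$-subspace $V'\ni z$ with $\psi(V')\subseteq V$, not the tangent space of a Shimura subvariety. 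Second, and relatedly, your key step ``read off the defining equations of such a coordinate subspace'' presumes that a bi-$\IQbar$-subspace is a span of root spaces; by Corollary~\ref{CorDecompositionIntoRootSpacesSiegel} this is only true when all ratios $\theta_j\theta_{j'}/\theta_k\theta_{k'}$ are irrational, i.e.\ precisely when there are no elementary relations — the case one actually has to handle. The correct argument (the paper's) first subtracts elementary relations to reduce to a relation supported on one representative $(j_s,j'_s)$ per isotypic class $J_s$, then uses Proposition~\ref{PropBiQbarEasyProp} to write $V'=\bigoplus_s V'_s$ and the fact that $z=\sum f_{jj'}$ has \emph{all} coordinates nonzero to conclude that each isotypic diagonal $\sum_{(j,j')\in J_s}f_{jj'}$ lies in $V'$; pushing these vectors into $V$ kills each $c_{j_sj'_s}$ one class at a time.

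Third, the existence of the auxiliary point is not ``bookkeeping'': one must produce a CM point of the form $z=\sum f_{jj'}$ with every coordinate nonzero (so that its isotypic components are the full diagonals), and — for the maximal-torus case — such that the \emph{only} special subvariety through $[o]$ and $[z]$ is $\mathbb{A}_g$ itself. This is exactly where Corollary~\ref{CorWeylPointDense} (density of Weyl points avoiding the finitely many Hilbert modular subvarieties through $[o]$) enters; after picking such a $z=\sum k_{jj'}f_{jj'}$ one rescales the basis by the algebraic numbers $k_{jj'}$. With that choice, the second half of the Question follows immediately: the ``moreover'' clause gives $V'=T_{[o]}S'_{\IQbar}$ with $S'\ni[o],[z]$, so $S'=\mathbb{A}_g$, hence $V=T_{[o]}\mathbb{A}_{g,\IQbar}$ and all $c_{jj'}=0$. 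Your alternative route for this half — classifying special subvarieties through $[o]$ via Proposition~\ref{PropWeylPoint} — is not available, since that proposition requires $[o]$ to be a Weyl point, a strictly stronger hypothesis than $\mathrm{MT}(o)$ being a maximal torus, and it also relies on an unproved converse of the remark following Corollary~\ref{CorDecompositionIntoRootSpacesSiegel}.
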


\begin{proof}
Each $\theta_j$ is well-defined up to $\IQbar^*$. By abuse of notation, we write $\theta_j \in \CC$ for a representative. It is known by WAST that $\theta_j \not\in \IQbar$ for each $j \in \{1,\ldots,g\}$.

For the uniformization $u \colon \mathfrak{H}_g \rightarrow \mathbb{A}_g$, take ${o} \in u^{-1}([o])$ and the derivative of $u$ at ${o}$
\[
\psi \colon T_{{o}}\mathfrak{H}_g \rightarrow T_{[o]} \mathbb{A}_g.
\]

We start the proof by fixing $\IQbar$-bases of $T_{{o}}\mathfrak{H}_g$ and of $T_{[o]} \mathbb{A}_g$.  We have seen in Theorem~\ref{ThmPeriodsSiegelBis} that the matrix for $\psi$ is $\mathrm{diag}(\theta_j\theta_{j'}/\pi)_{1\le j \le j' \le g}$ under suitable $\IQbar$-bases $\{f_{jj'}\}_{1\le j \le {j'} \le g}$  of $T_{{o}}\mathfrak{H}_{g,\IQbar}$ and $\{e_{jj'}\}_{1\le j \le {j'} \le g}$  of $T_{[o]}\mathbb{A}_{g,\IQbar}$.  Moreover by Theorem~\ref{ThmDecompositionIntoRootSpacesSiegel}, these $\IQbar$-bases can be chosen such that each $\CC f_{jj'} = \CC e_{jj'}$ is a root space.

Write $\cD_g$ for the bounded realization of $\mathfrak{H}_g$ based at ${o}$. Then $o \in \cD_{g,\IQbar} \subseteq T_{{o}}\mathfrak{H}_{g,\IQbar}$ and $o$ is the origin. For each $z \in \cD_g$, write $[z]$ for the image of $z$ under $\cD_g \cong \mathfrak{H}_g \xrightarrow{u} \mathbb{A}_g$.

%Consider the set $G(\QQ){o}$; it is known that $G(\QQ){o}$ is contained in $\cD_{g,\IQbar}$ and is dense. 
\vskip 0.5em

\noindent\textbf{Claim: } There exists a $\IQbar$-basis $\{f_{jj'}\}_{1\le j \le {j'} \le g}$  of $T_{{o}}\mathfrak{H}_{g,\IQbar}$ as above such the point $z:= \sum f_{jj'}$ satisfies that  $[z] \in \mathbb{A}_g(\IQbar)$.

\vskip 0.5em

Let us prove this claim. Set
\begin{align*}
\tilde{\Sigma}:= \{z \in \cD_g :& ~ z \text{ is CM,  the only special subvariety of } \\
& ~ \mathbb{A}_g\text{ passing through }[o]\text{ and }[z]\text{ is }\mathbb{A}_g\}.
\end{align*}
Then  $\tilde{\Sigma}$ is dense in $\cD_g$ in the usual topology by Corollary~\ref{CorWeylPointDense}. 
%Let $\{f_{jj'}\}_{1\le j \le {j'} \le g}$ be an arbitrary $\IQbar$-eigenbasis of $T_{{o}}\mathfrak{H}_{g,\IQbar}$. 
Since $\tilde{\Sigma}$ is dense in $\cD_g$, there exists $z = \sum k_{jj'} f_{jj'} \in \tilde{\Sigma}$ with $k_{jj'} \not= 0$ for each $j, j'$. Moreover as each CM point is $\IQbar$ in $T_o \mathfrak{H}_g$, we have that $k_{jj'} \in \IQbar$. Thus $z = \sum k_{jj'} f_{jj'} \in \tilde\Sigma$ with each $k_{jj'} \in \IQbar^*$. Now the claim holds true by replacing each $f_{jj'}$ by $k_{jj'}f_{jj'}$.% In order words, $z=(1,\ldots,1)$ under the $\IQbar$-basis $\{f_{jj'}\}_{1\le j \le {j'} \le g}$ for the geometric $\IQbar$-structure.

\vskip 0.5em

Finally, set
\[
e_{jj'} := \frac{\psi(f_{jj'})}{\theta_j\theta_{j'}/\pi}.
\]
By Theorem~\ref{ThmPeriodsSiegelBis} $\{e_{jj'}\}_{1\le j \le j' \le g}$ is a $\IQbar$-eigenbasis of $T_{[o]} \mathbb{A}_{g,\IQbar}$.

\vskip 0.5em

Now assume
\begin{equation}\label{EqQuadraticRelation}
\sum_{1\le j \le j'\le g} c_{jj'} \theta_j \theta_{j'} = 0
\end{equation}
with $c_{jj'} \in \IQbar$. 

 Define
\[
V:= \left\{\sum a_{jj'} e_{jj'} \in T_{[o]}\mathbb{A}_{g,\IQbar}: \sum c_{jj'}a_{jj'} = 0 \right\};
\]
it is a $\IQbar$-subspace of $T_{[o]}\mathbb{A}_{g,\IQbar}$ because each $c_{jj'} \in \IQbar$. 
A direct computation shows that $\psi(z) = \psi(\sum f_{jj'}) = \frac{1}{\pi}\sum \theta_j \theta_{j'} e_{jj'}$. So  $\psi(z) \in V\otimes \CC$ by \eqref{EqQuadraticRelation}.%:= \left\{\sum a_{jj'} e_{jj'} \in T_{[o]}\mathbb{A}_{g,\IQbar}: \sum c_{jj'}a_{jj'} = 0 \right\},   %Hence by minimality of $V_z$, we have $V_z \subseteq V \not= T_{[o]} \mathbb{A}_{g,\IQbar}$.

Hence we can apply Conjecture~\ref{ConjAnalyticSubspace} to the point $z\in \cD_g$ and $V$. So there exists a bi-$\IQbar$-subspace $V'$ of $T_{{o}}\mathfrak{H}_{g,\IQbar}$
containing  $z = \sum f_{jj'}$ such that $\psi(V')\subset V$..

If $\mathrm{MT}(o) = \mathrm{MT}(A)$ is a maximal torus, then the ``moreover'' part of Conjecture~\ref{ConjAnalyticSubspace} furthermore implies that we can take $V' = T_{[o]}S'_{\IQbar}$ for some special subvariety $S'$ of $\mathbb{A}_g$. But then by choice of $z$, we have $S' = \mathbb{A}_g$. So $V = T_{[o]}\mathbb{A}_{g,\IQbar}$ and hence $c_{jj'} = 0$ for all $j,j'$.

In general, there exists an equivalence relation among the pairs in $\{1,\ldots,g\}^2$ defined as follows: $(j,j') \sim (k,k')$ if and only if $\theta_j\theta_{j'} \cong \theta_k\theta_{k'}$. Let $\{1,\ldots,g\}^2 = \coprod_s J_s$ be the partition into equivalence classes, and for each $s$ let us fix a pair $(j_s,j'_s) \in J_s$. Then \eqref{EqQuadraticRelation} can be rewritten such that $c_{jj'} = 0$ if $(j,j') \not= (j_s,j'_s)$ for any $s$. Then our goal is to prove that $c_{j_sj'_s} = 0$ for each $s$. With this choice 
\[
V:= \left\{\sum a_{jj'} e_{jj'} \in T_{[o]}\mathbb{A}_{g,\IQbar}: \sum_{s} c_{j_sj_s'}a_{j_sj_s'} = 0 \right\}.
\]
%as $\sum_s c_{j_sj'_s} \theta_{j_s}\theta_{j'_s} = 0$. 

By Proposition~\ref{PropBiQbarEasyProp}, the bi-$\IQbar$-subspace $V'$ equals $\bigoplus_s V'_s$ for some $V'_s\subseteq \bigoplus_{(j,j') \in J_s} \CC f_{jj'}$. Since $z = \sum f_{jj'} \in V'$, we have $\CC\cdot \sum_{(j,j') \in J_s} f_{jj'} \subseteq V'_s$ for each $s$. Therefore  the subspace $ \psi(V') \supset V$ contains the point  $e_{j_sj'_s} + \sum_{(j,j') \in J_s \setminus \{(j_s,j'_s)\}} \frac{\theta_{j}\theta{j'}}{\theta_{j_s}\theta_{j'_{s}}}e_{jj'}$ for each $s$. But from the last paragraph we assumed $c_{jj'} = 0$ for any $(j,j') \in J_s \setminus \{(j_s,j'_s)\}$. So $c_{j_sj'_s} = 0$ by definition of $V$. Hence we are done.
\end{proof}

\section{Grothendieck's Period Conjecture and consequences on CM periods}\label{SectionGroPeriodConj}
This section is rather independent of the rest of the paper. We recall the basic versions of Grothendieck's Period Conjecture, and explain some consequences on the algebraic relations among $\pi$ and holomorphic periods of CM abelian varieties. We show that all non-trivial such algebraic relations are generated by a fixed set of monomials (Proposition~\ref{PropCMAlgebraicRelationMonomial}).% of degree $\ge 3$.

\subsection{Statement of the conjecture}
Let $X$ be a smooth projective irreducible variety defined over $\IQbar$. Let
\[
\beta \colon H^*_{\mathrm{dR}}(X) \otimes_{\IQbar} \CC \xrightarrow{\sim} H^*(X,\QQ)\otimes_{\QQ}\CC
\]
be the de Rham--Betti comparison. Under suitable bases, $\beta$ is a matrix in $\mathrm{GL}_N(\CC)$. Write $(\beta_{jj'})_{1 \le j,j'\le N}$ for this matrix.

For each $n \in\mathbb{N}$, each algebraic cycle in $X^n$ induces $\IQbar$-polynomial relations, all homogenous of degree $n$, for the $\beta_{jj'}$'s. Indeed, for $n = 1$, an algebraic cycle of $X$ of codimension $d$ gives an element in $H^{2d}_{\mathrm{dR}}(X) \cap \beta^{-1}(H^{2d}(X,\QQ))$. 
%a Hodge cycle in $X$ is a non-zero element in $H^{d,d}(X) \cap \beta^{-1}(H^{2d}(X,\QQ))$. \Ginlinecomment{What is this??? On the de Rham side.} 
Hence there exists a vector $\mathbf{v} = \begin{bmatrix} v_1 & \cdots  & v_N \end{bmatrix}^{\intercal} \in \IQbar^N$ such that $\beta\mathbf{v} \in \QQ^N$. 
 Writing it out, we obtain $N$ linear relations over $\IQbar$ among the $\beta_{jj'}$'s. %\Ginlinecomment{Pb: These relations should not have constant terms. But here there is one. How to get rid of it?} 
For general $n$, the assertion follows from K\"{u}nneth's Formula.

Notice that among such relations, some of them are products of polynomials of lower degrees. This notably happens for elements in $\mathrm{End}(X)$, which are cycles in $X^2$ but give linear relations. Indeed, for each $\alpha \in \mathrm{End}(X)$, we have the following commutative diagram
\[
\xymatrix{
H^*_{\mathrm{dR}}(X) \otimes_{\IQbar} \CC \ar[r]^-{\beta} \ar[d]_-{\alpha^*} & H^*(X,\QQ)\otimes_{\QQ}\CC \ar[d]^-{\alpha^*} \\
H^*_{\mathrm{dR}}(X) \otimes_{\IQbar} \CC \ar[r]^-{\beta} & H^*(X,\QQ)\otimes_{\QQ}\CC.
}
\]
Under suitable bases, the $\alpha^*$ on the left is an $N\times N$-matrix with entries in $\IQbar$, and the $\alpha^*$ on the right is an $N\times N$-matrix with entries in $\QQ$. Thus this commutative diagram gives linear relations over $\IQbar$ among the $\beta_{jj'}$'s.

\begin{conj}[Grothendieck, strong version] 
Consider the $\IQbar$-variety $\mathrm{GL}_N$. Set $I$ to be the ideal of the subvariety $\beta^{\IQbar\text{-Zar}}$.\footnote{Thus $\beta$ is the generic point of this subvariety.} Then $I$ is generated by the polynomial relations obtained from all algebraic cycles in all powers of $X$.
\end{conj}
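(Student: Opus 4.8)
The displayed statement is the strong form of Grothendieck's Period Conjecture, so I should say at the outset that no proof is known: its role here is to be \emph{assumed}, in order to extract the conditional consequences for the algebraic relations among $\pi$ and CM periods that occupy the rest of this section (culminating in Proposition~\ref{PropCMAlgebraicRelationMonomial}). What one can realistically do is reformulate it in Tannakian language, isolate the single easy inclusion, and point to where the difficulty sits.

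First I would recast the statement motivically. Let $\langle X\rangle^{\otimes}$ be the Tannakian category generated by (the motive of) $X$, equipped with its de Rham and Betti fibre functors; write $G_{\mathrm{dR}}$ for the associated automorphism group over $\IQbar$ and $G_B$ for the one over $\QQ$, and let $\mathcal{T}$ be the $\QQ$--$\IQbar$ bi-torsor of comparison isomorphisms between the two functors that respect the class of every algebraic cycle on every power $X^n$. The rationality of cycle classes recalled just before the statement --- an algebraic cycle of codimension $d$ gives an element of $H^{2d}_{\mathrm{dR}}(X)\cap\beta^{-1}(H^{2d}(X,\QQ))$, and K\"{u}nneth handles the powers --- says exactly that $\mathcal{T}\subseteq\mathrm{GL}_N$ is defined by precisely the polynomial relations named in the conjecture. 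With this dictionary the conjecture becomes the equality of closed $\IQbar$-subvarieties $\beta^{\IQbar\text{-Zar}}=\mathcal{T}$, equivalently $\mathrm{tr.deg}_{\QQ}\QQ\big((\beta_{jj'})_{1\le j,j'\le N}\big)=\dim\mathcal{T}=\dim G_{\mathrm{dR}}$.

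Second, the inclusion $\beta^{\IQbar\text{-Zar}}\subseteq\mathcal{T}$, i.e.\ the upper bound $\mathrm{tr.deg}\le\dim G_{\mathrm{dR}}$, is the elementary half and is already essentially contained in the discussion preceding the statement: each algebraic cycle (and, as noted there, each endomorphism of $X$) produces a genuine $\IQbar$-polynomial identity among the $\beta_{jj'}$'s, so the point $\beta$ lies in $\mathcal{T}(\CC)$ and hence so does its $\IQbar$-Zariski closure. This half uses nothing beyond the functoriality of the cycle class map and of the de Rham--Betti comparison.

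Third --- and here I would have nothing unconditional to offer --- the reverse inclusion asserts that there are \emph{no} $\IQbar$-polynomial relations among the periods beyond those forced by algebraic cycles. The only known cases are trivial ones and a short list: $X=\Gm$ (Lindemann, giving the transcendence of $\pi$) and CM elliptic curves (Chudnovsky); for general abelian varieties even the \emph{linear} part is W\"{u}stholz's theorem (Theorem~\ref{ThmWAST}), and the quadratic and higher relations are open. The obstruction is precisely the one this paper is organised around: transcendence theory yields matching \emph{lower} bounds for $\mathrm{tr.deg}_{\QQ}$ only in the presence of a group structure, through zero estimates and interpolation, and no such mechanism is available in the general motivic setting. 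Concretely, a proof in the CM case would in particular have to supply the transcendence and $\IQbar$-independence statements for the products $\theta_j\theta_{j'}/\pi$ that are the content of Question~\ref{QuestionNoQuadraticRelationIntro} and of the hyperbolic analytic subspace conjecture (Conjecture~\ref{ConjAnalyticSubspace}). So the plan for this section is not to prove the displayed conjecture but to (i) record the torsor reformulation, (ii) note the trivial inclusion, and (iii) run the implication in the conditional direction, obtaining the monomial description of Proposition~\ref{PropCMAlgebraicRelationMonomial}.
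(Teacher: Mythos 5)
This statement is Grothendieck's Period Conjecture (strong version); the paper states it as a conjecture and offers no proof, using it only as a hypothesis, so there is nothing to compare your argument against. You correctly refrain from claiming a proof, and your torsor reformulation, the observation that the inclusion $\beta^{\IQbar\text{-Zar}}\subseteq\mathcal{T}$ is the elementary half coming from rationality of cycle classes, and the remark that the reverse inclusion is the open transcendence statement all match the paper's own surrounding discussion (including its remark that the weak version is equivalent to the strong one when the universal period torsor is connected).
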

Let us take a closer look at this conjecture. Write $I = \bigoplus_{j \ge 1} I_j$ the homogenous decomposition. A consequence of this conjecture for $I_1$ is that $I_1$ is generated by relations obtained from elements in $\mathrm{End}(X)$. In case of commutative algebraic groups, this consequence can be deduced from W\"{u}stholz's Analytic Subgroup Theorem. Little is known for $j \ge 2$.

There is also a weaker version of this conjecture, sometimes also known as Grothendieck's Period Conjecture. Let $G_{\mathrm{mod}}$ be the motivic Galois group defined by Nori \cite{Nori} and Ayoub \cite{Ayoub}. In case of the first cohomology for abelian varieties, $G_{\mathrm{mod}}$ is precisely the Mumford--Tate group  by Andr\'{e} \cite{AndreMotivicGaloisGroupMTGroup}.%Deligne \cite[Thm.1]{De1980} (Hodge = absolute Hodge).
\begin{conj}[Grothendieck, weak version]
$\mathrm{trdeg}_{\IQbar}(\beta_{jj'})_{1\le j,j'\le N} = \dim G_{\mathrm{mod}}$.
\end{conj}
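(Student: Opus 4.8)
The plan is to split the asserted equality into two inequalities, one of which is unconditional and one of which is the genuine open problem. For the inequality $\mathrm{trdeg}_{\IQbar}(\beta_{jj'})_{1\le j,j'\le N} \le \dim G_{\mathrm{mod}}$, let $\langle H^*(X)\rangle^{\otimes}$ be the Tannakian subcategory generated by the cohomology of $X$, equipped with the de Rham fibre functor $\omega_{\mathrm{dR}}$ (valued in $\IQbar$-vector spaces) and the Betti fibre functor $\omega_B$ (valued in $\QQ$-vector spaces), so that $G_{\mathrm{mod}} = \underline{\mathrm{Aut}}^{\otimes}(\omega_B)$. The functor of tensor isomorphisms $P := \underline{\mathrm{Isom}}^{\otimes}(\omega_{\mathrm{dR}},\omega_B)$ becomes, after base change to $\IQbar$, a torsor under $G_{\mathrm{mod},\IQbar}$, hence a smooth $\IQbar$-variety with $\dim P_{\IQbar} = \dim G_{\mathrm{mod}}$. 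Choosing bases as in the statement embeds $P_{\IQbar}$ as a closed subvariety of $\mathrm{GL}_{N,\IQbar}$, and the comparison $\beta$ is a $\CC$-point of $P$; hence the variety $\beta^{\IQbar\text{-Zar}}$ appearing in the strong version is contained in $P_{\IQbar}$, so $\mathrm{trdeg}_{\IQbar}(\beta_{jj'})_{j,j'} = \dim \beta^{\IQbar\text{-Zar}} \le \dim P_{\IQbar} = \dim G_{\mathrm{mod}}$.

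The reverse inequality $\ge$ is the real content: it says $\dim\beta^{\IQbar\text{-Zar}} = \dim P_{\IQbar}$, i.e.\ that $\beta$ is a generic point of (a component of) $P_{\IQbar}$, and in particular it is implied by the strong version. The only route I can see is to reduce to cases governed by a transcendence theorem. For $1$-motives --- in particular for $M = H^1(X)$ when $X$ is an abelian variety, a torus, or a product thereof, and \emph{a fortiori} for CM abelian varieties, the case relevant to this paper --- the statement is a theorem (Grothendieck's period conjecture is known for $1$-motives), deduced from W\"{u}stholz's Analytic Subgroup Theorem (Theorem~\ref{ThmWAST}), which determines all $\IQbar$-linear relations among the periods exactly as in the reformulations of $\mathsection$\ref{SubsectionBiIQbarAlgTori} and $\mathsection$\ref{SubsectionExampleCMAV}; in that setting this suffices, as no polynomial relations survive beyond those forced by the algebraic cycle classes.

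The main obstacle is everything outside the $1$-motivic range: for $H^i(X)$ with $i\ge 2$, or for abelian varieties whose periods escape the reach of WAST, the conjecture demands control over \emph{all} polynomial --- not merely linear --- relations among the entries of a full period matrix, and no present transcendence technique yields lower bounds for $\mathrm{trdeg}$ at that level of generality. Consequently a complete proof in full generality is out of reach; what one can honestly record is the unconditional inequality above, the observation that the reverse inequality over $\IQbar$ amounts to the strong version, and the verification of the conjecture in the $1$-motivic cases pertinent to CM abelian varieties. This is ultimately why the hyperbolic analytic subspace conjecture of $\mathsection$\ref{SectionAnalyticSubspaceConj} is offered as a more tractable substitute for the portion of Grothendieck's conjecture that governs quadratic relations between CM periods.
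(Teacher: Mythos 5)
This statement is a \emph{conjecture} in the paper, not a theorem: the authors record it without proof (and indeed no proof is known), only remarking that the strong version implies it and that the converse holds when the universal period torsor is connected. So there is no argument in the paper to compare yours against, and you were right not to pretend otherwise. What you do supply is accurate: the inequality $\mathrm{trdeg}_{\IQbar}(\beta_{jj'}) \le \dim G_{\mathrm{mod}}$ is the standard unconditional half, obtained exactly as you say from the fact that $\beta$ is a $\CC$-point of the period torsor $P = \underline{\mathrm{Isom}}^{\otimes}(\omega_{\mathrm{dR}},\omega_B)$, which after base change to $\IQbar$ is a $G_{\mathrm{mod},\IQbar}$-torsor of dimension $\dim G_{\mathrm{mod}}$; hence $\beta^{\IQbar\text{-Zar}} \subseteq P_{\IQbar}$ and the transcendence degree is bounded above. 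Your identification of the reverse inequality as the genuine open content, and of the $1$-motivic range (tori, abelian varieties, hence the CM case via WAST and the Huber--W\"{u}stholz circle of results) as the only territory where it is a theorem, matches the state of the art and the way the paper uses the conjecture in $\mathsection$\ref{SectionGroPeriodConj} (where it is \emph{assumed}, not proved, in Propositions on CM periods). The only caution: what you have written is a correct situational analysis, not a proof, and it should be labelled as such; in particular the final sentence of your second paragraph slightly overstates what WAST gives --- it controls $\IQbar$-\emph{linear} relations among periods of $1$-motives, and upgrading this to the full transcendence-degree statement for, say, a general CM abelian variety already requires input (quadratic and higher relations) that is precisely what Question~\ref{QuestionNoQuadraticRelationIntro} and Conjecture~\ref{HWASP} are about.
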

The strong version implies the weak version. Conversely, the weak version implies the strong version \textit{if} the universal period torsor is connected. We refer to \cite[7.5.2.2.Prop]{AndreMotifs} for more details.

\subsection{Consequences on holomorphic CM periods}
Let $A$ be a CM abelian variety over $\IQbar$ with no square factors. Let $\theta_1,\ldots,\theta_g$ be its holomorphic periods. %We see two consequences of the weak version of Grothendieck's Period Conjecture.

Let $\bT_0 := \mathrm{MT}(A)$. Then $\bT_0$ is a subtorus of $\GSp_{2g}$; it contains $\Gm$, the center of $\mathrm{GSp}_{2g}$. Thus the multiplier $\mathrm{GSp}_{2g} \rightarrow \Gm$, $\alpha \mapsto \det(\alpha)^{1/g}$, is non-trivial when restricted to $\bT_0$. Hence there is a non-constant homomorphism $X^*(\Gm) \rightarrow X^*(\bT_0)$. Let $\epsilon_0^*$ be the image of $1$ of this homomorphism.%; a more precise choice will be made below.

By Lemma~\ref{LemmaNoSquareFactorRegularMT}, $\bT := Z_{\GSp_{2g}}(\bT_0)$ is a maximal torus of $\GSp_{2g}$. The last sentence of Remark~\ref{RemarkCMAbVarDiagonal} yields 
 a $\ZZ$-basis $\{\epsilon_0^*,\epsilon_1^*,\ldots,\epsilon_g^*\}$ of $X^*(\bT)$ satisfying the following property:  For de Rham--Hodge comparison $\beta \in \mathrm{GSp}_{2g}(\CC)$, %, write $\bar{\beta}$ for the image of $\beta$ under the natural quotient $\mathrm{GSp}_{2g} \to G$. 
we have $\epsilon_j^*({\beta}) = \theta_j$ for each $j \in \{1,\ldots,g\}$. %\Ginlinecomment{Add a short explanation?}% We also choose $\epsilon_0^*$ such that $\epsilon_0^*(\beta) = 2\pi i$.%Hence $(\epsilon_i^*+\epsilon_j^*)({\beta}) = \theta_i\theta_j$ for each $1 \le i<j\le g$.

\begin{prop}
%Suppose $A$ has no square factors.
Assume the weak version of Grothendieck's Period Conjecture for $A$. Then each $\theta_j\theta_{j'}$ is a transcendental number.
\end{prop}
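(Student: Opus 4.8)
The plan is to translate the claim $\theta_j\theta_{j'}\in\IQbar$ into a statement about a character of the maximal torus $\bT:=Z_{\GSp_{2g}}(\bT_0)$ being trivial on $\bT_0=\mathrm{MT}(A)$, and then to contradict the transcendence‑degree equality supplied by the weak period conjecture. By Lemma~\ref{LemmaNoSquareFactorRegularMT}, $\bT$ is a maximal torus of $\GSp_{2g}$ containing $\bT_0$, and by (the last sentence of) Remark~\ref{RemarkCMAbVarDiagonal} the comparison matrix $\beta$ of $A$ lies in $\bT(\CC)$ and is conjugate over $\IQbar$ to $\mathrm{diag}(\theta_1,\dots,\theta_g,\pi\theta_1^{-1},\dots,\pi\theta_g^{-1})$. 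Hence the field generated by the entries of the period matrix of $A$ in any basis is $K:=\IQbar(\theta_1,\dots,\theta_g,\pi)$, and, writing characters additively, the character $\chi_{jj'}:=\epsilon_j^{*}+\epsilon_{j'}^{*}\in X^{*}(\bT)$ satisfies $\chi_{jj'}(\beta)=\theta_j\theta_{j'}$. Since $G_{\mathrm{mod}}$ of $A$ equals $\mathrm{MT}(A)=\bT_0$ by Andr\'{e}, the weak conjecture gives $\mathrm{trdeg}_{\IQbar}K=\dim\bT_0$. I will assume $\theta_j\theta_{j'}\in\IQbar$ and seek a contradiction.

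Next I would compare two saturated sublattices of $X^{*}(\bT)$. Let $L:=\ker\bigl(X^{*}(\bT)\twoheadrightarrow X^{*}(\bT_0)\bigr)$, of corank $\dim\bT_0$; and let $L_\beta:=\{\chi\in X^{*}(\bT):\chi(\beta)\in\IQbar^{*}\}$, the kernel of $X^{*}(\bT)\to\CC^{*}/\IQbar^{*}$, $\chi\mapsto\chi(\beta)$. As $\CC^{*}/\IQbar^{*}$ is torsion‑free, $L_\beta$ is saturated, and its corank $\rho$ is the rank of the subgroup of $\CC^{*}/\IQbar^{*}$ generated by $\theta_1,\dots,\theta_g,\pi$. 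I then need two facts. First, $\mathrm{trdeg}_{\IQbar}K\le\rho$: after reordering, $\rho$ of the elements $\theta_1,\dots,\theta_g,\pi$ are multiplicatively independent modulo $\IQbar^{*}$ and a power of each of the others is a $\IQbar^{*}$‑multiple of a monomial in them, so each of the others is algebraic over the subfield they generate. Second, $L\subseteq L_\beta$: a $\chi\in L$ occurs, with some eigenvector, in a tensor construction $T$ built from $H^{1}(A)$ and its dual; the $\bT_0$‑invariants of $T$ form a nonzero sub‑motive on which $\bT_0$ acts trivially, hence isomorphic to $\QQ(0)^{\oplus r}$, and a Hodge class in it is absolutely Hodge by Deligne's theorem on Hodge cycles on abelian varieties, so its de Rham--Betti period — which is $\chi(\beta)$ up to $\IQbar^{*}$ — is algebraic.

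Finally I would note that $\chi_{jj'}\notin L$: the eigenvector $\omega_j\otimes\omega_{j'}$ shows that $\chi_{jj'}$ occurs in $H^{1}(A)^{\otimes 2}$, which is pure of weight $2$, whereas any character trivial on $\bT_0$ occurs only inside motives of weight $0$ (a copy of $\QQ(0)$). Now if $\theta_j\theta_{j'}\in\IQbar^{*}$ then $\chi_{jj'}\in L_\beta$; since $L\subseteq L_\beta$, $L$ is saturated and $\chi_{jj'}\notin L$, this forces $\mathrm{rk}\,L_\beta\ge\mathrm{rk}\,L+1$, i.e.\ $\rho\le\dim\bT_0-1$. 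Combining with the two facts above, $\dim\bT_0=\mathrm{trdeg}_{\IQbar}K\le\rho\le\dim\bT_0-1$, a contradiction. Hence $\theta_j\theta_{j'}$ is not algebraic; being a nonzero period it is transcendental. (For $j=j'$ this is in fact unconditional, since $\theta_j^{2}\in\IQbar$ would make $\theta_j$ algebraic, contradicting WAST.)

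Once Grothendieck's conjecture is granted, the real content lies in the second fact of the middle paragraph — invoking Deligne's absolute‑Hodge theorem so that the ``motivic'' multiplicative relations among $\theta_1,\dots,\theta_g,\pi$ are at least known to hold — together with the elementary but essential observation that \emph{any} further multiplicative relation among these periods would strictly drop the transcendence degree below $\dim\mathrm{MT}(A)$. The remaining delicate point is the weight argument showing $\chi_{jj'}$ is not itself one of these motivic relations, which is exactly where $A$ being a genuine weight‑one abelian variety, rather than a Tate‑type object, is used.
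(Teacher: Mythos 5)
Your proof is correct, but it reaches the conclusion by a genuinely different route than the paper's. The paper's argument has two ingredients: (a) $(\epsilon_j^*+\epsilon_{j'}^*)|_{\bT_0}\not\equiv 0$, proved via Lemma~\ref{LemmaNoSquareFactorRegularMT} ($Z_{\GSp_{2g}}(\bT_0)$ is a torus, hence solvable, so $\bT_0$ is semi-regular and no root of $(\bT,\GSp_{2g})$ vanishes on it); and (b) the assertion that, under the weak conjecture, $\beta$ is the generic point of $\bT_0$, from which the transcendence of $(\epsilon_j^*+\epsilon_{j'}^*)(\beta)$ follows at once. You replace (a) by the observation that the weight cocharacter $\Gm=Z(\GSp_{2g})\subseteq\MT(A)$ acts by weight $2$ on $H^1(A)^{\otimes 2}$, so no character occurring there can kill $\bT_0$; this is simpler, avoids the root-system input, and does not use the no-square-factors hypothesis at that step. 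You replace (b) by an explicit lattice comparison in $X^*(\bT)$ between $L=\ker\bigl(X^*(\bT)\to X^*(\bT_0)\bigr)$ and the lattice $L_\beta$ of multiplicative $\IQbar^*$-relations among $2\pi i,\theta_1,\dots,\theta_g$, invoking Deligne's absolute-Hodge theorem to obtain $L\subseteq L_\beta$ before running the transcendence-degree count. This is heavier, but also more careful: the paper's phrase that ``$\beta$ is the generic point of $\bT_0$'' tacitly assumes $\beta$ lies on a $\bT_0$-torsor with a $\IQbar$-point, which is precisely the content of your inclusion $L\subseteq L_\beta$, and your argument supplies it. In effect you prove along the way the relevant special case of Proposition~\ref{PropCMAlgebraicRelationMonomial}: under the weak conjecture, a monomial in $2\pi i,\theta_1,\dots,\theta_g$ is algebraic exactly when the corresponding character of $\bT$ is trivial on $\bT_0$. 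Both approaches are valid; the paper's is shorter once its formulation of genericity is granted, while yours isolates exactly where Deligne's theorem (equivalently, the torsor structure of the period point) enters.
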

\begin{proof} 
Consider the root system $\Phi(\bT, \GSp_{2g}) \subseteq X^*(\bT)$ defined  in \cite[13.18]{BorelLinear-Algebrai}; it contains $2\epsilon_j^*$, $\epsilon_j^*+\epsilon_{j'}^*$ and $\epsilon_j^*-\epsilon_{j'}^*$ for all $1 \le j < j'\le g$.% \Ginlinecomment{This is for $\mathrm{Sp}_{2g}$. For $\mathrm{GSp}_{2g}$???}

As $Z_{\GSp_{2g}}(\bT_0)$ is solvable, $\bT_0$ is a semi-regular torus by \cite[13.1.Prop]{BorelLinear-Algebrai}. So $\bT_0 \not\subseteq \ker \chi$ for any $\chi \in \Phi(\bT, \GSp_{2g})$ by \cite[13.2.Prop]{BorelLinear-Algebrai}. In particular
\begin{equation}\label{Eq}
(\epsilon_j^*+\epsilon_{j'}^*)|_{\bT_0} \not\equiv 0 \quad \text{ for all }1\le j < j' \le g.
\end{equation}

By our choice of $\epsilon_j^*$, we have $(\epsilon_j^*+\epsilon_{j'}^*)({\beta}) = \theta_j\theta_{j'}$ for each $1 \le j < j' \le g$.

Now comes the step where we assume the weak version of Grothendieck's Period Conjecture for $A$. Then ${\beta}$ is the generic point of $\bT_0$. Hence \eqref{Eq} implies $(\epsilon_j^*+\epsilon_{j'}^*)({\beta}) \not\in \IQbar$ for all $1 \le j < j' \le g$. We are done.% Then the conclusion follows from the last paragraph.
\end{proof}

%The techniques used in this proof can be adapted to prove the following stronger result.
\begin{prop}\label{PropCMAlgebraicRelationMonomial}
%Suppose $A$ has no square factors.
Assume the weak version of Grothendieck's Period Conjecture for $A$. Then 
there exist monomials over $\IQbar$ in $\pi$ and the $\theta_j$'s (with $j \in \{1,\ldots,g\}$) such that each algebraic relations over $\IQbar$ among these numbers are generated by these monomials.
\end{prop}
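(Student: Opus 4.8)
The plan is to translate the assertion into a statement about a single point on an algebraic torus and then to deduce it from the weak Grothendieck Period Conjecture together with Andr\'e's identification of the motivic Galois group. Keep the notation of this section: $\bT_0 = \MT(A)$, $\bT = Z_{\GSp_{2g}}(\bT_0)$ is a maximal torus by Lemma~\ref{LemmaNoSquareFactorRegularMT}, $\beta \in \GSp_{2g}(\CC)$ is the de Rham--Betti comparison matrix, and $\{\epsilon_0^*,\epsilon_1^*,\ldots,\epsilon_g^*\}$ is the $\ZZ$-basis of $X^*(\bT)$ with $\epsilon_0^*$ generating $X^*(\Gm)$ and $\epsilon_j^*(\beta) = \theta_j$ for $1 \le j \le g$. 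Since $\beta$ is compatible with the CM endomorphisms of $A$ and these generate $\bT_0$, we have $\beta \in Z_{\GSp_{2g}}(\bT_0)(\CC) = \bT(\CC)$. The chosen basis gives an isomorphism $\bT \cong \Gm^{g+1}$ over $\IQbar$ under which $\beta$ becomes the point with coordinates $(\epsilon_0^*(\beta),\ldots,\epsilon_g^*(\beta))$, with $\epsilon_0^*(\beta) \simeq 2\pi i$ because the polarization pairing is multiplied by $2\pi i$ under the comparison; in particular $\IQbar(\chi(\beta) : \chi \in X^*(\bT)) = \IQbar(\pi,\theta_1,\ldots,\theta_g)$. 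Because all of $\pi,\theta_1,\ldots,\theta_g$ are nonzero, the passage between $\Gm^{g+1}$ and $\AAA^{g+1}$ is harmless, and the ideal of $\IQbar$-algebraic relations among $\pi,\theta_1,\ldots,\theta_g$ becomes the ideal of the $\IQbar$-Zariski closure $Z$ of the point $\beta$ inside $\bT$.

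The next step is to identify $Z$. Let $M := \ker(X^*(\bT) \to X^*(\bT_0))$ be the group of characters trivial on $\bT_0$; since $\bT_0 \hookrightarrow \bT$ is a closed immersion of tori the restriction map is surjective, so $X^*(\bT)/M \cong X^*(\bT_0)$ is torsion-free and $M$ is saturated of rank $g+1-\dim\bT_0$. For each $\chi \in M$ the corresponding $1$-dimensional $\bT$-eigenspace inside a suitable tensor construction $T^{a,b}(H^1(A))$ carries the trivial action of the Mumford--Tate group, hence is a trivial weight-$0$ sub-Hodge-structure, so its de Rham--Betti period $\chi(\beta)$ lies in $\IQbar^*$. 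The equations $\chi(t) = \chi(\beta)$ for $\chi \in M$ cut out inside $\bT$ precisely the coset $\beta\cdot\bT_0$, which is irreducible of dimension $\dim\bT_0$ (a torsor under the connected torus $\bT_0$, in fact isomorphic to $\bT_0$ as $\IQbar$ is algebraically closed) and contains $\beta$, so $Z \subseteq \beta\bT_0$. For the reverse inclusion one invokes the hypothesis: the weak Grothendieck Period Conjecture for $A$ gives $\mathrm{trdeg}_{\IQbar}\IQbar(\pi,\theta_1,\ldots,\theta_g) = \dim G_{\mathrm{mod}}$, and $G_{\mathrm{mod}} = \MT(A) = \bT_0$ by Andr\'e \cite{AndreMotivicGaloisGroupMTGroup}. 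Hence $\dim Z = \dim\bT_0 = \dim(\beta\bT_0)$, and since $\beta\bT_0$ is irreducible we conclude $Z = \beta\bT_0$.

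To finish, quote the standard description of the ideal of a coset of a subtorus: in the Laurent ring $\IQbar[x_0^{\pm1},\ldots,x_g^{\pm1}]$ the ideal of $\beta\bT_0$ is generated by the binomials $x^a - \chi(\beta)$, where $\chi = \sum_j a_j\epsilon_j^*$ runs over $M$ and $x^a = \prod_j x_j^{a_j}$; this is a prime lattice ideal because $M$ is saturated. Contracting to $\IQbar[x_0,\ldots,x_g]$ — again using that each $x_j$ evaluates at $\beta$ to a nonzero number, so one may clear denominators by splitting $a = a^+ - a^-$ — exhibits the prime ideal of all $\IQbar$-polynomial relations among $\pi,\theta_1,\ldots,\theta_g$ as generated by relations $P - Q = 0$ with $P$ and $Q$ monomials over $\IQbar$ in $\pi,\theta_1,\ldots,\theta_g$ (one of them possibly a constant); these are exactly the monomial relations asserted in the statement.

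The step I expect to be the main obstacle is not the toric bookkeeping but making sure the two facts about $\beta$ are correctly in force: that the de Rham--Betti period point of $A$ lies on, and after $\IQbar$-Zariski closure spans, a torsor under $\bT_0 = \MT(A)$ — which rests on the no-square-factors hypothesis, through the fact that all Hodge (equivalently motivated) classes on powers of $A$ come from the CM endomorphisms and divisor classes, so that the motivic Galois group reduces to the Mumford--Tate torus — and that the field generated by the entries of $\beta$ is literally $\IQbar(\pi,\theta_1,\ldots,\theta_g)$, which is the content of Remark~\ref{RemarkCMAbVarDiagonal}. Once these are granted, the rest is a transcendence-degree count together with elementary geometry of tori.
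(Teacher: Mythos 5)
Your argument is correct and follows essentially the same route as the paper's: identify the ideal of $\IQbar$-relations among $\pi,\theta_1,\ldots,\theta_g$ with the ideal of the $\IQbar$-Zariski closure of $\beta$ inside the maximal torus $\bT\cong\Gm^{g+1}$, use the weak conjecture (together with Andr\'e's $G_{\mathrm{mod}}=\MT(A)$) to see that this closure has dimension $\dim\bT_0$, and recognize the resulting lattice ideal attached to $\ker(X^*(\bT)\to X^*(\bT_0))$ as generated by monomial relations. The one place you are more careful than the paper is the torsor issue: the paper simply declares $\beta$ to be the generic point of $\bT_0$ itself, whereas you locate the closure inside the coset $\beta\bT_0$ and supply the needed algebraicity of $\chi(\beta)$ for $\chi$ trivial on $\MT(A)$ via Hodge classes on powers of $A$ --- a step that deserves an explicit citation (Deligne's theorem that Hodge classes on abelian varieties are absolute Hodge) but is otherwise sound.
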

\begin{proof}
Set $l := \dim \bT - \dim \bT_0$. 

The inclusion $\bT_0 < \bT$ induces a group homomorphism $X^*(\bT) \rightarrow X^*(\bT_0)$, which is furthermore surjective. The kernel of this homomorphism, denoted by $N$, is a free-$\mathbb{Z}$-module of rank $l$. Let $\{a_{j0}\epsilon_0^*+\cdots + a_{jg}\epsilon_g^*: j = 1,\ldots, l\}$ be a set of generators of $N$. Then
\begin{equation}\label{EqT0T}
\bT_0 = \{ t \in \bT : \prod_{k=0}^g \epsilon_k^*(t)^{a_{jk}} = 1 \text{ for all }j =1,\ldots, l\}.
\end{equation}

Denote by $(t_0,t_1,\ldots,t_g)$ the coordinates of $\mathbb{A}_{\IQbar}^{g+1}$. The image of $\bT_{\IQbar} \xrightarrow{(\epsilon_0^*,\ldots,\epsilon_g^*)} \mathbb{G}_{\mathrm{m},\IQbar}^{g+1} \subseteq \mathbb{A}_{\IQbar}^{g+1}$ is an open $\IQbar$-subvariety. % as $\PP^{g+1}$ if we identify $\mathbb{A}_{\IQbar}^{g+1}$ with the affine chart $U := \{[x_0:x_1:\cdots :x_g : 1] : x_j \in \IQbar\text{ for }j \in \{0,1,\ldots,g\}\}$ of $\PP^{g+1}$. 
For each $j \in \{1,\ldots,l\}$, set $P_j := \prod_{k=0}^g t_k^{a_{jk}} -  1$. %; it is the homogeneous polynomial obtained from $\prod_{k=0}^g (x_k/x_{g+1})^{a_{jk}} -1 $. 

By \eqref{EqT0T} , as a $\IQbar$-subvariety we have
\[
\bT_0 = \bT \cap Z(P_1,\ldots, P_l).
\]
As $l = \dim \bT - \dim \bT_0$ and $\bT_{\IQbar}$ is open in $\PP_{\IQbar}^{g+1}$, by Hilbert Nullstellensatz, each polynomial $P \in \IQbar[t_0,\ldots,t_g]$ which vanishes identically on $\bT_0$ is in the radical of the ideal $(P_1,\ldots,P_l)$, which  is precisely $(P_1,\ldots,P_l)$ since the kernel of $X^*(\bT) \rightarrow X^*(\bT_0)$ is torsion free.%, so the ideal $(P_1,\ldots,P_l)$ equals its radical.

Now let us study the algebraic relations over $\IQbar$ among $2\pi i$, $\theta_1,\ldots,\theta_g$. Assume $P \in \IQbar[t_0,\ldots,t_g]$ is a polynomial such that $P(2\pi i,\theta_1,\ldots,\theta_g) = 0$. %Let $P \in \IQbar[x_0,\ldots,x_{g+1}]$ be the homogeneous polynomial associated with $Q$ (where $t_j = x_j/x_{g+1}$). Then we have $P(2\pi i,\theta_1,\ldots,\theta_g,1) = 0$. 
Since $\epsilon_0^*(\beta) = 2\pi i$ and $\epsilon_j^*(\beta) = \theta_j$ for each $j \in \{1,\ldots,g\}$, we have $P(\epsilon_0^*(\beta), \epsilon_1^*(\beta) ,\ldots, \epsilon_g^*(\beta)) = 0$.

Now comes the step where we assume the weak version of Grothendieck's Period Conjecture for $A$. Then ${\beta}$ is the generic point of $\bT_0$. Thus from the last paragraph we have $P|_{\bT_0} \equiv 0$. Hence $P$ is in the ideal $(P_1,\ldots,P_l)$. Therefore each algebraic relation among $\pi$, $\theta_1,\ldots,\theta_g$ is generated by the $l$-monomials
\[
(2 \pi i)^{a_{j0}} \theta_1^{a_{j1}} \cdots \theta_g^{a_{jg}} \in \IQbar, \quad j \in \{1,\ldots, l\}.
\]
We are done.% \Ginlinecomment{Nullstellensatz: All polynomials defininig $T$ as a subvariety of $T_{\max}$ should be generated by these relations!!!}
\end{proof}

This proposition yields the following immediate corollary.
\begin{cor}
The weak version of Grothendieck's Period Conjecture for $A$ gives an affirmative answer to Question~\ref{QuestionNoQuadraticRelationIntro} for $A$.
\end{cor}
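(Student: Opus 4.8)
The plan is to derive the Corollary from Proposition~\ref{PropCMAlgebraicRelationMonomial} via one elementary observation: that the ideal defining a subtorus is binomial, so that the quadratic products $\theta_j\theta_{j'}$ become honest monomials in a Laurent polynomial ring, where $\IQbar$-linear (in)dependence is transparent. Write $\theta_0 := 2\pi i$ and let $I \subseteq \IQbar[t_0^{\pm 1},\ldots,t_g^{\pm 1}]$ be the ideal of all $\IQbar$-polynomial relations among $\theta_0,\theta_1,\ldots,\theta_g$. By (the proof of) Proposition~\ref{PropCMAlgebraicRelationMonomial}, the weak Grothendieck conjecture for $A$ implies that $I$ is generated by binomials $t^{a_j} - c_j$ ($j = 1,\ldots,l$) with $c_j \in \IQbar^{*}$, where $a_1,\ldots,a_l$ is a $\ZZ$-basis of the lattice $N := \ker\bigl(X^*(\bT) \to X^*(\bT_0)\bigr)$. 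Since $\bT_0$ is a torus, $X^*(\bT_0) = X^*(\bT)/N$ is torsion-free, i.e. $N$ is saturated; hence, after a monomial change of the $t_j$ adapted to $N$ and a rescaling by suitable elements of $\IQbar^{*}$ normalizing the $c_j$, one obtains an isomorphism of $\IQbar$-algebras
\[
B := \IQbar[t_0^{\pm 1},\ldots,t_g^{\pm 1}]/I \;\cong\; \IQbar[M], \qquad M := \ZZ^{g+1}/N ,
\]
the group algebra of the free abelian group $M$ of rank $\dim \bT_0$. The specialization $t_j \mapsto \theta_j$ identifies $B$ with a subring of $\CC$; in particular $B$ is a domain, and the image $m_j \in B^{\times}$ of $t_j$ has the form $m_j = \nu_j \bar e_j$ for some $\nu_j \in \IQbar^{*}$ and $\bar e_j \in M$, the $\bar e_0,\ldots,\bar e_g$ generating $M$.

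Next I would read off the products. In $B \cong \IQbar[M]$ the element $\theta_j\theta_{j'}$ equals $\nu_j\nu_{j'}(\bar e_j + \bar e_{j'})$, an $\IQbar^{*}$-multiple of a single group element. Hence $\theta_j\theta_{j'} \cong \theta_k\theta_{k'}$ if and only if $\bar e_j + \bar e_{j'} = \bar e_k + \bar e_{k'}$ in $M$; this is exactly the equivalence relation appearing in Question~\ref{QuestionNoQuadraticRelationIntro} and its proof. Choosing a representative $(j_s,j'_s)$ in each of the $r$ equivalence classes $J_s$, the group elements $\bar e_{j_s} + \bar e_{j'_s}$ are pairwise distinct, hence $\IQbar$-linearly independent in the group algebra $\IQbar[M] = B \subseteq \CC$, and so the complex numbers $\theta_{j_1}\theta_{j'_1},\ldots,\theta_{j_r}\theta_{j'_r}$ are $\IQbar$-linearly independent. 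Therefore $\dim_{\IQbar}\sum_{1 \le j \le j' \le g} \IQbar\,\theta_j\theta_{j'} = r$.

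A dimension count now settles both parts of Question~\ref{QuestionNoQuadraticRelationIntro}. Inside each class $J_s$ of cardinality $n_s$ all the $\theta_j\theta_{j'}$ are pairwise $\IQbar^{*}$-proportional, so the elementary relations supported on $J_s$ span an $(n_s-1)$-dimensional subspace of the space $R$ of $\IQbar$-linear relations among the $\theta_j\theta_{j'}$; altogether the elementary relations span a subspace of $R$ of dimension $\sum_s (n_s - 1) = \binom{g+1}{2} - r$. By the previous paragraph $\dim R = \binom{g+1}{2} - \dim_{\IQbar}\sum \IQbar\,\theta_j\theta_{j'} = \binom{g+1}{2} - r$, so the elementary relations span all of $R$: every $\IQbar$-linear relation among the $\theta_j\theta_{j'}$ is a $\IQbar$-linear combination of elementary ones. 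If in addition $\mathrm{MT}(A) = \bT_0$ is a maximal torus, then, as $\bT = Z_{\GSp_{2g}}(\bT_0)$ is already a maximal torus of $\GSp_{2g}$ by Lemma~\ref{LemmaNoSquareFactorRegularMT} (recall $A$ has no square factors), we get $\bT_0 = \bT$, hence $N = 0$, $M = \ZZ^{g+1}$, and $\bar e_0,\ldots,\bar e_g$ is a $\ZZ$-basis; the sums $\bar e_j + \bar e_{j'}$ with $1 \le j \le j' \le g$ are then pairwise distinct, so $r = \binom{g+1}{2}$ and $\dim_{\IQbar}\sum \IQbar\,\theta_j\theta_{j'} = g(g+1)/2$.

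The only point that genuinely uses the conjecture — and the only place where care is needed — is the identification $B \cong \IQbar[M]$ with $M$ free: it rests on Proposition~\ref{PropCMAlgebraicRelationMonomial} to know that $I$ is binomial, on the saturation of $N$ (automatic, $\bT_0$ being a torus), and on the compatibility of the normalizing rescaling with the embedding $B \hookrightarrow \CC$ (which involves only $\IQbar^{*}$-scalars, hence is harmless). Everything afterwards is formal: the reduction of both assertions in Question~\ref{QuestionNoQuadraticRelationIntro} to the $\IQbar$-linear independence of the $r$ distinguished products, which is immediate once the products are viewed as monomials in a group algebra. I therefore do not anticipate a real obstacle beyond this bookkeeping.
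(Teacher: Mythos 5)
Your proof is correct, and it supplies exactly the details the paper leaves implicit: the paper states this corollary with no proof, declaring it an "immediate" consequence of Proposition~\ref{PropCMAlgebraicRelationMonomial}, and your group-algebra reading of that proposition — $\IQbar[t_0^{\pm},\ldots,t_g^{\pm}]/I \cong \IQbar[X^*(\bT_0)]$ with $X^*(\bT_0)$ free, so that the products $\theta_j\theta_{j'}$ become $\IQbar^*$-multiples of group elements whose coincidences are exactly the equivalence classes $J_s$ — is the natural and intended route. The dimension count identifying the span of the elementary relations with the full relation space, and the specialization $\bT_0=\bT$ in the maximal-torus case via Lemma~\ref{LemmaNoSquareFactorRegularMT}, are both sound.
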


\bibliographystyle{alpha}

\end{document}